\newtheorem{theorem}{Theorem}[section]
\theoremstyle{definition}
\newtheorem{definition}[theorem]{Definition}
\theoremstyle{remark}
\numberwithin{equation}{section}
\theoremstyle{plain}
\newtheorem{thm}{Theorem}[section]
\newtheorem{lem}[thm]{Lemma}
\newtheorem{cor}[thm]{Corollary}
\theoremstyle{definition}
\newtheorem{dfn}[thm]{Definition}
\newtheorem{ex}[thm]{Example}
\theoremstyle{remark}
\newtheorem{rmk}[thm]{Remark}
\newcommand{\CC}{\mathbf{C}}
\newcommand{\ZZ}{\mathbf{Z}}
\begin{document}

 \title[Vertex rings]{Vertex rings and their Pierce bundles}
\title{Vertex Rings and their Pierce Bundles}

\author{Geoffrey Mason}
\address{Department of Mathematics, University of California, Santa Cruz}
\curraddr{}
\email{gem@ucsc.edu}
\thanks{We thank the Simon Foundation, grant $\#427007$, for its support.}

\subjclass[2010]{Primary 17B99, 13P99}

\date{}

\begin{abstract} In part I we introduce \emph{vertex rings}, which bear the same relation to vertex algebras
(or VOAs) as commutative, associative rings do to commutative, associative algebras over $\mathbf{C}$.\ We show that vertex rings are characterized by \emph{Goddard axioms}.\ These include a generalization of the translation-covariance axiom
of VOA theory that involves a canonical \emph{Hasse-Schmidt derivation} naturally associated to any vertex ring.\
 We give several illustratory applications of these axioms, including the construction of vertex rings associated with the
 Virasoro algebra.\ We consider some categories of vertex rings, and the r\^{o}le played by the \emph{center} of a vertex ring.\ 
 In part II we extend the theory of \emph{Pierce bundles} associated to a commutative ring to the setting of vertex rings.\ This amounts  to the construction of certain \emph{reduced}  \'{e}tale bundles of vertex rings functorially associated to a vertex ring.\ 
 We introduce \emph{von Neumann regular} vertex rings as a generalization of  von Neumann regular commutative rings;  
we obtain a characterization of this class of vertex rings as those whose  Pierce bundles are bundles of \emph{simple} vertex rings.
\end{abstract}

\maketitle

\noindent
{Part I{:} Vertex Rings}

\medskip
\noindent
1.\ Introduction.\\
 2.\ Basic properties of vertex rings.\\
 2.1.\ Definition of vertex ring.\\
 2.2.\ Commutator, associator, and locality formulas.\\
 2.3.\ Vacuum vector.
 
 \medskip\noindent
 3.\ Derivations.\\
 3.1.\ Hasse-Schmidt derivations.\\
 3.2.\ Translation-covariance.
 
 \medskip\noindent
 4.\ Characterizations of vertex rings.\\
 4.1.\ Fields on an abelian group.\\
 4.2.\ Statement of the existence theorem.\\
 4.3.\ Residue products.\\
 4.4.\ The relation between residue products and translation-covariance.\\
 4.5.\ Completion of the proof of Theorem 4.3.\\
 4.6.\ Generators of a vertex ring and a refinement of Theorem 4.3.\\
 4.7.\ Formal Taylor expansion and an alternate existence Theorem.
 
 \medskip\noindent
 5.\ Categories of vertex rings.\\
 5.1.\ The category  of vertex rings.\\
 5.2.\ Commutative rings with HS derivation.

 \medskip\noindent
 6.\ The center of a vertex ring.\\
 6.1.\ Basic properties.\\
 6.2.\ Vertex $k$-algebras.\\
 6.3.\ Idempotents.\\
 6.4.\ Units.\\
 6.5.\ Tensor product of vertex rings.

 \medskip\noindent
 7.\ Virasoro vertex $k$-algebras.\\
 7.1.\ The Lie algebra $Vir_k$.\\
 7.2.\ The Virasoro vertex ring $M_k(c', 0)$.\\
 7.3.\ Virasoro vectors.\\
 7.4.\ Graded vertex rings.\\
 7.5.\ Vertex operator $k$-algebras.

\medskip\noindent
{Part II{:} Pierce bundles of vertex rings}
 
 \medskip\noindent
 8.\ {\'{E}tale bundles of vertex rings}.\\
 8.1.\ Basic definitions.\\
 8.2.\ Nonassociative vertex rings and sections.

 \medskip\noindent
 9.\ Pierce bundles of vertex rings.\\
 9.1.\ The Stone space of a vertex ring.\\
 9.2.\ The Pierce bundle of a vertex ring.\\
 9.3.\  Some local sections.
 
 \medskip\noindent
 10.\ Von Neumann regular vertex rings.\\
 10.1.\ Regular ideals.\\
 10.2. Von Neumann regular vertex rings.
 
 \medskip\noindent
 11.\ Equivalence of some categories of vertex rings.\\
 
 \medskip\noindent
 12.\ Appendix
 
  \begin{center}{\bf Part I{:} Vertex Rings}
 \end{center}

 \section{Introduction}
 The \textit{raison d'etre} for the present paper stems from the simple observation that the axioms for a vertex operator algebra
 (VOA) are \emph{integral}{:} there are no denominators.\ It is therefore meaningful to speak of a \emph{vertex ring} which, roughly speaking, is a VOA with an additive structure but not necessarily a  linear structure, and somewhat more precisely, it is an additive abelian group admitting a countable infinity of $\mathbf{Z}$-bilinear operations satisfying the same basic identity (sometimes called the \emph{Jacobi identity}) as a VOA.
 
  \medskip
  It is well-known that
 certain VOAs possess an \emph{integral structure}, i.e., a basis with respect to which the structure constants are integers, and the 
 $\mathbf{Z}$-span of such a basis is a vertex ring. (For example lattice theories have this property.)\ Dong and Griess have made a study of such integral forms invariant under a group action \cite{DG1}, \cite{DG2}.
 
 \medskip
  If the VOA $V$ has an integral structure
 and if $\tilde{V}$ is the $\mathbf{Z}$-span of an integral basis, the base-change $k\otimes\tilde{V}$ is a \emph{vertex $k$-algebra} for any commutative ring $k$, and the binary operations become $k$-linear.\  VOAs defined over base rings (or at least base fields) other than $\mathbf{C}$ occur frequently in the literature.\ One encounters base-changes such as
 $\mathbf{C}[t]\otimes V$ frequently, though they are often viewed as VOAs defined over $\mathbf{C}$.\ And in a slightly different direction, Dong and Ren \cite{DR} and Li and Mu \cite{LiM} have made interesting studies of Virasoro VOAs and Heisenberg VOAs respectively over base fields other than $\mathbf{C}$.\ 
 
 \medskip
 All of these examples point to the desirability of having available a general theory of vertex rings, and more generally 
 vertex $k$-algebras, and the purpose of the
 present paper is to make a start on such a theory.\ On the other hand, our original motivation for getting involved with
 such a project was quite different, and arose from the desire to extend some results in \cite{DM1} to a more general setting.\ There, Chongying Dong and I described the decomposition of a VOA into \emph{blocks} according to its (central) idempotents and I wanted to see what this theory would look if the VOA had a lot of idempotents.\ In order to even formulate precisely what this means one needs the general notion of a vertex ring.
 
 \medskip
 In the rest of this Introduction we will describe some of the content and main ideas of the present paper, which has two quite different parts.\ The first part deals with the axiomatics of vertex rings, the second with their so-called \emph{Pierce bundles}.\ As is well-known, one may obtain an important characterization of 
 vertex algebras (over $\mathbf{C}$) using the  \emph{Goddard axioms} \cite{G}, \cite{LL},  \cite{MN}.\ The 
 general idea is to show that the Jacobi identity for a  VOA is equivalent to a collection of \emph{mutually local, translation-covariant, creative fields}.\ Part I is mainly devoted to a generalization of this result to vertex rings and giving some applications.\ Most of the needed proofs dealing with \emph{locality} already exist in the literature and
carry over to the setting of vertex rings.\ However the  same is not true of
 translation-covariance.\ Translation-covariance for VOAs deals with a certain natural derivation, often denoted by $T$.\ For a vertex ring $V$, $T$ must be replaced by what we call the \emph{canonical Hasse-Schmidt derivation} of $V$, which is a certain sequence $\underline{D}{=}(D_0{:=}Id_V, D_1, D_2, ...)$ of endomorphisms $D_i$ of $V$
 satisfying \emph{Leibniz}, or \emph{divided power}, identities.\ We formulate a general translation-covariance axiom for vertex rings using the canonical HS derivation.\ This is carried out in Section 3.\ The introduction of the canonical HS derivation is very natural, and not without precedent.\  There is an extensive literature dealing with commutative rings with either a derivation or  HS-derivation
 \cite{EGA},\cite{Mats}.\ Indeed, pairs $(k, \underline{D})$ consisting of a (unital) commutative ring $k$ equipped with 
 HS-derivation $\underline{D}$ provide perhaps the easiest examples of  vertex rings that are \emph{not} VOAs.\

 \medskip
 Section 4 is taken up with the characterization of vertex rings \emph{a la} Goddard, using locality and our more general notion of translation-covariance.\ Here the exposition has been influenced by the presentation of
 Matsuo and Nagatomo \cite{MN}.\ We make several subsequent applications of this characterization.\ The first, in Section 4, deals with \emph{generating fields} for a vertex ring.\ This is the most transparent way to construct VOAs and vertex rings alike.
 
 \medskip
 The remainder of Part I is concerned with categories of vertex rings and related topics.\ Of paramount importance for everything that comes later is the idea of the \emph{center} $C(V)$ of a vertex ring $V$.\ This is concept is known in VOA theory \cite{DM1}, \cite{LL}, but its importance diminishes in the presence of denominators.\ One way to define $C(V)$, which is naturally a commutative ring and a vertex subring of
 $V$, is as the group of $\underline{D}$-constants of $V$.\ It is also the set of states with \emph{constant} vertex operator (cf.\ Theorem \ref{thmcenter1}).\ There is a categorical explanation for the importance of $C(V)$ that runs
 as follows:\ a unital, commutative ring $k$ is a vertex ring.\ Indeed, it corresponds to a pair $(k, \underline{D})$  where
 $\underline{D}{:=}(Id_k, 0, 0, ...)$ is the \emph{trivial} HS-derivation. Thus there is a functorial insertion 
 \begin{eqnarray}\label{Jinsertion}
K{:}\mathbf{Comm}{\rightarrow}\mathbf{Ver}
\end{eqnarray}
of the category $\mathbf{Comm}$ of unital, commutative rings into the category $\mathbf{Ver}$ of vertex rings.\ It is a basic fact that
in this way, $\mathbf{Comm}$ becomes a \emph{coreflective} subcategory of $\mathbf{Ver}$, i.e., the insertion $K$ has a \emph{right adjoint} (see \cite{Mac} for background).\ Indeed, the right adjoint is the center functor $C{:}\mathbf{Ver}{\rightarrow}\mathbf{Comm}$ that associates $C(V)$ to $V$.\
Section 6 is taken up with these issues and some other aspects of vertex rings that depend on the center functor.\ 
These include \emph{idempotents} and \emph{units} of  $V$, all of which turn out to  lie in $C(V)$.\ We also formally introduce vertex $k$-algebras.\ This could have been done from the outset in Section 1, but since we want to think of a commutative ring $k$ as a vertex ring it is more natural to define a vertex $k$-algebra as an object in the 
comma category $(k{\downarrow} \mathbf{Ver})$ of \emph{objects under $k$}.\ We treat \emph{tensor products} of vertex rings using our Goddard axioms; this is the \emph{coproduct} in $\mathbf{Ver}$.\ This construction, which can be awkward even in the setting of VOAs over  $\mathbf{C}$ (cf.\ \cite{FHL}), includes base changes
such as $R\otimes_k V$ ($R$ is a commutative $k$-algebra) that we mentioned before.

\medskip
Section 7 gives a more substantial application of the  Goddard axioms to the construction of \emph{Virasoro} vertex $k$-algebras
over an arbitrary base ring $k$.\ The Virasoro $k$-algebra ($k$ a commutative ring) is the Lie $k$-algebra $Vir$ with 
$k$-basis $L(n)\ (N{\in}\mathbf{Z})$ together with $K$, and  where the \emph{nontrivial} brackets are 
\begin{eqnarray}\label{Virrelns}
[L(m), L(n)]{=} (m-n)L(m+n){+}\frac{m^3-m}{6}\delta_{m{+}n, 0}c'K
\end{eqnarray}
($c'{\in}k$ is called the \emph{quasicentral charge} of $Vir$).\ Compared to the usual definition of the Virasoro algebra (\ref{Virrelns}) makes sense for \emph{any} $k$.\ It  amounts to a rescaling of the central element $K$ by a factor of $2$.\
To show that $\sum_n L(n)z^{-n-2}$ is a generating field for a vertex $k$-algebra, thanks to the Goddard axioms one only needs to demonstrate the existence of a
suitable HS derivation $\underline{D}{=}(Id, D_1, D_2, ...)$.\ We show that $\underline{D}$ exists and that
\begin{eqnarray}\label{Dmrelns}
L({-}1)^m{=}m!D_m\ (m{\in}\mathbf{Z}_{\geq 0}).
\end{eqnarray}

This construction allows us to define VOAs over an arbitrary commutative ring $k$ as
vertex $k$-algebras with a compatible $k$-grading and Virasoro vector whose modes satisfy
(\ref{Virrelns}).\ Vertex algebras over $\mathbf{C}$ and $Vir$ itself are the basic examples.\ We
include (\ref{Dmrelns}) as one of our VOA axioms.\ This seems natural, though experts may demur.\ In any case, we terminate our presentation of the axiomatics of vertex rings at this point.

\medskip
 The coreflective property of (\ref{Jinsertion}) suggests that $\mathbf{Ver}$ may be regarded as a \emph{natural extension} of 
 $\mathbf{Comm}$, and that certain theorems and/or theories
that hold in $\mathbf{Comm}$ might extend to $\mathbf{Ver}$.\ This point of view motivates Part II, where the main idea is to
demonstrate that some of the constructions and results in the remarkable paper \cite{RSP} of Pierce do indeed extend to 
$\mathbf{Ver}$.\ Pierce's paper concerns certain sheaves
of rings functorially associated to a commutative ring $k$.\ Actually, in keeping with standard practice at the time,  Pierce  did
not deal with sheaves \emph{per se} but rather with \emph{bundles}, and more precisely an equivalent category $\mathbf{redCommbun}$ whose objects are  \emph{reduced \'{e}tale bundles} of rings\footnote{In fact, the phrase `\'{e}tale bundle' never occurs in \cite{RSP}, where such things are called `sheaves'.}; `reduced' means
 that the bundles have a \emph{Boolean base space} (Hausdorff and totally disconnected) and \emph{indecomposable} fibers.\ One of the main 
 results of \cite{RSP} is an equivalence of categories $\mathbf{Comm}\stackrel{\sim}{\longrightarrow}\mathbf{redCommbun}$.\ Similarly, one of our main results in Part II is the extension of this result  to vertex rings{:}\ thus every vertex ring $V$ is canonically associated with
 a reduced \'{e}tale bundle $\mathcal{R}{\rightarrow}X$ of vertex rings.\ An important point is that the base $X$  is
 none other than the base of the Pierce bundle $E{\rightarrow}X$ associated to $C(V)$, namely $X{:=}Spec(B(C(V))$ where
 $B(k)$ for a commutative ring $k$ is a certain Boolean ring  whose elements comprise the idempotents of $k$.\ We call $X$  the \emph{Stone space} of $k$ (or $V$), being closely related to the duality theory of Marshall Stone.\ It is sometimes called the
 \emph{Boolean spectrum} of $k$.\
 The upshot is that there is a diagram of categories and functors that is discussed in Section \ref{Seqcat}{:}
\begin{eqnarray}\label{catdiag}
\xymatrix{
&\mathbf{Ver}\ar[r]^{\sim} &\mathbf{redVerbun} \\
&\mathbf{Comm}\ar[u]^K\ar[r]_{\sim} & \bf{redCommbun}\ar[u] 
 }
\end{eqnarray}
 where $\mathbf{redVerbun}$ is the category of reduced \'{e}tale bundles of vertex rings  and the horizontal functors are equivalences.
 
 \medskip
 For the purposes of the present paper it is crucial to deal with bundles  rather than the corresponding sheaf of sections.\ This is because the infinitely many operations in a vertex ring may lead to problems with
 sections over open sets, and one does not necessarily obtain a sheaf of vertex rings as the sheaf of sections but rather something weaker - what we call a sheaf of \emph{nonassociative vertex rings}.\ On the other hand the local sections over a \emph{closed} set \emph{do} carry the structure of a vertex ring.\ Since the base spaces we deal with are Boolean there is a basis of clopen sets,
 and this makes a sheaf perspective viable.\ Most of this is explained (with plenty of background) in the first two Sections of
 Part II.

\medskip
Pierce's theory works particularly well for commutative \emph{von Neumann regular rings}, and  
something similar  holds true for vertex rings.\ Thus in Section \ref{SvNr} we introduce \emph{von Neumann regular} (vNr) vertex rings.\ These are vertex rings $V$ such that every principal $2$-sided ideal has the form $e(-1)V$ for an idempotent $e$.\ We  establish various properties of vNr vertex rings.\ In particular
we show that in the upper equivalence of (\ref{catdiag}), the full subcategory of $\mathbf{Ver}$ whose objects are vNr vertex rings corresponds to the category of reduced \'{e}tale bundles of \emph{simple vertex rings}.\ This result is the main motivation for considering vNr vertex rings.\ Indeed, if $V$ is a simple vertex ring then $C(V)$ is a \emph{field}, so that simple vertex rings are 
the more familiar vertex algebras over a field, and vNr vertex rings are exactly those vertex rings whose Pierce bundle has 
such vertex algebras as stalks.\ As a special case, applying this when $V$ is a commutative vNr ring recovers Pierce's Theorem (\cite{RSP}, Theorem $10.3$).\ Pierce used this result to study \emph{modules} over a vNr ring, however we do not pursue the representation theory of vertex rings here.\
 
\medskip
The paper is \emph{expository} in nature, though proofs are almost always complete.\ (Section \ref{Seqcat} is an exception.)\  It  should be possible for nonexperts to follow the material, while experts will find much that is familiar.\ This approach is more-or-less forced upon us by the nature of the material:\ some of the existing proofs in the literature concerning VOAs work perfectly well in the setting of vertex rings, some work only with modification, some do not work at all.\ Under the circumstances, it seemed better to give a presentation starting from scratch.\ Part II is written assuming that the reader has no prior knowledge of bundleology.\ We have explained  the basic constructions in the context of vertex rings, though this is hardly different from bundles of commutative rings as discussed, for example, in \cite{MacM}.\ Pierce's original paper \cite{RSP} is also an excellent place to read about his construction (indeed, about bundles too), and 
we have borrowed shamelessly from this source, going so far as to use the same notation in some places.

\medskip
We thank Ken Goodearl for helpful conversations.

 \section{Basic properties of vertex rings}\label{Sbasic}
In this and the following few Sections we introduce vertex rings and show that
they consist of mutually local, creative, translation-covariant fields.

\subsection{Definition of vertex ring}

\begin{definition}\label{defvr} A \emph{vertex ring} is an additive abelian group $V$ equipped with biadditive products $(u, v) \mapsto u(n)v\ (u, v\in V)$ defined for \emph{all} $n\in\mathbf{Z}$, together with a distinguished
element $\mathbf{1}\in V$ (the \emph{vacuum element}).\ The following identities are required to hold for all $u, v, w\in V$:
\begin{eqnarray}
&&(a)\ \mbox{there is an integer}\ n_0(u, v)\geq 0\ \mbox{such that}\ u(n)v=0\ \mbox{for all}\ n\geq n_0. \notag\\
&&(b)\ u(-1)\mathbf{1}=u;\ u(n)\mathbf{1}=0\ \mbox{for}\ n\geq 0.\notag\\
&&(c)\ \forall r, s, t\in\mathbf{Z}, \notag\\
&&\ \ \ \ \ \ \ \ \ \ \ \ \ \ \ \ \ \ \ \  \sum_{i\geq 0} {r\choose i}(u(t+i)v)(r+s-i)w = \label{JI}\\
&&\ \ \ \ \ \sum_{i\geq 0}(-1)^i {t\choose i} \left\{u(r+t-i)v(s+i)w-(-1)^tv(s+t-i)u(r+i)w\right\}.\notag
\end{eqnarray}
\end{definition}

We refer to (\ref{JI}) as the \emph{Jacobi identity}.\ Thanks to (a),  the two sums in (c) make sense inasmuch as there are only finitely many nonzero terms.\ Similar comments will apply in numerous contexts in what follows, and we will generally not make this explicit.\ We call $u(n)v$ the \emph{$n^{th}$ product} of $V$.

\medskip
For an additive abelian group $V$, $End(V)$ denotes the set of endomorphisms of
$V$.\ It is an associative ring with respect to composition, and
a $\mathbf{Z}$-Lie algebra with respect to the usual bracket $[a, b] {:=} ab-ba$.

\medskip
 If $V$ is a vertex ring, we often refer to elements of $V$ \emph{states}, and call $u(n)$ the \emph{$n^{th}$ mode} of the state $u$.\ Because $u(n)v$ is additive in $v$, we may, and shall, regard
$u(n)$ as an \emph{endomorphism} in $End(V)$ for all $u\in V$ and $n\in\mathbf{Z}$.\ Then
(\ref{JI})(c) can be regarded as an identity in $End(V)$.

\medskip
The \emph{vertex operator} corresponding to $u\in V$ is the formal generating function defined by
\begin{eqnarray*}
Y(u, z) {:=} \sum_{n\mathbf{Z}} u(n)z^{-n-1}{\in}End(V)[[z, z^{-1}]]
\end{eqnarray*}
for an indeterminate $z$.\ Identities between endomorphisms of $V$ are conveniently written as identities involving vertex operators.\ To
facilitate this we use some `obvious' notations when dealing with vertex operators.\ For example, if
$u, v\in V$ then
\begin{eqnarray*}
Y(u, z)v{:=} \sum_{n\in\mathbb{Z}} u(n)vz^{-n-1}{\in}V[z^{-1}][z],
\end{eqnarray*}
the last containment being a consequence of (\ref{JI})(a).\ Similarly, (\ref{JI})(b) says that
\begin{eqnarray}\label{createform}
Y(u, z)\mathbf{1}{\in}u+ zV[[z]].
\end{eqnarray}
\begin{dfn}\label{dfncreate} We  paraphrase (\ref{createform}) by saying that \emph{$Y(u, z)$ is creative with respect to $\mathbf{1}$ and creates the state $u$}.
(A refinement  is discussed in Theorem \ref{thmHS}.)
\end{dfn}

\medskip
The additivity of $u(n)v$ in $u$ as well as $v$ permits us to promote $Y$
to a morphism of abelian groups
\begin{eqnarray*}
Y{:} V \longrightarrow End(V)[[z, z^{-1}]],\ u\mapsto Y(u, z).
\end{eqnarray*}
$Y$ is then called the \emph{state-field correspondence}.\ (Discussion of the word \emph{field} as it used here
is deferred until Section \ref{SSfields}.)

\begin{rmk} The state-field correspondence is \emph{injective}.
\end{rmk}
\begin{proof} Use the creativity of $Y(u, z)$.
\end{proof}

\subsection{Commutator, associator, and locality formulas}\label{SScal}
 We emphasize some particularly useful special cases of (\ref{JI}).\ The first two, 
 the \emph{commutator formula} and \emph{associator formula}, are obtained simply by setting $t{=}0$ and $r{=}0$ respectively.\ As identities in $End(V)$, they read as follows:
\begin{eqnarray}\label{commform}
[u(r), v(s)]{=} \sum_{i\geq 0} {r\choose i}(u(i)v)(r+s-i),
\end{eqnarray}
\begin{eqnarray}\label{assocform}
&&\ \ \ \  (u(t)v)(s) {=} \sum_{i\geq 0}(-1)^i {t\choose i} \left\{u(t-i)v(s+i)-(-1)^tv(s+t-i)u(i)\right\}.
\end{eqnarray}

\medskip
The third special case arises by choosing $t\geq 0$ large enough so that,
for a given pair of states $u, v$, we have $u(t+i)v=0$ for all $i\geq 0$.\ The existence of $t$
is guaranteed by (\ref{JI})(a), and with such a choice the left-hand-side of (\ref{JI})(c) vanishes.\
What pertains is the following formula, which holds for $t\gg0$:
\begin{eqnarray}\label{locform}
\sum_{i\geq 0}(-1)^i{t\choose i}\left\{u(r+t-i)v(s+i)-(-1)^tv(s+t-i)u(r+i)\right\}=0.
\end{eqnarray}

\medskip\noindent
This is more compelling when formulated in terms of vertex operators{:}
\begin{lem}\label{lemlocform}(Locality formula) If $u, v$ are states in a vertex ring then there is
an integer $t\gg 0$ (depending on $u$ and $v$) such that
\begin{eqnarray}\label{locform1}
(z-w)^t[Y(u, z), Y(v, w)]=0.
\end{eqnarray}
In other words, 
\begin{eqnarray}\label{weakcommform}
(z-w)^tY(u, z)Y(v, w)= (z-w)^tY(v, w)Y(u, z).
\end{eqnarray}
\end{lem}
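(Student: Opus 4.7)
The strategy is to observe that identity (\ref{locform}), which has already been derived in the text as a consequence of the Jacobi identity (\ref{JI})(c) together with axiom (a), is nothing other than the coefficient-wise form of the desired identity (\ref{locform1}). So the task reduces to a careful bookkeeping with formal power series.

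First I would fix states $u,v$ and invoke axiom (a) to choose $t\geq n_0(u,v)$ so that $u(t+i)v=0$ for all $i\geq 0$; then the left-hand side of (\ref{JI})(c) vanishes identically in $r,s,w$, giving (\ref{locform}) for every $r,s\in\mathbf{Z}$. The goal is now to recognize (\ref{locform}) as the coefficient of $z^{-r-1}w^{-s-1}$ in $(z-w)^t[Y(u,z),Y(v,w)]$. Using the finite binomial expansion $(z-w)^t=\sum_{i\geq 0}(-1)^i\binom{t}{i}z^{t-i}w^i$ and multiplying out $Y(u,z)Y(v,w)$ term by term, the coefficient of $z^{-r-1}w^{-s-1}$ in $(z-w)^tY(u,z)Y(v,w)$ is
\[
\sum_{i\geq 0}(-1)^i\binom{t}{i}u(r+t-i)v(s+i),
\]
and an analogous computation shows that the coefficient of $z^{-r-1}w^{-s-1}$ in $(z-w)^tY(v,w)Y(u,z)$ is
\[
\sum_{i\geq 0}(-1)^i\binom{t}{i}v(s+i)u(r+t-i).
\]

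The one subtle point is that the second sum in (\ref{locform}) involves $v(s+t-i)u(r+i)$ rather than $v(s+i)u(r+t-i)$; these are interchanged via the reindexing $i\mapsto t-i$. Combining this with the symmetry $\binom{t}{t-i}=\binom{t}{i}$ and absorbing the sign $(-1)^t(-1)^{t-i}=(-1)^i$ converts the second sum of (\ref{locform}) into $\sum_{i\geq 0}(-1)^i\binom{t}{i}v(s+i)u(r+t-i)$. Hence (\ref{locform}) says precisely that the coefficient of $z^{-r-1}w^{-s-1}$ in $(z-w)^t[Y(u,z),Y(v,w)]$ vanishes. Since $r,s$ are arbitrary, (\ref{locform1}) follows, and (\ref{weakcommform}) is just a rewriting of (\ref{locform1}).

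The only mildly delicate step is the reindexing that aligns the two conventions for writing the commutator; once this is noted the rest is purely formal. I would not expect any serious obstacle, and would simply present the binomial expansion and the index shift explicitly.
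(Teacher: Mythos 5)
Your proposal is correct and takes essentially the same route as the paper's proof: both expand $(z-w)^t$ binomially, extract the coefficient of $z^{-r-1}w^{-s-1}$ from the commutator, and identify the result with the locality formula (\ref{locform}) via the reindexing $i\mapsto t-i$ together with ${t\choose t-i}={t\choose i}$ and the sign absorption $(-1)^t(-1)^{t-i}=(-1)^i$ (all legitimate since $t\geq 0$). No gap remains; your version merely treats the two products $Y(u,z)Y(v,w)$ and $Y(v,w)Y(u,z)$ separately where the paper handles the commutator in one stroke.
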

\begin{proof} We have
\begin{eqnarray*}
(z-w)^t[Y(u, z), Y(v, w)]=\left\{\sum_{i=0}^t (-1)^i{t\choose i}z^iw^{t-i}\right\}\left\{\sum_{m, n} [u(m), v(n)]z^{-m-1}w^{-n-1}\right\},
\end{eqnarray*}
and the coefficient of $z^{-a-1}w^{-b-1}$ in this expression is
\begin{eqnarray*}
&& \sum_{i=0}^t (-1)^i{t\choose i}[u(i+a), v(t-i+b)]\\
&=&  \sum_{i=0}^t (-1)^{t-i}{t\choose i}u(t-i+a) v(i+b)-\sum_{i=0}^t (-1)^i{t\choose i}v(t-i+b)u(i+a).
\end{eqnarray*}
But this vanishes on account of (\ref{locform}) if $t$ is large enough.
\end{proof}

\begin{dfn}\label{dfnmlocal}
We paraphrase property (\ref{locform1}) by saying that $Y(u, z)$ and $Y(v, z)$ are \emph{mutually local
or order $t$}, or simply \emph{mutually local} if we do not wish to emphasize $t$.
\end{dfn}

\begin{rmk} (a)\  The identity (\ref{weakcommform}) is also sometimes called \emph{weak commutativity}.\
There is an analog, called \emph{weak associativity} or \emph{duality}, which can be proved by a similar argument that requires only slightly more effort.\ We will just state the result, which will not be used below{:}\
For fixed $u, v\in V$ and $t\gg 0$ we have
\begin{eqnarray}\label{wassocform}
(z+w)^tY(Y(u, z)v, w)=(z+w)^tY(u, z+w)Y(v, w),
\end{eqnarray}
where we are observing the convention (\ref{binexp}) for the binomial expansion of $(z+w)^n$. \\ 
\\
(b) As we will see, the Jacobi identity is more-or-less \emph{equivalent} to the conjunction
of weak commutativity and weak associativity.\ As in the classical theory of rings, it can be fruitful to consider
axiomatic set-ups  where weak associativity (but \emph{not} weak commutativity)
is assumed, leading to a theory of associative (but not commutative) vertex rings.\
 One might even consider
\emph{nonassociative vertex rings} where neither weak commutativity nor weak associativity pertain (but other relevant axioms hold).\ Such objects arise naturally as sheaves of sections of bundles of vertex rings.\ We discuss this further
in Section \ref{Setale}.\ The present work focuses almost exclusively on vertex rings (aka weak commutative, and associative vertex rings) as we have defined them.
\end{rmk}

We record a result which will be useful in several places. 
\begin{lem}\label{lemcommvector} The following are equivalent for a vertex ring $V$ and states $u, v{\in} V$.
\begin{eqnarray*}
&&(a)\ [u(r), v(s)]{=}0\ \mbox{for all}\ r, s{\in}\mathbf{Z},\\
&&(b)\ u(n)v {=} 0\ \mbox{for all}\ n\geq 0.
\end{eqnarray*}
\end{lem}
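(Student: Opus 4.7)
The plan is to use the commutator formula (\ref{commform}) together with the creativity property of the vacuum from Definition \ref{defvr}(b); the equivalence then falls out immediately in both directions.

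For the implication (b) $\Rightarrow$ (a), I would simply read off (\ref{commform}). Indeed, $[u(r),v(s)] = \sum_{i\geq 0}\binom{r}{i}(u(i)v)(r{+}s{-}i)$, and each summand involves $u(i)v$ with $i \geq 0$, which vanishes under the hypothesis. Hence $[u(r),v(s)] = 0$ for all $r,s\in\mathbf{Z}$.

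For the converse (a) $\Rightarrow$ (b), fix $n \geq 0$ and specialize the commutator to $r = n$, $s = -1$, then evaluate at the vacuum. By Definition \ref{defvr}(b) we have $v(-1)\mathbf{1} = v$ and $u(n)\mathbf{1} = 0$ (as $n \geq 0$), so $[u(n), v(-1)]\mathbf{1} = u(n)v(-1)\mathbf{1} - v(-1)u(n)\mathbf{1} = u(n)v$. Hypothesis (a) makes the left-hand side zero, and since $n \geq 0$ was arbitrary we conclude (b).

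The only step requiring any thought is recognizing that the choice $(r,s) = (n,-1)$ with evaluation at $\mathbf{1}$ is precisely what isolates the single term $u(n)v$ from the commutator, by killing the $v(-1)u(n)\mathbf{1}$ side via the vacuum axiom. Once this observation is made, both directions collapse to one-line arguments, so there is no substantive obstacle.
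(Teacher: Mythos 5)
Your proof is correct and matches the paper's (one-line) proof in spirit: the paper simply says the lemma follows easily from the commutator formula (\ref{commform}), and your direction (b)$\Rightarrow$(a) is exactly that, while your converse via $[u(n),v(-1)]\mathbf{1}=u(n)v$ is the natural way to make the ``easily'' precise (indeed it uses only the vacuum axiom (\ref{JI})(b), avoiding any induction on the modes $u(i)v$). Both directions check out, so there is nothing to fix.
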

\begin{proof} This follows easily from  (\ref{commform}).
\end{proof}

\subsection{Vacuum vector}
We will prove
\begin{thm}\label{thmvacuum} For all $n{\in}\mathbf{Z}$ we have
$\mathbf{1}(n) {=} \delta_{n, -1}Id_V$.\ That is,
\begin{eqnarray*} 
 Y(\mathbf{1}, z){=}Id_V.
 \end{eqnarray*}
\end{thm}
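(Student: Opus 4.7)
The plan is to prove the theorem in three stages: first establish that every mode $\mathbf{1}(s)$ commutes with every mode $u(r)$, then use that commutation to reduce the computation of $\mathbf{1}(s)v$ to that of $\mathbf{1}(s)\mathbf{1}$, and finally handle the non-obvious negative-index case $s \leq -2$ by a short induction extracted from the associator formula.

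For centrality, I substitute $v = \mathbf{1}$ and $t = 0$ into the Jacobi identity (\ref{JI}). The left-hand side becomes $\sum_i \binom{r}{i}(u(i)\mathbf{1})(r+s-i)w$, which vanishes identically by axiom (b) (since $u(i)\mathbf{1} = 0$ for every $i \geq 0$), while the right-hand side collapses via $\binom{0}{i} = \delta_{i,0}$ to $[u(r), \mathbf{1}(s)]w$. Hence $[u(r), \mathbf{1}(s)] = 0$ in $End(V)$ for all $u, r, s$. Writing an arbitrary state as $v = v(-1)\mathbf{1}$ and sliding $\mathbf{1}(s)$ past $v(-1)$ yields $\mathbf{1}(s)v = v(-1)(\mathbf{1}(s)\mathbf{1})$; specializing to $s = -1$ gives $\mathbf{1}(-1)v = v(-1)\mathbf{1} = v$, so $\mathbf{1}(-1) = Id_V$, and for $s \geq 0$ axiom (b) forces $\mathbf{1}(s)\mathbf{1} = 0$, whence $\mathbf{1}(s) = 0$.

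What remains, and is the crux of the argument, is to prove $\mathbf{1}(s)\mathbf{1} = 0$ for $s \leq -2$. For this I apply the associator formula (\ref{assocform}) with $v = \mathbf{1}$ and $t = -1$; using $u(-1)\mathbf{1} = u$ and $(-1)^i\binom{-1}{i} = 1$, it becomes the $End(V)$-identity
\begin{equation*}
u(s) = \sum_{i \geq 0}\bigl\{u(-1-i)\mathbf{1}(s+i) + \mathbf{1}(s-1-i)u(i)\bigr\}.
\end{equation*}
Evaluating both sides on $\mathbf{1}$ kills the $u(i)\mathbf{1}$ contributions by axiom (b), leaving $u(s)\mathbf{1} = \sum_i u(-1-i)(\mathbf{1}(s+i)\mathbf{1})$. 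Setting $s = -k$ with $k \geq 2$ and writing $w_j := \mathbf{1}(-j-1)\mathbf{1}$, the $i = k-1$ summand is $u(-k)\mathbf{1}$ (since $\mathbf{1}(-1)\mathbf{1} = \mathbf{1}$) and cancels the left-hand side, while terms $i \geq k$ contribute zero by (b). One is left with the recursion
\begin{equation*}
\sum_{j=1}^{k-1} u(j-k)\, w_j = 0 \qquad (u \in V,\ k \geq 2).
\end{equation*}
Specializing $u = \mathbf{1}$ and using the already-proved $\mathbf{1}(-1) = Id_V$, the $j = k-1$ term becomes just $w_{k-1}$, and straightforward induction on $k$ gives $w_j = 0$ for all $j \geq 1$, completing the proof.

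The main obstacle is precisely this last step: axiom (b) is silent on $\mathbf{1}(-k)\mathbf{1}$ for $k \geq 2$, so the needed recursion must be fabricated from Jacobi (via its associator incarnation), and it only becomes useful once the second stage has supplied $\mathbf{1}(-1) = Id_V$.
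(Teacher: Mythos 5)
Your proof is correct, and while it shares the paper's overall skeleton (reduce everything to the products $\mathbf{1}(s)\mathbf{1}$, then kill the modes $\mathbf{1}(-k)\mathbf{1}$ for $k\geq 2$ by induction), the inductive engine is genuinely different. Your first two stages are the same content as the paper's identity (\ref{ideq}): the paper gets $\mathbf{1}(s)u = u(-1)\mathbf{1}(s)\mathbf{1}$ in one substitution ($v=w=\mathbf{1}$, $r=-1$, $t=0$), whereas you factor it through centrality ($t=0$, $v=\mathbf{1}$, i.e., the commutator formula (\ref{commform}) with second state $\mathbf{1}$) plus creativity $v=v(-1)\mathbf{1}$ -- equivalent, and arguably more transparent. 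The real divergence is in the negative-mode induction. The paper takes $u=v=w=\mathbf{1}$ and $t=-1$ in the full Jacobi identity, keeps the parameter $r$ free, chooses $0\leq r<-s-1$ with $r+s=n$, and uses commutativity of the vacuum modes to identify exactly two surviving terms, arriving at $\mathbf{1}(n)\mathbf{1}=2\,\mathbf{1}(n)\mathbf{1}$; your route instead specializes the associator (\ref{assocform}) (the $r=0$ case) at $t=-1$, $v=\mathbf{1}$ with $u$ \emph{general}, evaluates on $\mathbf{1}$, and extracts the triangular recursion $\sum_{j=1}^{k-1}u(j-k)w_j=0$ with $w_j=\mathbf{1}(-j-1)\mathbf{1}$. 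Setting $u=\mathbf{1}$, each step isolates $w_{k-1}$ with coefficient $\mathbf{1}(-1)=Id_V$, while the lower terms die because the operands $w_j$ already vanish by induction (so there is no circularity, and you never need to know the operators $\mathbf{1}(j-k)$ themselves). What each approach buys: yours avoids the $X=2X$ cancellation trick and the auxiliary fact that the $\mathbf{1}$-modes commute among themselves, and your intermediate identity $u(s)\mathbf{1}=\sum_{i\geq 0}u(-1-i)\mathbf{1}(s+i)\mathbf{1}$ is a mildly stronger statement valid for all states $u$; the paper's argument stays entirely inside the modes of the vacuum and exploits the free parameter $r$ in (\ref{calc1}) instead of a free state. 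Both are legitimate $t=-1$ exploitations of the Jacobi identity, but the decompositions are distinct.
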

\begin{proof} 
First take $v=w=\mathbf{1}, r=-1, t=0$ in (\ref{JI}) and use (b) to obtain 
\begin{eqnarray}\label{ideq}
u(-1)\mathbf{1}(s)\mathbf{1} = \mathbf{1}(s)u(-1)\mathbf{1} = \mathbf{1}(s)u
\end{eqnarray}
for all $u\in V$.\ Thus in order to prove the Theorem, it suffices to show that 
$\mathbf{1}(s)\mathbf{1}=\delta_{s, -1}\mathbf{1}$.\ By (\ref{JI})(b) we have $\mathbf{1}(s)u=0$ for $s \geq 0$ and any $u$,
so certainly $\mathbf{1}(s)=0$ for $s \geq 0$.\ Similarly, 
$\mathbf{1}(-1)\mathbf{1}=\mathbf{1}$.

\medskip
We prove that $\mathbf{1}(n)\mathbf{1}=0$ for $n\leq -2$ by induction on $-n$.\
First take $u=v=w=\mathbf{1}$ and $t=-1$ in (\ref{JI}) together with  (b) to see that
\begin{eqnarray}\label{calc1}
\mathbf{1}(r+s)\mathbf{1} = \sum_{i\geq 0} \left\{\mathbf{1}(r-1-i)\mathbf{1}(s+i)\mathbf{1}+\mathbf{1}(s-1-i)\mathbf{1}(r+i)\mathbf{1}\right\}
\end{eqnarray}
for all $r, s\in\mathbf{Z}$.\ If we first take $r=s=-1$ in (\ref{calc1}) we obtain
\begin{eqnarray*}
\mathbf{1}(-2)\mathbf{1}=\sum_{i\geq 0} \left\{\mathbf{1}(-2-i)\mathbf{1}(-1+i)\mathbf{1}+\mathbf{1}(-2-i)\mathbf{1}(-1+i)\mathbf{1}\right\} = 2\mathbf{1}(-2)\mathbf{1},
\end{eqnarray*}
whence $\mathbf{1}(-2)\mathbf{1}=0$.\ This begins the induction.\ Let $r+s=n\leq -2$ where we choose $0\leq r < -s-1$.\ (\ref{calc1}) then reads
\begin{eqnarray*}
\mathbf{1}(n)\mathbf{1} &=& \sum_{i\geq 0} \mathbf{1}(r-1-i)\mathbf{1}(s+i)\mathbf{1}.
\end{eqnarray*}

\medskip
Note that all of the modes $\mathbf{1}(r), \mathbf{1}(s)$ \emph{commute}
thanks to (\ref{commform}).\ By induction, it follows that in the previous display,
 the only possible nonzero terms on the right-hand-side come from
$i=r$ and $i=-s-1$, and in both cases these are equal to $\mathbf{1}(n)\mathbf{1}$.\
Thus we obtain $\mathbf{1}(n)\mathbf{1}=2\mathbf{1}(n)\mathbf{1}$ and therefore $\mathbf{1}(n)\mathbf{1}=0$.\ This
completes the proof of the Theorem.
\end{proof}

\section{Derivations}\label{SHS}
Derivations play a ubiquitous r\^{o}le in the theory of vertex rings. 

\subsection{Hasse-Schmidt derivations}
By \emph{nonassociative ring} we will always mean a not-necessarily associative ring $V$ that may not  have an identity, i.e.,
an additive abelian group equipped with a biadditive product $uv\ (u, v\in V)$.\
The main examples we use are commutative rings, which will \emph{always} mean commutative, associative rings with an identity; and vertex rings $V$ equipped with their $n^{th}$ product.

\begin{dfn}
(a)\ Let $V$ be a nonassociative ring.\ A \emph{derivation} of $V$ is an endomorphism $f{\in}End(V)$ such that
$f(uv){=}uf(v){+}f(u)v\ \ (u, v{\in}V).$\\
(b)\ Let $V$ be a vertex ring.\ A \emph{derivation} of $V$ is an endomorphisms $f{\in}End(V)$ such that
$f$ is a derivation of \emph{each} of the nonassociative rings defined by $V$ together with any of its $n^{th}$ products.\  In
other words, we have for all $n{\in}\mathbf{Z}$ and $u, v{\in}V$, 
\begin{eqnarray*}
f(u(n)v){=}u(n)f(v){+}f(u)(n)v
\end{eqnarray*}

\end{dfn}

In each case we let $Der(V)$ denote the set of all derivations of $V$.\ By a standard argument,
$Der(V)\subseteq End(V)$ is a Lie subalgebra.

\begin{dfn}\label{dfnHS}\ Let $V$ be an additive abelian group, and suppose that $\underline{D}{:=} (D_0, D_1, \hdots)$ is a sequence of endomorphisms $D_i\in End(V)$ with $D_0=Id_V$.\\
(a)\  If $V$ is a nonassociative ring, we call\footnote{To refer to $\underline{D}$ as a derivation is a convenient misnomer as only
$D_1$ is a true derivation.\ $\underline{D}$ is called a \emph{differentiation} in \cite{Mats}.}  $\underline{D}$ a \emph{Hasse-Schmidt} (HS) derivation of $V$ if, for all $u, v\in V$ and all $m\geq 0$, we have
\begin{eqnarray*}
D_m(uv)= \sum_{i+j=m} D_i(u)D_j(v).
\end{eqnarray*}
(b)\ If $V$ is a vertex ring, we call $\underline{D}$ a HS derivation of $V$ if, for every $n{\in}\ZZ$,
$\underline{D}$ is a HS derivation of the nonassociative ring defined by $V$ together with its $n^{th}$ product.\\
(c)\ $\underline{D}$ is called \emph{iterative} if, for all $i, j{\geq}0$, we have
\begin{eqnarray}\label{dfniter}
D_i\circ D_j {=} {i+j\choose i}D_{i+j}.
\end{eqnarray}
\end{dfn}

\begin{ex}\label{trivHS} (a)\ The \emph{trivial} HS derivation of $V$ is $\underline{D}{=}(Id_V, 0, 0, \hdots)$, in which
all higher $D_m\ (m\geq 1)$ are zero.\\
(b)\  If $\underline{D}$ is iterative then $D_1^m{=}m!D_m\ (m{\geq}0)$.
\end{ex}

 Using the binomial theorem, we obtain
\begin{lem}\label{rmkDinvert} Suppose that $\underline{D}=(Id_V, D_1, \hdots)$ is an iterative derivation.\ Then
\begin{eqnarray*}
\left\{\sum_{m=0}^{\infty}D_mz^m\right\}\left\{\sum_{m=0}^{\infty}D_m(-z)^m\right\}=Id_V.
\end{eqnarray*}
$\hfill \Box$
\end{lem}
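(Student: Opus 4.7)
The plan is a direct calculation. I multiply the two formal series, collect the coefficient of $z^n$, apply iterativity to each composition $D_i\circ D_j$ appearing in the coefficient, and then recognize what remains as a binomial expansion of $(1-1)^n$.

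More precisely, first I would write
\begin{eqnarray*}
\left\{\sum_{m\geq 0}D_m z^m\right\}\left\{\sum_{m\geq 0}D_m(-z)^m\right\}
=\sum_{n\geq 0}z^n\sum_{\substack{i+j=n\\ i,j\geq 0}}(-1)^j\,D_i\circ D_j.
\end{eqnarray*}
By the iterativity hypothesis (\ref{dfniter}), for each pair $(i,j)$ with $i+j=n$ one has $D_i\circ D_j={n\choose i}D_n$, so the inner sum becomes
\begin{eqnarray*}
\sum_{i+j=n}(-1)^j {n\choose i} D_n = D_n\sum_{i=0}^{n}(-1)^{n-i}{n\choose i}= D_n(1-1)^n.
\end{eqnarray*}

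Hence the coefficient of $z^n$ vanishes for every $n\geq 1$, while the constant term equals $D_0=Id_V$, which is the claim. There is no substantive obstacle; the only point to watch is the bookkeeping of the sign $(-1)^j$ together with the correct reading of iterativity, but once $D_i\circ D_j$ is replaced by $\binom{n}{i}D_n$ the identity reduces to the standard fact that $(1-1)^n=0$ for $n\geq 1$.
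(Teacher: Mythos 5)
Your proof is correct and is exactly the argument the paper intends: the lemma is stated with only the remark ``Using the binomial theorem, we obtain,'' and your calculation---collecting the coefficient of $z^n$, replacing $D_i\circ D_j$ by $\binom{n}{i}D_n$ via iterativity, and summing to $D_n(1-1)^n$---is the standard way to fill that in. The sign bookkeeping and the handling of the $n=0$ term are both handled properly, so nothing further is needed.
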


For HS derivations in the theory of commutative rings, see \cite{Mats}.\ Iterative HS derivations arise naturally in vertex rings, as we now show.

\begin{thm}\label{thmHS} Let $V$ be a vertex ring, and for $m{\geq}0$ define  
$D_m{\in}End(V)$ by the formula $D_m(u){:=}u(-m-1)\mathbf{1}$, i.e.,
\begin{eqnarray*}
Y(u, z)\mathbf{1}=\sum_{m\geq 0} D_m(u)z^m.
\end{eqnarray*}
Then $\underline D{:=}(D_0, D_1, \hdots)$ is an iterative, HS derivation of $V$.
\end{thm}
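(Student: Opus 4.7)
That $D_0 = \mathrm{Id}_V$ is immediate from axiom (b): $D_0(u) = u(-1)\mathbf{1} = u$. The other two assertions, iterativeness and the HS derivation property, I will derive from the associator formula (\ref{assocform}) by plugging in the vacuum and exploiting both creativity (axiom (b)) and the vacuum Theorem \ref{thmvacuum} (i.e.\ $\mathbf{1}(m) = \delta_{m,-1}\mathrm{Id}_V$).

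For iterativeness, the target quantity is $D_i D_j(u) = (u(-j-1)\mathbf{1})(-i-1)\mathbf{1}$. I apply (\ref{assocform}) with $t = -j-1$, $s = -i-1$, inserting $\mathbf{1}$ in place of both the middle vector $v$ and the right-hand vector. The second sum vanishes because it involves $u(k)\mathbf{1}$ with $k \geq 0$, while in the first sum the factor $\mathbf{1}(-i-1+k)\mathbf{1}$ equals $\delta_{k,i}\mathbf{1}$ by Theorem \ref{thmvacuum}, killing all but the $k=i$ term. What remains is $(-1)^i\binom{-j-1}{i}u(-i-j-1)\mathbf{1}$, which by the elementary identity $(-1)^i\binom{-j-1}{i} = \binom{i+j}{i}$ equals $\binom{i+j}{i}D_{i+j}(u)$.

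For the HS derivation property $D_m(u(n)v) = \sum_{i+j=m} D_i(u)(n)D_j(v)$, I apply (\ref{assocform}) twice. First, with $t = n$, $s = -m-1$ and right-hand vector $\mathbf{1}$: the second sum dies as above, while the first sum simplifies by axiom (b) to $D_m(u(n)v) = \sum_{k=0}^{m}(-1)^k\binom{n}{k}u(n-k)D_{m-k}(v)$. Second, I use (\ref{assocform}) as an operator identity with $t = -i-1$, $s = n$ and middle vector $\mathbf{1}$ to evaluate $D_i(u)(n) = (u(-i-1)\mathbf{1})(n)$; here Theorem \ref{thmvacuum} reduces each of the two sums to a single delta-supported term, and at most one of these supports is a nonnegative value of the summation index at a time, depending on the sign of $n$. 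After simplifying the relevant $\binom{-i-1}{\cdot}$ into an ordinary binomial, I expect the uniform formula $D_i(u)(n) = (-1)^i\binom{n}{i}u(n-i)$ (with $\binom{n}{i}$ the generalized binomial defined via the falling factorial for all integer $n$). Substituting this into $\sum_{i+j=m}D_i(u)(n)D_j(v)$ reproduces the sum already obtained for $D_m(u(n)v)$, finishing the proof.

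The main obstacle is the case split on the sign of $n$ in the second application of the associator: one must verify that for $n < 0$ the identity $(-1)^i\binom{n}{i} = \binom{i-n-1}{i}$ converts the surviving term into the claimed uniform form, and for $n \geq 0$ the vanishing $\binom{n}{i} = 0$ when $i > n$ accounts for the missing contributions. This is pure bookkeeping with binomial coefficients but should be written out carefully to make the two cases collapse into a single closed formula.
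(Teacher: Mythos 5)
Your proposal is correct and takes essentially the same route as the paper: your vacuum-insertion into (\ref{assocform}) with $t=-j-1$, $s=-i-1$ is exactly the paper's iterativity computation, and your asserted operator identity $D_i(u)(n)=(-1)^i{n\choose i}u(n-i)$ is precisely the paper's Lemma \ref{lemmaDiu}, proved there by the same case split on the sign of $n$ with the same binomial bookkeeping. The only difference is the immaterial order in which the two applications of the associator formula are combined.
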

\begin{proof} We use Theorem \ref{thmvacuum} repeatedly in what follows.\
The identification $D_0{=}Id_V$ follows from (\ref{JI})(b).\
For the iterative property, we have
\begin{eqnarray*}
D_{\ell}\circ D_m(u) 
&=&(u(-m-1)\mathbf{1})(-\ell-1)\mathbf{1} \\
&=&\sum_{i\geq 0}(-1)^i {-m-1\choose i} u(-m-1-i)\mathbf{1}(-\ell-1+i)\mathbf{1}\\
&=&(-1)^{\ell} {-m-1\choose \ell} u(-m-1-\ell)\mathbf{1} \\
&=& {\ell+m\choose \ell}D_{\ell+m}(u).
\end{eqnarray*}

As for the Hasse-Schmidt property, we first record
\begin{lem}\label{lemmaDiu} We have
\begin{eqnarray*}
D_i(u)(n) {=} (-1)^i{n\choose i}u(n-i).
\end{eqnarray*}
\end{lem}
\begin{proof} We have
\begin{eqnarray*}
(D_iu)(n) &=& (u(-i-1)\mathbf{1})(n) \\
&=&\sum_{j\geq 0}(-1)^j {-i-1\choose j} \left\{u(-i-1-j)\mathbf{1}(n+j)+(-1)^i\mathbf{1}(n-i-1-j)u(j)\right\}\\
&=&\sum_{j\geq 0}{i+j\choose j}\left\{u(-i-1-j)\mathbf{1}(n+j)+(-1)^i\mathbf{1}(n-i-1-j)u(j)\right\}\\
&=& (-1)^i{n\choose i}u(n-i)
\end{eqnarray*}
(check the cases $n\geq 0$ and $n<0$ separately).\ The Lemma is proved.
\end{proof}

To complete the proof of Theorem \ref{thmHS}, use Lemma \ref{lemmaDiu} and (\ref{assocform}) to obtain
\begin{eqnarray*}
D_mu(n)v &=& (u(n)v)(-m-1)\mathbf{1}\\
&=&\sum_{i\geq 0}(-1)^i {n\choose i} u(n-i)v(-m-1+i)\mathbf{1}\\
&=&\sum_{i\geq 0} (D_iu)(n)D_{m-i}v.
\end{eqnarray*}
This is the required Hasse-Schmidt property.
\end{proof}

\begin{dfn}\label{VringHSdef} If $V$ is a vertex ring, we call $\underline{D}$ defined as in Theorem \ref{thmHS}  
the \emph{canonical HS derivation of $V$}.\ 
\end{dfn}

A first example of the utility of the canonical HS derivation is the \emph{skew-symmetry} formula.

\begin{lem}\label{lemmaskewsymm} Let $V$ be a vertex ring with canonical HS derivation $\underline{D}$.\ Then for all $u, v{\in}V$ and $n{\in}\mathbf{Z}$ we have
\begin{eqnarray*}
v(n)u{=}(-1)^{n+1}\sum_{i\geq 0} (-1)^iD_i(u(n+i)v).
\end{eqnarray*}
In terms of vertex operators, this reads
\begin{eqnarray}\label{skewsymmform}
Y(v, z)u{=}\sum_{m\geq 0}z^mD_mY(u, -z)v.
\end{eqnarray}
\end{lem}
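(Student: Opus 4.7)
The plan is to derive the skew-symmetry formula directly from the Jacobi identity (\ref{JI}) by symmetrizing it under the substitution $(u,r)\leftrightarrow(v,s)$ and then specializing. The key observation is that the right-hand side of (\ref{JI}) is essentially antisymmetric under this swap, up to the factor $(-1)^t$, so two copies can be combined to make the right-hand side vanish and leave a clean identity on the left.

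First I would write down two copies of (\ref{JI}): the original, and the copy obtained by simultaneously swapping $u\leftrightarrow v$ and $r\leftrightarrow s$. Adding $(-1)^t$ times the second copy to the first, the right-hand sides cancel term-by-term inside each brace, yielding
\begin{eqnarray*}
\sum_{i\geq 0}{r\choose i}(u(t+i)v)(r+s-i)w + (-1)^t\sum_{i\geq 0}{s\choose i}(v(t+i)u)(r+s-i)w = 0.
\end{eqnarray*}
Setting $s=0$ collapses the second sum to its single $i=0$ term and gives
\begin{eqnarray*}
(v(t)u)(r)w = (-1)^{t+1}\sum_{i\geq 0}{r\choose i}(u(t+i)v)(r-i)w.
\end{eqnarray*}

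Next I would invoke Lemma \ref{lemmaDiu}, which may be rewritten as ${n\choose i}x(n-i) = (-1)^i D_i(x)(n)$. Applying this with $x = u(t+i)v$ and $n = r$ converts the right-hand side above into $(-1)^{t+1}\sum_{i\geq 0}(-1)^i D_i(u(t+i)v)(r)w$. Specializing $r=-1$, $w=\mathbf{1}$ and using the creativity property $x(-1)\mathbf{1}=x$ from Definition \ref{dfncreate} on both sides then produces
\begin{eqnarray*}
v(t)u = (-1)^{t+1}\sum_{i\geq 0}(-1)^i D_i(u(t+i)v),
\end{eqnarray*}
which is the claimed state identity after renaming $t$ as $n$. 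The vertex operator version follows by a routine re-indexing: setting $j = n+i$ and using $(-z)^{-j-1} = (-1)^{j+1}z^{-j-1}$, one rearranges $\sum_n v(n)u\,z^{-n-1}$ into $\sum_{m\geq 0}z^m D_m Y(u,-z)v$.

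No serious obstacle is expected here. The symmetrization trick on the Jacobi identity does the real work, and the remaining ingredients---Lemma \ref{lemmaDiu}, creativity, and the truncation axiom ensuring that the sums over $i$ are finite---are already in place. The one point requiring care is the sign bookkeeping in the cancellation on the right-hand side, where the swap $u\leftrightarrow v$ permutes the two summands inside each brace while the outer factor $(-1)^t$ restores the overall sign so that everything cancels cleanly.
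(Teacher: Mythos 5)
Your proof is correct, but it reaches the formula by a different mechanism than the paper. The paper's proof is a single direct specialization of the Jacobi identity: take $w=\mathbf{1}$, $r=-1$, $s=0$ in (\ref{JI})(c); the right-hand side then collapses to $(-1)^{t+1}v(t)u$ by vacuum annihilation ($v(s+i)\mathbf{1}=0$ for $i\geq 0$, and $u(-1+i)\mathbf{1}=0$ except at $i=0$), while the left-hand side is $\sum_{i\geq 0}(-1)^iD_i(u(t+i)v)$ immediately, using ${-1\choose i}=(-1)^i$ and the definition $D_i(x)=x(-i-1)\mathbf{1}$. You instead kill the right-hand side by pure antisymmetry, adding $(-1)^t$ times the $(u,r)\leftrightarrow(v,s)$-swapped copy of (\ref{JI}); your sign bookkeeping is sound (the swap exchanges the two terms inside each brace and $(-1)^{2t}=1$, so the combination vanishes term-by-term), and it yields the stronger intermediate mode identity $\sum_{i\geq 0}{r\choose i}(u(t+i)v)(r+s-i)w+(-1)^t\sum_{i\geq 0}{s\choose i}(v(t+i)u)(r+s-i)w=0$, valid for \emph{all} $r,s,t$ and all $w$, before any vacuum input is used. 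That identity is an operator-level form of skew-symmetry with some independent interest, and it is what your approach buys; what the paper's approach buys is brevity. One mild redundancy in your route: the detour through Lemma \ref{lemmaDiu} at general $r$ is unnecessary, since after setting $s=0$ you could specialize $r=-1$, $w=\mathbf{1}$ at once and recognize $(u(t+i)v)(-1-i)\mathbf{1}=D_i(u(t+i)v)$ straight from the definition, exactly as the paper does. Your derivation of the vertex-operator form (\ref{skewsymmform}) via the substitution $j=n+i$ and $(-z)^{-j-1}=(-1)^{j+1}z^{-j-1}$ is the same routine re-indexing the paper leaves implicit, and your appeal to the truncation axiom for finiteness of the $i$-sums is correctly placed.
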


\begin{proof} Take $w{=}\mathbf{1}$ and $r{=}-1, s{=}0$ in (\ref{JI})(c) to obtain
\begin{eqnarray*}
\sum_{i\geq 0} (-1)^i(u(t+i)v)(-1-i)\mathbf{1} 
&=& \sum_{i\geq 0}(-1)^i {t\choose i} (-1)^{t+1}v(t-i)u(-1+i)\mathbf{1}\\
&=&(-1)^{t+1}v(t)u.
\end{eqnarray*}
Since the left-hand-side is equal to $\sum_{i\geq 0} (-1)^iD_i(u(t+i)v)$,  the Lemma follows.
\end{proof}

A standard way to consider the iterative property of the canonical HS derivation 
involves the  \emph{ring of divided powers} $\mathbf{Z}\langle x\rangle$.\ This is the  commutative ring generated by symbols $x^{[n]}\ (n\geq 0)$ subject to the identity
\begin{eqnarray*}
x^{[m]}x^{[n]} {=} {m+n \choose n}x^{[m+n]}.
\end{eqnarray*}
$\ZZ\langle x\rangle$ can be realized as the subring of 
$\mathbf{Q}[x]$ generated by $\frac{x^n}{n!}\ (n\geq 0)$.\
The iterative property of $\underline{D}$ immediately implies
\begin{lem}\label{lemdpmodule} Suppose that $V$ is a vertex ring
with canonical HS derivation $\underline{D}$.\ Then the association $D_n\mapsto x^{[n]}\ (n{\geq}0)$ makes $V$ into a \emph{left $\mathbf{Z}\langle x\rangle$}-module. $\hfill \Box$
\end{lem}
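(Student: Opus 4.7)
The plan is to directly verify that the assignment $D_n\mapsto x^{[n]}$ extends to a unital ring homomorphism $\phi{:}\mathbf{Z}\langle x\rangle{\rightarrow}\mathrm{End}(V)$. Giving such a ring homomorphism is the same as endowing $V$ with a left $\mathbf{Z}\langle x\rangle$-module structure (with $x^{[n]}{\cdot}v{:=}D_n(v)$), and each $D_n\in\mathrm{End}(V)$ is already additive in $v$, so only the compatibility of $\phi$ with multiplication and the identity need attention.

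First I would use the presentation of $\mathbf{Z}\langle x\rangle$ to note that it is a free abelian group on the symbols $\{x^{[n]}\}_{n\geq 0}$, so the $\mathbf{Z}$-linear map $\phi$ specified on this basis by $\phi(x^{[n]}){=}D_n$ is well-defined on the underlying abelian group. Then $\phi(x^{[0]}){=}D_0{=}\mathrm{Id}_V$ by the defining property of the canonical HS derivation (Theorem \ref{thmHS}), so $\phi$ sends the unit of $\mathbf{Z}\langle x\rangle$ to the unit of $\mathrm{End}(V)$.

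Next I would check multiplicativity on the basis, which by $\mathbf{Z}$-bilinearity suffices. For $m, n{\geq}0$,
\begin{eqnarray*}
\phi(x^{[m]}x^{[n]})={m+n\choose n}\phi(x^{[m+n]})={m+n\choose n}D_{m+n},
\end{eqnarray*}
while by the iterative property (\ref{dfniter}),
\begin{eqnarray*}
\phi(x^{[m]})\circ\phi(x^{[n]})=D_m\circ D_n={m+n\choose m}D_{m+n}={m+n\choose n}D_{m+n}.
\end{eqnarray*}
These agree, so $\phi$ is a ring homomorphism and $V$ becomes a left $\mathbf{Z}\langle x\rangle$-module.

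Honestly there is no substantive obstacle here: the lemma is a straight translation of the iterative identity $D_iD_j{=}\binom{i+j}{i}D_{i+j}$ (already proved within Theorem \ref{thmHS}) into the language of modules over the divided power ring. The only matter requiring care is the bookkeeping distinction between defining $\phi$ on the $\mathbf{Z}$-basis $\{x^{[n]}\}$ versus verifying multiplicativity on products, but both reduce to a one-line computation using \eqref{dfniter}.
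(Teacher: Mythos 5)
Your proof is correct and is essentially the paper's argument: the paper dismisses the lemma as an immediate consequence of the iterative identity (\ref{dfniter}), and your write-up simply makes explicit the routine bookkeeping (freeness of $\mathbf{Z}\langle x\rangle$ on the symbols $x^{[n]}$, unitality via $D_0{=}\mathrm{Id}_V$, multiplicativity on the basis). Nothing is missing and nothing is different in substance.
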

 
\subsection{Translation-covariance}\label{SStc}
Let  $V$ be a vertex ring with canonical HS derivation $\underline{D}$.\ For
a state $u{\in}V$ we set
\begin{eqnarray}\label{tc1}
\delta_z^{(i)}Y(u, z){:=} \frac{1}{i!}\partial_z^iY(u, z),
\end{eqnarray}
where $\partial_z$ is formal differentiation with respect to $z$.\
 Despite the appearance of $i!$ in the denominator, we have $\delta_z^{(i)}Y(u,z){\in}End(V)[[z, z^{-1}]]$, because 
\begin{eqnarray}\label{dact1}
\delta_z^{(i)}\left(\sum_n u(n)z^{-n-1}\right) &=& \sum_n {-n-1\choose i}u(n)z^{-n-i-1} \notag\\
&=& (-1)^i\sum_n{n\choose i}u(n-i)z^{-n-1}.
\end{eqnarray}
In fact, $\delta_z^{(i)}Y(u, z)$ is the vertex operator for a state in $V$.\ This is part of the next result.

\begin{thm}\label{lend} The following hold for all $u\in V$ and $m\geq 1$.
\begin{eqnarray*}
&&(a)\ D_m\mathbf{1}{=}0,\\
&&(b)\ Y(D_m(u), z) {=} \delta_z^{(m)}Y(u, z),\\
&&(c)\ [D_m, Y(u, z)] {=} \sum_{i=1}^m \delta_z^{(i)}Y(u, z)D_{m-i}.
\end{eqnarray*}
\end{thm}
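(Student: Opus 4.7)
The plan is to prove the three parts in order, leaning on the HS derivation property established in Theorem \ref{thmHS} and the explicit mode formula of Lemma \ref{lemmaDiu}.

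For part (a), I would simply evaluate the definition: $D_m\mathbf{1} = \mathbf{1}(-m-1)\mathbf{1}$. By Theorem \ref{thmvacuum}, $\mathbf{1}(n) = \delta_{n,-1}\,Id_V$, and since $m\geq 1$ forces $-m-1 \leq -2 \neq -1$, we get $D_m\mathbf{1} = 0$. This is immediate and sets up the pattern that results about $\underline{D}$ propagate from facts about the vacuum.

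For part (b), the natural move is to expand $Y(D_m(u),z) = \sum_n D_m(u)(n)\,z^{-n-1}$ and apply Lemma \ref{lemmaDiu} to each coefficient, giving $D_m(u)(n) = (-1)^m\binom{n}{m}u(n-m)$. This is identical to the coefficient of $z^{-n-1}$ in $\delta_z^{(m)}Y(u,z)$ as computed in (\ref{dact1}), so the two series agree termwise. No obstacle here beyond bookkeeping.

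The substantive step is part (c). My approach is first to derive, for each fixed $n$, the commutator identity
\begin{eqnarray*}
[D_m, u(n)] = \sum_{i=1}^{m} D_i(u)(n)\, D_{m-i}
\end{eqnarray*}
by applying the Hasse–Schmidt property of $\underline{D}$ with respect to the $n^{\rm th}$ product (Theorem \ref{thmHS}): the identity $D_m(u(n)v) = \sum_{i+j=m} D_i(u)(n)D_j(v)$ rearranges as $D_m u(n) v - u(n)D_m v = \sum_{i=1}^m D_i(u)(n)D_{m-i}(v)$, where the $i=0$ term is precisely $u(n)D_m v$. Then I would multiply by $z^{-n-1}$ and sum over $n$, pulling $D_{m-i}$ out on the right, to obtain
\begin{eqnarray*}
[D_m, Y(u,z)] = \sum_{i=1}^{m} Y(D_i(u), z)\, D_{m-i},
\end{eqnarray*}
and finally invoke part (b) to rewrite each $Y(D_i(u),z)$ as $\delta_z^{(i)}Y(u,z)$.

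The only subtlety I anticipate is the reindexing in the commutator step, in particular recognizing that the HS identity for the $n^{\rm th}$ product gives exactly the right expression for $[D_m, u(n)]$ rather than something that needs further simplification. Once the mode-level commutator is in hand, repackaging into vertex operators and invoking (b) is purely formal. Thus the proof is short and entirely mechanical given Theorems \ref{thmvacuum} and \ref{thmHS} and Lemma \ref{lemmaDiu}; no deeper input from the Jacobi identity is needed at this stage.
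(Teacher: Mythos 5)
Your proposal is correct and follows essentially the same route as the paper: (a) and (b) by direct evaluation via Theorem \ref{thmvacuum}, Lemma \ref{lemmaDiu} and (\ref{dact1}), and (c) by extracting the mode-level commutator $[D_m,u(n)]=\sum_{i=1}^m D_i(u)(n)D_{m-i}$ from the Hasse--Schmidt identity of Theorem \ref{thmHS} (the $i=0$ term cancelling $u(n)D_m$), repackaging into $[D_m,Y(u,z)]=\sum_{i=1}^m Y(D_i(u),z)D_{m-i}$, and invoking (b). The paper performs the same computation applied to an arbitrary state $w$ with the sum over $n$ carried along, but this is only a cosmetic difference in bookkeeping.
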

\begin{proof} (a) amounts to $\mathbf{1}(-i-1)\mathbf{1}{=}0$ for $i{\geq}1$, which follows from
Theorem \ref{thmvacuum}.\ Part (b) follows from Lemma  \ref{lemmaDiu} and (\ref{dact1}).

\medskip
As for (c), use Theorem \ref{thmHS} to see that
\begin{eqnarray*}
[D_m, Y(u, z)]w &=& \sum_n \left(D_m(u(n)w)-u(n)D_m(w)\right)z^{-n-1}\\
&=& \sum_n\sum_{i=0}^m \left(D_i(u)(n)D_{m-i}(w)-u(n)D_m(w)\right)z^{-n-1}\\
&=& \sum_n\sum_{i=1}^m D_i(u)(n)D_{m-i}(w)z^{-n-1}.
\end{eqnarray*}
This shows that
\begin{eqnarray*}
[D_m, Y(u, z)] = \sum_{i=1}^m Y(D_i(u), z)D_{m-i},
\end{eqnarray*}
and then (c) follows from (b).\ This completes the proof of the Theorem.
\end{proof}

\medskip
\begin{dfn}\label{dfntc} We say that $Y(u, z)$ is \emph{translation covariant with respect to $\underline{D}$}
if property (c) of Theorem \ref{lend} holds for \emph{all} $m{\geq}0$.
\end{dfn}

\section{Characterizations of vertex rings}\label{SCharVR}
In Sections \ref{Sbasic} and \ref{SHS} we have shown that the vertex operators in a vertex ring 
are mutually local (Definition \ref{dfnmlocal}), creative (Definition \ref{dfncreate}), and translation-covariant
(Definition \ref{dfntc}).\
In this Section we show that vertex rings can be \emph{characterized} by these properties.\ This amounts to an extension of
the  Goddard axioms \cite{G} for vertex algebras to the general setting of vertex rings.\ To carry this through we need to develop machinery 
to facilitate calculations with quantum fields on an arbitrary abelian group.

\subsection{Fields on an abelian group}\label{SSfields}
\begin{dfn}\label{dfnFV} Let $V$ be an additive abelian group.\ We set
\begin{eqnarray*}
\mathcal{F}(V){:=}\left\{a(z){:=}\sum_{n\in\ZZ} a(n)z^{-n-1}{\in} End(V)[[z,  z^{-1}]]\ {\mid}\ a(n)b{=}0 \ \mbox{for}\
n\gg0, \ \mbox{all}\ b\in V\right\}.
\end{eqnarray*}
\end{dfn}
In this definition, it is understood that the integer $t$ such that $a(n)b{=}0$ for
$n\geq t$ depends on the states $a$ and $b$.\  Clearly, $\mathcal{F}(V)$ is an additive abelian group.\ We say that 
$a(z){\in}\mathcal{F}(V)$ is a \emph{field on $V$}, and call $\mathcal{F}(V)$  the \emph{space of fields on $V$}.

\medskip
Definition \ref{dfnFV}  is, of course, motivated by
the corresponding axiom (\ref{JI})(a) for vertex operators in a vertex ring.\ Indeed, if $V$ is a vertex ring, the state-field correspondence defines a morphism of abelian groups $Y{:}V \rightarrow \mathcal{F}(V)$.

\medskip
We now carry over to fields in $\mathcal{F}(V)$ 
the main properties  that we previously considered for vertex operators in a vertex ring.\ To be clear,
we repeat the relevant definitions in this more general setting.

\medskip
\begin{dfn}\label{field props} Let $V$ be an additive abelian group.\ Let $a(z){=}\sum_n a(n)z^{-n-1}$ and
$b(z){\in}\mathcal{F}(V)$ be fields on $V$,
$v_0\in V$ be a fixed state, and  $\underline{D}:=(Id_V, D_1, ...)$ a sequence
of endomorphisms of $V$.\\
(a) $a(z)$ is \emph{creative with respect to $v_0$ and creates the state $a{\in}V$},
if 
\begin{eqnarray*}
a(n)v_0=0\ (n\geq 0),\ a(-1)v_0=a,\ \ \mbox{i.e.,}\ a(z)v_0 \in a+ zV[[z]].
\end{eqnarray*}
We say that $a(z)$ is merely \emph{creative} (with respect to $v_0$)  if $a(n)v_0=0\ (n\geq 0)$, in which case the state $a(-1)v_0$
that is created is unspecified.\\
(b) $a(z)$ is \emph{translation covariant with respect to $\underline{D}$} if, for all $m\geq 0$, we have 
\begin{eqnarray*}
[D_m, a(z)]=\sum_{i=1}^m \delta_z^{(i)}a(z)D_{m-i}.
\end{eqnarray*}
(c) $a(z)$ and $b(z)$ are \emph{mutually local (of order $t$)} if there is $t\geq 0$ such that
\begin{eqnarray*}
(z-w)^t[a(z), b(w)]=0.
\end{eqnarray*}
We write this as $a(z){\sim_t}b(z)$, or simply $a(z){\sim}b(z)$ if we do not wish to emphasize $t$.
\end{dfn}

\medskip
In (b), the operator $\delta_z^{(i)}$ on fields is defined as in (\ref{dact1}).\ As in Section \ref{SScal}, the mutual locality of $a(z)$ and $b(z)$ is equivalent to
the analog of the locality formula (\ref{locform}) for all integers $r, s$.\

\medskip The thrust of our earlier arguments is that if $V$ is a vertex ring
then the set of vertex operators $\{Y(u, z){\mid}u{\in} V\}$ is a set of mutually local fields on $V$ that are creative with respect to the vacuum
vector $\mathbf{1}$ and translation covariant with respect to the canonical Hasse-Schmidt derivation of $V$.

\subsection{Statement of the existence Theorem}\label{SSexist}
In this Subsection we state the main existence Theorem and make a start on its proof.
\begin{thm}\label{thmexist1} Let $(V, Y, v_0, \underline{D})$ consist of 
an additive abelian group $V$, a state $v_0{\in}V$,\ a sequence of endomorphisms
$\underline{D}:=(Id_V, D_1, ...)$ in $End(V)$ satisfying $D_m(v_0){=}0$ for $m\geq 1$, and a morphism of abelian groups
\begin{eqnarray*}
Y{:} V\rightarrow \mathcal{F}(V),\ u\mapsto Y(u, z):=\sum_n u(n)z^{-n-1}.
\end{eqnarray*}
Suppose that the following assumptions hold for all states $u, v\in V$:
\begin{eqnarray*}
&&\ \ \ \ \ \ \ \ \  Y(u, z)\sim Y(v, z), \\
&&\ \ \ \ \ \  Y(u, z)v_0 \in u+zV[[z]], \\
 &&{[}D_m, Y(u, z){]}\ {=}\sum_{i=1}^m \delta_z^{(i)}Y(u, z)D_{m-i}\ \ (m\geq 0).
\end{eqnarray*}
(In short, $\{Y(u, z){\mid}u{\in} V\}$ is a set of mutually local, creative and translation-covariant fields on $V$.)\ Then $V$ is a vertex ring with state-field correspondence $Y$, vacuum vector $v_0$, and
canonical HS derivation $\underline{D}$.
\end{thm}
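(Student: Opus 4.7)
Axioms (a) and (b) of Definition 2.1 are essentially immediate: (a) is the content of $Y(u,z) \in \mathcal{F}(V)$, and (b) restates the creativity hypothesis. All of the substance lies in proving the Jacobi identity (c). As a preliminary, I would apply the translation-covariance identity to $v_0$. Using $D_m v_0 = 0$ for $m \geq 1$ and creativity, this collapses term by term to $D_m u = u(-m-1) v_0$, so that $Y(u,z) v_0 = \sum_{m \geq 0} D_m(u) z^m$. Thus $\underline{D}$ is automatically forced to coincide with the prospective canonical HS derivation of $V$, and the companion identities relating $Y(D_m u, z)$ to $\delta_z^{(m)} Y(u,z)$ become available at once from translation-covariance.

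The strategy for the Jacobi identity is the classical one: combine \emph{weak commutativity}, which is already given by mutual locality in the form
\begin{eqnarray*}
(z-w)^t Y(u,z) Y(v,w) = (z-w)^t Y(v,w) Y(u,z),
\end{eqnarray*}
with a companion \emph{weak associativity} identity
\begin{eqnarray*}
(z+w)^t Y(Y(u,z) v, w) = (z+w)^t Y(u, z+w) Y(v, w)
\end{eqnarray*}
(where $(z+w)^t$ is expanded in nonnegative powers of $w$, following the convention already in force). Once both relations are available, the Jacobi identity (2.1)(c) drops out by extracting the coefficient of $z^{-r-1} w^{-s-1}$ from a single residue identity built from these two relations and the formal delta function---a purely bookkeeping step. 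The technical heart of the proof is therefore establishing weak associativity.

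To prove weak associativity, I would first establish a \emph{Goddard-type uniqueness lemma}: a creative, translation-covariant field $A(w)$ on $V$ that is mutually local with every $Y(u,w)$ is completely determined by the state $a = A(-1) v_0$ it creates. The argument uses mutual locality to interchange $A(w)$ with $Y(u,z)$ when applied to $v_0$, and then uses creativity of $Y(u,z)$ at $v_0$ together with the formal Taylor expansion $A(w) v_0 = \sum_m D_m(a) w^m$ to solve for $A(w) u$ in terms of canonical data depending only on $a$ and $u$. Applying this lemma to the two sides of the proposed weak associativity identity---each of which, as a $w$-field with $z$ a formal parameter, is translation-covariant, mutually local with every $Y(x,w)$, and creates the same state after multiplication by a sufficiently large power $(z+w)^t$---then forces them to coincide. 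The main obstacle I expect is precisely the uniqueness lemma: one must control pole orders from several simultaneously invoked locality bounds and handle the $z$-dependence uniformly enough that the comparison of $w$-fields in the weak associativity step is rigorous rather than merely formal. Once that lemma is in place, the passage from weak commutativity plus weak associativity to the full Jacobi identity is essentially mechanical.
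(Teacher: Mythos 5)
Your preliminary reductions are correct and coincide with the paper's: axioms (a) and (b) of Definition \ref{defvr} are immediate, translation-covariance applied to $v_0$ forces $Y(u,z)v_0=\sum_{m\geq 0}D_m(u)z^m$ so that $\underline{D}$ is automatically the canonical HS derivation, and your uniqueness lemma is exactly the paper's Lemma \ref{lemmadz} (in difference form: a creative, translation-covariant field, mutually local with all $Y(u,z)$, which creates $0$, vanishes), proved just as you sketch. The genuine gap is in the step you yourself flag as the main obstacle: weak associativity cannot be obtained by applying that uniqueness lemma to the two sides viewed as ``$w$-fields with $z$ a formal parameter,'' because neither side satisfies the lemma's locality hypothesis. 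For $Y(u,z+w)Y(v,w)$, locality of $Y(u,\zeta)$ with $Y(x,w')$ only yields $(z+w-w')^N[Y(u,z+w),Y(x,w')]=0$: the commutator's singularity sits along $w'=z+w$, so no power of $(w-w')$ annihilates it, i.e.\ this object is not mutually local in $w$ with the fields $Y(x,w)$ in any order. For $Y(Y(u,z)v,w)=\sum_{t}Y(u(t)v,w)z^{-t-1}$, each $z$-coefficient is local with each $Y(x,w)$, but the locality orders are unbounded as $t\to-\infty$, so no single order serves the whole series; and retreating to individual $z$-coefficients fails on the other side, where the coefficient of a fixed power of $z$ in $(z+w)^tY(u,z+w)Y(v,w)$ has modes given by infinite sums and so does not lie in $\mathcal{F}(V)$. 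Thus the comparison of $w$-fields is not a matter of handling the $z$-dependence ``uniformly enough'': the hypotheses of the uniqueness lemma genuinely fail for these two-variable objects.

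The paper's proof avoids this by never leaving $\mathcal{F}(V)$: instead of the two-variable weak associativity (which it states in Remark 2.6 but pointedly does not use), it proves the single-variable residue-product formula $Y(u(t)v,z)=Y(u,z)_tY(v,z)$. The residue product of fields is again a field; it is creative and creates $u(t)v$ (Lemma \ref{lemtcresprod}), mutually local with everything by Dong's lemma (Lemma \ref{lemDong}), and---the technical heart, a substantial binomial computation occupying the place of your missing step---translation-covariant (Theorem \ref{thmtcfields}, via Theorem \ref{thmdeltatc}). Lemma \ref{lemmadz} then applies verbatim to the difference $Y(u,z)_tY(v,z)-Y(u(t)v,z)$. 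Finally, in place of your delta-function passage from weak commutativity plus weak associativity to Jacobi, the paper uses the denominator-free Matsuo--Nagatomo recursion (Lemma \ref{lemmaasslocJI}): the associativity formula is the $r=0$ case of the Jacobi identity, locality is the $t\gg 0$ case, and the Pascal-type recursion $A(r+1,s,t)=A(r,s+1,t)+A(r,s,t+1)$ propagates these two special cases to all $(r,s,t)$. To salvage your route you would have to either prove weak associativity by some other mechanism (the natural one being the residue-product identity, at which point the paper's argument is already complete) or formulate and prove a two-variable uniqueness statement whose hypotheses your series actually satisfy; the lemma as you state it does not suffice.
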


\medskip
Given the translation-covariance assumption as in the statement of the Theorem,
the creativity assumption $Y(u, z)v_0\in u+ zV[[z]]$ is equivalent to the stronger assertion 
\begin{eqnarray}\label{morecreate}
Y(u, z)v_0=\sum_{m\geq 0} D_m(u)z^m.
\end{eqnarray}
Indeed, translation-covariance implies that
\begin{eqnarray*}
D_mY(u, z)v_0=\delta_z^{(m)}Y(u, z)v_0,
\end{eqnarray*}
so that $D_m(u)=D_mu(-1)v_0=u(-m-1)v_0$.\ Then we deduce that 
$Y(u, z)v_0=\sum_{m\geq 0}u(-m-1)v_0z^m=
\sum_{m\geq 0} D_m(u)z^m$, as asserted.\ In particular, in the context of Theorem \ref{thmexist1}, once it is known that $V$ is a vertex ring the statement that
$\underline{D}$ is the canonical HS derivation of $V$ follows automatically.

\medskip
Let $u, v{\in} V$.\  Since $Y(u, z)v{=}\sum_n u(n)vz^{-n-1}{\in} V[[z, z^{-1}]]$,  we have bilinear products
$u(n)v$ for all integers $n$, and because $Y(u, z){\in} \mathcal{F}(V)$ then $u(n)v{=}0$
for $n\gg 0$.\ Furthermore, the creativity assumption  means that
$u(n)v_0=0$ for $n\geq 0$ and $u(-1)v_0=u$.\ Thus  (\ref{JI})(a), (b) 
hold, and in order to prove that $V$ is a vertex ring and thereby complete the proof of Therem 
\ref{thmexist1}, it only remains to establish the Jacobi identity.

\medskip
As a first step we have the following result.
\begin{lem}\label{lemmaasslocJI}  Suppose that $V$ is an additive abelian group, with
$a(z){:=}\sum_n a(n)z^{-n-1}$, $b(z){:=}\sum_n b(n)z^{-n-1}$ a pair of fields on $V$.\
Then the modes of $a(z)$ and $b(z)$ satisfy  the associativity  formula
(\ref{assocform}) and the locality formula (\ref{locform}) for all integers $r, s, t$ if, and only if, they satisfy the Jacobi identity (\ref{JI})(c) for all integers $r, s, t$.
\end{lem}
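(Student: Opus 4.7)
The plan is to prove each direction by direct manipulation of the three formulas.

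For the forward direction, I specialize the Jacobi identity (\ref{JI})(c) at $r = 0$. Since $\binom{0}{i} = \delta_{i,0}$, the left-hand side collapses to $(a(t)b)(s)w$, and as $w$ is arbitrary the resulting identity of endomorphisms is exactly (\ref{assocform}). Next, because $a(z) \in \mathcal{F}(V)$, for any fixed $b \in V$ there is an integer $T$ with $a(n)b = 0$ for $n \geq T$; taking $t \geq T$ makes every summand on the left-hand side of (\ref{JI})(c) vanish, so the right-hand side must also vanish, which is precisely (\ref{locform}).

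For the reverse direction, I substitute the associator formula (\ref{assocform}) into each factor $(a(t+i)b)(r+s-i)w$ on the left-hand side of (\ref{JI})(c). This produces a double sum indexed by the Jacobi index $i$ and an associator index $j$; its summands split into terms of type $a(p)b(q)w$ and terms of type $b(q)a(p)w$, in both cases with $p+q = r+s+t$. Grouping by the pair $(p,q)$, the coefficient of $a(p)b(r+s+t-p)w$ reduces, after the substitution $j = t+i-p$, to $(-1)^{t-p}\sum_{i\geq 0}(-1)^i\binom{r}{i}\binom{t+i}{p}$. A standard Vandermonde-type identity evaluates this as $(-1)^{r+t-p}\binom{t}{r+t-p}$, which is precisely the coefficient of the same monomial extracted from the right-hand side of (\ref{JI})(c) at $k = r+t-p$. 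A parallel calculation handles the $b(q)a(p)w$ coefficients, and the argument is symmetric under the interchange enforced by the $(-1)^t$ sign.

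The main obstacle is that when $r$ is a negative integer the support of $\binom{r}{i}$ is unbounded, so the inner Vandermonde sum fails to converge on purely combinatorial grounds even though the original left-hand side of (\ref{JI})(c) is finite by the field property. It is precisely here that the locality formula (\ref{locform}) intervenes: it provides the boundary identities needed to justify the rearrangement of the double sum, cancelling the tail contributions that lie outside the window in which the Vandermonde identity applies. Once this rearrangement is justified, the coefficient matching above delivers (\ref{JI})(c) for arbitrary integers $r$, $s$, $t$.
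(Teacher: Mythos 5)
Your forward direction is correct and is exactly the paper's: setting $r=0$ in (\ref{JI})(c) yields (\ref{assocform}), and choosing $t$ so large that $a(t+i)b=0$ for all $i\geq 0$ kills the left-hand side and yields (\ref{locform}). The reverse direction, however, has a genuine gap at precisely the point you flag, and your proposed repair is not an argument. After substituting (\ref{assocform}) into each term $(a(t+i)b)(r+s-i)w$, the resulting double sum over $(i,j)$ is finite only as an \emph{iterated} sum: for each fixed $i$ the inner $j$-sum has finitely many nonzero terms (field property applied to $w$), and only finitely many $i$ contribute because $a(t+i)b=0$ for $i\gg 0$. But the individual terms do not vanish for large $i$ --- only each inner sum does --- so when you regroup by monomials $a(p)b(q)w$, the coefficient of a fixed monomial becomes, for $r<0$, a genuinely infinite sum of integers, namely $\sum_{i\geq 0}(-1)^{t+i-p}{r\choose i}{t+i\choose t+i-p}$ with ${r\choose i}\not=0$ for \emph{every} $i\geq 0$. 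You are working at a fixed mode triple $(r,s,t)$, so there are no formal variables available to grade the sum, and the rearranged expression is simply undefined. The claim that locality ``provides the boundary identities needed to justify the rearrangement'' names no mechanism: you never say which tails cancel, or why, and since the passage from $r\geq 0$ to negative $r$ is the entire nontrivial content of the lemma, this is a fatal gap rather than a technicality. There is also a secondary error: you rewrote ${t+i\choose t+i-p}$ as ${t+i\choose p}$ via the symmetry ${m\choose n}={m\choose m-n}$, which fails for negative upper entry --- the Appendix warns of exactly this --- and when $t+i<0$ the surviving terms have $p<0$, where ${t+i\choose p}=0$ while ${t+i\choose t+i-p}$ need not be.

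Compare the paper's proof, which never rearranges an infinite sum. Writing $A(r,s,t)$, $B(r,s,t)$, $C(r,s,t)$ for the three constituents of (\ref{JI})(c), Pascal's rule (\ref{bi2}) gives the recursion (\ref{Aform}), $A(r+1,s,t)=A(r,s+1,t)+A(r,s,t+1)$, and identically for $B$ and $C$. Associativity anchors the identity $A=B-C$ at $r=0$, and upward induction on $r$ extends it to all $r\geq 0$; locality, together with the vanishing of $A$ for large $t$, anchors it for all $t\gg 0$ uniformly in $r$ and $s$. A putative failure therefore has $r<0$ and $t$ bounded above, so a failing triple with $r+t$ maximal exists; running the recursion backwards, $A(r,s,t)=A(r+1,s-1,t)-A(r,s-1,t+1)$, expresses it through two triples with strictly larger $r+t$, a contradiction. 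Every sum in this argument is finite term by term, which is exactly what your rearrangement lacks. If you want to salvage your computation, it is legitimate precisely for $r\geq 0$, where the $i$-support is bounded by $r$ and a Vandermonde-type identity (cf.\ (\ref{bi1}), (\ref{bi4})) applies; for $r<0$ you must replace the coefficient-matching by an honest induction of the above sort.
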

\begin{proof} In Section \ref{SScal} we derived  associativity and locality (cf.\ Lemma \ref{lemlocform}) of vertex operators in a vertex ring
as a purely formal consequence of (\ref{JI})(c), and the proof in the more general set-up we are now in is exactly the same.

\medskip
It remains to show that, conversely, (\ref{JI})(c) is a consequence of the conjunction of associativity and locality of fields in $\mathcal{F}(V)$.\
In fact, standard proofs of this assertion for vertex algebras defined over $\CC$ (e.g.\ \cite{MN}, Proposition 4.4.3) remain valid in the present setting.\
We sketch the details following the proof of Matsuo-Nagatomo (loc.\ cit).

\medskip
For any $r, s, t{\in}\ZZ$ we introduce the notation
\begin{eqnarray*}
&&A(r, s, t) {=} \sum_{i\geq 0} {r\choose i}(a(t+i)b)(r+s-i),\\
&&B(r, s, t){=} \sum_{i\geq 0}(-1)^i {t\choose i} a(r+t-i)b(s+i),\\
&&C(r, s, t) {=} \sum_{i\geq 0}(-1)^{t+i} {t\choose i} b(s+t-i)a(r+i).
\end{eqnarray*}

In these terms,  the Jacobi identity (\ref{JI})(c) for the fields $a(z), b(z)$ just says that for all $r, s, t$ we have
\begin{eqnarray}\label{ABCform}
A(r, s, t)=B(r, s, t)-C(r, s, t).
\end{eqnarray}
On the other hand, as we discussed in Subsection \ref{SScal}, the associativity formula (\ref{assocform}) is nothing but
the case $r=0$ of (\ref{ABCform}), while locality in the form of (\ref{locform}) is just (\ref{ABCform}) for $t\gg 0$.\ So we have to deduce
the general case of (\ref{ABCform}) on the basis of these two special cases.

\medskip
We can do this by first using (\ref{bi2}) in the Appendix to observe that
\begin{eqnarray}\label{Aform}
A(r+1, s, t)=A(r, s+1, t)+A(r, s, t+1).
\end{eqnarray}
Furthermore, exactly the same formula holds if we replace $A$ by $B$ or $C$.

\medskip
 Because (\ref{ABCform}) holds for 
$r=0$ (and \emph{any} $s, t$), an induction using (\ref{Aform}) shows that it holds for all $r\geq 0$.\ Since it also holds
for all big enough $t$ independently of $r, s$, if it is false in general then there is a pair $(r, t)$ for which it is false
and for which $r+t$ is \emph{maximal}.\ But we have
\begin{eqnarray*}
&&A(r, s, t) = A(r+1, s-1, t)-A(r, s-1, t+1)\\
&=&B(r+1, s-1, t)-C(r+1, s-1, t)-B(r, s-1, t+1)+C(r, s-1, t+1)\\
&=&B(r, s, t)-C(r, s, t).
\end{eqnarray*}
So in fact (\ref{ABCform}) holds for all $r, s, t$, and the proof of the Lemma is complete.
\end{proof}

\subsection{Residue products}\label{SSrp}
Because locality of fields is one of the hypotheses of Theorem \ref{thmexist1}, in order to complete
the proof of the Theorem we are reduced (thanks to Lemma \ref{lemmaasslocJI}) to establishing the associativity formula (\ref{assocform}).\ A good way to approach this is through the use of \emph{residue products} in 
$\mathcal{F}(V)$.\ 

\begin{dfn}\label{dfnresprod}
Let $V$ be an additive abelian group with
$a(z){:=}\sum_n a(n)z^{-n-1}$ and $b(z){:=}\sum_n b(n)z^{-n-1}$ a pair of fields in $\mathcal{F}(V)$.\
Let $m$ be any integer.\ The \emph{$m^{th}$ residue product} of $a(z)$ and $b(z)$ is the field
in $\mathcal{F}(V)$, denoted by $a(z)_m b(z)$, whose $n^{th}$ mode is given by the following formula:
\begin{eqnarray}\label{resprod}
&&\ \ \ \ \ \ \ \ \ \ \ \ \ \ \ \ \ (a(z)_{m}b(z))_n{:=}  \notag\\
&&\sum_{i\geq 0}(-1)^i{m\choose i}\left\{a(m{-}i)b(n+i){-}(-1)^mb(m+n{-}i)a(i)\right\}.
\end{eqnarray}
\end{dfn}

It is easy to see that because $a(z)$ and $b(z)$ are fields on $V$, then for any state $u{\in}V$
we have $(a(z)_m b(z))_nu{=}0$ for all large enough $n$.\ Hence,  $a(z)_m b(z)$ 
\emph{is} a field on $V$.\ Thus for any integer $m$, $\mathcal{F}(V)$ equipped with its $m^{th}$ residue product is a nonassociative ring.

\medskip
Motivation for introducing this field stems from the nature of the associativity formula
(\ref{assocform}).\ Indeed, for a vertex ring we can restate the associativity formula in the following
compact and highly suggestive form{:}
\begin{eqnarray}\label{vringresprod}
Y(u(t)v, z) {=}Y(u, z)_t Y(v, z).
\end{eqnarray}

\medskip
In proving Theorem \ref{thmexist1}, we of course do not know that $V$ is a vertex ring.\
Nevertheless our goal is to establish  (\ref{vringresprod}) for the fields  $Y(u, z)$ defined in Theorem \ref{thmexist1}, 
this being equivalent to the desired associativity.\
First we need to develop some general facts about residue products of fields.

\begin{lem}\label{lemtcresprod} 
Suppose that $a(z), b(z)$ are creative with respect to $v_0$ and that $b(z)$ creates $v$.\ Then
$a(z)_{m}b(z)$ is creative with respect to $v_0$ and creates $a(m)v$
\end{lem}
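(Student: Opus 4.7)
The plan is to prove both claims by directly plugging $v_0$ into the defining formula (\ref{resprod}) for the residue product and exploiting the two hypotheses on $a(z)$ and $b(z)$, namely
\begin{eqnarray*}
a(i)v_0 = 0\ (i\geq 0),\ a(-1)v_0 = a,\quad b(i)v_0 = 0\ (i\geq 0),\ b(-1)v_0 = v.
\end{eqnarray*}
There are only two things to verify: $(a(z)_m b(z))_n v_0 = 0$ for every $n\geq 0$, and $(a(z)_m b(z))_{-1} v_0 = a(m)v$.

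For the first claim, fix $n\geq 0$ and consider each summand in (\ref{resprod}) applied to $v_0$. The term $a(m-i)b(n+i)v_0$ vanishes because $n+i\geq 0$, so $b(n+i)v_0 = 0$ by creativity of $b(z)$. The term $b(m+n-i)a(i)v_0$ vanishes because the summation index satisfies $i\geq 0$, so $a(i)v_0 = 0$ by creativity of $a(z)$. Hence the entire sum is zero, which is exactly creativity of $a(z)_m b(z)$ with respect to $v_0$.

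For the second claim, specialize $n = -1$ in (\ref{resprod}) applied to $v_0$. The second piece $b(m-1-i)a(i)v_0$ again vanishes for all $i\geq 0$ since $a(i)v_0 = 0$. In the first piece $a(m-i)b(-1+i)v_0$, the factor $b(-1+i)v_0$ is zero whenever $i\geq 1$ (because then $-1+i\geq 0$), so only the $i=0$ term survives, and there $b(-1)v_0 = v$. This single term contributes $(-1)^0\binom{m}{0}a(m)v = a(m)v$, as required.

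I do not anticipate a main obstacle: the statement is a transparent bookkeeping consequence of the residue product formula together with the two creativity hypotheses. The only point worth flagging is that the proof makes no use of either locality or translation-covariance, so it is a genuinely general property of residue products in $\mathcal{F}(V)$ and will be available as a clean input to the subsequent arguments aimed at establishing the associativity formula (\ref{vringresprod}).
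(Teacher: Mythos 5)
Your proof is correct and is essentially the paper's own argument: the paper likewise evaluates the defining formula (\ref{resprod}) on $v_0$ for $n\geq -1$, kills the second piece using $a(i)v_0=0$, and observes that in the first piece $b(n+i)v_0$ vanishes unless $n=-1$ and $i=0$, yielding $(a(z)_m b(z))_n v_0=\delta_{n,-1}\,a(m)v$. Your closing observation that neither locality nor translation-covariance is needed matches how the paper treats the lemma as a general fact about residue products of creative fields.
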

\begin{proof} Let $n{\geq}-1$. Since $a(n)v_0{=}b(n)v_0{=}0\ (n\geq 0)$ and $b(-1)v_0{=}v$,
we have
\begin{eqnarray*}
&&(a(z)_{m}b(z))_nv_0 \\
&=& \sum_{i\geq 0}(-1)^i{m\choose i}\left\{a(m-i)b(n+i)-((-1)^mb(m+n-i)a(i)\right\}v_0\\
&=&\delta_{n, -1}a(m)v.
\end{eqnarray*}
This completes the proof of the Lemma.
\end{proof}

\begin{lem}\label{lemDong} Suppose that $a(z), b(z), c(z){\in}\mathcal{F}(V)$ are pairwise mutually local fields.\
Then $a(z)_mb(z)$ and $c(z)$ are also mutually local fields for all integers $m$.
\end{lem}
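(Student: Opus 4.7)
My plan is to adapt the classical proof of Dong's Lemma from vertex algebra theory. Let $s$ be a large enough positive integer so that all three pairs are mutually local of order $s$: $(z_1-z_2)^s[a(z_1), b(z_2)] = 0$ and similarly for $(a, c)$ and $(b, c)$. The goal is to produce an integer $N$, depending on $m$ and $s$, such that $(z_2-z_3)^N[a(z_2)_m b(z_2), c(z_3)] = 0$, where $z_3$ plays the r\^{o}le of $w$.

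The first step is to rewrite the $m$th residue product as a contour-style residue in an auxiliary variable $z_1$,
\begin{eqnarray*}
a(z_2)_m b(z_2) = \Res_{z_1}\bigl\{\iota_+(z_1-z_2)^m\, a(z_1)b(z_2) - (-1)^m \iota_-(z_1-z_2)^m\, b(z_2)a(z_1)\bigr\},
\end{eqnarray*}
where $\iota_+$ and $\iota_-$ denote the expansions of $(z_1-z_2)^m$ in non-negative powers of $z_2$ and $z_1$ respectively (these agree when $m\geq 0$). A direct bookkeeping check against Definition \ref{dfnresprod} verifies the formula.

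Now take the commutator with $c(z_3)$ and invoke the Jacobi identity for the associative commutator bracket on $\Endo(V)$,
\begin{eqnarray*}
[a(z_1)b(z_2), c(z_3)] = a(z_1)[b(z_2), c(z_3)] + [a(z_1), c(z_3)]b(z_2),
\end{eqnarray*}
and analogously with $a(z_1)$ and $b(z_2)$ reversed. Multiplying by $(z_1-z_3)^s(z_2-z_3)^s$ and invoking mutual locality of the three pairs annihilates both resulting right-hand sides. The real work is converting this three-variable vanishing into the desired two-variable statement after $\Res_{z_1}$ is extracted. The mechanism is to expand $(z_1-z_3)^s = ((z_1-z_2) + (z_2-z_3))^s$ by the binomial theorem and to choose $N$ sufficiently large (at least $2s$, with a correction depending on $m$), so that every surviving term in the residue either acquires a factor $(z_1-z_2)^s$ that kills the inner commutator by locality of $(a, b)$, or enough powers of $(z_2-z_3)$ or $(z_1-z_3)$ to be killed by locality of $(b, c)$ or $(a, c)$.

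The main obstacle I anticipate is the case $m < 0$: here $\iota_+(z_1-z_2)^m$ and $\iota_-(z_1-z_2)^m$ are genuinely distinct formal distributions, and their difference is supported on the diagonal $z_1 = z_2$. One must verify that this diagonal contribution is controlled by the factor $(z_1-z_2)^s$ coming from mutual locality of $(a, b)$, so that the single choice of $N$ works uniformly in $m$. Once this combinatorial bookkeeping with the two expansions is pinned down, the estimate in the previous paragraph closes the proof.
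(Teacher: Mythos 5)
Your proposal is the standard proof of Dong's Lemma, which is exactly the argument the paper itself invokes: its entire proof of this Lemma is a citation to Matsuo--Nagatomo (Proposition 2.1.5) together with the remark that the proof holds for vertex rings unchanged, and your residue formula, Leibniz expansion of the commutator, and binomial splitting of $(z_1-z_3)^s$ are precisely that argument. The one point worth making explicit is why it transfers from $\mathbf{C}$ to the integral setting, and why your anticipated $m<0$ obstacle is benign: the $\iota_+$ and $\iota_-$ expansions of $(z_1-z_2)^{m+i}$ coincide as soon as $m+i\geq 0$, so after enough powers of $(z_1-z_2)$ are absorbed the diagonal discrepancy collapses to a genuine commutator killed by locality of $(a,b)$, and every coefficient appearing throughout is an integer binomial coefficient, so no denominators ever arise.
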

\begin{proof} Standard proofs of this result for vertex algebras over $\mathbf{C}$ (e.g., \cite{MN},
Proposition 2.1.5) also hold for vertex rings with the proof unchanged.
\end{proof}

\subsection{The relation between residue products and translation-covariance}\label{SSrptc}
The main result of this Subsection is
\begin{thm}\label{thmtcfields} Let $V$ be an additive abelian group with a sequence
of endomorphisms $\underline{D}=(Id_V, D_1, ...)$ in $End(V)$.\ 
Suppose that $a(z)$ and $b(z)$ are fields on $V$ that are translation-covariant with respect
to $\underline{D}$.\ Then $a(z)_m b(z)$ is also translation-covariant with respect to $\underline{D}$
for all integers $m$.
\end{thm}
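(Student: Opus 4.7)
The plan is to derive the translation-covariance of $c(z) := a(z)_m b(z)$ directly from the mode formula of Definition \ref{dfnresprod}. Reading off the coefficient of $z^{-r-1}$ in the hypothesized identity $[D_k, a(z)] = \sum_{i=1}^k \delta_z^{(i)} a(z) D_{k-i}$ via (\ref{dact1}) yields the mode-level identity
\begin{equation*}
[D_k, a(r)] = \sum_{j=1}^{k} (-1)^j \binom{r}{j} a(r-j) D_{k-j},
\end{equation*}
and the analogous identity for $b$. It is also convenient to note the formal fact that in $\operatorname{End}(V)$ one has $[D_k, XY] = [D_k, X]Y + X[D_k, Y]$ (this is just bilinearity of the commutator, and does not rely on $D_k$ being a derivation of composition).

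First I would apply $[D_k, -]$ to the four-term expansion of $(a(z)_m b(z))_n$, distribute across the products $a(m-i)b(n+i)$ and $b(m+n-i)a(i)$, and substitute the explicit formulas for $[D_k, a(\cdot)]$ and $[D_k, b(\cdot)]$. This produces a sum indexed by $i \geq 0$ and $j \in \{1,\dots,k\}$ whose terms each end in $D_{k-j}$. The task then reduces to reorganizing the sum, for each fixed $j$, into
\begin{equation*}
(-1)^j \binom{n}{j} (a(z)_m b(z))_{n-j} D_{k-j},
\end{equation*}
which is precisely what the translation-covariance of $c(z)$ demands. The combinatorial reorganization rests on a Chu--Vandermonde-style identity relating $\sum_{i\geq 0}(-1)^i \binom{m}{i}\binom{n+i}{j}$ and its partner with $\binom{m+n-i}{j}$ to $\binom{n}{j}\binom{m}{\cdot}$ times a shifted residue-product coefficient; identities such as (\ref{bi2}) in the Appendix should handle this.

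The main obstacle is this bookkeeping: four parallel contributions (two from the $a$-commutator and two from the $b$-commutator, across both summands of the residue product) must combine with matching signs before the binomial identity applies. A cleaner conceptual alternative is to package the hypothesis as the generating-series identity $D(t)\,a(z) = a(z+t)\,D(t)$, where $D(t) := \sum_{k\geq 0} D_k t^k$, and similarly for $b$. Writing $c(z)$ as a formal residue $\operatorname{Res}_w\bigl[a(w)b(z)\,\iota_{w,z}(w-z)^m - b(z)a(w)\,\iota_{z,w}(w-z)^m\bigr]$, one pushes $D(t)$ past $a(w)$ and then $b(z)$, performs the substitution $w \mapsto w+t$, and recovers $D(t)\,c(z) = c(z+t)\,D(t)$; extracting the coefficient of $t^k$ gives the mode-level translation-covariance of $c(z)$ for every $k\geq 0$. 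This bypasses the explicit combinatorics but requires care with the two expansions $\iota_{w,z}$ and $\iota_{z,w}$ when $m<0$.
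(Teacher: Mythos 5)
Your proposal is correct, and your preferred (second) route is genuinely different from the paper's. The paper follows your first sketch: it works at the level of modes, though note a soft spot in your description --- after one substitution, the contributions of the form $[D_k,a(m-i)]\,b(n+i)$ do \emph{not} yet end in $D_{k-j}$: the factor $D_{k-j}$ lands between the $a$- and $b$-modes and must be pushed past $b(n+i)$ by a second application of translation-covariance, and the resulting convolution structure is what the paper isolates as Theorem \ref{thmdeltatc} (that $(\mathrm{Id},\delta_z,\delta_z^{(2)},\dots)$ is an HS derivation of every residue product), the identity that closes the combinatorics being the Vandermonde-type (\ref{bi4}) rather than the Pascal rule (\ref{bi2}). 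Your generating-series argument sidesteps all of this and is sound: summing the hypothesis against $t^k$ gives $D(t)\,a(z)=a(z+t)\,D(t)$ with $a(z+t):=e^{t\partial_z}a(z)$ (no iterativity of $\underline{D}$ is needed for this one-sided form, and extracting $t^k$ recovers the definition exactly); the residue presentation of $a(z)_m b(z)$ is just Definition \ref{dfnresprod} repackaged; pushing $D(t)$ through both factors and using $\operatorname{Res}_w F(w+t)=\operatorname{Res}_w F(w)$ (valid because $\operatorname{Res}_w \partial_w^j F=0$ for $j\geq 1$), together with the fact that shifting both $w$ and $z$ by $t$ fixes the kernels $\iota_{w,z}(w-z)^m$ and $\iota_{z,w}(w-z)^m$, yields $D(t)\,c(z)=c(z+t)\,D(t)$ for $c(z)=a(z)_m b(z)$, which is the claim. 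The expansion care you flag for $m<0$ is exactly convention (\ref{binexp}) applied in the ordering $|w|>|z|>|t|$, and it is harmless since the substitution $w\mapsto w+t$ is made only after both shifts. Your method is in the spirit of Li's technique, which the paper itself deploys in Section 4.7 (Lemmas \ref{lemDconj} and \ref{lemFTE}) but not for this theorem; what the paper's longer mode-level computation buys is the explicit formulas and the intermediate Theorem \ref{thmdeltatc} of independent interest, while your route buys brevity and makes transparent that nothing beyond translation-covariance of the two factors is used.
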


In order to establish this result we first prove a result of independent interest.
\begin{thm}\label{thmdeltatc} Let $V$ be an additive abelian group.\ Then $(Id_V, \delta_z, \delta_z^{(2)}, ...)$ is an iterative HS derivation of the 
nonassociative ring consisting of $\mathcal{F}(V)$ equipped with
its $m^{th}$ residue product.
\end{thm}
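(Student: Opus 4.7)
My plan is to verify both assertions of the theorem—the iterative identity and the generalized Leibniz (Hasse--Schmidt) identity—by direct mode-by-mode comparison. Since each claim is an equality of elements of $\mathcal{F}(V)$, or of bilinear operations on it, it suffices to match $r$-th modes for every $r\in\ZZ$. The workhorse is the mode formula that is read off from (\ref{dact1}),
\begin{equation*}
(\delta_z^{(s)}a)(r) = (-1)^s\binom{r}{s}\,a(r-s).
\end{equation*}

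For the iterative property $\delta_z^{(i)}\circ\delta_z^{(j)} = \binom{i+j}{i}\,\delta_z^{(i+j)}$, I iterate the mode formula to obtain $(\delta_z^{(i)}\delta_z^{(j)}a)(r) = (-1)^{i+j}\binom{r}{i}\binom{r-i}{j}\,a(r-i-j)$, and then apply the standard trinomial revision $\binom{r}{i}\binom{r-i}{j} = \binom{i+j}{i}\binom{r}{i+j}$. This step is essentially automatic.

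For the Hasse--Schmidt identity $\delta_z^{(n)}(a(z)_m b(z)) = \sum_{p+q=n}(\delta_z^{(p)}a(z))_m(\delta_z^{(q)}b(z))$, I compare $r$-th modes after expanding the residue products via (\ref{resprod}), handling the ``$ab$''- and ``$ba$''-halves of (\ref{resprod}) separately. For the $ab$-half, the reindexing $i'=i+p$ on the right-hand side aligns the argument of $a$ with that appearing on the left; the $ba$-half is handled symmetrically via $i'=i+q$. After these substitutions the problem reduces to the two combinatorial identities
\begin{equation*}
\binom{m}{i'-p}\binom{m-i'+p}{p} = \binom{m}{i'}\binom{i'}{p},\qquad \sum_{p}(-1)^p\binom{i'}{p}\binom{N-p}{k-p} = \binom{N-i'}{k},
\end{equation*}
valid for arbitrary integer $N$ under the standard binomial expansion convention. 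The first is a direct factorial rewriting. The second follows by extracting $[y^k]$ from the formal identity $(1+y)^N\bigl(1-y/(1+y)\bigr)^{i'} = (1+y)^{N-i'}$.

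The main obstacle is simply bookkeeping: keeping track of signs, shifted indices, and the interplay between the two halves of (\ref{resprod}) while the calculation remains legible. Once the reindexing is performed and the two combinatorial identities are invoked, both sides coincide mode-by-mode. No denominators appear at any step, so the argument goes through verbatim over $\ZZ$.
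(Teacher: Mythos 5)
Your proposal is correct and is essentially the paper's own argument: the paper likewise proves the HS property by comparing modes after expanding the residue products, splitting into the $ab$- and $ba$-halves of (\ref{resprod}), and reducing each half to trinomial revision plus an alternating Vandermonde identity --- your second displayed identity is exactly the appendix identity (\ref{bi4}), which the paper invokes at the same point. The only deviations are minor and in your favor: you actually verify the iterative property (the paper explicitly skips it as straightforward) and you supply a generating-function proof of (\ref{bi4}) rather than citing it; just note that in the $ba$-half this identity is needed with an arbitrary integer (possibly negative) top parameter, which your formal binomial-series argument does cover under the convention (\ref{binexp}).
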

\begin{proof} The iterative property (which we do not use) is straightforward to prove, and we skip the details.\ As for the HS property,
let $a(z){=}\sum_n a(n)z^{-n-1}$ and $b(z){=}\sum_n b(n)z^{-n-1}$ lie in $\mathcal{F}(V)$.\ By (\ref{dact1}) we have
\begin{eqnarray*}
\delta_z^{(i)}a(z){=}(-1)^i \sum_n A_i(n)z^{-n-1}, \ \mbox{with}\ A_i(n){=}{n\choose i}a(n-i).
\end{eqnarray*}
With analogous notation for $\delta_z^{(j)}b(z)$, we see that if $i+j=\ell\geq 0$ then
\begin{eqnarray*}
&&((\delta_z^{(i)}a(z))_m(\delta_z^{(j)}b(z)))_n \\
&=& (-1)^{\ell}
\sum_{t\geq 0}(-1)^t{m\choose t}\left\{A_i(m-t)B_j(n+t)-(-1)^mB_j(m+n-t)A_i(t)\right\},
\end{eqnarray*}
and similarly
\begin{eqnarray*}
&&\delta_z^{(\ell)}(a(z)_mb(z)) = (-1)^{\ell}\sum_n {n\choose\ell}(a(z)_{(m)}b(z))_{n-\ell}z^{-n-1}\\
=&&(-1)^{\ell}\sum_n {n\choose\ell}\sum_{t\geq 0}(-1)^t{m\choose t}
\left\{a(m-t)b(n-\ell+t)-(-1)^mb(m+n-\ell-t)a(t)\right\}z^{-n-1}.
\end{eqnarray*}
So it suffices to show for all integers $m, n$ that
\begin{eqnarray}\label{tcsum}
&& {n\choose\ell}\sum_{t\geq 0}(-1)^t{m\choose t}
\left\{a(m-t)b(n-\ell+t)-(-1)^mb(m+n-\ell-t)a(t)\right\}= \notag\\
&&\ \ \ \ \ \sum_{i+j=\ell}\sum_{t\geq 0}(-1)^t{m\choose t}\left\{A_i(m-t)B_j(n+t)-(-1)^mB_j(m+n-t)A_i(t)\right\}.
\end{eqnarray}
Note that
\begin{eqnarray*}
A_i(m-t)B_j(n+t)&=&{m-t\choose i}{n+t\choose j}a(m-t-i)b(n+t-j),\\
B_j(m+n-t)A_i(t)&=& {m+n-t\choose j}{t\choose i} b(m+n-t-j)a(t-i).
\end{eqnarray*}
Thus (\ref{tcsum}) will follow from
\begin{eqnarray}\label{tc1}
&& {n\choose\ell}\sum_{t\geq 0}(-1)^t{m\choose t} a(m-t)b(n-\ell+t)\\
&=&\sum_{i=0}^\ell\sum_{t\geq 0}(-1)^t{m\choose t}{m-t\choose i}{n+t\choose \ell-i}a(m-t-i)b(n+t-\ell+i) \notag
\end{eqnarray}
and
\begin{eqnarray}\label{tc2}
&& {n\choose\ell}\sum_{t\geq 0}(-1)^t{m\choose t}b(m+n-\ell-t)a(t)\\
&=&\sum_{i=0}^{\ell}\sum_{t\geq 0}(-1)^t{m\choose t}{m+n-t\choose \ell-i}{t\choose i} b(m+n-t-\ell+i)a(t-i). \notag
\end{eqnarray}

To prove (\ref{tc2}), notice that for   $p\geq0$ the coefficient of $b(m+n-\ell-p)a(p)$ on the right-hand-side
 is equal to
\begin{eqnarray*}
&&\sum_{i=0}^{\ell}(-1)^{p+i}{m\choose p+i}{m+n-p-i\choose \ell-i}{p+i\choose i} \\
&=&(-1)^{p}{m\choose p}\sum_{i=0}^{\ell}(-1)^{i}{m-p\choose i}{m+n-p-i\choose \ell-i}\\
&=&(-1)^p{m\choose p}{n\choose \ell}
\end{eqnarray*}
where the last equality follows from (\ref{bi4}).\ This proves (\ref{tc2}), and we can establish (\ref{tc1})  in exactly the same way. 
The proof of  Theorem \ref{thmdeltatc}  is complete.
\end{proof}

We turn to the proof of Theorem \ref{thmtcfields}.\ Choose $\ell{\geq}0$
and use the operator identity
$[D_{\ell}, AB]=[D_{\ell}, A]B + A[D_{\ell}, B]$ to obtain
\begin{eqnarray*}
&&[D_{\ell}, a(z)_mb(z)] = \sum_n [D_{\ell}, (a(z)_mb(z))_n ]z^{-n-1}\\
=&&\sum_n\sum_{i\geq 0}(-1)^i{m\choose i}[D_{\ell}, \left\{a(m-i)b(n+i)-((-1)^mb(m+n-i)a(i)\right\}]z^{-n-1}\\
=&&[D_{\ell}, a(z)]_mb(z)+a(z)_m[D_{\ell}, b(z)]\\
=&&\sum_{i=1}^{\ell}\left\{ (\delta_z^{(i)}a(z)D_{\ell-i})_mb(z)+a(z)_m(\delta_z^{(i)}b(z)D_{\ell-i} )\right\}.
\end{eqnarray*}

On the other hand, by Theorem \ref{thmdeltatc} we have
\begin{eqnarray*}
\sum_{i=1}^\ell \delta^{(i)}_z (a(z)_mb(z))D_{\ell-i} &=& \sum_{i=1}^\ell 
\sum_{j=0}^i (\delta_z^{(j)}a(z))_m(\delta_z^{(i-j)}b(z))D_{\ell-i}\\
&=&\sum_{r=0}^{\ell-1} \sum_{p+q+r=\ell} (\delta_z^{(p)}a(z))_m(\delta_z^{(q)}b(z))D_{r}.
\end{eqnarray*}
Thus we must establish the following identity:
\begin{eqnarray}\label{deltaDid}
&&\sum_{i=1}^{\ell}\left\{ (\delta_z^{(i)}a(z)D_{\ell-i})_mb(z)+a(z)_m(\delta_z^{(i)}b(z)D_{\ell-i} )\right\}\\
&=&\sum_{r=0}^{\ell-1} \sum_{p+q+r=\ell} (\delta_z^{(p)}a(z))_m(\delta_z^{(q)}b(z))D_{r}. \notag
\end{eqnarray}

\medskip
For various fields $c(z)=\sum_n c(n)z^{-n-1}$ we have to consider expressions of the form
\begin{eqnarray*}
&&((c(z)D_t)_{(m)}b(z))_n\\
=&& \sum_{r\geq 0}(-1)^r{m\choose r}\left\{c(m-r)D_tb(n+r)-((-1)^mb(m+n-r)c(r)D_t\right\}\\
=&& \sum_{r\geq 0}(-1)^r{m\choose r}\\
&&\left\{c(m-r)[D_t, b(n+r)]+c(m-r)b(n+r)D_t-((-1)^mb(m+n-r)c(ir)D_t\right\}\\
=&&((c(z))_{(m)}b(z))_nD_t +\sum_{r\geq 0}(-1)^r{m\choose r}c(m-r)[D_t, b(n+r)],
\end{eqnarray*}
and similarly
\begin{eqnarray*}
&&a(z)_{(m)}(d(z)D_t)_n \\
&=& 
\sum_{r\geq 0}(-1)^r{m\choose r}\left\{ a(m-r)d(n+r)D_t -(-1)^md(m+n-r)D_ta(r)  \right\}\\
&=& 
\sum_{r\geq 0}(-1)^r{m\choose r}\\
&&\left\{ a(m-r)d(n+r)D_t -(-1)^md(m+n-r)[D_t, a(r)] 
-(-1)^md(m+n-r) a(r)D_t\right\}\\
&=&(a(z)_{(m)}d(z))_nD_t +\sum_{r\geq 0}(-1)^r{m\choose r}\left\{-(-1)^md(m+n-r)[D_t, a(r)] \right\}.
\end{eqnarray*}
As a result, we obtain
\begin{eqnarray*}
&&\sum_{i=1}^{\ell}\left\{ (\delta_z^{(i)}a(z)D_{\ell-i})_{(m)}b(z)+a(z)_{(m)}(\delta_z^{(i)}b(z)D_{\ell-i}) \right\}\\
=&&\sum_{i=1}^\ell ((\delta_z^{(i)}a(z))_{(m)}b(z))D_{\ell-i}+(a(z)_{(m)}(\delta^{(i)}_zb(z)))D_{\ell-i}+\\
&& \sum_{i=1}^\ell \sum_{r\geq 0}\sum_n (-1)^r{m\choose r}\left\{c_i(m-r)[D_{\ell-i}, b(n+r)]-(-1)^m
d_i(m+n-r)[D_{\ell-i}, a(r)]\right\}z^{-n-1}\\
=&& \sum_{i=1}^\ell \sum_{r\geq 0}\sum_n (-1)^r{m\choose r} \sum_{j=1}^{\ell-i}(-1)^j    \\
&&\left\{c_i(m-r){n+r\choose j} b(n+r-j)    -(-1)^m
d_i(m+n-r) {r\choose j} a(r-j) \right\}D_{\ell-i-j}z^{-n-1},
\end{eqnarray*}
where we have set $c_i(z) = \delta_z^{(i)}a(z)$ and $d_i(z)= \delta_z^{(i)}b(z)$.

\medskip
Comparing this with (\ref{deltaDid}), we are reduced to proving the following equality:
\begin{eqnarray*}
&&\ \ \   \ \ \ \ \ \  \ \ \ \ \ \ \ \ \  \sum_{i=1}^\ell \sum_{r\geq 0}\sum_n (-1)^r{m\choose r} \sum_{j=1}^{\ell-i}(-1)^j   \\
&&\left\{c_i(m-r){n+r\choose j} b(n+r-j)   -(-1)^m
d_i(m+n-r) {r\choose j} a(r-j) \right\}D_{\ell-i-j}z^{-n-1} \\
&=&\sum_{r=0}^{\ell-1} \sum_{p+q+r=\ell} (\delta_z^{(p)}a(z))_m(\delta_z^{(q)}b(z))D_{r}\\
&=&\sum_n \sum_{r=0}^{\ell-1} \sum_{i+j+r=\ell} \sum_{s\geq 0} (-1)^s{m\choose s}\left\{c_i(m-s)d_j(n+s)-(-1)^md_j(m+n-s)c_i(s)\right\}z^{-n-1}D_r,
\end{eqnarray*}
that is
\begin{eqnarray*}
&&\ \ \   \ \ \ \ \ \  \ \ \ \ \ \ \ \ \  \sum_{i=1}^\ell \sum_{s\geq 0} (-1)^s{m\choose s} \sum_{j=1}^{\ell-i}(-1)^j   \\
&&\left\{c_i(m-s){n+s\choose j} b(n+s-j)   -(-1)^m
d_i(m+n-s) {s\choose j} a(s-j) \right\}D_{\ell-i-j} \\
&=&\sum_{r=0}^{\ell-1} \sum_{i+j+r=\ell} \sum_{s\geq 0} (-1)^s{m\choose s}\left\{c_i(m-s)d_j(n+s)-(-1)^md_j(m+n-s)c_i(s)\right\}D_r.
\end{eqnarray*}
Now
\begin{eqnarray*}
&&\ \ \   \ \ \ \ \ \  \ \ \ \ \ \ \ \ \  \sum_{i=1}^\ell \sum_{s\geq 0} (-1)^s{m\choose s} \sum_{j=1}^{\ell-i}(-1)^j   \\
&&\left\{c_i(m-s){n+s\choose j} b(n+s-j)   -(-1)^m
d_i(m+n-s) {s\choose j} a(s-j) \right\}D_{\ell-i-j} \\
&=&\sum_{u=0}^{\ell-1}\sum_{i+j+u=\ell}  \sum_{s\geq 0} (-1)^s{m\choose s} (-1)^j   \\
&&\left\{c_i(m-s){n+s\choose j} b(n+s-j)   -(-1)^m d_i(m+n-s) {s\choose j} a(s-j) \right\}D_{u}, 
\end{eqnarray*}
so we need for fixed  $1\leq v\leq \ell$ that
\begin{eqnarray*}
&&\sum_{i+j=v}  \sum_{s\geq 0} (-1)^s{m\choose s} (-1)^j   \\
&&\left\{c_i(m-s){n+s\choose j} b(n+s-j)   -(-1)^m d_i(m+n-s) {s\choose j} a(s-j) \right\}\\
&=& \sum_{i+j=v} \sum_{s\geq 0} (-1)^s{m\choose s}\left\{c_i(m-s)d_j(n+s)-(-1)^md_j(m+n-s)c_i(s)\right\}.
\end{eqnarray*}
But this follows directly from the definition of the fields $c_i(z), d_j(z)$, and the proof of
Theorem \ref{thmtcfields} is complete. $\hfill\Box$

\subsection{Completion of the proof of Theorem \ref{thmexist1}}\label{SScompletion}
We have already seen in Subsection \ref{SSrp} that only the associativity formula
(\ref{assocform}) for the fields  $Y(u, z)\ (u{\in}V)$ remains to be proved, and that furthermore this is equivalent to proving
the identity (\ref{vringresprod}).\ We have now assembled all of the pieces that
allow us to carry this out.\ We first record a Lemma that we will need again later.
\begin{lem}\label{lemmadz} Suppose that $d(z){=}\sum_n d(n)z^{-n-1}{\in}\mathcal{F}(V)$ is translation-covariant, mutually local with all
fields $Y(u, z)\ (u\in V)$ and creative with respect to $v_0$.\ Then $d(z){=}0$ if, and only if, $d(z)$ creates $0$, i.e., $d(-1)v_0{=}0$.
\end{lem}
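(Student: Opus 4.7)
The forward direction is immediate: $d(z)=0$ trivially implies $d(-1)v_0=0$. For the converse, I assume $d(-1)v_0=0$ and argue in two steps, first killing $d(z)v_0$ and then using locality to kill $d(z)u$ for every $u\in V$.

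\emph{Step 1 (Killing $d(z)v_0$).} The derivation presented in the paragraph following Theorem \ref{thmexist1} shows that translation-covariance of $d(z)$ with respect to $\underline{D}$, together with the hypothesis $D_m(v_0)=0$ for $m\geq 1$ and the creativity of $d(z)$ with respect to $v_0$, forces
\begin{eqnarray*}
d(z)v_0 \;=\; \sum_{m\geq 0} D_m\bigl(d(-1)v_0\bigr)\,z^m.
\end{eqnarray*}
(The argument is: apply $[D_m,d(z)]=\sum_{i=1}^m\delta_z^{(i)}d(z)D_{m-i}$ to $v_0$, note that only the $i=m$ term survives, and read off $d(-m-1)v_0=D_m(d(-1)v_0)$.) Since $d(-1)v_0=0$ by hypothesis, this yields $d(z)v_0=0$.

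\emph{Step 2 (Killing $d(z)u$ via locality).} Fix $u\in V$. By the mutual locality assumption there is $t\geq 0$ with $(z-w)^t[d(z),Y(u,w)]=0$, equivalently
\begin{eqnarray*}
(z-w)^t d(z)Y(u,w) \;=\; (z-w)^t Y(u,w)d(z).
\end{eqnarray*}
Apply both sides to $v_0$. Using $d(z)v_0=0$ from Step 1, the right side vanishes, so $(z-w)^t d(z)Y(u,w)v_0=0$. Since $Y(u,w)$ is creative with $Y(u,w)v_0\in u+wV[[w]]$, the quantity $(z-w)^tY(u,w)v_0$ is a power series in $w$ whose constant term (coefficient of $w^0$) equals $z^t u$. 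Extracting that coefficient from the displayed identity gives
\begin{eqnarray*}
z^t\, d(z)u \;=\; 0 \quad\text{in } V[[z,z^{-1}]].
\end{eqnarray*}
Multiplying by $z^{-t}$ (a unit in $V[[z,z^{-1}]]$) yields $d(z)u=0$, i.e.\ $d(n)u=0$ for all $n\in\ZZ$. As $u$ was arbitrary, $d(z)=0$.

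\emph{Main obstacle.} The only subtlety is the legitimacy of the coefficient-extraction at the end of Step 2; this is clean because $Y(u,w)v_0$ already lies in $V[[w]]$ (no negative powers of $w$), so multiplication by $(z-w)^t$ and the subsequent specialization to the $w^0$-component are unambiguous operations in $V[[z,z^{-1}]][[w]]$. Everything else is a direct appeal to the three hypotheses (creativity, translation-covariance, locality), used exactly once each.
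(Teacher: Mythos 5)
Your proof is correct and takes essentially the same route as the paper's: translation-covariance together with $D_m(v_0){=}0$ yields $d(-m-1)v_0{=}D_m(d(-1)v_0)$ and hence $d(z)v_0{=}0$, after which locality applied to $v_0$ and extraction of the $w^0$-coefficient (the paper phrases this as $Res_w\,w^{-1}$) gives $z^t d(z)u{=}0$ for every $u$. The only cosmetic difference is that the paper first derives the intermediate identity $D_m d(n)v_0 = {m-n-1\choose m} d(n-m)v_0$ for all $n<0$ before specializing to $n=-1$, whereas you pass directly to the constant coefficient.
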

\begin{proof} We have to prove that $d(z){=}0$ on the basis of the assumption that $d(-1)v_0=0$.\ To see this,
we first show that $d(z)v_0=0$.\
 Let $m\geq 1$.\ Then $D_m(v_0){=}0$, so that
\begin{eqnarray*}
[D_m, d(z)]v_0 {=} D_md(z)v_0{=} \sum_{n<0} D_md(n)v_0z^{-n-1}.
\end{eqnarray*}
On the other, by translation-covariance we also have
\begin{eqnarray*}
[D_m, d(z)]v_0{=} \sum_{i=1}^m \delta_z^{(i)}d(z)D_{m-i}v_0{=}\delta_z^{(m)}d(z)v_0{=}\sum_{n<0} {-n-1\choose m}d(n)v_0z^{-n-m-1}.
\end{eqnarray*}
This shows that for $m{\geq}1, n{<}0$ we have
\begin{eqnarray*}
D_md(n)v_0 {=}  {m-n-1\choose m}d(n-m)v_0.
\end{eqnarray*}
Because $d(-1)v_0{=}0$ by hypothesis, we obtain for all $m{\geq}1$ that
\begin{eqnarray*}
0{=}D_md(-1)v_0{=} d(-1-m)v_0,
\end{eqnarray*}
and therefore $d(z)v_0{=}0$, as claimed.

\medskip
Now by assumption, $d(z)$ is mutually local with every field $Y(u, z)\
(u{\in} V)$.\ Suppose that $(z-w)^N[d(z), Y(u, z)]{=}0$ for some $N\geq 0$.\  Then
\begin{eqnarray*}
&&z^Nd(z)u {=}Res_ww^{-1}(z-w)^Nd(z)Y(u, w)v_0 \\
&&\ \ \ \ \ \ \ \ \ \  \ {=} Res_ww^{-1}(z-w)^NY(u, w)d(z)v_0 = 0.
\end{eqnarray*}
This shows that $d(z)u{=}0$ for all $u{\in} V$, so $d(z){=}0$, and the proof
of the Lemma is complete.
\end{proof}

\medskip
To complete the proof of Theorem \ref{thmexist1},  choose $u, v{\in}V, t{\in}\mathbf{Z}$ and set 
\begin{eqnarray*}
d(z){=}Y(u, z)_{t}Y(v, z)-Y(u(t)v, z).
\end{eqnarray*}
\ We have to show that $d(z){=}0$.\ Indeed, $u(t)v{\in}V$, so that $Y(u(t)v, z)$ is creative, translation-covariant and mutually local with all fields
$Y(a, z)\ (a{\in}V)$ by hypothesis, and
it creates $u(t)v$.\ On the other hand,
$Y(u, z)_tY(v, z)$ is also creative, mutually local with all fields $Y(a, z)$, and translation-covariant by Lemmas \ref{lemtcresprod}, \ref{lemDong}
and Theorem \ref{thmtcfields} respectively, and it also creates $u(t)v$ (loc.\ cit).\ Therefore,
$d(z)$ satisfies the assumptions of the previous Lemma,  and the conclusion
$d(z){=}0$ follows.\ This completes the proof of Theorem \ref{thmexist1}.

\subsection{Generators of a vertex ring and a refinement of Theorem \ref{thmexist1}}
In this Subsection we deduce a refinement of Theorem \ref{thmexist1} which only involves 
\emph{generators} of $V$.

\begin{dfn}\label{defVgens} Let $U\subseteq V$ be a subset
of a vertex ring $V$.\ We say that $U$ \emph{generates} $V$ if $V$ is generated (as an additive abelian group) by the states
\begin{eqnarray*}
\left\{u_1(i_1)...u_k(i_k)\mathbf{1} \mid u_j\in U, i_j\in\mathbf{Z},\ j=1, ..., k \right\}.
\end{eqnarray*}
If this holds, we write $V=\langle U \rangle$.
\end{dfn}

We shall prove
\begin{thm}\label{thmexist2} Let $(V, v_0, \underline{D})$ consist of 
an additive abelian group $V$, a state $v_0{\in} V$, and a sequence of endomorphisms
$\underline{D}{:=}(Id_V, D_1, ...)$ in $End(V)$ satisfying $D_m(v_0){=}0\ (m{\geq} 1)$.\ Suppose that
$U{\subseteq} V$,  that to each $u{\in} U$ is attached a field $u(z){=}\sum_n u(n)z^{-n-1}{\in}
\mathcal{F}(V)$,  and that  the following assumptions hold for all $u, v\in U$:
\begin{eqnarray*}
&&u(z) {\sim} v(z), \\
&&u(z)v_0 {\in} u{+}zV[[z]],\\
&&[D_m, u(z)]{=}\sum_{i=1}^m \delta_z^{(i)}u(z)D_{m-i}\ \ (m\geq 1),\\
&&\mbox{$V$ is generated (as an additive abelian group) by}\\
&&\ \ \ \ \ \ \ \ \ \ \ \ \ \ \  \mbox{$\left\{u_1(m_1)...u_k(m_k)v_0 \mid u_j\in U, m_j\in\mathbf{Z}, j=1, ..., k  \right\}$.}
\end{eqnarray*}
Then $V{=}(V, Y, v_0, \underline{D})$ is a vertex ring generated by $U$ and
with state-field correspondence $Y$ satisfying $Y(u, z)=u(z)\ (u\in U)$, vacuum vector $v_0$, and
canonical HS derivation $\underline{D}$.
\end{thm}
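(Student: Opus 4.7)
The plan is to reduce to Theorem~\ref{thmexist1} by extending the assignment $u\mapsto u(z)$ ($u\in U$) to a state-field correspondence $Y\colon V\to\mathcal{F}(V)$, constructing the remaining fields via iterated residue products of the given ones.\ First I would let $\mathcal{S}\subseteq\mathcal{F}(V)$ be the smallest additive subgroup containing $Id_V$ and each $u(z)$ ($u\in U$) that is closed under all residue products $(a(z), b(z))\mapsto a(z)_m b(z)$, $m\in\mathbf{Z}$.\ By induction on the construction of $\mathcal{S}$, using Lemma~\ref{lemtcresprod} (creativity is preserved), Theorem~\ref{thmtcfields} (translation-covariance is preserved), and Lemma~\ref{lemDong} (mutual locality is preserved under residue products and additive combinations), one checks that every field in $\mathcal{S}$ is creative with respect to $v_0$ and translation-covariant with respect to $\underline{D}$, and that any two fields in $\mathcal{S}$ are mutually local.\ The hypothesis $D_m(v_0)=0$ ($m\geq 1$) is what makes $Id_V$ a legitimate base case for translation-covariance.

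Next I would consider the additive map $\Phi\colon\mathcal{S}\to V$, $a(z)\mapsto a(-1)v_0$, which sends each field to the state it creates.\ Since $\Phi(Id_V)=v_0$, $\Phi(u(z))=u$ for $u\in U$, and Lemma~\ref{lemtcresprod} gives $\Phi(a(z)_m b(z))=a(m)\Phi(b(z))$, induction on $k$ shows every state of the form $u_1(m_1)\cdots u_k(m_k)v_0$ lies in the image of $\Phi$, so by the generating hypothesis $\Phi$ is surjective.\ The main obstacle is the injectivity of $\Phi$:\ if $a(z)\in\mathcal{S}$ and $a(-1)v_0=0$, one must show $a(z)=0$.\ Lemma~\ref{lemmadz} almost gives this, but as stated it requires mutual locality of $a(z)$ with $Y(u, z)$ for every $u\in V$, and $Y$ has not yet been defined on $V$.\ Inspecting the proof of Lemma~\ref{lemmadz}, however, what is actually used is only that for each $u\in V$ there exists \emph{some} creative field, mutually local with $a(z)$, creating $u$.\ Surjectivity of $\Phi$ supplies such a field from within $\mathcal{S}$, and any two fields in $\mathcal{S}$ are mutually local by the preceding step, so the argument of Lemma~\ref{lemmadz} goes through verbatim and yields $a(z)u=0$ for every $u\in V$, hence $a(z)=0$.

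Setting $Y:=\Phi^{-1}$ yields an additive map $V\to\mathcal{F}(V)$ with $Y(v_0, z)=Id_V$ and $Y(u, z)=u(z)$ for $u\in U$, and by construction $\{Y(u, z)\mid u\in V\}$ is a set of mutually local, creative, translation-covariant fields on $V$.\ Theorem~\ref{thmexist1} then exhibits $V$ as a vertex ring with state-field correspondence $Y$, vacuum vector $v_0$, and canonical Hasse-Schmidt derivation $\underline{D}$.\ That $U$ generates $V$ in the sense of Definition~\ref{defVgens} is immediate from the generating hypothesis on $V$.
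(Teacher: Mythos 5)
Your proof is correct and is essentially the paper's argument run in mirror image: the paper assigns to each generating expression $u_1(m_1)\cdots u_k(m_k)v_0$ the corresponding iterated residue product of fields and proves this assignment well defined via Lemma~\ref{lemmadz}, whereas you run the map in the opposite direction (your $\Phi$) and prove injectivity by the same lemma --- logically the identical step --- with both versions resting on Lemmas~\ref{lemtcresprod}, \ref{lemDong}, Theorem~\ref{thmtcfields}, and the reduction to Theorem~\ref{thmexist1}. Your observation that Lemma~\ref{lemmadz} really only requires, for each $u\in V$, the existence of \emph{some} creative field creating $u$ and mutually local with $d(z)$ is a point the paper's proof uses tacitly (it invokes the lemma before $Y$ is known to be single-valued on all of $V$), so your formulation makes that step precise.
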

\begin{proof} We associate to each state $a:=u_1(m_1)...u_k(m_k)v_0\ (u_j\in U)$ the field
\begin{eqnarray*}
a(z):=u_1(z)_{m_1}(u_2(z)_{m_2}...(u_k(z)_{m_k}Id_V)...),
\end{eqnarray*}
and to each such sum of states the corresponding sum of fields.\ Thanks to
the hypotheses of the Theorem together with Lemmas \ref{lemtcresprod}, \ref{lemDong}
and Theorem \ref{thmtcfields}, we create in this way a set of mutually local,
 translation-covariant, creative fields associated to the states of $V$.\ 
  
  \medskip
  We aver that this process is \emph{well-defined}, i.e., each state is associated to a \emph{unique} field on $V$.\ If this is so, the linear extension of the association
$a\mapsto a(z)$ is the state-field correspondence we are after, and the present Theorem follows
directly from Theorem \ref{thmexist1}.

\medskip
As for well-definedness, it suffices to show that if we have a relation of the form
\begin{eqnarray*}
\sum_{j=1}^t u_{1_j}(i_{1_j})...u_{k_j}(i_{k_j})v_0=0\ \ (u_{m_j}\in U),
\end{eqnarray*}
then the field associated to the state on the left-hand-side by the process described above
also vanishes.\ Let the field in question be denoted by $d(z)$.\ Now $d(z)$ is mutually local with all other fields associated to the states of $V$
and translation-covariant by Lemma \ref{lemDong} and Theorem \ref{thmtcfields} respectively, and it is creative and creates $0$ by Lemma \ref{lemtcresprod}.\ Then $d(z)=0$ holds by Lemma \ref{lemmadz}.\ This completes the proof of
Theorem \ref{thmexist2}.
\end{proof}

\begin{rmk}\label{remgenresprod} The proof shows that if $u=u_1(m_1)\hdots u_k(m_k)v_0$ then
\begin{eqnarray*}
Y(u, z)= Y(u_1, z)_{m_1}( \hdots (Y(u_k, z)_{m_k}Id_V)\hdots).
\end{eqnarray*}
\end{rmk}

\subsection{Formal Taylor expansion and an alternate existence Theorem}
In this Subsection we illustrate some additional techniques dealing with formal series, in particular
the formal Taylor expansion, by proving an alternate characterization of vertex rings.\ The result we are after,
the proof of which follows a line of argument due to Haisheng Li \cite{Li}, is the following.
\begin{thm}\label{thmexist3} Let $(V, Y', v_0, \underline{D})$ consist of 
an additive abelian group $V$, a state $v_0{\in}V$, an \emph{iterative} sequence of endomorphisms
$\underline{D}:=(Id_V, D_1, ...)$ in $End(V)$ satisfying $D_m(v_0){=}0$ for $m\geq 1$, and a morphism of abelian groups
\begin{eqnarray*}
Y'{:} V\rightarrow End(V)[[z, z^{-1}]],\ u\mapsto Y'(u, z):=\sum_n u(n)z^{-n-1}.
\end{eqnarray*}
Suppose that the following assumptions hold for all states $u, v\in V$:
\begin{eqnarray*}
&&\ \ \ \ \ \ \ \ \ Y'(u, z)\sim Y'(v, z), \\
&&\ \ \ \ \ \ Y'(u, z)v_0\in u+zV[[z]], \\
&& {[}D_m, Y'(u, z){]} =\sum_{i=1}^m \delta_z^{(i)}Y'(u, z)D_{m-i}\ \ (m\geq 0).
\end{eqnarray*}
Then $V$ is a vertex ring with state-field correspondence $Y'$, vacuum vector $v_0$, and
canonical HS derivation $\underline{D}$.\ (In particular, each $Y'(u, z)\in\mathcal{F}(V)$.)
\end{thm}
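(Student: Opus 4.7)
The plan is to reduce Theorem \ref{thmexist3} to Theorem \ref{thmexist1}. The two statements have identical hypotheses except that Theorem \ref{thmexist3} only assumes $Y'(u,z) \in End(V)[[z, z^{-1}]]$, whereas Theorem \ref{thmexist1} assumes the stronger field property $Y'(u,z) \in \mathcal{F}(V)$, namely that $u(n)v = 0$ for $n \gg 0$ for each fixed $v$. So the single new ingredient to extract here is precisely the field property; once this is in hand, Theorem \ref{thmexist1} applies verbatim and delivers everything. (The iterativity assumption on $\underline{D}$ is in fact redundant for the conclusion, since by Theorem \ref{thmHS} the canonical HS derivation produced by Theorem \ref{thmexist1} is automatically iterative; it matches, however, the hypotheses under which Li works.)

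The key observation is an asymmetry. By the creativity assumption, $Y'(v, w) v_0 \in v + w V[[w]] \subseteq V[[w]]$ contains no negative powers of $w$, and symmetrically $Y'(u, z) v_0 \in V[[z]]$ contains no negative powers of $z$. Fix $u, v \in V$ and choose $t \geq 0$ with $(z-w)^t [Y'(u, z), Y'(v, w)] = 0$ in the ambient space $End(V)[[z, z^{-1}, w, w^{-1}]]$, then apply this identity to $v_0$ to obtain
$$ (z-w)^t Y'(u, z) Y'(v, w) v_0 \;=\; (z-w)^t Y'(v, w) Y'(u, z) v_0. $$
The left-hand side lies in $V[[z, z^{-1}]][[w]]$: indeed $Y'(v, w) v_0 \in V[[w]]$, and subsequent multiplication by $Y'(u, z)$ and by the polynomial $(z-w)^t$ introduces no negative powers of $w$. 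By the symmetric argument the right-hand side lies in $V[[w, w^{-1}]][[z]]$. Inside the ambient $V[[z, z^{-1}, w, w^{-1}]]$, the intersection of these two one-sided completions is precisely $V[[z, w]]$, so the common value must actually lie in $V[[z, w]]$.

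Extracting the coefficient of $w^0$ from the left-hand side picks out only the $w^0$-term of the polynomial $(z-w)^t$ (namely $z^t$) paired with the $w^0$-term of $Y'(v, w) v_0$ (namely $v$), yielding $z^t Y'(u, z) v$. This element of $V[[z, z^{-1}]]$ must therefore lie in $V[[z]]$. Writing out $z^t Y'(u, z) v = \sum_n u(n) v \, z^{t-n-1}$, the absence of negative powers of $z$ is exactly the statement that $u(n) v = 0$ for all $n \geq t$, which is the field property. All hypotheses of Theorem \ref{thmexist1} are now satisfied, and it applies to deliver Theorem \ref{thmexist3}. The main obstacle lies in the reasoning of the previous paragraph --- the passage from the asymmetric one-sided containments on the two sides of the locality identity to the symmetric conclusion that both lie in $V[[z, w]]$ --- which is the manifestation in this setting of the \emph{formal Taylor expansion} techniques alluded to at the start of the subsection.
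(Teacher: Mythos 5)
Your proof is correct, and at the decisive step it is genuinely leaner than the paper's. Both arguments share the same skeleton---derive the field property $u(n)v=0$ for $n\geq t$ from locality applied to the vacuum, then invoke Theorem \ref{thmexist1}---but they diverge in how the second variable is controlled. The paper (Lemma \ref{lemfdprop}, following Li) takes the identity $(z-y)^tY'(u,y)Y'(v,z)v_0=(z-y)^tY'(v,z)Y'(u,y)v_0$ and then rewrites the right-hand side as $(z-y)^t\bigl(\sum_{m}D_my^m\bigr)Y'(v,z-y)u$, using Lemma \ref{lemaltcreate}, the formal Taylor expansion (Lemmas \ref{lemDconj}, \ref{lemFTE}) and, crucially, the inversion of $\sum_m D_my^m$ (Lemma \ref{rmkDinvert}), which is exactly where the iterativity hypothesis enters; only then does it read off that no negative powers of $y$ occur. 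You instead observe that the unmodified right-hand side $(z-w)^tY'(v,w)Y'(u,z)v_0$ already lies in $V[[w,w^{-1}]][[z]]$ by creativity alone, so the two one-sided containments force the common value into $V[[z,w]]$, and extracting the $w^0$-coefficient yields $z^tY'(u,z)v\in V[[z]]$. That step uses neither translation-covariance nor $\underline{D}$ at all, and it substantiates your side remark: iterativity is indeed redundant, since once Theorem \ref{thmexist1} applies, $\underline{D}$ is the canonical HS derivation and hence iterative by Theorem \ref{thmHS}---a conclusion the paper's route cannot deliver, because its key lemma genuinely consumes iterativity. (In fact your shortcut is latent in the paper's own chain (\ref{Liarg}): its second expression already has no negative powers of $y$ by creativity, so lines three and four are not logically needed there.) What the paper's longer detour buys is the Taylor-expansion machinery itself, which the subsection is explicitly designed to illustrate and which is what one needs for the independent proof of Theorem \ref{thmexist3} avoiding Theorem \ref{thmexist1} that the paper mentions and leaves as an exercise.
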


\begin{rmk} The statement of Theorem \ref{thmexist3} is very similar to that of Theorem \ref{thmexist1}, but it differs in two crucial ways{:}\ first, the formal series $Y'(u, z)$ are \emph{not} assumed to be fields on $V$,
merely  series of operators on $V$.\ For this reason, we have used $Y'$ to denote such series; the prime
indicates that $Y'(u, z)$ is not necessarily a field.\ (At the end of the day, of course, the conclusion of the Theorem is that
$Y'(u, z)$ \emph{is} a field.)\ The second difference is that $\underline{D}$ is assumed to be \emph{iterative}.\
Note that the other assumptions of locality, creativity and translation-covariance do not require $Y'(u, z)$ to be
a field on $V$ - they make sense for all series.
\end{rmk}

We turn to the proof of Theorem \ref{thmexist3}, which we give in a sequence of Lemmas.\
 We adopt the notation and assumptions of the Theorem until further notice.\ In the following, $y$ and $z$
 are  a pair of formal variables.

\begin{lem}\label{lemaltcreate}
 We have
\begin{eqnarray*}
Y'(u, z)v_0=\sum_{m\geq 0} D_m(u)z^m.
\end{eqnarray*}
\end{lem}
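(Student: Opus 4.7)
The plan is to reproduce, in the setting of Theorem \ref{thmexist3}, the short argument already sketched after (\ref{morecreate}). By the creativity hypothesis we may write $Y'(u,z)v_0 = \sum_{m\geq 0} b_m z^m$ with $b_0 = u$ and $b_m \in V$, and the goal reduces to identifying $b_m$ with $D_m(u)$ for every $m \geq 1$.

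The key move is to apply the endomorphism $D_m$ to the power series $Y'(u,z)v_0$ in two different ways. On the one hand, acting coefficient-by-coefficient gives $D_m Y'(u,z) v_0 = \sum_{n \geq 0} D_m(b_n)\, z^n$. On the other hand, since $D_m(v_0) = 0$ for $m \geq 1$, we have $D_m Y'(u,z) v_0 = [D_m, Y'(u,z)]v_0$, and translation-covariance rewrites this as
\begin{equation*}
[D_m, Y'(u,z)] v_0 = \sum_{i=1}^m \delta_z^{(i)} Y'(u,z)\, D_{m-i}(v_0).
\end{equation*}
The hypothesis $D_j(v_0) = 0$ for $j \geq 1$ kills every term except $i = m$, leaving $\delta_z^{(m)} Y'(u,z) v_0 = \sum_{n \geq 0} \binom{n+m}{m} b_{n+m} z^n$.

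Comparing the two expressions as formal power series in $z$ and reading off the constant term yields $D_m(b_0) = b_m$, i.e.\ $D_m(u) = b_m$, which is the desired identity. There is no serious obstacle here: the argument is a direct application of translation-covariance together with the condition $D_m(v_0) = 0$, which collapses the sum on the right-hand side to a single term. In particular, the iterativity hypothesis on $\underline{D}$ in Theorem \ref{thmexist3} plays no role at this stage; it will presumably be needed later, when one has to promote the series $Y'(u,z)$ to honest fields on $V$ and apply Theorem \ref{thmexist1}.
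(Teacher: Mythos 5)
Your proof is correct and takes essentially the same route as the paper: the paper's proof of Lemma \ref{lemaltcreate} simply repeats the argument given after (\ref{morecreate}), namely applying translation-covariance to $v_0$, using $D_j(v_0)=0$ for $j\geq 1$ to collapse the sum to $D_mY'(u,z)v_0=\delta_z^{(m)}Y'(u,z)v_0$, and then comparing coefficients to get $D_m(u)=u(-m-1)v_0$. Your remark that iterativity of $\underline{D}$ plays no role here is also accurate; it is needed only later (via Lemma \ref{rmkDinvert} in the proof of Lemma \ref{lemfdprop}).
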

\begin{proof} The proof follows from translation-covariance, just as (\ref{morecreate}) is deduced from the
assumptions of Theorem \ref{thmexist1}.
\end{proof}

\medskip
 The expression $\delta^{(i)}_z=\frac{1}{i!}\partial_z\ (i\geq 0)$ introduced in Subsection \ref{SStc} may be
 applied to series in $V[[z, z^{-1}]]$, as may the \emph{exponential}
$e^{y\partial_z}$.\ However they cannot generally be applied to states in any meaningful way.\ As a substitute
we may use $D_i$ in place of $\delta^{(i)}_z$, when the exponential becomes $\sum_{m\geq 0} D_my^m$.

\begin{lem}\label{lemDconj} We have
\begin{eqnarray*}
\left\{\sum_{m\geq 0} D_my^m\right\}Y'(u, z)\left\{\sum_{m\geq 0} (-y)^mD_m\right\}  = e^{y\partial_z}Y'(u, z). 
\end{eqnarray*}
\end{lem}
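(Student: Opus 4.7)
The plan is to show that the claimed identity is equivalent, after multiplication on the right by $\sum_{m\geq 0}D_my^m$, to a coefficient-wise identity which is nothing other than the translation-covariance hypothesis.

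First, since $\underline{D}$ is iterative, Lemma \ref{rmkDinvert} gives
$$\left(\sum_{m\geq 0}D_my^m\right)\left(\sum_{m\geq 0}(-y)^mD_m\right)=Id_V,$$
so the two operator-valued power series in $y$ are mutual two-sided inverses in $\mathrm{End}(V)[[y]]$. Multiplying the desired identity on the right by $\sum_{m\geq 0}D_my^m$, it is therefore equivalent to
$$\left(\sum_{m\geq 0}D_my^m\right)Y'(u,z)=\left(e^{y\partial_z}Y'(u,z)\right)\left(\sum_{m\geq 0}D_my^m\right).$$

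Next I would expand both sides as power series in $y$. Recalling that $e^{y\partial_z}Y'(u,z)=\sum_{m\geq 0}\delta_z^{(m)}Y'(u,z)\,y^m$, and comparing coefficients of $y^m$, this reduces the identity to
$$D_mY'(u,z)=\sum_{i+j=m,\,i,j\geq 0}\delta_z^{(i)}Y'(u,z)D_j\qquad(m\geq 0).$$
Separating off the $i=0$ term $Y'(u,z)D_m$ on the right and using $\delta_z^{(0)}=Id$, this is exactly
$$[D_m,Y'(u,z)]=\sum_{i=1}^m\delta_z^{(i)}Y'(u,z)D_{m-i},$$
which is the translation-covariance assumption on $Y'(u,z)$. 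Hence the identity holds coefficient-wise in $y$, and reversing the initial multiplication by the inverse $\sum_{m\geq 0}(-y)^mD_m$ yields the Lemma.

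I do not anticipate a genuine obstacle; the only point requiring mild care is that the manipulations above take place in the ring $\mathrm{End}(V)[[y]][[z,z^{-1}]]$ (or $\mathrm{End}(V)[[y]]\otimes\mathrm{End}(V)[[z,z^{-1}]]$ of bi-indexed series), where multiplication by $\sum_{m\geq 0}D_my^m$ is a well-defined invertible operation thanks to the iterative hypothesis via Lemma \ref{rmkDinvert}. Everything else is a coefficient comparison.
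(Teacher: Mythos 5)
Your proof is correct, but it is organized differently from the paper's. The paper proves the lemma head-on: it extracts the coefficient of $y^n$ on the left side, namely $\sum_{m=0}^n(-1)^{n-m}D_mY'(u,z)D_{n-m}$, substitutes translation-covariance for $D_mY'(u,z)$, and then collapses the resulting double sum using the iterative relation $D_{m-i}D_{n-m}=\binom{n-i}{n-m}D_{n-i}$ together with the alternating binomial identity $\sum_k(-1)^k\binom{n-i}{k}=\delta_{i,n}$, leaving exactly $\delta_z^{(n)}Y'(u,z)$. You instead transpose the inverse factor to the other side first, so that the coefficient-of-$y^m$ identity becomes \emph{verbatim} the translation-covariance hypothesis $D_mY'(u,z)=\sum_{i+j=m}\delta_z^{(i)}Y'(u,z)D_j$, with no binomial computation at all; iterativity enters only through Lemma \ref{rmkDinvert}. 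Two small points to note: Lemma \ref{rmkDinvert} as stated gives the product in only one order, but substituting $-y$ for $y$ in it yields the other order, so your claim of mutual two-sided inverses is immediate (alternatively, a one-sided inverse of a power series with invertible constant term in $\mathrm{End}(V)[[y]]$ is automatically two-sided); and your caution about where the manipulation lives is apt, since $\mathrm{End}(V)[[z,z^{-1}]]$ is not a ring, but right multiplication by an invertible element of $\mathrm{End}(V)[[y]]$ on $\mathrm{End}(V)[[z,z^{-1}]][[y]]$ is a well-defined bijection, which is all you use. Your route is arguably cleaner, trading the paper's explicit binomial collapse for a one-line appeal to invertibility; the paper's computation, on the other hand, is self-contained and does not need the transposition step.
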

\begin{proof} We have to prove that if $n{\geq}0$ then
\begin{eqnarray*}
\sum_{m=0}^n (-1)^{n-m}D_mY'(u, z)D_{n-m} =\delta^{(n)}_zY'(u, z).
\end{eqnarray*}
But the lhs of this equation is equal to
\begin{eqnarray*}
&&\sum_{m=0}^n (-1)^{n-m}\left([D_m, Y'(u, z)]+Y'(u, z)D_m\right)D_{n-m}\\
&=&\sum_{m=0}^n (-1)^{n-m}\left(\sum_{i=0}^m \delta^{(i)}_zY'(u, z)D_{m-i}\right)D_{n-m}\\
&=&\sum_{i=0}^m \delta^{(i)}_zY'(u, z)\sum_{m=0}^n (-1)^{n-m}D_{m-i}D_{n-m}\\
&=&\sum_{i=0}^m \delta^{(i)}_zY'(u, z)\sum_{m=0}^n (-1)^{n-m}{n-i\choose n-m}D_{n-i}\\
&=& \delta^{(n)}_zY'(u, z).
\end{eqnarray*}
\end{proof}

\begin{lem}\label{lemFTE} (Formal Taylor expansion.)\ We have
\begin{eqnarray*}
Y'(u, z+y)=\left\{\sum_{m\geq 0} D_my^m\right\}Y'(u, z)\left\{\sum_{m\geq 0} (-y)^mD_m\right\}. 
\end{eqnarray*}
(Convention (\ref{binexp}) for binomial expansions is in effect here.)
\end{lem}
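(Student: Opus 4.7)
The plan is to reduce the claim, via Lemma \ref{lemDconj}, to the purely formal identity
\begin{eqnarray*}
Y'(u, z+y) \;=\; e^{y\partial_z} Y'(u, z),
\end{eqnarray*}
and then verify the latter by comparing modes.

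First, I would invoke Lemma \ref{lemDconj} directly to rewrite the right-hand side of the desired formula as $e^{y\partial_z} Y'(u,z) := \sum_{k\geq 0} y^k \delta_z^{(k)} Y'(u,z)$. Note that this makes sense as a formal series: although the exponential $e^{y\partial_z}$ cannot be applied to arbitrary elements, it is well-defined on series in $z$ since each power of $y$ has only finitely many contributions from a given mode.

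Next, I would unpack both sides. Writing $Y'(u, z) = \sum_n u(n) z^{-n-1}$, the left-hand side expands under convention (\ref{binexp}) as
\begin{eqnarray*}
Y'(u, z+y) \;=\; \sum_n u(n) (z+y)^{-n-1} \;=\; \sum_{n,\, k\geq 0} \binom{-n-1}{k} u(n)\, z^{-n-1-k}\, y^k.
\end{eqnarray*}
On the other hand, the computation (\ref{dact1}) gives
\begin{eqnarray*}
\delta_z^{(k)} Y'(u, z) \;=\; \sum_n \binom{-n-1}{k} u(n)\, z^{-n-1-k},
\end{eqnarray*}
so that $e^{y\partial_z} Y'(u,z)$ produces exactly the same double sum. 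Matching coefficients of $z^{-n-1-k}y^k$ completes the proof.

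The only potential obstacle is bookkeeping: one must check that convention (\ref{binexp}) for the expansion of $(z+y)^{-n-1}$ agrees with the convention implicit in $\delta_z^{(k)}$ (namely, expand in nonnegative powers of $y$). Once this is settled, the identity is essentially a tautological consequence of the definitions, so the real mathematical content of Lemma \ref{lemFTE} is carried by Lemma \ref{lemDconj}, which translates the conjugation by $\sum D_m y^m$ into the analytic operator $e^{y\partial_z}$.
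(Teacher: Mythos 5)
Your proposal is correct and follows essentially the same route as the paper: both reduce the claim via Lemma \ref{lemDconj} to the identity $Y'(u,z+y)=e^{y\partial_z}Y'(u,z)$ and then verify it by comparing modes using (\ref{dact1}) and the binomial convention (\ref{binexp}). Your observation that the mathematical content lives in Lemma \ref{lemDconj} matches the structure of the paper's argument exactly.
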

\begin{proof} We have
\begin{eqnarray*}
e^{y\partial_z}Y'(u, z)&=&\sum_{i\geq 0} y^i \delta_z^{(i)}Y'(u, z)=\sum_{i\geq 0}\sum_{n\in\ZZ} \frac{y^i}{i!} n(n-1)...(n-i+1)u(-n-1)z^{n-i}\\
&=&\sum_{i\geq 0}\sum_{n\in\ZZ} {n\choose i} u(-n-1)y^iz^{n-i}=Y'(u, z+y).
\end{eqnarray*}
Now apply Lemma \ref{lemDconj}.
\end{proof}

\begin{lem}\label{lemfdprop} If $Y'(u, z){\sim_t}Y'(v, z)\ (t\geq 0)$ then $u(n)v{=}0$ for $n{\geq}t$.\ In particular,
$Y'(u, z)$ is a field on $V$.
\end{lem}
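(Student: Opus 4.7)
The plan is to exploit the creativity hypothesis together with locality to force cancellation of the unwanted modes. The key observation is that, by Lemma \ref{lemaltcreate}, both $Y'(u,z)v_0$ and $Y'(v,w)v_0$ lie in $V[[z]]$ and $V[[w]]$ respectively (only nonnegative powers). Locality gives us the relation
\begin{eqnarray*}
(z-w)^t Y'(u,z)Y'(v,w)v_0 = (z-w)^t Y'(v,w)Y'(u,z)v_0,
\end{eqnarray*}
and we will extract information by comparing the formal-series domains in which each side lives.

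First I would observe that the left-hand side belongs to $V[[z,z^{-1}]][[w]]$ (because $Y'(v,w)v_0\in V[[w]]$), and therefore involves only nonnegative powers of $w$; while the right-hand side belongs to $V[[z]][[w,w^{-1}]]$ (because $Y'(u,z)v_0\in V[[z]]$), and therefore involves only nonnegative powers of $z$. Since the two sides are equal as formal series, the common expression must lie in the intersection $V[[z]][[w]]$, i.e.\ it has only nonnegative powers of both $z$ and $w$.

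Next I would extract the coefficient of $w^0$ from the left-hand side. Writing $(z-w)^t=\sum_{k=0}^t\binom{t}{k}(-1)^k z^{t-k}w^k$ and $Y'(v,w)v_0=\sum_{m\geq 0}D_m(v)w^m$, the coefficient of $w^0$ comes from $k=m=0$ and equals $z^t\, Y'(u,z)D_0(v)=z^t\, Y'(u,z)v$. By the conclusion of the previous paragraph, this expression must lie in $V[[z]]$, i.e.\ it involves only nonnegative powers of $z$. Since $z^t Y'(u,z)v=\sum_{n\in\mathbf{Z}} u(n)v\, z^{t-n-1}$, the vanishing of all coefficients with $t-n-1<0$ yields $u(n)v=0$ for every $n\geq t$, which is the desired conclusion. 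The ``in particular'' claim is immediate: $t$ depends only on the pair $(u,v)$, so $Y'(u,z)\in\mathcal{F}(V)$.

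There is no real obstacle to overcome beyond being careful about the domains of formal series; the whole argument is a formal-series bookkeeping exercise that avoids invoking translation-covariance or iterativity of $\underline{D}$, using only locality and the elementary consequence of creativity that $Y'(\cdot,z)v_0$ is a power series in $z$ (and symmetrically in $w$). This is essentially the standard ``asymmetry of domains'' trick used to promote locality to the field property.
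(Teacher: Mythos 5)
Your proof is correct, and it is genuinely leaner than the paper's. The paper follows Li's argument: after applying locality to $v_0$ and using Lemma \ref{lemaltcreate} to write $Y'(u,y)v_0=\bigl(\sum_{m\geq 0}D_my^m\bigr)u$, it pushes the exponential series past $Y'(v,z)$ via the formal Taylor expansion (Lemma \ref{lemFTE}), so that the absence of negative powers of $y$ is read off from the binomial convention (\ref{binexp}) in the expression $(z-y)^t\bigl(\sum_{m\geq 0}D_my^m\bigr)Y'(v,z-y)u$, and then specializes $z=0$. That chain rests on Lemmas \ref{lemDconj} and \ref{rmkDinvert}, hence on translation-covariance \emph{and} the iterativity of $\underline{D}$. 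You instead notice that the un-Taylored middle expression $(z-w)^tY'(v,w)Y'(u,z)v_0$ already lies in $V[[z]][[w,w^{-1}]]$, simply because $Y'(u,z)v_0\in V[[z]]$ and $(z-w)^t$ is a polynomial; combined with the symmetric containment for the other ordering, the common series lies in $V[[z,w]]$, and extracting the coefficient of $w^0$ (your analogue of the paper's $z=0$ specialization, with the same mechanics: the constant term of $Y'(v,w)v_0$ is $v$) gives $z^tY'(u,z)v\in V[[z]]$, i.e.\ $u(n)v=0$ for $n\geq t$. What your route buys is a real economy of hypotheses: only locality and raw creativity are used --- you do not even need the refined form of Lemma \ref{lemaltcreate}, since $Y'(v,w)v_0\in v+wV[[w]]$ suffices --- so iterativity of $\underline{D}$ is seen to be irrelevant to this particular lemma. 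What the paper's route buys is a rehearsal of the Taylor-expansion machinery that Li's method exploits more fully if one wants to prove Theorem \ref{thmexist3} independently of Theorem \ref{thmexist1}, as the Remark following the proof indicates; for the lemma itself, your direct domain comparison is the shorter and more elementary argument.
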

\begin{proof}
Using Lemma \ref{lemaltcreate}, Corollary \ref{rmkDinvert} (applicable thanks to iterativity of $\underline{D}$),
Lemma \ref{lemFTE} and locality, we have 
\begin{eqnarray}\label{Liarg}
&&(z-y)^tY'(u, y)Y'(v, z)v_0 \notag\\
&=& (z-y)^tY'(v, z)Y'(u, y)v_0\notag\\
&=&(z-y)^tY'(v, z) \left(\sum_{m\geq 0} D_{m}y^m\right)u\\
&=&(z-y)^t\left(\sum_{m\geq 0} D_{m}y^m\right)Y'(v, z-y)u. \notag
\end{eqnarray}

\medskip
In this sequence of equalities the first expression contains \emph{no negative powers of $z$},
whereas in the last expression there are \emph{no negative powers of $y$} thanks to our binomial convention.\
So both sides lie in $V[[y, z]]$, and setting $z{=}0$ in the first expression returns the pure powers of
$y$, equal to
\begin{eqnarray*}
(-y)^tY'(u, y)v.
\end{eqnarray*}
We deduce that
\begin{eqnarray*}
u(n)v{=}0\  \mbox{for}\ n\geq t,
\end{eqnarray*}
which completes the proof of the Lemma.
\end{proof}

Now that we know that $Y'(u, z){\in}\mathcal{F}(V)$ for $u{\in}V$, Theorem \ref{thmexist3} 
follows from Theorem \ref{thmexist1}.\ 
 
 \begin{rmk}It is possible to avoid
 Theorem \ref{thmexist1} altogether and give an independent proof of Theorem \ref{thmexist3} (\cite{Li}).\
 We shall not use this fact, so we skip the details, leaving them to the reader as a (nontrivial) exercise.
 \end{rmk}

\section{Categories of vertex rings}
We consider the category of vertex rings and various subcategories.\ A particularly useful result
(Theorem \ref{thmladjoint}) says that the category of commutative rings is a \emph{coreflective}
subcategory of the category of vertex rings.\ For relevant background in category theory, see \cite{Mac}.

\subsection{The category of vertex rings}
Let $U$ and $V$ be two vertex rings with state-field correspondences $Y_U, Y_V$ respectively.\ We frequently abuse notation by using
$\mathbf{1}$ for the vacuum element in either $U$ or $V$, denoting products in
both $U$ and $V$ by $a(n)b$, and writing $Y$ for either $Y_U$ or $Y_V$, making the distinction between $U$ and $V$ explicit only
when necessary to avoid  confusion.

\begin{dfn}
A \emph{morphism} $f{:}U{\rightarrow}V$ is a morphism of additive abelian groups
that preserves all products in the sense that $f(u(n)v){=}f(u)(n)f(v)\ (u, v{\in} U, n\in\mathbf{Z})$ and also
preserves vacuum elements, i.e.,  $f(\mathbf{1}){=}\mathbf{1}$.\ In terms of vertex operators,
\begin{eqnarray*}
fY_U(u, z)v {=} Y_V(f(u), z)f(v).
\end{eqnarray*}
\end{dfn}

 \medskip
 With this definition, vertex rings and their morphisms define a large category that
 we denote by $\bf{Ver}$.
  
 \begin{dfn}
 A  \emph{left ideal} in $V$ is an
 additive subgroup $I{\subseteq} V$ such that $a(n)u\in I$  for all $a\in V, u\in I, n\in\mathbf{Z}$.\ 
 Similarly, a \emph{right ideal} satisfies $u(n)a\in I$ for all $a\in V, u\in I, n\in\mathbf{Z}$.\
 A \emph{$2$-sided ideal} is one that is both a left and right ideal.

\medskip
$V$ is called \emph{simple} if it has no nontrivial 2-sided ideals.
\end{dfn}

\medskip
 In a vertex algebra over $\mathbf{C}$ the notions of left, right and $2$-sided ideals coincide,
 but that is not quite right for a general vertex ring.\ Recall (cf.\ Lemma \ref{lemdpmodule}) the left action of the ring $\mathbf{Z}\langle x\rangle$ of divided powers.
\begin{lem}\label{lemideal} Let $V$ be a vertex ring.\ The following are equivalent for a subgroup $I{\subseteq} V$.
\begin{eqnarray*}
&&(a)\ \mbox{$I$ is a $2$-sided ideal},\\
&&(b)\ \mbox{$I$ is a left ideal and a left $\mathbf{Z}\langle x\rangle$-submodule}.
\end{eqnarray*}
\end{lem}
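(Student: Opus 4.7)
The plan is to observe that, by Lemma \ref{lemdpmodule}, the $\mathbf{Z}\langle x\rangle$-module structure on $V$ is given by letting $x^{[n]}$ act as $D_n$, where $D_n(u) = u(-n-1)\mathbf{1}$. Thus condition (b) asserts that $I$ is a left ideal and that $D_n(I) \subseteq I$ for all $n \geq 0$. With this translation in hand, the two directions will flow directly from basic identities that have already been established.

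For the direction (a) $\Rightarrow$ (b), I would simply observe that if $I$ is a $2$-sided ideal and $u \in I$, then the right ideal property applied to $a = \mathbf{1} \in V$ gives $u(-n-1)\mathbf{1} \in I$, i.e., $D_n(u) \in I$ for every $n \geq 0$. Since a $2$-sided ideal is in particular a left ideal, (b) follows. This part is essentially a tautology once one writes down the definitions.

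For the direction (b) $\Rightarrow$ (a), the task is to derive the right-ideal property from the left-ideal property plus closure under the $D_n$'s. The natural tool is the skew-symmetry formula (Lemma \ref{lemmaskewsymm}):
\begin{eqnarray*}
u(n)a = (-1)^{n+1} \sum_{i \geq 0} (-1)^i D_i\bigl(a(n+i)u\bigr).
\end{eqnarray*}
Given $u \in I$ and $a \in V$, the left-ideal hypothesis yields $a(n+i)u \in I$ for every $i \geq 0$, and then closure under each $D_i$ yields $D_i(a(n+i)u) \in I$. Since the sum is finite (only finitely many terms $a(n+i)u$ are nonzero by the vertex-ring axiom), $u(n)a \in I$, so $I$ is a right ideal.

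I expect no serious obstacle: the only subtlety is recognizing that the canonical HS derivation $\underline{D}$ is precisely the bridge that converts skew-symmetry into a closure condition, and that skew-symmetry itself is exactly the mechanism by which right products are manufactured from left products together with $\underline{D}$. This is why the $\mathbf{Z}\langle x\rangle$-module hypothesis in (b) is forced on us if one insists on a characterization purely in terms of left-ideal data.
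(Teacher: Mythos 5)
Your proof is correct and follows essentially the same route as the paper's: the direction (a)$\Rightarrow$(b) via $D_n(u)=u(-n-1)\mathbf{1}$ and the right-ideal property applied to $\mathbf{1}$, and the direction (b)$\Rightarrow$(a) via the skew-symmetry formula of Lemma \ref{lemmaskewsymm} combined with closure under the $D_i$. The paper's proof is just a more compressed version of exactly this argument.
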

\begin{proof} In order for $I$ to be a left $\mathbf{Z}\langle x\rangle$-submodule of $V$, it is necessary and sufficient that 
$D_na {=} a(-n-1)\mathbf{1}{\in} I$ for all $a{\in} I, n{\geq} 0$.\ So clearly
(a)$\Rightarrow$(b).\ On the other hand, suppose that (b) holds.\ Use skew-symmetry
(Lemma \ref{lemmaskewsymm}) to see that if $a{\in} I, u{\in} V, n\in\mathbf{Z}$ then
\begin{eqnarray*}
a(n)u {=} (-1)^{n+1}\sum_{i\geq 0} (-1)^iD_iu(n+i)a {\in} I,
\end{eqnarray*}
whence $I$ is a right as well as a left ideal, and (a) holds.\ The Lemma is proved. 
\end{proof}

\medskip
The $2$-sided ideals of $V$ are the kernels of morphisms in ${\bf Ver}$.\  If $I{\subseteq} V$ is a $2$-sided ideal then the quotient group
$V/I$ inherits the structure of vertex ring in the natural way, and the standard isomorphism
theorems apply.

\begin{ex}\label{exideal} Many elementary constructions in commutative ring theory have vertex ring analogs.\
 We give some examples that will arise later on.\\
(i) \emph{Maximal 2-sided ideals}.\ These are proper 2-sided ideals $I{\subseteq} V$ (proper means not equal to $V$)  that are maximal in the lattice of all 2-sided ideals.\ They exist by dint of a standard application of
Zorn's Lemma in which one considers the 2-sided ideals that do \emph{not} contain $\mathbf{1}$.\ $I$ is
maximal if, and only if, $V/I$ is a simple vertex ring.\\
(ii)\ \emph{The torsion ideal}.\ This is the torsion subgroup $T(V){\subseteq} V$ of the underlying abelian group.\
Because all products in $V$ are bilinear, $T(V)$ is a 2-sided ideal of $V$ whose quotient $V/T(V)$ is
a torsion-free vertex ring.\\
(iii) The \emph{principal 2-sided ideal generated by $v\in V$}, i.e.\ the smallest 2-sided ideal of $V$ containing $v$, consists of all finite sums  $u_1(n_1)\hdots u_k(n_k)v(n)\mathbf{1}$ with $u_1, \hdots, u_k\in V$ and
integers $n_1, \hdots, n_k, n$.\ This is straightforward to check using Lemma \ref{lemideal}.\\
(iv) The ring of rational integers $\mathbf{Z}$ is a vertex ring (see the following Subsection for more on this),
indeed $\mathbf{Z}$ is an \emph{initial object} in the category $\mathbf{Ver}$.\  The characteristic map $\mathbf{Z}\rightarrow V, n\mapsto n\mathbf{1}$ is the unique morphism in $Hom_{\mathbf{V}}(\mathbf{Z}, V)$.\ The
kernel is $p\mathbf{Z}$ for an integer $p\geq 0$ that we naturally call the \emph{characteristic} of $V$, denoted
by $char V$.\ Note that $T(V){=}V$ if, and only if, $char V{>}0$.
\end{ex}

\medskip
The image $f(U)$ of a morphism of vertex rings $f{:} U\rightarrow V$ is a \emph{vertex subring} of
$V$ in the usual sense.\ That is, it contains $\mathbf{1}$ and is \emph{closed} with respect to
all $n^{th}$ products of its elements.

\medskip
Suppose that $U{\subseteq} V$, and let $\langle U \rangle$ be the additive subgroup
of $V$ generated by all states $u_1(m_1)\hdots u_k(m_k)\mathbf{1}$ for all
$u_j\in U, m_j\in\mathbf{Z}, k\geq 0$.\ We assert that $\langle U\rangle$ is a vertex subring of $V$.\
Indeed, the case $k=0$ shows that $\mathbf{1}\in\langle U \rangle$.\ Moreover,
by the associativity formula in the form (\ref{vringresprod})  we have
\begin{eqnarray*}
Y(u_1(m_1)\hdots u_k(m_k)\mathbf{1}, z)= Y(u_1, z)_{m_1}(\hdots Y(u_k, z)_{m_k}Id_V)\hdots )
\end{eqnarray*}
and from the very nature of residue product (cf.\ Definition \ref{dfnresprod}) the modes of this field
are sums of compositions of the modes of the states $u_i$.\ Thus each $n^{th}$
product of any pair of generating states $u_1(m_1)\hdots u_k(m_k)\mathbf{1}$ and 
 $u_1'(m_1')\hdots u_t'(m_t')\mathbf{1}$ lies in $\langle U\rangle$, and the fact that
 $\langle U \rangle$ is a vertex subring of $V$ follows immediately.\ We remark that use
  of the symbol $\langle U\rangle$ here is consistent with  Definition \ref{defVgens} inasmuch as 
  $U$ generates the vertex subring $\langle U\rangle$ of $V$.
 
\subsection{Commutative rings with HS derivation}
\begin{thm}\label{thmcatderring} Suppose that $A$ is a commutative ring with identity $1$ that admits
an iterative Hasse-Schmidt derivation $\underline{D}{=}(Id_A, D_1, \hdots)$.\
For $u, v{\in} A$ define
\begin{eqnarray*}
u(n)v{=} \left\{ \begin{array}{cc} 
 D_{-n-1}(u)v,& n<0\\
 0, &  n\geq 0\end{array} \right.
\end{eqnarray*}
Equipped with these products, $A$ is a vertex ring with vacuum element $1$, canonical
HS derivation $\underline{D}$, and vertex operators
\begin{eqnarray*}
Y(u, z) {=} \sum_{n\geq 0} (D_{n}u)z^{n},
\end{eqnarray*}
 where $D_n(u)$ is the endomorphism of $A$ corresponding to multiplication by $D_n(u)$.
\end{thm}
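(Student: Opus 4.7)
The plan is to verify the hypotheses of Theorem \ref{thmexist1}, which will immediately yield the conclusion (with $\underline{D}$ automatically being the canonical HS derivation). So I need to check: (i) each $Y(u,z)$ lies in $\mathcal{F}(A)$; (ii) $D_m(1)=0$ for $m\geq 1$; (iii) creativity with respect to $1$; (iv) mutual locality; (v) translation-covariance with respect to $\underline{D}$.

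Items (i) and (iii) are immediate from the definition: by construction $Y(u,z)=\sum_{n\geq 0}D_n(u)z^n$ has no terms in negative powers of $z$, so it is trivially a field, and $Y(u,z)\cdot 1=\sum_{n\geq 0} D_n(u)\cdot 1\cdot z^n\in u+zA[[z]]$ since $D_0=\id$. Item (ii) follows by induction on $m$ from the HS-derivation axiom applied to $1\cdot 1=1$: for $m=1$ one gets $D_1(1)=2D_1(1)$, so $D_1(1)=0$, and assuming $D_j(1)=0$ for $1\leq j<m$ the relation $D_m(1)=\sum_{i+j=m}D_i(1)D_j(1)$ collapses to $D_m(1)=2D_m(1)$. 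Item (iv) is free: every mode $D_n(u)$ acts on $A$ as multiplication by an element of a commutative ring, and multiplication operators commute pairwise, so $[Y(u,z),Y(v,w)]=0$ and mutual locality holds with $t=0$.

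The one serious computation is item (v), and it is where the \emph{iterative} hypothesis on $\underline{D}$ is used. I would compute $D_m Y(u,z)v$ by applying the HS property term-by-term:
\begin{eqnarray*}
D_m(D_n(u)\,v)=\sum_{i+j=m}D_i(D_n(u))\,D_j(v)=\sum_{i+j=m}\tbinom{i+n}{i}D_{i+n}(u)\,D_j(v),
\end{eqnarray*}
where the second equality uses iterativity. Summing over $n$ with the factor $z^n$, reindexing $k=i+n$, and recognizing $\sum_{k}\tbinom{k}{i}D_k(u)z^{k-i}=\delta_z^{(i)}Y(u,z)$ gives
\begin{eqnarray*}
D_m\,Y(u,z)v=\sum_{i+j=m}\delta_z^{(i)}Y(u,z)\,D_j(v).
\end{eqnarray*}
Separating off the $i=0$ summand $Y(u,z)D_m(v)$ produces exactly $[D_m,Y(u,z)]v=\sum_{i=1}^m\delta_z^{(i)}Y(u,z)D_{m-i}(v)$, which is translation-covariance.

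With (i)--(v) in hand, Theorem \ref{thmexist1} applies and shows $A$ is a vertex ring whose state--field correspondence is $Y$, whose vacuum is $1$, and whose canonical HS derivation is $\underline{D}$. The consistency check $D_m(u)=u(-m-1)\cdot 1=D_m(u)\cdot 1=D_m(u)$ confirms that the canonical HS derivation supplied by Theorem \ref{thmHS} agrees with the given $\underline{D}$. The main obstacle is the translation-covariance calculation in (v): it is the only place the iterative identity is needed, and one must carry out the reindexing carefully to match the formal derivative $\delta_z^{(i)}$ acting on $Y(u,z)$.
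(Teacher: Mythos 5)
Your proposal is correct and follows essentially the same route as the paper: both reduce to Theorem \ref{thmexist1}, obtain locality for free because all modes are multiplication operators in a commutative ring, and carry out the same key computation $D_m(D_n(u)v)=\sum_{i+j=m}\binom{i+n}{i}D_{i+n}(u)D_j(v)$ (HS property plus iterativity) to get translation-covariance. The only cosmetic differences are that the paper also verifies the associativity formula explicitly (reusing that same computation), whereas you skip it as redundant given Theorem \ref{thmexist1}, and you make explicit the hypothesis checks $D_m(1)=0$ and creativity that the paper leaves implicit.
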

\begin{proof} We will verify the axioms of associativity, locality, and translation-covariance.\ Then
Theorem \ref{thmexist1} applies, and the present Theorem follows.

\medskip
First we check the associativity formula (\ref{assocform}).\ Because all $n^{th}$ products
 are $0$ for $n\geq 0$, this reduces to showing that for $s, t\leq -1$ we have
\begin{eqnarray*}
(u(t)v)(s) = \sum_{i\geq 0}(-1)^i {t\choose i} \left\{u(t-i)v(s+i)\right\},
\end{eqnarray*}
 i.e.,
\begin{eqnarray*}
D_{-s-1}(D_{-t-1}(u)v) = \sum_{i\geq 0}(-1)^i {t\choose i} D_{-t+i-1}(u)D_{-s-i-1}(v).
\end{eqnarray*}
To prove this, set $m{=}-s-1, n{=}-t-1\geq 0$.\ Then  the left-hand-side 
of the previous display is equal to
\begin{eqnarray*}
&&D_m(D_n(u)v) {=}\sum_{i+j=m} (D_i\circ D_n)(u)D_j(v)\\
&&\ \ \ \ \ \ \ \ \ \ \ \ \ \ \ \ \ {=} \sum_{i+j=m} {i+n\choose i}(D_{i+n})(u)D_j(v)\\
&&\ \ \ \ \ \ \ \ \ \ \ \ \ \ \ \ \ {=}\sum_{i\geq 0}(-1)^i {t\choose i} D_{-t+i-1}(u)D_{-s-i-1}(v).
\end{eqnarray*}
This establishes associativity.

\medskip
Next, because $A$ is commutative then
$D_m(u)$ and $D_n(v)$ commute as multiplication operators on $A$. Therefore,
$[Y(u, z), Y(v, w)]=0$, so that locality (\ref{locform1}) is obvious.\

\medskip
As for translation-covariance, use the previous displayed calculation to see that
\begin{eqnarray*}
[D_m, Y(u, z)]v&=& \sum_{n\geq 0}[D_m, D_n(u)]vz^n\\
&=&\sum_{n\geq 0} (D_m(D_n(u)v)-D_n(u)D_m(v))z^n\\
&=&\sum_{n\geq 0}\sum_{i\geq 1} {n+i\choose i} D_{n+i}(u)D_{m-i}(v)z^n\\
&=&\sum_{i\geq 1} \delta_z^{(i)}Y(u, z)D_{m-i}v.
\end{eqnarray*}
This completes the proof of the Theorem.
\end{proof}

All vertex rings with the property that all modes $u(n)$ \emph{vanish} for  $n\geq 0$
arise from the construction in Theorem \ref{thmcatderring}.
\begin{thm}\label{thmcatcommring} Suppose that $V$ is a vertex ring such that 
$u(n){=}0$ for all $u\in V$ and all $n{\geq} 0$.\ Then $V$ is a commutative ring with identity $\mathbf{1}$
with respect to the $-1^{th}$ product, and $V$ is a vertex ring of the type described in Theorem \ref{thmcatderring}.
\end{thm}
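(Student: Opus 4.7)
The plan is to verify, in order, that the $-1$-product on $V$ is commutative, associative, and unital with unit $\mathbf{1}$; then to show that the canonical Hasse-Schmidt derivation $\underline{D}$ of $V$ (which already exists by Theorem \ref{thmHS}) recovers every negative-index product by the formula $u(-m-1)v = D_m(u)(-1)v$. Once these are in place, Theorem \ref{thmcatderring} applies to the pair $(V, \underline{D})$ and identifies $V$ with a vertex ring of the advertised form.

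For commutativity, I will apply skew-symmetry (Lemma \ref{lemmaskewsymm}) with $n=-1$:
\begin{eqnarray*}
v(-1)u = \sum_{i\geq 0}(-1)^{i}D_i(u(-1+i)v).
\end{eqnarray*}
By hypothesis $u(i)v = 0$ for $i\geq 0$, so only $i=0$ survives and we get $v(-1)u = u(-1)v$. For associativity, I would apply the associator formula (\ref{assocform}) with $t = s = -1$: again using $u(i) = v(i) = 0$ for $i\geq 0$, every term in the sum vanishes except $i=0$, leaving $(u(-1)v)(-1)w = u(-1)v(-1)w$. Combined with $u(-1)\mathbf{1} = u$ from axiom (b) and $\mathbf{1}(-1)u = u$ from Theorem \ref{thmvacuum}, this makes $(V, (-1), \mathbf{1})$ a commutative ring with identity.

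Next, I recover the remaining products. The canonical HS derivation is $D_m(u) = u(-m-1)\mathbf{1}$, and it is an iterative HS derivation by Theorem \ref{thmHS}. The key computation is the following application of (\ref{assocform}) with $t = -m-1$, $s = -1$, applied to $(u(-m-1)\mathbf{1})(-1)w$:
\begin{eqnarray*}
(u(-m-1)\mathbf{1})(-1)w = \sum_{i\geq 0}(-1)^{i}\binom{-m-1}{i}\bigl\{u(-m-1-i)\mathbf{1}(-1+i)w - (-1)^{-m-1}\mathbf{1}(-m-1-i)u(i)w\bigr\}.
\end{eqnarray*}
The second bracketed term is zero because $u(i)w = 0$ for $i\geq 0$, while Theorem \ref{thmvacuum} forces $\mathbf{1}(-1+i)w = \delta_{i,0}w$. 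Only $i=0$ survives, yielding $(u(-m-1)\mathbf{1})(-1)w = u(-m-1)w$, i.e.\ $D_m(u)(-1)w = u(-m-1)w$. Equivalently, the negative-index products are precisely $u(n)v = D_{-n-1}(u)\cdot v$ for $n < 0$, while the nonnegative-index products vanish by hypothesis.

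At this point the data $(V, \underline{D})$ is a commutative ring with identity equipped with an iterative HS derivation, whose $n$-th products match the formula of Theorem \ref{thmcatderring} exactly; invoking that theorem finishes the proof. I expect the only real obstacle to be bookkeeping in the associator computation above — one must carefully track how the hypothesis $u(i) = 0$ $(i\geq 0)$ collapses the sums and use Theorem \ref{thmvacuum} to pin down the single surviving term — but once this reduction is spotted, the rest is formal.
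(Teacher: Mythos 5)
Your proposal is correct and follows essentially the same route as the paper: establish commutativity, associativity, and the unit for the $-1$-product, invoke Theorem \ref{thmHS} for the iterative HS property, and then use the key mode computation $(u(-m-1)\mathbf{1})(-1)w = u(-m-1)w$ via (\ref{assocform}) to match the products of Theorem \ref{thmcatderring} --- the last step being identical, line for line, to the paper's. The only (harmless) local variation is that you derive commutativity from skew-symmetry (Lemma \ref{lemmaskewsymm}) with $n=-1$, whereas the paper applies the commutator formula (\ref{commform}) with $r=s=-1$ to $\mathbf{1}$; both collapse by the hypothesis $u(n)=0$ for $n\geq 0$.
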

\begin{proof} We have $u(-1)\mathbf{1}{=}\mathbf{1}(-1)u{=}u$, so $\mathbf{1}$ is an
identity with respect to the $-1^{th}$ operation.\ Moreover, by (\ref{commform}) we see that
\begin{eqnarray*}
u(-1)v-v(-1)u &=& u(-1)v(-1)-v(-1)u(-1))\mathbf{1}\\
&=&  \sum_{i\geq 0} {-1\choose i}(u(i)v)(-2-i)\mathbf{1}=0.
\end{eqnarray*}
This shows that $V$ is
commutative with respect to the $-1^{th}$ product, and similarly using (\ref{assocform})
we find that it is associative.\ So $V$ is indeed a commutative ring with respect to the $-1^{th}$ product.

\medskip
That $\underline{D}$ is an
iterative Hasse-Schmidt derivation of this commutative ring is a special case of
Theorem \ref{thmHS}.\ Finally, we have for $m\geq 0$, 
\begin{eqnarray*}
D_m(u)(-1)v &=& (u(-m-1)\mathbf{1})(-1)v\\
&=& \sum_{i\geq 0}(-1)^i {-m-1\choose i} u(-m-1-i)\mathbf{1}(-1+i)v\\
&=& u(-m-1)v,
\end{eqnarray*}
which says that
\begin{eqnarray*}
Y(u, z)v&=&\sum_{m\geq 0} u(-m-1)vz^m = \sum_{m\geq 0} D_m(u)(-1)vz^m.
\end{eqnarray*}
This shows that $V$ is the type of vertex ring described in Theorem \ref{thmcatderring}, 
thus completing the proof of the present Theorem.
\end{proof}

\medskip
As a special case of Theorem \ref{thmcatderring}, consider \emph{any} commutative ring $A$, equipped with the 
trivial  HS derivation (cf.\ Example \ref{trivHS}).\ Then by Theorems \ref{thmcatderring} and \ref{thmcatcommring} we deduce
\begin{thm}\label{thmcommring} Let $A$ be a commutative ring with identity $1$.\ Then $A$ is a vertex ring with vacuum element $1$ and with canonical HS derivation that is \emph{trivial}.\  All $n^{th}$ products $u(n)v$ are zero for $n\not= -1$ and $u(-1)v=uv$ is the product in $A$.
$\hfill \Box$
\end{thm}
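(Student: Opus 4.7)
The plan is to apply Theorem \ref{thmcatderring} to $A$ equipped with its trivial Hasse-Schmidt derivation $\underline{D} = (Id_A, 0, 0, \ldots)$ from Example \ref{trivHS}(a). First I verify that $\underline{D}$ is an iterative HS derivation of $A$ as a commutative ring. Iterativity $D_i \circ D_j = {i+j\choose i} D_{i+j}$ is immediate: when $i = j = 0$ both sides equal $Id_A$, and when $i + j \geq 1$ both sides vanish since at least one of $D_i$, $D_j$, $D_{i+j}$ is zero. The HS identity $D_m(uv) = \sum_{i+j=m} D_i(u) D_j(v)$ is equally trivial: for $m = 0$ it reads $uv = uv$, while for $m \geq 1$ both sides are zero because every summand on the right contains a factor $D_k$ with $k \geq 1$.

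With the hypotheses of Theorem \ref{thmcatderring} verified, that theorem immediately endows $A$ with a vertex ring structure having vacuum $1$ and canonical HS derivation equal to $\underline{D}$, which is trivial by construction. The formula $u(n)v = D_{-n-1}(u)v$ for $n < 0$ (and $u(n)v = 0$ for $n \geq 0$) from that theorem then gives $u(-1)v = D_0(u)v = uv$, while for $n \leq -2$ we have $u(n)v = D_{-n-1}(u)v = 0$ since $-n-1 \geq 1$ and $D_k = 0$ for $k \geq 1$. This matches the claim exactly, so there is no real obstacle here; the result is a direct corollary of Theorem \ref{thmcatderring} applied to a particularly simple HS derivation. (Theorem \ref{thmcatcommring} could also be cited to emphasize that this is the unique vertex ring structure on $A$ in which all nonnegative modes vanish, but it is not needed for the existence claim made here.)
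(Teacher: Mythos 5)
Your proposal is correct and matches the paper's own argument, which likewise obtains Theorem \ref{thmcommring} as an immediate special case of Theorem \ref{thmcatderring} applied to the trivial HS derivation of Example \ref{trivHS}(a) (with Theorem \ref{thmcatcommring} cited alongside). Your explicit verification of iterativity and the HS identity for the trivial derivation, and the extraction of the product formula $u(-1)v = uv$, $u(n)v = 0$ for $n \neq -1$, simply fills in details the paper leaves to the reader.
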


\medskip
Let $\bf{Comm}$ be the category of unital commutative rings and unital ring morphisms.\ Let
$\bf{CommHS}$ be the category of unital commutative rings equipped with an iterative HS derivation 
$\underline{D}$.\
If $(A, \underline{D}), (A', \underline{D}')$ are two objects in $\bf{CommHS}$, a morphism of 
$f: (A, \underline{D}) \rightarrow (A', \underline{D}')$ is a unital ring morphism $f:A\rightarrow A'$ 
such that $fD_m=D_m'f$ for all $m\geq 0$.\ 
$\bf{Comm}$ is \emph{isomorphic} to the full subcategory of $\bf{CommHS}$ whose
objects consist of those $(A, \underline{D})$ for which $\underline{D}=(Id_A, 0, 0, ...)$ is trivial.\ We have
the following commuting diagram of functors, each of which are insertions.

\[\xymatrix{
& \bf{Ver} &\\
\bf{Comm}\ar[ru]\ar[rr] && \bf{CommHS}\ar[lu]
 }
\]

\section{The center of a vertex ring} 

\subsection{Basic properties}
\begin{dfn}The \emph{center} $C(V)$ of a vertex ring $V$ consists  of all $u\in V$ such that $Y(u, z)=u(-1)$.
\end{dfn}

\begin{ex}\label{exvac} (i) $\mathbf{1}{\in} C(V)$.\ Therefore, the image of the characteristic map
(Example \ref{exideal}(iv)) is contained in $C(V)$.\ Since $C(V)$ is a commutative ring with respect to the $-1$ product (see Lemma \ref{lemCVcomm} for the easy proof) then $charV{=}charC(V)$.\\
(ii)\ If $a{\in} C(V)$ then $a(-1)V$ is the 2-sided ideal of $V$ generated by $a$.
\end{ex}
\begin{proof} That $\mathbf{1}{\in} C(V)$ is a restatement of Theorem \ref{thmvacuum}, and the remaining assertions (i) follow.\ As for (ii), it suffices to show that $a(-1)V$ is a 2-sided ideal.\ But because 
$Y(a, z)=a(-1)$, it is readily checked using (\ref{commform}) and (\ref{assocform}) that $a(-1)$ commutes with modes $b(n)$ and associates
in the sense that $(a(-1)b)(n)=a(-1)b(n)$ for all $b\in V$ and all $n\in\mathbf{Z}$, and our assertion
about $a(-1)V$ follows.
\end{proof}

The next result is very useful.\ In alternate parlance it says that $C(V)$ consists of the \emph{$D$-constants}.
\begin{thm}\label{thmcenter1} Let $V$ be a vertex ring with canonical Hasse-Schmidt derivation 
$\underline{D}{=}(Id_V, D_1, \hdots)$, and let $u{\in} V$.\ Then the following are equivalent:
\begin{eqnarray*}
&&(a)\ u{\in} C(V), \\
&&(b)\  D_iu {=}0\ \mbox{for}\ i\geq 1.
\end{eqnarray*}
\end{thm}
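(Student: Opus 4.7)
The plan is to prove both directions separately.\ For (a)$\Rightarrow$(b), the very definition of the canonical HS derivation gives $D_i u = u(-i-1)\mathbf{1}$, and $Y(u,z) = u(-1)$ means every mode $u(n)$ with $n \neq -1$ vanishes;\ in particular $D_i u = 0$ for $i \geq 1$.

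For (b)$\Rightarrow$(a), I would use Theorem \ref{lend}(b) to rephrase the hypothesis as $\delta_z^{(m)} Y(u,z) = Y(D_m u, z) = 0$ for all $m \geq 1$.\ Reading off coefficients via (\ref{dact1}) and reindexing by $k = n - m$ converts this into the family of endomorphism identities
\begin{eqnarray*}
\binom{k+m}{m}\, u(k) = 0 \qquad (k \in \mathbf{Z},\ m \geq 1).
\end{eqnarray*}
The remaining task is to deduce $u(k) = 0$ as an endomorphism of $V$ for each $k \neq -1$, from which $Y(u,z) = u(-1)$ and hence $u \in C(V)$ follow.

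The main obstacle is that, unlike in the characteristic-zero setting, these relations only assert that $u(k)$ is annihilated by each integer $\binom{k+m}{m}$;\ to eliminate $u(k)$ outright I need to write $1$ as a $\mathbf{Z}$-linear combination of these integers, i.e., to show $\gcd\{\binom{k+m}{m} : m \geq 1\} = 1$ whenever $k \neq -1$.\ I would handle this in two cases.\ If $k \leq -2$, taking $m = -k-1 \geq 1$ gives $\binom{-1}{-k-1} = (-1)^{-k-1} = \pm 1$, settling the claim outright.\ If $k \geq 0$, for each prime $p$ choose $j$ with $p^j > k$ and set $m = p^j$;\ since adding $k$ and $p^j$ in base $p$ involves no carries, Kummer's theorem gives $p \nmid \binom{k+p^j}{p^j}$, so no prime divides every member of the family.\ B\'ezout's identity then produces the required integer combination, forcing $u(k) = 0$ and completing the proof.
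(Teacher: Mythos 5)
Your proof is correct and takes essentially the same route as the paper: both reduce (b)$\Rightarrow$(a) to the relations ${k+m\choose m}u(k)=0$ for all $m\geq 1$ (the paper obtains these directly from Lemma \ref{lemmaDiu}, which is exactly what underlies Theorem \ref{lend}(b)), dispatch $k\leq -2$ identically via ${-1\choose -k-1}=\pm 1$, and handle $k\geq 0$ by showing the $\gcd$ of the family is $1$ and invoking B\'ezout. The only divergence is the justification of the $\gcd$ claim: the paper argues that a common prime divisor $p$ would force $(1-x)^{-(k+1)}\equiv 1 \pmod{p}$ and hence $(1-x)^{k+1}\equiv 1 \pmod{p}$, contradicting the coefficient $\pm 1$ of $x^{k+1}$, whereas you take $m=p^j>k$ and apply Kummer's theorem; both arguments are sound.
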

\begin{proof} If (a) holds
then $D_i(u) {=} u(-i-1)\mathbf{1}{=}0$ for $i\geq 1$, so (b) holds.\ The implication
$(b){\Rightarrow} (a)$ requires a bit more effort.\ By Lemma \ref{lemmaDiu}, if (b) holds then 
\begin{eqnarray}\label{ukcond}
{i+k\choose i}u(k){=}0\ \ (k{\in}\mathbf{Z}, i{\geq} 1).
\end{eqnarray}

\medskip
Suppose first that $k{\leq} {-}2$.\ Then we may take $i{=}{-}k{-}1$ in (\ref{ukcond}) to see that
$u(k){=}0$.\ Now suppose that $k{\geq} 0$.\ In order to prove that $u(k){=}0$, we use the following result{:}\
the integers ${k+i\choose i}$, where $i$ ranges over all
positive integers, have \emph{greatest common divisor $1$}.\ To see this, we have
\begin{eqnarray*}
(1-x)^{-(k+1)} {=} 1{+} \sum_{i\geq 1} {k+i\choose i}x^i.
\end{eqnarray*}
So if all ${k+i\choose i}$ for $i>0$ are divisible by a prime $p$ then
$(1-x)^{-(k+1)}$, and therefore also  its inverse $(1-x)^{k+1}$, are congruent
to $1$(mod $p$).\ But this is false because the coefficient of $x^{k+1}$ in the latter
polynomial is $\pm 1$.

\medskip
It follows that for any  $k{\geq} 0$, we can find an integral linear combination of the binomial coefficients
${i+k\choose i}$ (with $i{\geq} 1$ varying) equal to $1$.\ Then from (\ref{ukcond}) we deduce
that in fact $u(k)=0$.\ This completes the proof of the Theorem.
\end{proof}

\medskip
We give some first applications of Theorem \ref{thmcenter1}.\ 
\begin{cor} Suppose that $V$ is a vertex ring whose canonical HS derivation 
is trivial.\ Then $V$ is of the type described in Theorem \ref{thmcommring}.
\end{cor}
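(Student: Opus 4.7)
The plan is to chain together the two preceding characterization results. Triviality of the canonical HS derivation means $D_i = 0$ on $V$ for every $i \geq 1$, so by Theorem \ref{thmcenter1} we have $V = C(V)$. By the very definition of the center, this says $Y(u,z) = u(-1)$ for every $u \in V$, i.e., $u(n) = 0$ for all $n \neq -1$.

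Once we know all modes $u(n)$ with $n \geq 0$ vanish, we are exactly in the situation of Theorem \ref{thmcatcommring}. That theorem tells us $V$ is a commutative ring under the $-1$-product (with identity $\mathbf{1}$) and arises from the construction of Theorem \ref{thmcatderring} applied to this commutative ring together with the canonical HS derivation $\underline{D}$. But by hypothesis $\underline{D}$ is trivial, so the construction of Theorem \ref{thmcatderring} degenerates to the one in Theorem \ref{thmcommring}: $Y(u,z) = u(-1)$ is a constant field in $z$, and $u(-1)v$ is just the commutative ring product.

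There is essentially no obstacle here — the work was done in Theorems \ref{thmcenter1}, \ref{thmcatcommring}, and \ref{thmcommring}. The only point that needs a line of justification is the passage from ``$D_i u = 0$ for all $u$ and all $i \geq 1$'' to ``$u(n) = 0$ for $n \neq -1$,'' and this is immediate from the definition of the center together with Theorem \ref{thmcenter1}. So the proof is a two-step application of results already in hand.
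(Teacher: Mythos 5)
Your proof is correct and follows essentially the same route as the paper's: triviality of $\underline{D}$ together with Theorem \ref{thmcenter1} gives $V=C(V)$, hence $Y(u,z)=u(-1)$ for all $u$, and Theorem \ref{thmcatcommring} then identifies $V$ as the commutative ring of Theorem \ref{thmcommring}. If anything your citations are the more accurate ones, since the paper's printed proof attributes the step $Y(u,z)=u(-1)$ to Theorem \ref{thmcommring}, which is evidently a slip for Theorem \ref{thmcenter1}.
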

\begin{proof} We have $D_m=0$ for $m\geq1$, so by Theorem \ref{thmcommring}
we find that $Y(u, z)=u(-1)$ for all $u\in V$, and therefore $V{=}C(V)$.\ Now the Corollary follows from
Theorem \ref{thmcatcommring}.
\end{proof}

\medskip
We may state the Corollary is as follows{:}\ if $\bf{Comm}'$ is the full subcategory
of $\bf{Ver}$ consisting of objects which are vertex rings whose canonical HS derivation is \emph{trivial},
then $\bf{Comm}'$ is \emph{isomorphic} to the category $\bf{Comm}$ of commutative rings.\ As a result,
we may, and shall, identify the two categories $\bf{Comm}$ and $\bf{Comm}'$.\
The next result says that thus identified, $\bf{Comm}$ is \emph{coreflective} in $\bf{Ver}$ (cf.\ \cite{Mac}, Chapter IV, Section 3)

\begin{thm}\label{thmladjoint}
The functorial insertion $K{:}\bf{Comm}{\rightarrow} \bf{Ver}$ has a \emph{right adjoint} $C{:}\bf{Ver}\rightarrow \bf{Comm}$.\
$C$ is the \emph{center functor} $V{\rightarrow} C(V)$ that assigns to each vertex ring $V$ its center
$C(V)$.
\end{thm}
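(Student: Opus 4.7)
The plan is to construct, for each commutative ring $A$ and vertex ring $V$, a natural bijection
\[
\Phi_{A,V}: \Hom_{\mathbf{Ver}}(K(A), V) \longrightarrow \Hom_{\mathbf{Comm}}(A, C(V)),
\]
from which the adjunction $K \dashv C$ follows.

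First I would verify that $C$ is a functor. Given a morphism $f: V \to W$ in $\mathbf{Ver}$, the identities $f(u(n)v) = f(u)(n)f(v)$ and $f(\mathbf{1}) = \mathbf{1}$ yield
\[
f(D_i u) = f(u(-i-1)\mathbf{1}) = f(u)(-i-1)\mathbf{1} = D_i f(u) \qquad (i \geq 0),
\]
so $f$ intertwines the canonical HS derivations of $V$ and $W$. By Theorem \ref{thmcenter1}, $C(V)$ is precisely the kernel of $(D_1, D_2, \ldots)$, so $f(C(V)) \subseteq C(W)$. The restriction $C(f) := f|_{C(V)}$ preserves $\mathbf{1}$ and the $-1$-th product, which by Example \ref{exvac} is the commutative ring structure on $C(V)$; hence $C(f) \in \Hom_{\mathbf{Comm}}(C(V), C(W))$, and functoriality is immediate.

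For the adjunction itself: given $g \in \Hom_{\mathbf{Ver}}(K(A), V)$, the same HS-commutation argument shows $D_i^V g(a) = g(D_i^{K(A)} a) = 0$ for $i \geq 1$ and all $a \in A$, since the canonical HS derivation on $K(A)$ is trivial. By Theorem \ref{thmcenter1} this places $g(a)$ in $C(V)$, so I may set $\Phi_{A,V}(g) := g|_A: A \to C(V)$; this is a ring morphism, since it preserves $1$ and $g(a \cdot b) = g(a(-1)b) = g(a)(-1)g(b)$, which on $C(V)$ is multiplication. Conversely, given $\varphi \in \Hom_{\mathbf{Comm}}(A, C(V))$, I would compose $\varphi$ with the inclusion $C(V) \hookrightarrow V$ to define $\Psi_{A,V}(\varphi): K(A) \to V$. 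The key supporting observation is that the vertex subring $C(V) \subseteq V$, equipped with its inherited operations, coincides with $K(C(V))$: by definition of $C(V)$, $Y(u, z) = u(-1)$ for $u \in C(V)$, so all modes $u(n)$ vanish on $V$ for $n \neq -1$, while the $-1$-th product yields the commutative multiplication on $C(V)$. Hence $\Psi_{A,V}(\varphi)$ lies in $\Hom_{\mathbf{Ver}}(K(A), V)$.

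It remains to check that $\Phi$ and $\Psi$ are mutually inverse and natural in $A$ and $V$. Both are routine: a morphism in $\Hom_{\mathbf{Ver}}(K(A), V)$ is determined by its underlying map of sets, which factors through $C(V)$ by the argument above, so $\Psi \circ \Phi$ and $\Phi \circ \Psi$ are both the identity; naturality follows from the functoriality of $K$ and $C$. The one subtle point, which I would isolate as a brief lemma, is the identification of $C(V)$ with $K(C(V))$ as vertex rings, i.e.\ that the canonical HS derivation of $V$ restricts to the trivial derivation on $C(V)$ — this is precisely the content of Theorem \ref{thmcenter1} and is what makes the inclusion $C(V) \hookrightarrow V$ a morphism in $\mathbf{Ver}$.
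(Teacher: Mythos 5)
Your proposal is correct and takes essentially the same route as the paper: both prove the adjunction by exhibiting the natural bijection $\Hom_{\mathbf{Ver}}(K(A),V)\cong\Hom_{\mathbf{Comm}}(A,C(V))$ directly, with the decisive input being Theorem \ref{thmcenter1} (the characterization of $C(V)$ as the $\underline{D}$-constants), which the paper packages as Lemma \ref{lemfcentral} together with the functoriality statement of Lemma \ref{lemCVcomm}. Your only deviation is cosmetic: you spell out the converse direction explicitly --- that composing $\varphi:A\to C(V)$ with the inclusion $C(V)\hookrightarrow V$ is a morphism in $\mathbf{Ver}$ because $Y(u,z)=u(-1)$ on the center --- whereas the paper leaves this implicit in the remark that the two Hom-sets consist of the same functions with different codomains.
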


\medskip
We discuss the proof of the Theorem in the next few Lemmas.

\medskip
Theorem \ref{thmladjoint} may be reformulated in several ways
(loc.\ cit.)\ One particularly useful variant, which is more or less equivalent to the Theorem, is the following{:}
\begin{lem}\label{lemfcentral} Suppose that $A$ is a commutative ring, $V$ a vertex ring, and
$f{:}A\rightarrow V$ a morphism of vertex rings.\ Then $f(A){\subseteq} C(V)$.
\end{lem}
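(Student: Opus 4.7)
The plan is to exploit the characterization of the center given by Theorem \ref{thmcenter1}, namely that $u \in C(V)$ if and only if $D_i(u) = 0$ for all $i \geq 1$, where $\underline{D}$ is the canonical HS derivation. So the whole proof reduces to checking that every element in the image of $f$ is killed by the higher $D_i$'s.

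First I would unwind the vertex ring structure on $A$. By Theorem \ref{thmcommring}, the commutative ring $A$, viewed as a vertex ring, has trivial canonical HS derivation: all $n^{th}$ products vanish for $n \neq -1$, and $a(-1)b$ recovers the ordinary product. In particular, $D_i^A(a) = a(-i-1)\mathbf{1}_A = 0$ for every $a \in A$ and every $i \geq 1$.

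Next I would observe that the canonical HS derivation is \emph{preserved} by any morphism of vertex rings, simply because it is defined in terms of the vertex ring data (the $n^{th}$ products and the vacuum). Explicitly, since $f$ is a morphism we have $f(\mathbf{1}_A) = \mathbf{1}_V$ and $f(a(n)b) = f(a)(n)f(b)$, hence
\begin{eqnarray*}
D_m^V(f(a)) \ =\ f(a)(-m-1)\mathbf{1}_V \ =\ f(a)(-m-1)f(\mathbf{1}_A) \ =\ f\bigl(a(-m-1)\mathbf{1}_A\bigr) \ =\ f(D_m^A(a)).
\end{eqnarray*}
Combined with the previous step, this gives $D_m^V(f(a)) = f(0) = 0$ for every $a \in A$ and every $m \geq 1$.

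Finally, Theorem \ref{thmcenter1} converts the vanishing $D_m^V(f(a)) = 0$ for all $m \geq 1$ into the assertion $f(a) \in C(V)$, which is exactly what the lemma claims. There is essentially no obstacle here: the argument is a direct two-line computation once one invokes the trivial-HS description of $A$ (Theorem \ref{thmcommring}) and the center-as-$\underline{D}$-constants characterization (Theorem \ref{thmcenter1}). The only mild subtlety is noticing that the naturality $f \circ D_m^A = D_m^V \circ f$ follows automatically from the morphism axioms without needing to be imposed separately.
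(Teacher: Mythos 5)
Your proof is correct and is essentially identical to the paper's: the paper likewise computes $f(D_i u) = f(u(-i-1)\mathbf{1}) = f(u)(-i-1)\mathbf{1} = D_i' f(u)$, notes that the higher $D_i$ vanish on $A$ (its canonical HS derivation being trivial), and concludes via Theorem \ref{thmcenter1} that $f(A) \subseteq C(V)$. The only cosmetic difference is that the paper first states the naturality computation for a general object $(A, \underline{D})$ of $\mathbf{CommHS}$ before specializing to the trivial derivation, whereas you specialize from the outset.
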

\begin{proof}
Suppose to begin with, and in somewhat more generality than we need, that 
$(A, \underline{D})$ is an object in $\bf{CommHS}$ and that $\underline{D'}$ is the canonical HS derivation of $V$.\ Then for $u\in A$ we have
\begin{eqnarray*}
f(D_iu) &=& f(u(-i-1)1)= f(u)(-i-1)\mathbf{1} = D_i'f(u).
\end{eqnarray*}

In the case at hand, all higher endomorphisms $D_i\ (i\geq 1)$ of $A$ \emph{vanish}.\ Thus
the last calculation shows that $f(A)$ is annihilated by all $D_i'\ (i\geq 1)$.\ By Theorem
\ref{thmcenter1} we deduce that $f(A){\subseteq} C(V)$, and the Lemma is proved.
\end{proof}

\begin{lem}\label{lemCVcomm} Let $V$ be a vertex ring.\ Then $C(V)$ is a
commutative ring with identity $\mathbf{1}$ with respect to the ${-}1^{th}$ product.\ The assignment
$V\mapsto C(V)$ is the object map of a functor $C{:} \bf {Ver}\rightarrow {\bf Comm}$.
\end{lem}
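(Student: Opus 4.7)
The plan is to verify in turn that $C(V)$ is closed under the $-1^{\text{th}}$ product, that this product is commutative, associative, and unital on $C(V)$, and finally that $C$ is functorial. The main tools are the characterization $u\in C(V)\iff D_iu=0\ (i\geq 1)$ from Theorem \ref{thmcenter1}, together with the special properties of modes $a(-1)$ for $a\in C(V)$ recorded in Example \ref{exvac}(ii).

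First, for closure, I would take $u,v\in C(V)$ and check that each $D_m\ (m\geq 1)$ annihilates $u(-1)v$. By the Hasse-Schmidt property of $\underline{D}$ (Theorem \ref{thmHS} applied to the $-1^{\text{th}}$ product),
\begin{eqnarray*}
D_m(u(-1)v) \;=\; \sum_{i+j=m} D_i(u)(-1)D_j(v),
\end{eqnarray*}
and for $m\geq 1$ each term has either $i\geq 1$ or $j\geq 1$, forcing $D_i(u)=0$ or $D_j(v)=0$ by Theorem \ref{thmcenter1}. Hence $u(-1)v\in C(V)$.

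Next, for the ring axioms on $C(V)$: the identity property $\mathbf{1}(-1)u=u(-1)\mathbf{1}=u$ is immediate from Theorem \ref{thmvacuum} and Definition \ref{defvr}(b). For commutativity, I would specialize the skew-symmetry formula (Lemma \ref{lemmaskewsymm}) at $n=-1$:
\begin{eqnarray*}
v(-1)u \;=\; \sum_{i\geq 0}(-1)^i D_i(u(-1+i)v).
\end{eqnarray*}
Since $u\in C(V)$ gives $u(k)=0$ for $k\neq -1$, only the $i=0$ term survives, yielding $v(-1)u=u(-1)v$. For associativity, I would invoke the identity $(a(-1)b)(n)=a(-1)b(n)$ from Example \ref{exvac}(ii), which immediately gives $(u(-1)v)(-1)w=u(-1)(v(-1)w)$ whenever $u\in C(V)$. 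Combined with closure, this makes $C(V)$ a commutative associative ring with unit $\mathbf{1}$.

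Finally, for functoriality, given a morphism $f\colon V\to V'$ in $\mathbf{Ver}$ and $u\in C(V)$, I would compute
\begin{eqnarray*}
D_i'f(u)\;=\;f(u)(-i-1)\mathbf{1}\;=\;f(u)(-i-1)f(\mathbf{1})\;=\;f(u(-i-1)\mathbf{1})\;=\;f(D_iu)\;=\;0
\end{eqnarray*}
for $i\geq 1$, so $f(u)\in C(V')$ by Theorem \ref{thmcenter1}. The restriction $f|_{C(V)}\colon C(V)\to C(V')$ preserves $\mathbf{1}$ and the $-1^{\text{th}}$ product (which is the ring product), so it is a morphism in $\mathbf{Comm}$; composition and identity are preserved on the nose, giving the functor $C$. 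I do not anticipate a serious obstacle here: the only slightly delicate point is the commutativity computation, where one must notice that centrality collapses the skew-symmetry sum to a single term.
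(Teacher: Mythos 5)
Your proof is correct and follows essentially the same route as the paper's: closure via the Hasse--Schmidt property of the canonical derivation, the ring axioms via the formal identities of Section 2, and functoriality via the $\underline{D}$-constant characterization of $C(V)$ from Theorem \ref{thmcenter1}. The only variations are cosmetic: you derive commutativity from skew-symmetry (Lemma \ref{lemmaskewsymm}) at $n=-1$ where the paper takes $r=s=-1$ in the commutator formula (\ref{commform}) applied to $\mathbf{1}$, and your functoriality computation simply inlines the calculation that constitutes the paper's proof of Lemma \ref{lemfcentral}, which the paper cites instead.
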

\begin{proof} We have  $\mathbf{1}{\in} C(V)$ by Example \ref{exvac}, and
because $u(-1)\mathbf{1}{=}\mathbf{1}(-1)u{=}u$, $\mathbf{1}$ is the identity element for $C(V)$.\
Moreover, if $u, v{\in} C(V)$ and $n{\geq} 1$ then $D_iu{=}D_iv{=}0$ for $i{\geq} 1$ and hence
\begin{eqnarray*}
D_nu(-1)v = \sum_{i+j=n} (D_iu)(-1)D_jv=0.
\end{eqnarray*}
This shows that $C(V)$ is closed with respect to the $-1^{th}$ product.

\medskip Taking $r{=}s{=}{-}1$ in (\ref{commform}), applying both sides to $\mathbf{1}$, and using
$u(i){=}0$ for $i{\geq} 0$ shows that $u(-1)v{=}v(-1)u$, and a similar application of (\ref{assocform})
shows that multiplication in $C(V)$ is associative.\ This completes the proof of the first assertion of the Lemma.

\medskip
To complete the proof that $C{:}\bf {Ver}{\rightarrow} {\bf Comm}$ is a functor, suppose that $f{:}U{\rightarrow} V$
is a morphism of vertex rings.\ Restriction of $f$ to $C(U)$ is a morphism of
vertex rings $res{f}{:}C(U)\rightarrow V$, and by Lemma \ref{lemfcentral} the image of this
morphism lands in $C(V)$.\ This says that by defining $C(f):= res{f}$ we obtain a commuting diagram
\[\xymatrix{
 U\ar[d]_{f} \ar[r] &C(U)\ar[d]^{C(f)}\\
V\ar[r] & C(V)
 }
\]
That $C$ is a functor then follows immediately.
\end{proof}

\medskip
To complete the proof of Theorem \ref{thmladjoint}, let $A$ be a commutative ring
and  $V$ a vertex ring.\ What we must show is that there is a natural bijection of Hom-sets
\begin{eqnarray*}
\varphi: Hom_{\bf Ver}(K(A), V) \rightarrow Hom_{\bf Comm}(A, C(V)).
\end{eqnarray*}
Indeed, $K(A)$ is just $A$ regarded as a vertex ring, and application of Lemma \ref{lemfcentral} shows that
the two displayed Hom-sets consist of the same functions (with different codomains).\ This gives us the required bijection, and naturality follows.$\hfill \Box$

\subsection{Vertex $k$-algebras}\label{SSvertkalg}
Fix a vertex ring $U$.\ As usual\ (cf.\ Chapter II, Section 6 of \cite{Mac}), the \emph{comma category} 
$(U{\downarrow} \bf{Ver})$ consists of the `$\bf{Ver}$-objects under $U$'.\ Precisely, it has objects consisting of morphisms $U{\rightarrow} V$ in $\bf{Ver}$.\ A morphism $f$ in $(U\downarrow \bf{Ver})$ from $U{\rightarrow} V_1$ to $U{\rightarrow} V_2$ is a morphism $f{:}V_1\rightarrow V_2$ in $\bf{Ver}$ such that the following diagram in $\bf{V}$ commutes:
\[\xymatrix{
& U\ar[dl]\ar[dr] &\\
V_1\ar[rr]_f && V_2
 }
\]

\begin{dfn}
Let $k$ be a commutative ring, considered as an object in $\bf{Ver}$.\ We call the objects in  $(k{\downarrow} \mathbf{V})$
\emph{vertex $k$-algebras}.\
\end{dfn}

Suppose that $\varphi{:}k{\rightarrow} V$ is a vertex $k$-algebra.\ By Lemma \ref{lemfcentral} we have
$\varphi(k){\subseteq} C(V)$.\ We claim that the left action of $\varphi(k)$ on $V$ by the $-1$ operation
induces a left action
\begin{eqnarray*}
k{\times} V\rightarrow V, (a, v)\mapsto \varphi(a)(-1)v\ (a{\in} k)
\end{eqnarray*}
which turns $V$ into a unital left $k$-module such that all $n^{th}$ products in $V$ are $k$-linear.\
Indeed, this amounts to the identities
\begin{eqnarray*}
t(-1)(u(n)v){=}(t(-1)u)(n)v {=}u(n)(t(-1)v))\ \ (t{\in} C(V), u, v{\in} V, n{\in}\mathbf{Z}),
\end{eqnarray*}
and these are easily proved using (\ref{assocform}) and (\ref{commform}) and  the fact
that $Y(t, z)=t(-1)$ for $t\in C(V)$.

\medskip
We have a category k$\bf{Ver}$ whose objects are vertex rings which are also
unital left $k$-modules such that all $n^{th}$ products are $k$-linear.\ Morphisms
in k$\bf{Ver}$ are morphisms in $\bf{Ver}$ that are also $k$-linear.\ Our remarks in the previous paragraph then amount to showing that there is a functor $(k{\downarrow} \bf{Ver}){\rightarrow}k\bf{Ver}$.\
On the other hand, given an object $V$ in k$\bf{Ver}$,  the map $a\mapsto a.\mathbf{1}\ (a{\in} k)$ defines a morphism $k{\rightarrow} V$ in $\bf{Ver}$
and thereby an object in the comma category $(k{\downarrow} \bf{Ver})$
and thereby a functor $k\bf{Ver}{\rightarrow} (k{\downarrow} \bf{Ver})$.\ These two functors are inverse to each other
up to natural equivalence, and we  obtain the following result.
\begin{thm} Let $k$ be a commutative ring.\ There is an equivalence of categories
 \begin{eqnarray*}
 (k{\downarrow} \mathbf{Ver}) \cong k\bf{Ver}.
 \end{eqnarray*}\ $\hfill \Box$
\end{thm}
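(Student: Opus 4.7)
The strategy is to define explicit functors $F\colon (k{\downarrow}\mathbf{Ver}) \to k\mathbf{Ver}$ and $G\colon k\mathbf{Ver} \to (k{\downarrow}\mathbf{Ver})$, check they are well-defined and functorial, and then verify that the two composites are naturally isomorphic (indeed literally equal) to the respective identity functors. This establishes a strict equivalence, which is stronger than what is asserted.

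First I would define $F$: given an object $\varphi\colon k \to V$ of $(k{\downarrow}\mathbf{Ver})$, equip $V$ with the left $k$-action $a\cdot v := \varphi(a)(-1)v$. The two points to verify are that (i) this makes $V$ a unital left $k$-module and (ii) every $n^{\text{th}}$ product of $V$ is $k$-bilinear. Both rest on the fact, guaranteed by Lemma \ref{lemfcentral}, that $\varphi(k) \subseteq C(V)$, so that $Y(\varphi(a),z) = \varphi(a)(-1)$ is a constant field. The required identities $t(-1)(u(n)v) = (t(-1)u)(n)v = u(n)(t(-1)v)$ for $t = \varphi(a) \in C(V)$ are exactly the ones recorded in the paragraph preceding the theorem; they drop out of specializations of the associator formula (\ref{assocform}) and the commutator formula (\ref{commform}) applied to central states. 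Functoriality is automatic: a morphism $f$ over $k$ from $\varphi_1$ to $\varphi_2$ satisfies $f(a\cdot v) = f(\varphi_1(a)(-1)v) = \varphi_2(a)(-1)f(v) = a\cdot f(v)$, so $F(f) := f$ is $k$-linear.

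Next I would define $G$: given $V$ in $k\mathbf{Ver}$, set $\varphi_V\colon k \to V$, $\varphi_V(a) := a\cdot \mathbf{1}$. To show this is a morphism of vertex rings, I first note that $a\cdot \mathbf{1} \in C(V)$. Indeed, since products in $V$ are $k$-linear and $D_i \mathbf{1} = 0$ for $i \geq 1$ by Theorem \ref{lend}(a), we have $D_i(a\cdot \mathbf{1}) = a\cdot D_i\mathbf{1} = 0$, so Theorem \ref{thmcenter1} puts $a\cdot \mathbf{1}$ in $C(V)$. Consequently all $n^{\text{th}}$ products $\varphi_V(a)(n)\varphi_V(b)$ vanish for $n \neq -1$, while $\varphi_V(a)(-1)\varphi_V(b) = (a\cdot \mathbf{1})(-1)(b\cdot \mathbf{1}) = a\cdot(b\cdot \mathbf{1}) = (ab)\cdot \mathbf{1} = \varphi_V(ab)$, using $k$-linearity of the $-1$-mode together with $\mathbf{1}(-1) = \mathrm{id}_V$ from Theorem \ref{thmvacuum}; and $\varphi_V(1) = \mathbf{1}$. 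Morphisms in $k\mathbf{Ver}$ are $k$-linear and send $\mathbf{1}$ to $\mathbf{1}$, so they visibly intertwine the respective $\varphi_V$, giving functoriality of $G$.

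Finally I would verify the two round-trips explicitly. Starting with $V$ in $k\mathbf{Ver}$, the composite $FG$ reinstates the $k$-action via $a\cdot' v := \varphi_V(a)(-1)v = (a\cdot\mathbf{1})(-1)v = a\cdot(\mathbf{1}(-1)v) = a\cdot v$, again using $k$-linearity and Theorem \ref{thmvacuum}. Starting with $\varphi\colon k \to V$ in $(k{\downarrow}\mathbf{Ver})$, the composite $GF$ produces $a \mapsto a\cdot \mathbf{1} = \varphi(a)(-1)\mathbf{1} = \varphi(a)$, recovering $\varphi$ on the nose. Thus both composites are literally the identity. I expect no real obstacle here: the only nontrivial step is the $k$-bilinearity of all $n^{\text{th}}$ products in the definition of $F$, and this is exactly what the centralizing identities for elements of $C(V)$ provide.
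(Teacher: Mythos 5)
Your proposal is correct and takes essentially the same route as the paper: the paper's proof uses exactly the same two functors (the $k$-action $a\cdot v := \varphi(a)(-1)v$, justified by Lemma \ref{lemfcentral} and the centralizing identities derived from (\ref{commform}) and (\ref{assocform}) in one direction, and $a\mapsto a\cdot\mathbf{1}$ in the other) and asserts they are mutually inverse up to natural equivalence. You simply supply the verifications the paper leaves implicit, and your observation that both composites are literally the identity (via $u(-1)\mathbf{1}=u$ and $\mathbf{1}(-1)=\mathrm{Id}_V$) sharpens the conclusion to an isomorphism of categories.
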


We complete this Subsection with some examples of vertex $k$-algebras related to \emph{base-change}.\ Suppose we have a unital morphism of commutative rings
$\varphi{:} k{\rightarrow}R$ and a vertex $k$-algebra $V$.\ The base-change
\begin{eqnarray*}
V_R:= R\otimes_kV
\end{eqnarray*}
produces a left $R$-module where $b(a\otimes u) = (ba)\otimes u\ (a, b\in R, u\in V).$\ We easily check
that $V_R$ is a vertex $R$-algebra if we define $n^{th}$ modes in the obvious way,
i.e., $(a\otimes u)(n)(b\otimes v) := ab\otimes (u(n)v)$.

\medskip
For example, if we take $R{:=}k[[t]]$ with an indeterminate $t$ and the natural embedding 
$k{\rightarrow} R$, we obtain the \emph{power series vertex $k[[t]]$-algebra with coefficients in $V$}:
\begin{eqnarray*}
V[[t]]:= k[[t]]{\otimes_k} V.
\end{eqnarray*}
This vertex algebra  and similar ones with
$R{=}k[t], k[[t, t^{-1}]]$, or $k[t, t^{-1}]$, for example, and $k{=}\mathbf{C}$, are commonly used in VOA  theory, where they are usually regarded as vertex $\mathbf{C}$-algebras.

\medskip
$V[[t]]$ plays a r\^{o}le in an alternate treatment of HS derivations which is standard in the theory of  rings with derivation\ (cf.\ Section 1 of \cite{Mats}).\ When pursued, this line of argument  leads to connections between vertex rings and formal group laws.\ We will not carry this out here.
\begin{lem} Suppose that $V$ is a vertex $k$-algebra with canonical HS derivation
$\underline{D}=(Id_V, D_1, ...)$.\ The map
\begin{eqnarray*}
\alpha{:}V{\rightarrow} V[[t]],\ \ u\ {\mapsto} \sum_{m\geq 0} D_m(u)t^m
\end{eqnarray*}
is a morphism of vertex $k$-algebras.\ Moreover, it has a unique extension
to an \emph{automorphism} $\alpha{:}V[[t]]{\rightarrow} V[[t]]$ satisfying 
$\alpha(\mathbf{1}\otimes t){=}\mathbf{1}\otimes t$.
\end{lem}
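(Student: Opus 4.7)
The plan is to handle the statement in two phases: first produce the explicit extension and verify it is a vertex $k$-algebra automorphism; then dispatch uniqueness. Throughout, the essential ingredients are the Hasse--Schmidt and iterativity properties of $\underline{D}$ (Definition \ref{dfnHS}) and the observation that $\mathbf{1}\otimes t$ lies in the center of $V[[t]]$.

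For the construction, I would first check that $\alpha|_V$ is a vertex $k$-algebra morphism: $k$-linearity is immediate from the $k$-linearity of each $D_m$; vacuum preservation follows from $D_m(\mathbf{1})=0$ for $m\geq 1$ (Theorem \ref{lend}(a)); and preservation of $n$-th products is exactly the HS-derivation identity,
\[ \alpha(u(n)v) \;=\; \sum_m\sum_{i+j=m} D_i(u)(n)D_j(v)\, t^m \;=\; \sum_{i,j} D_i(u)(n)D_j(v)\, t^{i+j} \;=\; \alpha(u)(n)\alpha(v), \]
using $k[[t]]$-bilinearity of $n$-th products in $V[[t]]$. I then extend $\alpha$ to $\tilde\alpha(\sum_n v_n t^n):=\sum_n t^n \alpha(v_n)$, with coefficient of $t^k$ equal to the finite sum $\sum_{i+j=k} D_i(v_j)$. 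The extension is $k[[t]]$-linear, and vacuum and $n$-th product preservation transfer coefficient-wise from $\alpha|_V$; also $\tilde\alpha(\mathbf{1}\otimes t)=t\alpha(\mathbf{1})=\mathbf{1}\otimes t$. For bijectivity I introduce the candidate inverse $\beta(u):=\sum_m(-1)^m D_m(u)t^m$, extend $k[[t]]$-linearly to $\tilde\beta$, and verify via iterativity
\[ \tilde\beta(\alpha(u)) \;=\; \sum_{i,j}(-1)^j D_jD_i(u)\,t^{i+j} \;=\; \sum_n D_n(u)\, t^n \sum_{i+j=n}(-1)^j{n\choose i} \;=\; u, \]
the inner sum vanishing for $n\geq 1$; this is essentially Lemma \ref{rmkDinvert}. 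Symmetrically $\tilde\alpha\tilde\beta=\id$, so $\tilde\alpha$ is an automorphism.

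Uniqueness is the delicate point. Suppose $\tilde\alpha'$ is any vertex $k$-algebra morphism extending $\alpha|_V$ and fixing $\mathbf{1}\otimes t$. Since $\mathbf{1}\otimes t\in C(V[[t]])$ (its vertex operator is the constant endomorphism $t\cdot\id$, as $V[[t]]$ is a vertex $k[[t]]$-algebra and $Y(\mathbf{1},z)=\id$), the morphism axiom gives $\tilde\alpha'(tx)=\tilde\alpha'((\mathbf{1}\otimes t)(-1)x)=t\,\tilde\alpha'(x)$, so induction forces $\tilde\alpha'$ to be $k[t]$-linear. The main obstacle is upgrading this to $k[[t]]$-linearity, since $V$ together with $\mathbf{1}\otimes t$ does not generate $V[[t]]$ algebraically. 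I would resolve it by noting that $k[t]$-linearity already forces $\tilde\alpha'$ to preserve the $t$-adic filtration $t^N V[[t]]$: for $x=\sum_n v_n t^n$ and any $N\geq 0$, truncation yields $\tilde\alpha'(x)\equiv \tilde\alpha'(\sum_{n<N} v_n t^n)=\sum_{n<N} t^n\alpha(v_n)\pmod{t^N V[[t]]}$, and $t$-adic separatedness of $V[[t]]$ then identifies $\tilde\alpha'(x)$ with $\tilde\alpha(x)$, completing uniqueness.
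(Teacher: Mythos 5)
Your proposal is correct, and up to the point of pinning down the extension it runs parallel to the paper's own proof: the morphism property via $D_m(\mathbf{1})=0$ plus the Hasse--Schmidt identity, and the observation that centrality of $\mathbf{1}\otimes t$ forces $\alpha(t^n)=t^n$ and hence the coefficientwise formula, appear in both. Where you genuinely diverge is bijectivity and uniqueness. The paper establishes bijectivity by solving $\sum_i \alpha(a_i)t^i=\sum_j b_jt^j$ recursively for the coefficients $a_i$, one degree at a time (the recursion determines each $a_n$ uniquely, giving injectivity as well); you instead exhibit the closed-form inverse $\beta(u)=\sum_m(-1)^mD_m(u)t^m$ and dispose of both composites at once using iterativity of the canonical HS derivation --- this is precisely Lemma \ref{rmkDinvert}, which the paper proves but deploys only in the proof of Theorem \ref{thmexist3} (via Lemma \ref{lemfdprop}), not here; iterativity is available by Theorem \ref{thmHS}. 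For uniqueness the paper simply asserts that $\alpha(t^n)=t^n$ forces $\alpha\left(\sum_i a_it^i\right)=\sum_i\alpha(a_i)t^i$; your two-step argument ($k[t]$-linearity extracted from the fixed central element, then the $t$-adic truncation and separatedness upgrade) supplies exactly the justification the paper elides, and it proves uniqueness among \emph{all} morphism extensions fixing $\mathbf{1}\otimes t$, not merely among $k[[t]]$-linear ones, which matches the lemma as stated more closely. One caveat you share with the paper: separatedness, $\bigcap_N t^NV[[t]]=0$, must be available. In fact the paper's earlier definition $V[[t]]:=k[[t]]\otimes_kV$ is too small for $\alpha$ to land in it whenever the $D_m(u)$ fail to lie in a finitely generated $k$-submodule (e.g.\ $u=\omega$ in $M_k(c',0)$), so $V[[t]]$ should be read as the full power-series module, and under that reading separatedness is immediate and your argument is watertight. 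Net assessment: your route buys a cleaner, simultaneous proof of injectivity and surjectivity (the preimage of $b$ is just $\tilde\beta(b)$, rather than a coefficient recursion) and a more carefully grounded uniqueness, at essentially no extra cost since the needed iterativity is already a theorem of the paper.
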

\begin{proof} The argument in \cite{Mats} is essentially unchanged.\ To prove the first part of the Lemma, 
notice that $\alpha$ fixes the vacuum
$\mathbf{1}$ because $D_m(\mathbf{1}){=}0\ (m\geq 1)$.\ Moreover
\begin{eqnarray*}
&&\alpha(u(n)v){=}\sum_{m\geq 0} D_m(u(n)v)t^m {=} \sum_{m\geq 0} \sum_{i+j=m} D_i(u)(n)D_j(v)t^m\\
&&\ \ \ \ \ \ \ \ \ \ \ \ \ {=}\left(\sum_{i\geq 0}D_i(u)t^i\right)(n)\sum_{j\geq 0} D_j(v)t^j {=} \alpha(u)(n)\alpha(v).
\end{eqnarray*}
This shows that $\alpha$ is a morphism of vertex $k$-algebras.

\medskip
We assert that there is a \emph{unique} extension of $\alpha$ to a morphism
$\alpha{:}V[[t]]{\rightarrow} V[[t]]$ of vertex $k[[t]]$-algebras satisfying $\alpha(t){=}t$.\ Indeed, because powers of $t$ associate (because $t{\in}C(V[[t]])$) it is easy to see that if the extension exists then we must have
$\alpha(t^n)=\alpha(t)^n=t^n\ (n\geq 1)$, so that
\begin{eqnarray*}
\alpha\left(\sum_{i\geq 0} a_it^i \right) = \sum_{i\geq 0} \alpha(a_i)t^i.
\end{eqnarray*}
Thus there is only one possible extension, and the defined action of
$\alpha$ on $V[[t]]$ \emph{does} work, because
\begin{eqnarray*}
&&\alpha\left(\left(\sum_i a_it^i\right)(n)\left(\sum_j b_jt^j\right)\right) = \alpha\left(\sum_k \left(\sum_{i+j=k} 
a_i(n)b_j\right) t^k\right)\\
&&{=}\sum_k \left(\sum_{i+j=k} \alpha(a_i)(n)\alpha(b_j)\right)t^k = \alpha\left(\sum_i a_it^i\right)(n)\alpha\left(\sum_j b_jt^j\right).
\end{eqnarray*}

In fact, we assert that so defined, $\alpha$ is \emph{surjective}, hence is an \emph{automorphism} of $V[[t]]$.\ Given any 
$\sum_j b_jt^j$, we have to solve recursively for $a_i{\in} V$ that satisfy
\begin{eqnarray}\label{auto}
\sum_i \alpha(a_i)t^i {=}\sum_j b_jt^j.
\end{eqnarray}

\medskip
Since $\alpha(a_0){=}a_0{+} O(t)$ we must have $a_0{=}b_0$.\ Suppose we have found $a_0, ..., a_{n-1}$
such that (\ref{auto}) holds modulo $t^n$.\ Let $\alpha(a_i) {=} \sum_{m} a_{im}t^m\ (0\leq i\leq n-1)$.\ By
(\ref{auto}) we must have
\begin{eqnarray*}
b_n {=} \sum_{i=0}^{n-1} a_{in}{+}a_n,
\end{eqnarray*}
so $a_n$ is uniquely determined.\ The Lemma is proved. 
\end{proof}

\subsection{Idempotents}\label{SSidem}
We consider idempotents in a vertex ring.\ They have a r\^{o}le to play in Part II.
 \begin{dfn} Let $V$ be a vertex ring.\ An \emph{idempotent} is an element
 $e{\in} V$ such that $Y(e, z)e{=}e$, i.e., $e(n)e{=}\delta_{n, -1}e$.
 \end{dfn}
 
 \begin{ex}\label{exidem} An idempotent in the commutative ring $C(V)$ is an idempotent of $V$.
 \end{ex}
 \begin{proof} If $e{\in} C(V)$ is an idempotent then $Y(e, z){=}e(-1)$
 and $e {=} e(-1)e$.
 \end{proof}

\begin{lem}\label{lemmaidem} Let $V$ be a vertex ring with $e{\in} V$.\ Then the following are equivalent.
\begin{eqnarray*}
&&(a)\ e(n)e{=}0\ \mbox{for}\ n{\geq} 0\ \mbox{and}\ e(-1)e{=}e,\\
&&(b)\ e\ \mbox{is an idempotent in $V$},\\
&&(c)\ e\ \mbox{is an idempotent in $C(V)$}.
\end{eqnarray*}
\end{lem}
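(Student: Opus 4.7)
The plan is to close the cycle $(c) \Rightarrow (b) \Rightarrow (a) \Rightarrow (c)$. The implication $(c) \Rightarrow (b)$ is precisely Example~\ref{exidem}, and $(b) \Rightarrow (a)$ unpacks the defining identity $Y(e,z)e = e$ mode by mode. The real content is $(a) \Rightarrow (c)$.

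For this, I would first make a crucial reduction: it suffices to show $D_1(e) = 0$. Indeed, by Theorem~\ref{lend}(b), $Y(D_1(e), z) = \partial_z Y(e, z)$, so if $D_1(e) = 0$ then $Y(e, z)$ is constant in $z$, forcing $Y(e, z) = e(-1)$ and hence $e \in C(V)$ by definition; the identity $e(-1)e = e$ from (a) then says $e$ is idempotent in $C(V)$.

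To establish $D_1(e) = 0$, I would first observe that hypothesis (a) combined with Lemma~\ref{lemcommvector} makes all modes $e(n)$ mutually commute, and then assemble three identities. \textbf{(i)} \emph{Skew-symmetry} (Lemma~\ref{lemmaskewsymm}) with $u = v = e$, $n = -2$: under (a) only the $i = 0, 1$ terms of the sum survive, and using $e(-1)e = e$ one obtains $2\,e(-2)e = D_1(e)$. \textbf{(ii)} The \emph{associator formula} (\ref{assocform}) with $u = v = e$, $t = -1$, $s = -2$ applied to $\mathbf{1}$: the vanishing $e(i)\mathbf{1} = 0$ for $i \geq 0$ kills one sum and yields $D_1(e) = e(-1)D_1(e) + e(-2)e$. \textbf{(iii)} The \emph{same associator} applied instead to $e$: now $e(i)e = 0$ for $i \geq 0$ kills the other sum, and the $i = 1$ term collapses via $e(-1)e = e$ to $e(-2)e$, cancelling the left-hand side and giving $e(-1)\,e(-2)\,e = 0$.

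Combining (i) and (ii) gives $D_1(e) = 2(1 - e(-1))D_1(e)$, i.e.\ $D_1(e) = 2e(-1)D_1(e)$; a second substitution of (i) then yields $D_1(e) = 4\,e(-1)\,e(-2)\,e$, which vanishes by (iii). The main obstacle I expect is the integral nature of the argument: because we work over $\mathbf{Z}$, the identity $2\,e(-2)e = D_1(e)$ cannot simply be halved to extract $e(-2)e = 0$, so the proof must proceed via the auxiliary relation (iii) obtained by evaluating the associator on $e$ rather than on $\mathbf{1}$, which absorbs the resulting factor of $4$ without introducing any denominators.
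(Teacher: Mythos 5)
Your computations (i)--(iii) are all correct as stated, and they do prove $D_1(e)=0$ without introducing denominators. The genuine gap is the opening ``crucial reduction'': over $\mathbf{Z}$ it is \emph{false} that $D_1(e)=0$ forces $Y(e,z)$ to be constant. By Theorem \ref{lend}(b), $D_1(e)=0$ gives $\partial_z Y(e,z)=0$, i.e.\ $(n+1)\,e(n)=0$ for every $n$ --- and in the presence of torsion this does not imply $e(n)=0$ for $n\neq -1$. The kernel of $D_1$ genuinely exceeds $C(V)$ in positive characteristic: in a vertex $\mathbf{F}_p$-algebra, iterativity gives $D_1\circ D_{p-1}={p\choose 1}D_p=0$, so every state of the form $D_{p-1}(u)$ is killed by $D_1$ without being central. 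This is exactly why the paper characterizes $C(V)$ as the states annihilated by \emph{all} $D_i$, $i\geq 1$ (Theorem \ref{thmcenter1}), whose proof requires the gcd-of-binomial-coefficients argument ranging over every $i$; no single $D_i$ suffices. Ironically, you flagged the integrality obstruction to halving $2\,e(-2)e=D_1(e)$, but the same obstruction silently defeats the reduction to $D_1$ itself.

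The repair is an induction proving $D_n(e)=0$ for all $n\geq 1$, which is how the paper argues. Hypothesis (a) plus Lemma \ref{lemcommvector} makes all modes of $e$ commute; the Hasse--Schmidt property gives $D_n(e)=D_n(e(-1)e)=\sum_{i+j=n}D_i(e)(-1)D_j(e)$, and the inductive hypothesis collapses this to $D_n(e)=2\,e(-1)D_n(e)$, since by Lemma \ref{lemmaDiu} and commuting modes $D_n(e)(-1)e=e(-n-1)e=e(-1)D_n(e)$. Multiplying once more by $e(-1)$ and using $e(-1)e=e$ yields $e(-1)D_n(e)=2\,e(-1)D_n(e)$, hence $e(-1)D_n(e)=0$ and then $D_n(e)=0$ --- the same $2$-absorbing device you employed, so your base case survives essentially intact (your identity (ii) is the $n=1$ instance of the HS expansion, and (i), (iii) become superfluous). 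With all $D_n(e)=0$ in hand, Theorem \ref{thmcenter1} places $e$ in $C(V)$, and $e(-1)e=e$ from (a) finishes (c).
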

\begin{proof} After Example \ref{exidem}, the implications (c)$\Rightarrow$(b)$\Rightarrow$(a) are obvious, so we only need to prove that (a)$\Rightarrow$(c).\ We adapt a standard argument.

\medskip
 First note that if $e(n)e{=}0$ for $n{\geq} 0$ then all modes of $e$ \emph{commute} with each other thanks to Lemma \ref{lemcommvector}.\ Let $\underline{D}=(Id, D_1, \hdots)$ be the canonical HS derivation of $V$.\ By
Theorem \ref{thmcenter1} it suffices to show that $D_n(e){=}0$ for $n{\geq} 1$.\ We prove this by induction on $n$.
First we have
\begin{eqnarray*}
D_1(e){=}D_1(e(-1)e){=}D_1(e)(-1)e+e(-1)D_1(e){=}2e(-1)D_1(e),
\end{eqnarray*}
where we have used that modes of $e$ commute.\ Therefore
\begin{eqnarray*}
e(-1)D_1(e){=}2e(-1)^2D_1(e){=}2e(-1)D_1(e),
\end{eqnarray*}
leading to $0{=}e(-1)D_1(e){=}2e(-1)D_1(e){=}D_1(e)$.

\medskip
Similarly, if $n{\geq} 2$ then
$D_n(e){=}D_ne(-1)e{=}\sum_{i=0}^n D_i(e)(-1)D_{n-i}(e)$, and by induction it follows that
$D_n(e){=}2 e(-1)D_n(e)$.\ We  deduce that $D_n(e){=}0$ just as in the case $n{=}1$, and the proof
of the Lemma is complete.
\end{proof}

\medskip

Let $k$ be a commutative ring and $V$ a vertex $k$-algebra with canonical HS derivation
$\underline{D}=(Id, D_1, \hdots)$.\ The \emph{endomorphism algebra} of $V$ is defined as follows:
\begin{eqnarray*}
E(V){:=}\{f{\in} End_k(V){\mid} fY(v, z){=}Y(v, z)f\ (v\in V), \ fD_m{=}D_mf\ (m{\geq} 1)\}.
\end{eqnarray*}
Here, $End_k(V)$ is the $k$-algebra of $k$-linear endomorphisms of the $k$-module $V$.\ The next result follows
an argument  in \cite{DM1}.

\begin{lem}\label{lemmaEV} There is an \emph{isomorphism} of $k$-algebras 
\begin{eqnarray*}
\varphi{:}C(V)\stackrel{\cong}{\longrightarrow} E(V),\ a\mapsto \varphi_a{:} v\mapsto a(-1)v\ (a{\in} C(V), v{\in} V).
\end{eqnarray*}
\end{lem}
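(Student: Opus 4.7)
The plan is to verify in turn that $\varphi$ is well-defined with image in $E(V)$, that it is a unital $k$-algebra homomorphism, injective, and surjective, with the last of these being the step requiring real content.

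First, for $a\in C(V)$ the endomorphism $\varphi_a=a(-1)$ is $k$-linear (all modes in a vertex $k$-algebra are $k$-linear), commutes with every mode $v(n)$ by the commutator formula (\ref{commform}) applied to $r=-1$: the sum $\sum_{i\geq 0}\binom{-1}{i}(a(i)v)(-1+n-i)$ vanishes because $a(i)=0$ for $i\geq 0$, so $[a(-1),Y(v,z)]=0$; and commutes with each $D_m$ ($m\geq 1$) by the Hasse--Schmidt identity and Theorem \ref{thmcenter1}: $D_m(a(-1)v)=\sum_{i+j=m}D_i(a)(-1)D_j(v)=a(-1)D_m(v)$. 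So $\varphi_a\in E(V)$, and $\varphi$ is obviously additive.

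To see that $\varphi$ is a ring map, apply the associator formula (\ref{assocform}) with $t=-1$, $s=-1$ and $u=a\in C(V)$: all terms indexed by $i\geq 1$ involve $a(-1-i)$ with $-1-i\neq -1$ or $a(i)$ with $i\geq 0$, both of which vanish, leaving $(a(-1)b)(-1)=a(-1)b(-1)$. Thus $\varphi_{a(-1)b}=\varphi_a\circ\varphi_b$, and $\varphi_{\mathbf{1}}=\mathbf{1}(-1)=\mathrm{Id}_V$ by Theorem \ref{thmvacuum}. Injectivity is immediate from the creation formula $\varphi_a(\mathbf{1})=a(-1)\mathbf{1}=a$.

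For surjectivity, given $f\in E(V)$ set $a:=f(\mathbf{1})$. Since $f$ commutes with each $D_m$ and $D_m(\mathbf{1})=0$ for $m\geq 1$, we get $D_m(a)=f(D_m\mathbf{1})=0$, so $a\in C(V)$ by Theorem \ref{thmcenter1}. It remains to check $f(v)=a(-1)v$ for all $v\in V$. Because $f$ commutes with $v(-1)$ we have $f(v)=f(v(-1)\mathbf{1})=v(-1)f(\mathbf{1})=v(-1)a$, so it suffices to prove the identity $v(-1)a=a(-1)v$ for arbitrary $v\in V$ and $a\in C(V)$. This is the main point: apply skew-symmetry (Lemma \ref{lemmaskewsymm}) to the pair $(v,a)$, giving
\begin{eqnarray*}
Y(v,z)a=\sum_{m\geq 0}z^m D_m Y(a,-z)v=\sum_{m\geq 0}z^m D_m(a(-1)v),
\end{eqnarray*}
since $Y(a,-z)=a(-1)$. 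Extracting the coefficient of $z^0$ yields $v(-1)a=a(-1)v$ as desired. Hence $\varphi_a=f$ and $\varphi$ is surjective.

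The main obstacle is this last identity $v(-1)a=a(-1)v$: it relies critically on the centrality of $a$, which collapses the skew-symmetry formula to a finite expression whose constant term gives what we want. Everything else is routine verification using the commutator, associator, and HS-derivation identities already established.
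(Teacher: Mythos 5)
Your proof is correct and follows essentially the same route as the paper: you verify $\varphi_a\in E(V)$ using centrality of $a$, and establish surjectivity exactly as the paper does, via $a:=f(\mathbf{1})$, creativity, and the $D$-constants characterization of $C(V)$ (Theorem \ref{thmcenter1}). The only cosmetic differences are local: you justify $[D_m,\varphi_a]=0$ via the Hasse--Schmidt identity and $v(-1)a=a(-1)v$ via skew-symmetry, where the paper uses the direct mode computation $(a(-1)v)(-m-1)\mathbf{1}=a(-1)v(-m-1)\mathbf{1}$ and the (implicit) commutation of $a(-1)$ with $v(-1)$; both are immediate consequences of $Y(a,z)=a(-1)$, so the arguments are interchangeable.
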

\begin{proof} Define $\varphi_a\ (a{\in}C(V))$ as in the statement of the Lemma.\ If $a{\in} C(V)$ then $a(-1)$ commutes with all modes $v(n)$, and it follows 
immediately that $\varphi_a$ also commutes with all $v(n)$.\ Moreover,
\begin{eqnarray*}
D_m\varphi_av{=}(a(-1)v)(-m-1)\mathbf{1}{=}a(-1)v(-m-1)\mathbf{1}{=}\varphi_aD_mv,
\end{eqnarray*}
 showing
that $\varphi$ commutes with each $D_m$.\ Therefore,  $\varphi_a{\in} E(V)$.\ Thus
$\varphi{:}a{\mapsto} \varphi_a$ defines a map $C(V){\rightarrow} E(V)$, and because we have
$(a(-1)b)(-1)v{=}a(-1)b(-1)v$ for $a, b{\in} C(V)$ then $\varphi$ is a morphism of rings.

\medskip
To see that $\varphi$ is \emph{surjective}, let $f{\in} E(V)$ and $v{\in} V$.\ By creativity we have
 $f(v){=}fv(-1)\mathbf{1}{=}v(-1)f(\mathbf{1})$.\ Furthermore
$D_nf(\mathbf{1}){=} fD_n(\mathbf{1}){=}0$ for $n{\geq} 1$, so $f(\mathbf{1}){\in} C(V)$ by Theorem \ref{thmcenter1}.\
Finally, we have $\varphi_{f(\mathbf{1})}(v){=}f(\mathbf{1})(-1)v {=}v(-1)f(\mathbf{1}){=}f(v)$,
showing that $f{=}\varphi_{f(\mathbf{1})}$.\ The Lemma now follows.
\end{proof}

Idempotents in a vertex $k$-algebra $V$ determine decompositions of $V$ into direct sums of ideals,
just as for commutative rings.\ If $e$ is an idempotent in $V$ then $e\in C(V)$ by Lemma
\ref{lemmaidem}, and it follows easily that 
\begin{eqnarray}\label{dirsumdecomp}
V {=} e(-1)V{\oplus} (\mathbf{1}-e)(-1)V
\end{eqnarray}
is a decomposition of $V$ into the direct sum of ideals, each of which is itself
a vertex $k$-algebra.\ (The corresponding vacuum elements are $e$ and $\mathbf{1}-e$.)
The projection $V{\rightarrow} e(-1)V$ is the idempotent in $E(V)$ that corresponds to $\varphi_e$
in the isomorphism described in Lemma \ref{lemmaEV}.\ In particular,
$V$ is \emph{indecomposable} as a vertex $k$-algebra if, and only if, $C(V)$ is an indecomposable commutative ring.

\medskip
Conversely, given a pair of vertex $k$-algebras $U, V$, their \emph{direct sum} $U{\oplus}V$ is a vertex $k$-algebra
with $Y(u{\oplus} v, z)=Y(u, z){\oplus} Y(v, z)\ (u{\in} U, v{\in} V)$ and 
$\mathbf{1}_{U{\oplus} V}=\mathbf{1}_U{\oplus}\mathbf{1}_V$.\ The vacuum elements $\mathbf{1}_U, \mathbf{1}_V$
become idempotents in $U{\oplus} V$.\ This construction defines a
\emph{product} in $\mathbf{Ver}$.\ Moreover, the abelian group $0$ is a terminal object, so that 
$\mathbf{Ver}$ has all finite products.

\subsection{Units}
\begin{dfn}\label{dfnunit} Let $V$ be a vertex ring.\ A \emph{unit} in $V$ is an element
 $a{\in} V$ such that for some $b{\in} V$ we have $Y(a, z)b{=}\mathbf{1}$, i.e., $a(n)b{=}\delta_{n, -1}\mathbf{1}$.
  \end{dfn}
 
  \begin{ex} A unit in the commutative ring $C(V)$ is a unit of $V$.
 \end{ex}
 \begin{proof} If $a{\in} C(V)$ is a unit, $Y(a, z){=}a(-1)$
 and $a(-1)b {=} \mathbf{1}$ for some $b{\in} C(V)$.
 \end{proof}

\begin{lem}\label{lemmaunit} Let $V$ be a vertex ring, and suppose that $a, b{\in} V$ satisfy
$Y(a, z)b{=}\mathbf{1}$.\ Then $Y(b, z)a{=}\mathbf{1}$ and $a, b{\in} C(V)$.
\end{lem}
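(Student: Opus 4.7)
The plan is to first establish $Y(b,z)a = \mathbf{1}$ via skew-symmetry, and then use locality of $Y(a,z)$ with itself (applied to the state $b$) to force the higher Hasse-Schmidt derivatives of $a$ to vanish; the assertions $a,b \in C(V)$ then follow from Theorem \ref{thmcenter1}.

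First I would invoke the skew-symmetry formula (\ref{skewsymmform}):
\begin{eqnarray*}
Y(b,z)a = \sum_{m\geq 0} z^m D_m Y(a,-z)b.
\end{eqnarray*}
By hypothesis, $a(n)b = \delta_{n,-1}\mathbf{1}$, so $Y(a,-z)b = a(-1)b = \mathbf{1}$. Since $D_0 = Id_V$ and $D_m\mathbf{1} = 0$ for $m\geq 1$ by Theorem \ref{lend}(a), only the $m=0$ term survives, giving $Y(b,z)a = \mathbf{1}$.

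Next, to show $a \in C(V)$, I apply the mutual locality of $Y(a,z)$ with itself (Lemma \ref{lemlocform}): for some $N\geq 0$,
\begin{eqnarray*}
(z-w)^N Y(a,z)Y(a,w)b = (z-w)^N Y(a,w)Y(a,z)b.
\end{eqnarray*}
Using $Y(a,w)b = Y(a,z)b = \mathbf{1}$, and writing $A(z) := Y(a,z)\mathbf{1} = \sum_{m\geq 0} D_m(a)z^m$ (from Theorem \ref{thmHS}), this becomes
\begin{eqnarray*}
(z-w)^N A(z) = (z-w)^N A(w)
\end{eqnarray*}
as an identity in $V[[z,w]]$. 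Setting $w = 0$ (which is legitimate because $(z-w)^N$ is a polynomial and $A(w)\in V[[w]]$) yields $z^N A(z) = z^N a$ in $V[[z]]$. Cancelling $z^N$ (allowed since $V[[z]]$ has no $z$-torsion), we obtain $A(z) = a$, i.e., $D_m(a) = 0$ for every $m\geq 1$. Theorem \ref{thmcenter1} then places $a$ in $C(V)$. The argument for $b\in C(V)$ is identical, now using $Y(b,z)a = \mathbf{1}$ and locality of $Y(b,z)$ with itself.

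The main obstacle is the second step, where one must squeeze information about $D_m(a)$ out of the hypothesis, since the hypothesis only directly controls products of $a$ against $b$. The key idea is that by applying the locality identity to $b$, the factors $Y(a,\cdot)b$ collapse to $\mathbf{1}$, leaving a clean equation for $Y(a,z)\mathbf{1}$ whose resolution requires only the elementary fact that $z^N$ is not a zero divisor in $V[[z]]$.
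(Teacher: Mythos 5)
Your proof is correct, and while the first half coincides with the paper's (both deduce $Y(b,z)a{=}\mathbf{1}$ from skew-symmetry (\ref{skewsymmform}) together with $D_m\mathbf{1}{=}0$), the second half takes a genuinely different route. The paper works mode by mode: from $a(n)b{=}0\ (n{\geq}0)$ it gets $[a(r),b(s)]{=}0$ via Lemma \ref{lemcommvector}, then expands $0{=}(b(-m-1)a)(-1)b$ with the associator formula (\ref{assocform}) to conclude $D_m(b){=}b(-m-1)\mathbf{1}{=}0$ for $m{\geq}1$, and symmetrically for $a$. You instead run a generating-function argument: apply self-locality of $Y(a,z)$ to the state $b$, collapse both $Y(a,\cdot)b$ factors to $\mathbf{1}$ to get $(z-w)^NA(z){=}(z-w)^NA(w)$ with $A(z){=}Y(a,z)\mathbf{1}{=}\sum_m D_m(a)z^m$, set $w{=}0$ (legitimate, since both sides lie in $V[[z,w]]$), and cancel $z^N$ using that $z$ is not a zero divisor in $V[[z]]$. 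This is precisely the mechanism of the paper's uniqueness Lemma \ref{lemmadz}, redeployed here; it avoids all binomial bookkeeping, and it has the mild structural advantage that $a{\in}C(V)$ follows from the hypothesis $Y(a,z)b{=}\mathbf{1}$ alone, with step one needed only to transfer the conclusion to $b$. The paper's computation, by contrast, is elementary and identity-driven, using only the commutator and associator formulas without invoking locality. Both arguments are complete; the citations you lean on (Theorems \ref{thmHS}, \ref{lend}, \ref{thmcenter1}, Lemmas \ref{lemmaskewsymm}, \ref{lemlocform}) all apply as stated.
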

\begin{proof} Let $(Id, D_1, \hdots)$ be the canonical HS derivation of $V$.\ We have $a(n)b=\delta_{n, -1}\mathbf{1}$, and by skew-symmetry 
(Lemma \ref{lemmaskewsymm}) it follows that
\begin{eqnarray*}
&&b(n)a{=}(-1)^{n+1}\sum_{i\geq 0}(-1)^i D_i(a(n+i)b) \\
&&\ \ \ \ \ \ \  {=} (-1)^{n+1}\sum_{i\geq 0}(-1)^i \delta_{n+i, -1}D_i(\mathbf{1})=\delta_{n, -1}\mathbf{1}.
\end{eqnarray*}
This proves the first assertion of the Lemma.

\medskip
Next we observe that $[a(r), b(s)]=0$ for all $r, s\in\mathbf{Z}$.\ This follows from
Lemma \ref{lemcommvector}.\ Now let $m\geq 1$.\ Then using (\ref{assocform}) we have
\begin{eqnarray*}
&&0 {=} (b(-m-1)a)(-1)b \\
&&\ \ \ {=} \sum_{i\geq 0}(-1)^i{-m-1\choose i}\{b(-m-1-i)a(-1+i)+(-1)^ma(-m-2-i)b(i)\}b\\
&&\ \ \ {=} b(-m-1)a(-1)b = b(-m-1)\mathbf{1}=D_m(b).
\end{eqnarray*}

This shows that $D_m(b){=}0\ (m\geq 1)$, and  similarly  $D_m(a){=}0\ (m\geq 1)$.
Therefore $a$ and $b$ lie in $C(V)$  by Theorem \ref{thmcenter1}.\ This completes the proof of
the Lemma.
\end{proof}

 \begin{ex}\label{exunit} (i) Let $V$ be a vertex ring with $a{\in} C(V)$.\ The 2-sided ideal $a(-1)V$ of $V$ generated by $a$ (cf.\ Example \ref{exvac}(ii)) satisfies $a(-1)V=V$ if, and only if, $a$ is a unit.\\
 (ii) If $V$ is \emph{simple} then $C(V)$ is a \emph{field}.
 \end{ex}
 \begin{proof} (i)\ $a(-1)V{=}V {\Leftrightarrow} a(-1)b{=}\mathbf{1}\ (\exists\  b{\in} V) {\Leftrightarrow}
 Y(a, z)b{=}\mathbf{1}$.\ (ii) follows immediately from (i).
 \end{proof}

\subsection{Tensor product of vertex rings}
The category $\bf{Ver}$ of vertex rings has a \emph{coproduct} given by the tensor product
$U{\otimes} V$ of a pair of vertex rings $U, V$.\ (It also has a product, given by the direct sum
construction explained in the previous Subsection.)\ We give a proof using
 Theorem \ref{thmexist1}.

\medskip
The underlying abelian group is the tensor product of the abelian groups $U$ and $V$,
and the vacuum element for $U{\otimes} V$ is taken to be $\mathbf{1}{\otimes}\mathbf{1}$.\ 
The vertex operators for $U{\otimes} V$ are defined in the obvious manner, i.e., if
$u{\in} U, v{\in} V$ we define
\begin{eqnarray*}
Y(u{\otimes} v, z) {=} Y(u,z){\otimes} Y(v, z).
\end{eqnarray*}
In terms of modes, this means that
\begin{eqnarray*}
(u{\otimes} v)(n) {=} \sum_{i+j=n-1} u(i){\otimes} v(j),
\end{eqnarray*}
which shows in particular that $Y(u{\otimes} v, z){\in}\mathcal{F}(U{\otimes} V)$.\ Moreover,
\begin{eqnarray*}
Y(u{\otimes} v, z)(\mathbf{1}{\otimes}\mathbf{1}){=}\sum_{m\geq 0} \sum_{i+j=m} 
u(-i-1)\mathbf{1}{\otimes} v(-j-1)\mathbf{1}z^{m}.
\end{eqnarray*}
This proves the creative property, moreover it shows that the canonical HS-derivation $\underline{D'}=(Id, D_1',  \hdots)$ for 
$U{\otimes} V$ must necessarily be defined by
\begin{eqnarray}\label{D'def}
D_m'{=}\sum_{i+j=m} D_i{\otimes} D_j,
\end{eqnarray}
where we have abused notation by setting $(Id, D_1,  \hdots)$ for the canonical HS derivations of \emph{both} 
$U$ and $V$.\ 
The mutual locality of the fields $Y(u{\otimes} v, z)$ for $u, v\in V$ is an easy consequence of the locality of
the $Y(u, z)$, so it remains to establish translation-covariance.

\medskip
Although it is not necessary to know that $\underline{D'}$ is an HS-derivation in order to apply
Theorem \ref{thmexist1}, in the present instance it will be convenient to know in advance that it is.\ To see this,
we calculate
\begin{eqnarray*}
D_m'(u{\otimes} v)(n)(a\otimes b)&=&\sum_{i+j=n-1}D_m'(u(i)a{\otimes} v(j)b)\\
&=&\sum_{p=0}^m \sum_{i+j=n-1}(D_pu(i)a){\otimes} (D_{m-p}v(j)b)\\
&=&\sum_{p=0}^m \sum_{i+j=n-1}\sum_{r=0}^p\sum_{s=0}^{m-p} (D_{r}u)(i)(D_{p-r}a){\otimes}(D_sv)(j)(D_{m-p-s}b)\\
&=&\sum_{p=0}^m \sum_{r=0}^p\sum_{s=0}^{m-p} (D_{r}u{\otimes} D_sv)(n)(D_{p-r}a{\otimes} D_{m-p-s}b)\\
&=&\sum_{t=0}^m D_t'(u{\otimes} v)(n)D_{m-t}'(a{\otimes} b),
\end{eqnarray*}
and this is what we required.

\medskip
Turning to the proof of translation-covariance, we calculate
\begin{eqnarray*}
&&Y(D_m'(u{\otimes} v), z) {=} Y\left(\sum_{p+q=m} D_p(u){\otimes} D_q(v), z\right)\\
&=&\sum_{p+q=m}Y(D_p(u), z){\otimes} Y(D_q(v), z) {=} \sum_{p+q=m} \delta_z^{(p)}Y(u,z)\otimes \delta_z^{(q)}Y(v, z) \\
&=&
(-1)^m\sum_{p+q=m} \sum_{\ell}\sum_n {\ell\choose p}{n\choose q}u(\ell-p)\otimes v(n-q)z^{-\ell-n-2}.
\end{eqnarray*}

The coefficient of $z^{-t-1}$ in this expression is equal to
\begin{eqnarray*}
&&(-1)^m\sum_{p+q=m} \sum_{\ell} {\ell\choose p}{t-\ell -1\choose q}u(\ell-p){\otimes} v(t-\ell-q-1)\\
&=&(-1)^m\sum_{p+q=m} \sum_{i} {p+i\choose p}{t-p-i-1\choose q}u(i){\otimes} v(t-i-m -1)\\
&=&(-1)^m{t\choose m}\sum_i u(i){\otimes} v(t-i-m-1)\ \ (\mbox{use (\ref{bi1}) and (\ref{bi4})}))\\
&=&(-1)^m{t\choose m} (u{\otimes} v)(t-m),
\end{eqnarray*}
and this establishes that
\begin{eqnarray*}
&&Y(D_m'(u{\otimes} v), z)= (-1)^m\sum_t {t\choose m} (u\otimes v)(t-m)z^{-t-1} = \delta_z^{(m)}Y(u{\otimes} v, z).\\
\end{eqnarray*}

Having already shown that $\underline{D'}$ is an HS derivation, the proof that $Y(u{\otimes} v, z)$ is translation-covariant with respect to $(Id, D_1', D_2', \hdots)$ now
follows, using the previous display, in exactly the same way that part (c) of Theorem \ref{lend} is deduced from part (b).\ This completes the proof that $U{\otimes} V$, equipped with vacuum vector
$\mathbf{1}{\otimes}\mathbf{1}$, vertex operators $Y(u, z){\otimes} Y(v, z)$, and endomorphsms $\underline{D'}$, 
is indeed a vertex ring.

\medskip
 We also observe that the same proof works \emph{mutatis mutandis} if $U$ and $V$ are both vertex $k$-algebras for some commutative ring $k$ and $U{\otimes} V$ is taken to mean the tensor product $U{\otimes}_k V$
of $k$-modules.

\medskip
We have the usual diagram
\begin{eqnarray}\label{tpuniversal}
U \stackrel{i}{\longrightarrow} U\otimes V \stackrel{j}{\longleftarrow} V
\end{eqnarray}
where $i{:}u\mapsto u{\otimes}\mathbf{1}$ and $j{:} v\mapsto \mathbf{1}{\otimes} v$.\ Both $i$ and $j$
are  morphisms of vertex $k$-algebras, and the images of $i$ and $j$ jointly generate
$U\otimes V$ on account of the formula $u{\otimes} v=(u{\otimes} \mathbf{1})(-1)(\mathbf{1}{\otimes} v)$.\
The universal property of (\ref{tpuniversal}) that shows it is a coproduct in $k\mathbf{Ver}$ is then easy to see, and we have proved
\begin{thm} Suppose that $U$ and $V$ are a pair of vertex $k$-algebras.\ Then $U{\otimes}_k V$ carries a natural structure of vertex $k$-algebra with vertex operators defined by $Y(u{\otimes} v, z){=}Y(u, z){\otimes} Y(v, z)$, vacuum vector
$\mathbf{1}{\otimes}\mathbf{1}$, and  canonical HS-derivation $\underline{D'}$ defined by (\ref{D'def}).\
$U{\otimes}_k V$ defines a coproduct in $k\mathbf{Ver}$. $\hfill \Box$
\end{thm}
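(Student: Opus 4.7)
The plan is to apply Theorem \ref{thmexist1} to the tuple $(U\otimes_k V,\,Y,\,\mathbf{1}\otimes\mathbf{1},\,\underline{D'})$, where $Y(u\otimes v,z) := Y(u,z)\otimes Y(v,z)$ extended $k$-bilinearly and $\underline{D'}$ is given by (\ref{D'def}). The required hypotheses split into four pieces. First, $Y(u\otimes v,z)\in\mathcal{F}(U\otimes_k V)$ is immediate from the mode formula $(u\otimes v)(n) = \sum_{i+j=n-1}u(i)\otimes v(j)$ together with the truncation property in each factor. Second, creativity reduces to the factorwise expansion $Y(u,z)\mathbf{1}\otimes Y(v,z)\mathbf{1}\in (u\otimes v) + z(U\otimes_k V)[[z]]$, while the compatibility $D_m'(\mathbf{1}\otimes\mathbf{1})=0$ for $m\geq 1$ is immediate from $D_i(\mathbf{1})=0$ in each factor. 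Third, mutual locality of $Y(u\otimes v,z)$ and $Y(u'\otimes v',z)$ follows by choosing $N$ large enough to bound the locality orders for both $(Y(u,z),Y(u',w))$ in $U$ and $(Y(v,z),Y(v',w))$ in $V$; multiplication by $(z-w)^{2N}$ then annihilates the commutator.

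The main technical step is translation-covariance relative to $\underline{D'}$. I would first establish the analog of Theorem \ref{lend}(b), namely $Y(D_m'(u\otimes v),z) = \delta_z^{(m)}Y(u\otimes v,z)$. Writing $D_m'(u\otimes v) = \sum_{p+q=m}D_p(u)\otimes D_q(v)$ and invoking the factorwise identity $Y(D_p(u),z) = \delta_z^{(p)}Y(u,z)$ from Theorem \ref{lend}(b), the required identity reduces to a Vandermonde-type binomial manipulation for the coefficient of $z^{-t-1}$; this is the essential calculation and the point at which one consumes most of the work. Once this holds, the bracketed form of translation-covariance $[D_m',Y(u\otimes v,z)] = \sum_{i=1}^m \delta_z^{(i)}Y(u\otimes v,z)\,D_{m-i}'$ is deduced in exactly the same manner as Theorem \ref{lend}(c) from (b). A prerequisite is that $\underline{D'}$ is an HS-derivation on each $n^{\mathrm{th}}$ product of $U\otimes_k V$, which follows by a direct expansion from the HS-property of $\underline{D}$ in $U$ and in $V$. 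With these verifications in hand, Theorem \ref{thmexist1} supplies the vertex ring structure, and the identification of $\underline{D'}$ as the canonical HS-derivation is then automatic in view of the remark following (\ref{morecreate}).

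For the coproduct assertion, the maps $i(u) = u\otimes\mathbf{1}$ and $j(v) = \mathbf{1}\otimes v$ are routinely checked to be morphisms in $k\mathbf{Ver}$, and the identity $u\otimes v = i(u)(-1)j(v)$ shows that $i(U)$ and $j(V)$ jointly generate $U\otimes_k V$. Given a target vertex $k$-algebra $W$ with morphisms $f:U\to W$ and $g:V\to W$, the unique candidate factorization is $h(u\otimes v) := f(u)(-1)g(v)$; uniqueness is forced by the generation property, and verifying that $h$ respects all $n^{\mathrm{th}}$ products reduces, via the associator formula (\ref{assocform}), to the fact that $i(U)$ and $j(V)$ have mutually commuting modes in $U\otimes_k V$. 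This commutativity is transparent from the identities $Y(u\otimes\mathbf{1},z) = Y(u,z)\otimes\mathrm{id}_V$ and $Y(\mathbf{1}\otimes v,w) = \mathrm{id}_U\otimes Y(v,w)$, whose commutator vanishes identically. The main obstacle of the argument is to track this commutation carefully through the construction of $h$ and verify that $h$ is well-defined and $k$-linear, but this is routine once the vertex ring structure on $U\otimes_k V$ has been established.
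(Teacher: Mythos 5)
Your construction of the vertex $k$-algebra structure on $U{\otimes}_k V$ is correct and is essentially the paper's own argument: both feed the data $(U{\otimes}_k V,\, Y,\, \mathbf{1}{\otimes}\mathbf{1},\, \underline{D'})$ into Theorem \ref{thmexist1}, read off the field and creativity properties from the mode formula $(u{\otimes}v)(n)=\sum_{i+j=n-1}u(i){\otimes}v(j)$, dispose of locality factorwise, verify in advance that $\underline{D'}$ is an HS derivation, and make the identity $Y(D_m'(u{\otimes}v),z)=\delta_z^{(m)}Y(u{\otimes}v,z)$ (which the paper carries out via the binomial identities (\ref{bi1}) and (\ref{bi4})) the pivot from which the bracket form of translation-covariance is deduced exactly as Theorem \ref{lend}(c) is deduced from \ref{lend}(b); the identification of $\underline{D'}$ as the canonical HS derivation is automatic in both treatments, by the remark following (\ref{morecreate}).

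Your final paragraph on the coproduct property, however, contains a genuine gap, and it sits exactly where the paper supplies no details (the universal property of (\ref{tpuniversal}) is declared ``easy to see''). You reduce the multiplicativity of $h(u{\otimes}v):=f(u)(-1)g(v)$ to the commutation of the modes of $i(U)$ and $j(V)$ \emph{inside $U{\otimes}_k V$}. That is the wrong ring: rearranging $\bigl(f(u)(-1)g(v)\bigr)(n)\bigl(f(u')(-1)g(v')\bigr)$ into $\sum_{i+j=n-1}\bigl(f(u)(i)f(u')\bigr)(-1)\bigl(g(v)(j)g(v')\bigr)$ requires $[f(u)(r),g(v')(s)]=0$ \emph{in the target} $W$, equivalently (Lemma \ref{lemcommvector}) $f(u)(n)g(v')=0$ for $n\geq 0$, and commutation in the source does not transfer to the target, since morphisms push relations forward, not backward. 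Moreover this constraint on $W$ is genuinely forced and genuinely restrictive: from $(u{\otimes}\mathbf{1})(n)(\mathbf{1}{\otimes}v)=u(n)\mathbf{1}{\otimes}v=0$ for $n\geq 0$, any morphism $h$ with $h\circ i=f$ and $h\circ j=g$ would give $f(u)(n)g(v)=0$ for all $n\geq 0$, which fails for arbitrary pairs $(f,g)$. Concretely, take $U=V=W=M_k(c',0)$ and $f=g=\mathrm{id}$: a factorization $h$ would force $\omega(0)\omega=0$, whereas $\omega(0)\omega=L(-1)L(-2)\mathbf{1}=L(-3)\mathbf{1}\neq 0$. So the forced candidate $h$ is not a morphism for this pair, and no argument along your lines (or any other) can verify the unrestricted universal property: (\ref{tpuniversal}) is universal only for pairs of morphisms whose images have mutually commuting modes (a caveat already visible in \cite{FHL}), and the unrestricted coproduct statement holds after restricting to the commutative subcategory, where it is the classical fact about tensor products of commutative rings. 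You should state this restriction rather than assert the reduction you gave.
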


\noindent
\begin{ex} If $k{\rightarrow} R$ is a morphism of commutative rings and $V$ a vertex $k$-algebra,
the tensor product $R {\otimes}_k V$ is a vertex $k$-algebra where  $R$ has the trivial HS-derivation.\ Since
$(a{\otimes}\mathbf{1})(n)=a(n){\otimes} Id=\delta_{n, -1}a(-1){\otimes}\mathbf{1}\ (a{\in} R)$ then
$i(R)$ is contained in $C(R{\otimes}_k V)$ and $R{\otimes}_k V$ is then a vertex $R$-algebra.\ This is
the base-change that we discussed in Subsection \ref{SSvertkalg}.
\end{ex}

\section{Virasoro vertex $k$-algebras}\label{SVir}
The Virasoro algebra $Vir$ (over $\mathbf{C}$) is a well-known Lie algebra
giving rise to vertex algebras over $\mathbf{C}$ in a standard manner
(\cite{FZ}, \cite{LL}, \cite{MN}).\ In this Section we show how to construct Virasoro vertex
$k$-algebras associated to \emph{any} commutative ring $k$ and any \emph{quasicentral charge} in $k$
(cf.\ Definition \ref{dfnqcc}).\ This leads to the Definition and first examples of \emph{vertex operator
algebras over a commutative ring $k$}.

\subsection{The Lie algebra $Vir_k$}
Fix a commutative ring $k$.\ We consider a free $k$-module
\begin{eqnarray*}
Vir_k := \oplus_{n\in\ZZ} kL(n) \oplus kK
\end{eqnarray*}
with $k$-basis consisting of $L(n)\ (n{\in}\ZZ)$ and $K$.\ We make $Vir_k$ into a Lie algebra
over $k$ by defining the bracket relations{:}
\begin{eqnarray}\label{Virdef}
&&[L(m), L(n)] {=} (m-n)L(m+n) + \delta_{m+n, 0}{m+1\choose 3}K,\\
&&{[}L(m), K{]} {=} 0. \notag
\end{eqnarray}

\begin{rmk}\label{rmkVirdef} If $k{=}\mathbf{C}$, one usually replaces ${m+1\choose 3}$
with $\frac{m^3-m}{12}$ in this definition.\ This  amounts to a rescaling of the central element
$K$.\ We prefer to use (\ref{Virdef}) because it makes sense for all rings $k$, whereas
the other choice is problematic if $2$ is not a unit in $k$.
\end{rmk}

$Vir_k$ has a triangular decomposition into Lie $k$-subalgebras 
\begin{eqnarray*}
Vir_k {=} Vir_k^+\oplus Vir_k^0\oplus Vir_k^-,
\end{eqnarray*}
where
\begin{eqnarray*}
&&Vir_k^+:= \oplus_{m>0} kL(m), \ Vir_k^-:=\oplus_{m<0} kL(m),\\
&& Vir_k^0:= kL(0)\oplus kK.
\end{eqnarray*}
$Vir_k^{\geq}:= Vir_k^+ {\oplus} Vir_k^0$ is also Lie $k$-subalgebra of $Vir_k$.

\medskip
For each $c'\in k$, there is a 1-dimensional free $Vir_k^{\geq}$-module $U_{c'}:=kv_0$
such that $L(m).v_0=0\ (m\geq 0)$ and $K.v_0=c'v_0$.\ We consider the induced (Verma)
module
\begin{eqnarray}\label{verc'def}
Ver_{c'} := Ind_{Vir^{\geq}}^{Vir} kv_0 = Vir_k{\otimes}_{Vir_k^{\geq}} U_{c'}.
\end{eqnarray}
$Ver_{c'}$ is a free $k$-module with a basis  consisting of states
\begin{eqnarray*}
\{u(n_1, ..., n_k):= L(-n_1)...L(-n_k).v_0 \mid n_1\geq n_2 \geq ...\geq n_k\geq 1\}.
\end{eqnarray*}
Adopting standard notation, we set
\begin{eqnarray*}
&&\omega(z) := \sum_n \omega(n)z^{-n-1}= \sum_{n} L(n)z^{-n-2},
\end{eqnarray*}
so that $\omega(n)=L(n-1)$.

\begin{lem}\label{lemvirprops} The following hold:
\begin{eqnarray*}
&&(a)\ \omega(z)\in \mathcal{F}(Ver_{c'}),\\
&&(b)\ \omega(z)\sim_4 \omega(z).
\end{eqnarray*}
\end{lem}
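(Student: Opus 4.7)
The plan is to treat parts (a) and (b) separately, each by a direct computation from the bracket relation (\ref{Virdef}).

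For (a), I would argue by induction on the length $k$ of a basis element $u(n_1, \ldots, n_k) = L(-n_1) L(-n_2) \cdots L(-n_k).v_0$. Since $Ver_{c'}$ is a free $k$-module on such states and each $\omega(n) = L(n-1)$ acts $k$-linearly, it suffices to exhibit, for every fixed basis state, an integer $N$ with $L(m) u(n_1, \ldots, n_k) = 0$ for all $m \geq N$. The base case $k=0$ is just $L(m) v_0 = 0$ for $m \geq 0$. For the inductive step, the identity
\begin{eqnarray*}
L(m) L(-n_1) = L(-n_1) L(m) + (m+n_1) L(m-n_1) + \delta_{m, n_1}{m+1 \choose 3}K
\end{eqnarray*}
lets one move $L(m)$ past $L(-n_1)$. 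For $m > n_1$ the $K$-term disappears, and each surviving summand applies $L(m')$ (with $m' = m$ or $m' = m-n_1 \geq 1$) to the shorter word $L(-n_2) \cdots L(-n_k) v_0$. If $N'$ is an induction bound for this length-$(k-1)$ word, then taking $N := N' + n_1$ makes both values of $m'$ at least $N'$, which completes the step.

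For (b), I would extract the coefficient of $z^{-a-2} w^{-b-2}$ from $(z-w)^4[\omega(z), \omega(w)]$ directly. Using $(z-w)^4 = \sum_{i=0}^{4} (-1)^i {4 \choose i} z^{4-i} w^i$ together with $\omega(m) = L(m-1)$, that coefficient equals
\begin{eqnarray*}
\sum_{i=0}^{4} (-1)^i {4 \choose i} [L(a+4-i), L(b+i)].
\end{eqnarray*}
Expanding via (\ref{Virdef}) splits this into
\begin{eqnarray*}
L(a+b+4) \sum_{i=0}^{4} (-1)^i {4 \choose i}(a-b+4-2i) + \delta_{a+b+4, 0}\, K \sum_{i=0}^{4} (-1)^i {4 \choose i}{a+5-i \choose 3}.
\end{eqnarray*}
In each summation the quantity summed is a polynomial in $i$ of degree at most $3$, so by the standard identity that $\sum_{i=0}^{n} (-1)^i {n \choose i} p(i) = 0$ whenever $\deg p < n$, both sums vanish, establishing (b).

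The only real subtlety is that the entire calculation for (b) must succeed verbatim over an arbitrary commutative ring $k$: this is precisely why the integral normalization ${m+1 \choose 3}$ of the central term (rather than the classical $\frac{m^3-m}{12}$, as noted in Remark \ref{rmkVirdef}) is essential, since no divisions are ever required and neither vanishing identity depends on the characteristic of $k$. Beyond this, both (a) and (b) are quite routine formal consequences of (\ref{Virdef}) and the construction (\ref{verc'def}) of $Ver_{c'}$.
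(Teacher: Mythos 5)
Your proposal is correct in substance, but its two halves relate to the paper differently. For (a) you are doing essentially what the paper does: its proof consists of the one-line remark that an induction based on the relations in $Vir_k$ shows $L(n).u(n_1,\ldots,n_k)=0$ whenever $n>n_1+\cdots+n_k$, and your inductive step (commuting $L(m)$ past $L(-n_1)$ via (\ref{Virdef})) is the natural filling-in of that sketch. One small repair is needed: with the base-case bound $N'=0$ (from $L(m)v_0=0$ for $m\geq 0$), your first inductive step gives $N=n_1$, and at $m=n_1$ the central term $\delta_{m,n_1}{m+1\choose 3}K$ does \emph{not} disappear --- e.g.\ $L(2)L(-2)v_0={3\choose 3}Kv_0=c'v_0$, generally nonzero --- so $N:=N'+n_1$ fails to force $m>n_1$ when $N'=0$. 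Taking $N'=1$ in the base case (equally valid, since $L(m)v_0=0$ for $m\geq 1$) fixes this and recovers exactly the paper's strict bound $n>n_1+\cdots+n_k$. For (b), by contrast, you give a genuinely different and more self-contained argument: the paper simply calls the order-$4$ self-locality of $\omega(z)$ a famous relation whose proof over $\mathbf{C}$ carries over unchanged, and refers to \cite{MN}, Section 9.4 and \cite{LL}, Section 6.1. Your direct extraction of the coefficient of $z^{-a-2}w^{-b-2}$, reducing both the $L(a+b+4)$-term and the central term to fourth finite differences $\sum_{i=0}^4(-1)^i{4\choose i}p(i)$ of polynomials in $i$ of degree at most $3$, proves the claim from scratch; and since every quantity involved is an integer and the finite-difference vanishing already holds in $\mathbf{Z}$, no divisions occur, which substantiates the paper's ``carries over unchanged'' and makes visible why the integral normalization ${m+1\choose 3}$ of Remark \ref{rmkVirdef} is the right one. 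What your route buys is an explicit verification over an arbitrary commutative ring $k$ that the paper leaves to the literature, at the cost of a short computation.
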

\begin{proof} If $n$ and $n_1{\geq}  ...{\geq} n_k{\geq} 1$ are integers with $n{>}n_1{+}...{+}n_k$, an
induction based on the relations in $Vir_k$ shows that $L(n).u(n_1, ..., n_k)=0$.\ Part
(a) of the Lemma follows easily from this statement.\

\medskip
Part (b) asserts that $(z-y)^4[\omega(z), \omega(y)]{=}0$ (cf.\ Definition \ref{field props}(c)).\ This is
a famous relation whose proof in the case $k{=}\mathbf{C}$ carries over unchanged
to the present context.\ We skip the details and refer the reader to \cite{MN}, Section 9.4 and \cite{LL}, Section 6.1.
\end{proof}

\subsection{The Virasoro vertex ring $M_k(c', 0)$}\label{SSVvring}
We set
\begin{eqnarray*}
M_k(c', 0){:=} M(c', 0){:=} Ver_{c'}/I_{c'},
\end{eqnarray*}
where $I_{c'}{:=} Vir_k.L(-1)$  is the $Vir_k$-submodule of $Ver_{c'}$  generated
by $L(-1)$.\ $M(c', 0)$ is a free $k$-module with basis consisting of states
\begin{eqnarray*}
\{u'(n_1, ..., n_k){:=} L(-n_1)...L(-n_k).v_0+I_{c'}  \mid n_1{\geq} n_2 {\geq} ...{\geq} n_k{\geq} 2\}.
\end{eqnarray*}

\medskip
There is an induced action of operators and fields such as
 $L(n)$ and $\omega(z)$ on  $M(c', 0)$.\ We will often use the same symbol for such operators
 on both the Verma module and its quotient.\ This should cause no confusion.
 
 \medskip
 The bulk of this Section is taken up with the proof of the next result.
 \begin{thm}\label{thmVirex} Set $\omega{:=}L(-2)v_0 {+}I_{c'}$.\ Then $M_k(c', 0)$ is a vertex $k$-algebra generated by 
 $\omega$, with $Y(\omega, z){=}\omega(z)$.
 \end{thm}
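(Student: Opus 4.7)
The plan is to verify the hypotheses of Theorem \ref{thmexist2} with generating set $U = \{\omega\}$ and attached field $\omega(z)$. Three of the four hypotheses are essentially routine. Creativity of $\omega(z)$ with respect to $v_0$ reads $\omega(n) v_0 = L(n-1) v_0 = 0$ for $n \geq 0$: the cases $n \geq 1$ follow from the Verma module definition (\ref{verc'def}), while $\omega(0) v_0 = L(-1) v_0 = 0$ is precisely what the quotient by $I_{c'} = Vir_k . L(-1) v_0$ forces; also $\omega(-1) v_0 = L(-2) v_0 = \omega$. Mutual locality $\omega(z) \sim \omega(z)$ is Lemma \ref{lemvirprops}(b). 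The generation hypothesis is immediate from the PBW-style $k$-basis $\{L(-n_1) \cdots L(-n_j) v_0 : n_1 \geq \cdots \geq n_j \geq 2\}$ of $M_k(c', 0)$, since this set equals $\{\omega(-n_1 + 1) \cdots \omega(-n_j + 1) v_0\}$.

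The real work lies in exhibiting a sequence $\underline{D} = (Id, D_1, D_2, \ldots)$ of $k$-linear endomorphisms of $M_k(c', 0)$ with $D_m v_0 = 0$ and translation-covariance $[D_m, \omega(z)] = \sum_{i=1}^m \delta_z^{(i)} \omega(z) D_{m-i}$, which in mode form becomes
\begin{equation*}
[D_m, L(k)] = \sum_{i=1}^m (-1)^i \binom{k+1}{i} L(k-i) D_{m-i} \qquad (k \in \mathbf{Z},\ m \geq 1).
\end{equation*}
Over $\QQ$, the natural choice $D_m = L(-1)^m/m!$ (as in (\ref{Dmrelns})) satisfies both properties: the covariance identity is a formal consequence of the Virasoro bracket $[L(-1), L(k)] = -(k+1) L(k-1)$ by induction on $m$, and $D_m v_0 = 0$ follows from $L(-1) v_0 = 0$ in $M(c', 0)$, which is again why the quotient by $I_{c'}$ is needed.

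For a general $k$ in which $m!$ need not be invertible, the plan is to construct $D_m$ directly by the recursion that the translation-covariance identity dictates. Working universally over $R = \mathbf{Z}[c']$ (with $c'$ a formal variable), define $D_m$ on the PBW basis by $D_m v_0 = 0$ and
\begin{equation*}
D_m(L(-n) w) = L(-n) D_m(w) + \sum_{i=1}^m (-1)^i \binom{-n+1}{i} L(-n-i) D_{m-i}(w)
\end{equation*}
for $n \geq 2$; since $(-1)^i \binom{-n+1}{i} = \binom{n+i-2}{i}$ is a non-negative integer, this produces $R$-linear endomorphisms inductively on $m$ and on monomial length. The main obstacle, which I expect to be the crux of the proof, is well-definedness: the recursion must respect the PBW-reordering relations $L(-n) L(-n') w = L(-n') L(-n) w + (n' - n) L(-n - n') w$ (the central term $\delta_{n + n', 0} \binom{-n+1}{3} K$ is inactive since $n, n' \geq 2$), which reduces to a combinatorial identity among binomials that one verifies directly, ultimately mirroring the fact that $D_m = L(-1)^m/m!$ over $\QQ$. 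Once $\underline{D}$ is built over $R$ its required properties hold by construction, and base-change along the evaluation $R \to k$ sending the formal $c'$ to the chosen element of $k$ yields $\underline{D}$ on $M_k(c', 0)$. Theorem \ref{thmexist2} then concludes, with $k$-linearity built in throughout so that $M_k(c', 0)$ is a vertex $k$-algebra generated by $\omega$.
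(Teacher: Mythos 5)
Your overall architecture matches the paper's: the same three routine verifications (creativity, locality via Lemma \ref{lemvirprops}, generation from the PBW basis), the same recursion for $D_m$ --- your formula with coefficient $(-1)^i\binom{-n+1}{i}=\binom{n+i-2}{i}$ is literally the paper's (\ref{Dmdef}) --- and the same endgame of constructing $\underline{D}$ over a torsion-free base and transporting to general $k$. Your universal base $R=\mathbf{Z}[c']$ with base-change along the evaluation map is a legitimate and arguably cleaner variant of the paper's surjection from $\mathbf{Z}[x_a\mid a\in k]$, since base-change of vertex algebras along an arbitrary ring morphism is available from Subsection \ref{SSvertkalg}.

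But there is a genuine gap exactly at what you call the crux, and it is twofold. First, the difficulty is misplaced: if, as in the paper, you define $D_m$ only on the normal-ordered basis $u'(n_1,\hdots,n_k)$ (which is a \emph{free} $k$-basis) and extend linearly, there is no well-definedness problem at all, so the PBW-reordering compatibility you propose to verify is avoidable --- and in any case you assert its reduction to ``a combinatorial identity among binomials'' without exhibiting it. Second, and more seriously, even granting that compatibility, it only yields the covariance relation $[D_m,L(-n)]=\sum_{i=1}^m\binom{n+i-2}{i}L(-n-i)D_{m-i}$ for the \emph{creation} modes $n\geq 2$. The hypothesis of Theorem \ref{thmexist2} requires $[D_m,L(k)]=\sum_{i=1}^m(-1)^i\binom{k+1}{i}L(k-i)D_{m-i}$ for \emph{all} $k\in\mathbf{Z}$, and for $k\geq -1$ this emphatically does not ``hold by construction'': verifying it drags in the full Virasoro relations, including the quasicentral term (e.g.\ $[L(2),L(-2)]=4L(0)+c'$ enters any induction over basis vectors). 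The paper's mechanism for closing exactly this gap is what your proposal omits: prove the integral identity $m!\,D_m=L(-1)^m$ (Lemma \ref{lemD1m}, itself a nontrivial induction), prove $[L(-1)^m,L(k)]=\sum_{i=1}^m(-1)^i(k+1)\cdots(k-i+2)\binom{m}{i}L(k-i)L(-1)^{m-i}$ by a second induction, and then use torsion-freeness of the base to cancel $m!$ and obtain covariance for every mode. Your opening paragraph about $\QQ$ already contains the second induction; what is missing is the bridge $m!\,D_m=L(-1)^m$ for the integrally defined $D_m$ and the explicit appeal to torsion-freeness of $\mathbf{Z}[c']$. Without these, the sentence ``once $\underline{D}$ is built over $R$ its required properties hold by construction'' is false for the modes $L(k)$ with $k\geq -1$, and the application of Theorem \ref{thmexist2} is unjustified.
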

 
 \begin{dfn}\label{dfnqcc} Continuing the discussion in Remark \ref{rmkVirdef}
we call $c'$ the \emph{quasicentral charge},
and we call $c=2c'$ the \emph{central charge} of $M_k(c', 0)$.
\end{dfn}

 We begin the proof of Theorem \ref{thmVirex} by first noting that as an immediate consequence of Lemma
 \ref{lemvirprops}, $\omega(z)$ is a self-local field on $M(c', 0)$.\ Moreover,
 \begin{eqnarray*}
\omega(z).v_0 {=} \sum_n (L(n)v_0+I_{c'})z^{-n-2}{=} \omega+\sum_{n\geq 3}(L(-n)v_0+I_{c'})z^{n-2},
\end{eqnarray*}
so that $\omega(z)$ is creative and creates $\omega$.

\medskip
We are going to apply Theorem \ref{thmexist2}.\ We need
a  sequence of endomorphisms $\underline{D}= (Id, D_1, ...)$ of $M(c', 0)$
satisfying $D_m(v_0){=}0\ (m\geq 1)$, and with respect to which $\omega(z)$ is translation covariant.\
If $k$ is a $\mathbf{Q}$-algebra
we could take $D_m {=} \frac{L(-1)^m}{m!}$, but  this is not defined for general $k$.\ The strategy for proving the Theorem is to first prove it when $k$ is torsion-free, then deduce the
general case by a base-change.

\medskip
The way in which $\underline{D}$ must be defined is dictated by the requirement that
it should be an HS derivation.\ Thus for $k, m\geq 0$ we inductively define
\begin{eqnarray*}
D_0(v_0){:=}v_0, D_m(v_0){:=}0\ (m\geq 1)
\end{eqnarray*}
and
\begin{eqnarray}\label{Dmdef}
&&D_mu'(n_1, ..., n_k){:=}\sum_{i=0}^m (D_i\omega)(-n_1+1)D_{m-i}u'(n_2, ..., n_k)\ \ (m\geq 0),\\
&&\ \ \ \ \ \ \ \  \ \ D_i(\omega)(n){:=}(-1)^i{n\choose i}L(n-i-1)\ \ (i\geq 0, n\in\mathbf{Z}).\notag
\end{eqnarray}
This defines each $D_m$ on the $k$-base of states $\{u'(n_1, ..., n_k)\}$, and we extend
the definition by $k$-linearity to $M(c', 0)$.

\medskip
For example, we have $D_0(\omega){=}D_0(\omega)(-1).v_0{=}L(-2).v_0 {=:}\omega$, and  it follows from
(\ref{Dmdef}) that $D_0=Id$.\  More generally,
\begin{lem}\label{lemD1m} For all $m\geq 0$ we have
\begin{eqnarray*}
L(-1)^m {=}D_1^m{=}m!D_m.
\end{eqnarray*}
\end{lem}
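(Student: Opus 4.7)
The plan is to establish by induction on $m$ that $L(-1)^m = m!\,D_m$, and along the way that $L(-1) = D_1$, from which $D_1^m = L(-1)^m = m!\,D_m$ will follow immediately. The key technical statement is the operator identity
\begin{equation*}
L(-1)\circ D_m = (m+1)\,D_{m+1}
\end{equation*}
on $M_k(c',0)$; I would prove this by a separate induction on the length $k$ of the basis states $u'(n_1,\ldots,n_k)$.

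For the base $k=0$, i.e.\ on $v_0$, one has $D_m v_0 = 0$ for $m \geq 1$ by definition and $L(-1)v_0 = 0$ because $L(-1)v_0 \in I_{c'}$, so both sides vanish. For the inductive step on $k$, rewriting the coefficient in (\ref{Dmdef}) via $(-1)^i\binom{-n_1+1}{i} = \binom{n_1+i-2}{i}$ gives
\begin{equation*}
D_m\,u'(n_1,\ldots,n_k) = \sum_{i=0}^{m}\binom{n_1+i-2}{i}\,L(-n_1-i)\,D_{m-i}\,u'(n_2,\ldots,n_k),
\end{equation*}
and I would apply $L(-1)$ to this, commuting through each $L(-n_1-i)$ by the Virasoro bracket $[L(-1),L(-n_1-i)] = (n_1+i-1)\,L(-n_1-i-1)$ (the central term disappears since $n_1 + i \geq 2$, so $\delta_{-1-n_1-i,0} = 0$). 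The first piece of the resulting expansion then invokes the inductive hypothesis $L(-1)D_{m-i}u'(n_2,\ldots,n_k) = (m-i+1)D_{m-i+1}u'(n_2,\ldots,n_k)$ applied to the shorter state.

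Collecting the coefficient of $L(-n_1-j)\,D_{m+1-j}\,u'(n_2,\ldots,n_k)$ for $1 \leq j \leq m$ produces
\begin{equation*}
(m-j+1)\binom{n_1+j-2}{j} + (n_1+j-2)\binom{n_1+j-3}{j-1},
\end{equation*}
and the elementary identity $(n_1+j-2)\binom{n_1+j-3}{j-1} = j\binom{n_1+j-2}{j}$ collapses this to $(m+1)\binom{n_1+j-2}{j}$. The boundary cases $j=0$ and $j=m+1$ are handled by the same manipulation, and the whole sum matches $(m+1)$ times the expansion of $D_{m+1}\,u'(n_1,\ldots,n_k)$ furnished by (\ref{Dmdef}). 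This establishes $L(-1)\circ D_m = (m+1)\,D_{m+1}$; setting $m=0$ yields $L(-1) = D_1$, and an outer induction on $m$ then yields $L(-1)^{m+1} = L(-1)\cdot m!\,D_m = (m+1)!\,D_{m+1}$, hence also $D_1^m = L(-1)^m = m!\,D_m$.

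The main obstacle is this combinatorial identification: reducing (\ref{Dmdef}) to shifted binomial form, commuting $L(-1)$ through, and recognizing the Pascal-type cancellation $(m-j+1) + j = m+1$. Everything else — the reduction to a single identity $L(-1)D_m = (m+1)D_{m+1}$, and the outer induction on $m$ — is formal bookkeeping.
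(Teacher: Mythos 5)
Your proof is correct, and it is organized around a different pivot than the paper's. The paper sets $D_{[m]}:=m!D_m$ and proves, by induction on $m$, that $L(-1)^m$ satisfies the same one-mode-peeling recursion as $D_{[m]}$, namely $L(-1)^m u'(n_1,\hdots,n_k)=\sum_{i=0}^m\binom{m}{i}(n_1+i-2)\cdots(n_1-1)L(-n_1-i)L(-1)^{m-i}u'(n_2,\hdots,n_k)$; equality $L(-1)^m=m!D_m$ then follows because both operators obey the same recursion and agree on $v_0$. You instead isolate the single-step identity $L(-1)\circ D_m=(m+1)D_{m+1}$ --- precisely the instance $D_1\circ D_m=\binom{m+1}{1}D_{m+1}$ of the iterativity property (\ref{dfniter}) that the canonical HS derivation must ultimately satisfy (Theorem \ref{thmHS}), but which here must be verified by hand since $M_k(c',0)$ is not yet known to be a vertex ring --- and prove it by induction on the length $k$ of basis states, after which the lemma follows from a trivial outer induction on $m$. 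The computational core is the same in both arguments: the bracket $[L(-1),L(-n_1-i)]=(n_1+i-1)L(-n_1-i-1)$ (no central term, since $n_1+i\geq 2$) followed by a Pascal-type collapse, which appears as $\binom{m-1}{i}+\binom{m-1}{i-1}=\binom{m}{i}$ in the paper and as $(m-j+1)+j=m+1$ after your absorption identity $(n_1+j-2)\binom{n_1+j-3}{j-1}=j\binom{n_1+j-2}{j}$. Your version is combinatorially lighter (single binomial coefficients rather than products of falling factorials, and no factor of $m!$ carried through) and makes the conceptual content of the lemma --- iterativity of $\underline{D}$ --- visible; the paper's version delivers the closed-form recursion for $L(-1)^m$ in exactly the shape reused in the subsequent translation-covariance computation. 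Both arguments are integral, so valid over arbitrary $k$, and your boundary checks ($j=0$, $j=m+1$, and $L(-1)v_0=0$ because $L(-1)v_0\in I_{c'}$) are all sound.
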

\begin{proof} 
By construction,
\begin{eqnarray*}
&&D_mu'(n_1, ..., n_k) {=} \sum_{i=0}^m (-1)^i{-n_1+1\choose i}L(-n_1-i)D_{m-i}u'(n_2, ..., n_k)\\
 &&\ \ \ \ \ \ \ \ \ \ \ \ \ \ \  \ \ \ \ \ {=} \sum_{i=0}^m {n_1+i-2\choose i}L(-n_1-i)D_{m-i}u'(n_2, ..., n_k),
 \end{eqnarray*}
 so if we set  $D_{[m]}{:=}m!D_m$, then
 \begin{eqnarray*}
D_{[m]}u'(n_1, ..., n_k){=} \sum_{i=0}^m{m\choose i}(n_1+i-2)...(n_1-1)L(-n_1-i)D_{[m-i]}.u'(n_2, ..., n_k).
\end{eqnarray*}

Now in order to show that $L(-1)^m{=}m!D_m$, it suffices to show that $L(-1)^m$ satisfies the same recursive identity as $D_{[m]}$, that is
\begin{eqnarray*}
L(-1)^mu'(n_1, ..., n_k) {=}  \sum_{i=0}^m{m\choose i}(n_1+i-2)...(n_1-1)L(-n_1-i) L(-1)^{m-i}u'(n_2, ..., n_k).
\end{eqnarray*}
To do this, use induction on $m$ to see that the left-hand-side is equal to
\begin{eqnarray*}
&& \sum_{i=0}^{m-1}{m-1\choose i}(n_1+i-2)...(n_1-1)L(-1)L(-n_1-i) L(-1)^{m-1-i}u'(n_2, ..., n_k)\\
=&&\sum_{i=0}^{m-1}{m-1\choose i}(n_1+i-2)...(n_1-1)\\
&&\ \ \ \ \ \left\{(n_1+i-1)L(-n_1-i-1)+L(-n_1-i)L(-1) \right\}L(-1)^{m-1-i}u'(n_2, ..., n_k)\\
=&&\sum_{i=0}^{m-1}{m-1\choose i}(n_1+i-2)...(n_1-1)L(-n_1-i)L(-1)^{m-i}u'(n_2, ..., n_k)+\\
&&\sum_{i=0}^{m-1}{m-1\choose i}(n_1+i-1)...(n_1-1)L(-n_1-i-1)L(-1)^{m-1-i}u'(n_2, ..., n_k)\\
=&&\sum_{i=0}^{m-1}{m-1\choose i}(n_1+i-2)...(n_1-1)L(-n_1-i)L(-1)^{m-i}u'(n_2, ..., n_k)+\\
&&\sum_{j=1}^{m}{m-1\choose j-1}(n_1+j-2)...(n_1-1)L(-n_1-j)L(-1)^{m-j}u'(n_2, ..., n_k)\\
=&&\sum_{i=0}^{m-1}{m-1\choose i}(n_1+i-2)...(n_1-1)L(-n_1-i)L(-1)^{m-i}u'(n_2, ..., n_k)+\\
&&\sum_{i=1}^{m}{m-1\choose i-1}(n_1+i-2)...(n_1-1)L(-n_1-i)L(-1)^{m-i}u'(n_2, ..., n_k)\\
=&&\sum_{i=1}^{m-1}{m\choose i}(n_1+i-2)...(n_1-1)L(-n_1-i)L(-1)^{m-i}u'(n_2, ..., n_k)+\\
&&\ \ L(-n_1)L(-1)^mu'(n_2, ..., n_k)+(n_1+m-2)...(n_1-1)L(-n_1-m)u'(n_2, ..., n_k)\\
=&&\sum_{i=0}^{m}{m\choose i}(n_1+i-2)...(n_1-1)L(-n_1-i)L(-1)^{m-i}u'(n_2, ..., n_k).
\end{eqnarray*}

This  establishes the identity $L(-1)^m{=}m!D_m$.\ In particular, $L(-1){=}D_1$,
whence also $L(-1)^m{=}D_1^m$.\ This competes the proof of the Lemma.
\end{proof}

We can now prove Theorem \ref{thmVirex} in the case when $k$ is
\emph{torsion-free}.\ We have to show that
\begin{eqnarray*}
[D_m, \omega(z)]{=}\sum_{i=1}^m \delta^{(i)}\omega(z)D_{m-i},
\end{eqnarray*}
i.e., for all $n{\in}\mathbf{Z}$, 
\begin{eqnarray*}
[D_m, L(n)] {=} \sum_{i=1}^m (-1)^i {n+1\choose i}L(n-i)D_{m-i}.
\end{eqnarray*}

Because $k$ is torsion-free, it suffices to prove the identity that results upon
multiplying 
each side by $m!$\ Then by Lemma \ref{lemD1m} it suffices to show that
\begin{eqnarray*}
[L(-1)^m, L(n)] &=&
\sum_{i=1}^m (-1)^i (n+1)...(n-i+2){m\choose i}L(n-i)L(-1)^{m-i}.
\end{eqnarray*}
This can be proved by induction on $m$, as follows:
\begin{eqnarray*}
[L(-1)^m, L(n)]&=&L(-1)[L(-1)^{m-1}, L(n)]+[L(-1),L(n)]L(-1)^{m-1}\\
&=&\sum_{i=1}^{m-1} (-1)^i (n+1)...(n-i+2){m-1\choose i}L(-1)L(n-i)L(-1)^{m-i-1}\\
&&+(-1-n)L(n-1)L(-1)^{m-1}\\
&=&\sum_{i=1}^{m-1} (-1)^i (n+1)...(n-i+2){m-1\choose i}\\
&&\left\{[L(-1), L(n-i)]L(-1)^{m-i-1}+L(n-i)L(-1)^{m-i}\right\}\\
&&+(-1-n)L(n-1)L(-1)^{m-1}\\
&=&\sum_{i=1}^{m-1} (-1)^i (n+1)...(n-i+2){m-1\choose i}\\
&&\left\{(-1-n+i)L(n-i-1)L(-1)^{m-i-1}+L(n-i)L(-1)^{m-i}\right\}\\
&&+(-1-n)L(n-1)L(-1)^{m-1}\\
&=&\sum_{i=1}^{m-1} (-1)^i (n+1)...(n-i+2){m-1\choose i}L(n-i)L(-1)^{m-i}\\
&&+\sum_{i=1}^{m} (-1)^i (n+1)...(n-i+2){m-1\choose i-1}L(n-i)L(-1)^{m-i}\\
&=&\sum_{i=1}^m (-1)^i (n+1)...(n-i+2){m\choose i}L(n-i)L(-1)^{m-i}.
\end{eqnarray*}

This completes the proof that $M_k(c', 0)$ is a vertex ring when $k$ is torsion-free.\ By construction, all products in $Vir_k$ are $k$-linear, so that it
is a vertex $k$-algebra.

\medskip
Now suppose that $k$ is an \emph{arbitrary} commutative ring.\ We can find
a torsion-free commutative ring $R$ and a surjective ring morphism
$\psi{:}R{\rightarrow} k$.\ (E.g., take $R{=}\mathbf{Z}[x_a{\mid}a{\in} k] $ with $\psi{:}x_a\mapsto a$.)\ 
Because $R$ is torsion-free, the case of
Theorem \ref{thmVirex} already established shows that $M{:=}M_R(c', 0)$ is
a vertex $R$-algebra for any $c'{\in} R$.

\medskip
Let $I{=}ker\psi$.\ We claim that $IM{=} \{\sum Iu'(n_1, ..., n_k)\mid n_1{\geq} ... {\geq} n_k{\geq} 2\}$ is a $2$-sided ideal
of $M$.\ Indeed, this follows immediately (cf.\ Lemma
\ref{lemideal}) because the operators $L(n)$ are $R$-linear.\ Thus, $M/IM$
carries the structure of a vertex $R$-algebra.\ On the other hand,
the very construction of $M_k(c', 0)$ (as  $Vir_k$-module) shows that it arises as
the base change $k{\otimes}_RM$, where $c'$ is an element of
$R$ that projects onto $c'$.

\medskip
There is an isomorphism of $R$-modules
\begin{eqnarray*}
 M/IM \stackrel{\cong}{\longrightarrow} (R/I){\otimes}_R M, \ m+IM\mapsto 1{\otimes} m\ (m{\in} M),
\end{eqnarray*}
and by transporting the vertex structure of $M/IM$ using this isomorphism, we
obtain the desired vertex $k$-algebra structure on $(R/I)\otimes_R M=k\otimes_RM= M_k(c', 0)$.\
This completes the proof of Theorem \ref{thmVirex}. $\hfill\Box$

\subsection{Virasoro vectors}
Virasoro vectors in vertex rings are ubiquitous and useful.

\begin{dfn}
Suppose that $k$ is a commutative ring, $V$ a vertex $k$-algebra, 
and $c'{\in} k$.\ 
A \emph{Virasoro element (vector) of quasicentral charge $c'$ in $V$} is a state $\omega{\in} V$ such that if 
$Y(\omega, z):= \sum_n L(n)z^{-n-2}$ is the vertex operator for $\omega$,
then the modes $L(n)$ satisfy 
\begin{eqnarray*}
[L(m), L(n)] = (m-n)L(m+n)+\delta_{m+n, 0}{m+1\choose 3}c'Id_V.
\end{eqnarray*}
In other words, the $L(n)$  furnish a representation of the Virasoro Lie $k$-algebra
$Vir_k$ (\ref{Virdef}) on $V$ in which the central element $K$ acts as multiplication by $c'$.\
We call $V$ a \emph{Virasoro vertex $k$-algebra of quasicentral charge $c'$} if 
$V$ is generated by a Virasoro element of quasicentral charge $c'$.
\end{dfn}

The category $k\mathbf{Ver}_{c'}$ of \emph{vertex $k$-algebras of quasicentral charge $c'$} is
defined as follows:\ objects are pairs $(V, \omega)$ where
$V$ is a vertex $k$-algebra and $\omega$ is a Virasoro element of quasicentral charge $c'$ in $V$;
a morphism $\alpha{:} (U, \nu)\rightarrow(V, \omega)$ is a morphism of vertex $k$-algebras
$\alpha{:} U{\rightarrow} V$ such that $\alpha(\nu){=}\omega$.

\begin{thm}\label{thmMkVc} Fix a commutative ring $k$ and an element $c'{\in} k$.\
Then $M_k(c', 0)$ is an \emph{initial object} in $k\mathbf{Ver}_{c'}$.
\end{thm}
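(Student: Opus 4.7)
The plan is to produce, for each object $(V, \omega_V)$ of $k\mathbf{Ver}_{c'}$, a unique morphism $\alpha\colon M_k(c',0)\to V$ with $\alpha(\omega)=\omega_V$. Uniqueness will be immediate from Theorem \ref{thmVirex}: since $M_k(c',0)$ is generated as a vertex $k$-algebra by $\omega$, Remark \ref{remgenresprod} shows that any such $\alpha$ is completely determined by $\alpha(\omega)$.

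For existence, I will first build $\alpha$ as a $Vir_k$-module map and then verify it respects vertex operators. The Virasoro vector $\omega_V$ turns $V$ into a $Vir_k$-module on which $K$ acts as $c'\cdot Id_V$ and $L(n)$ acts as $L_V(n):=\omega_V(n+1)$. The creativity axiom $Y_V(\omega_V,z)\mathbf{1}_V\in\omega_V+zV[[z]]$ forces $L_V(n)\mathbf{1}_V=0$ for all $n\geq -1$ and $L_V(-2)\mathbf{1}_V=\omega_V$. Consequently $v_0\mapsto\mathbf{1}_V$ defines a $Vir_k^{\geq}$-module map $U_{c'}\to V$, which by the universal property of induction (\ref{verc'def}) extends uniquely to a $Vir_k$-module map $\pi\colon Ver_{c'}\to V$. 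Because $\pi(L(-1)v_0)=L_V(-1)\mathbf{1}_V=0$, $\pi$ annihilates the submodule $I_{c'}=Vir_k.L(-1)v_0$ and descends to a $Vir_k$-module map $\alpha\colon M_k(c',0)\to V$ with $\alpha(v_0+I_{c'})=\mathbf{1}_V$; in particular $\alpha(\omega)=L_V(-2)\mathbf{1}_V=\omega_V$.

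It then remains to verify $\alpha(u(n)v)=\alpha(u)(n)\alpha(v)$ for all $u,v\in M_k(c',0)$ and $n\in\mathbf{Z}$. By $k$-bilinearity I reduce to $u=\omega(m_1)\cdots\omega(m_r)\mathbf{1}$; iterating the $Vir_k$-module property of $\alpha$ gives $\alpha(u)=\omega_V(m_1)\cdots\omega_V(m_r)\mathbf{1}_V$. By Remark \ref{remgenresprod} both $Y_M(u,z)$ and $Y_V(\alpha(u),z)$ are the same iterated residue products, built out of $Y_M(\omega,z)$ and $Id_M$ versus $Y_V(\omega_V,z)$ and $Id_V$, respectively. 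The key observation is that intertwining is preserved under residue products: a direct inspection of (\ref{resprod}) shows that if $\alpha\cdot a(n)=a'(n)\cdot\alpha$ and $\alpha\cdot b(n)=b'(n)\cdot\alpha$ for all $n$, then also $\alpha\cdot(a(z)_m b(z))_n=(a'(z)_m b'(z))_n\cdot\alpha$ for all $m,n$. Starting from the trivial base case $\alpha\cdot Id_M=Id_V\cdot\alpha$ together with the $Vir_k$-module property $\alpha\cdot\omega(n)=\omega_V(n)\cdot\alpha$, induction on $r$ produces $\alpha Y_M(u,z)v=Y_V(\alpha(u),z)\alpha(v)$, as required.

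The only step I expect to need any care is the last, namely residue-product preservation of intertwining; but this is a purely formal unwinding of (\ref{resprod}), which expresses $(a(z)_m b(z))_n$ as a $\mathbf{Z}$-linear combination of compositions of modes of $a$ and $b$. Once this observation is in hand, the induction is immediate and the argument is complete.
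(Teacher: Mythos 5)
Your proposal is correct, and it is more complete than the paper's own argument where it matters most. The uniqueness half coincides with the paper's: both exploit that $M_k(c',0)$ is spanned by $\omega(m_1)\cdots\omega(m_r)\mathbf{1}$, so a morphism is pinned down by $\alpha(\omega)$. The divergence is in existence. The paper defines $\alpha$ on the free $k$-basis $L(-n_1)\cdots L(-n_k)v_0$ and then asserts that $\alpha$ is a morphism ``in principle'' because all relations in $M_k(c',0)$ are consequences of the Virasoro relations, leaving the details to the reader. You instead (i) obtain well-definedness for free from Frobenius reciprocity for the induced module (\ref{verc'def}) --- valid here since $Vir_k$ is free as a $k$-module, so PBW applies over any commutative $k$ --- noting that creativity forces $L_V(n)\mathbf{1}_V=0$ for $n\geq -1$, so the map kills $I_{c'}$; and (ii) verify the morphism property by induction via the observation that intertwining of modes passes through residue products, which is indeed a termwise computation from (\ref{resprod}) (legitimate even for $m<0$, since applied to any fixed state only finitely many terms survive on both sides). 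This is exactly the kind of argument the paper's ``further details'' would have to amount to, and your route packages it cleanly. One cosmetic point: you invoke Remark \ref{remgenresprod} for the identity $Y_V(\alpha(u),z)=Y_V(\omega_V,z)_{m_1}(\cdots(Y_V(\omega_V,z)_{m_r}Id_V)\cdots)$ in the \emph{target} $V$, but that Remark is stated in the setting of Theorem \ref{thmexist2}; for an arbitrary vertex ring the same identity follows by iterating the associativity formula (\ref{vringresprod}) together with $Y(\mathbf{1},z)=Id_V$ (Theorem \ref{thmvacuum}), as the paper itself notes when discussing generated vertex subrings --- cite that instead and the argument is airtight.
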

\begin{proof} By Theorem \ref{thmVirex} $M_k(c', 0)$ is an object in $k\mathbf{Ver}_{c'}$.

\medskip
  Let $(U, \nu)$ be an object in $k\mathbf{Ver}_{c'}$.\ We have to show that there is
a \emph{unique} morphism of vertex $k$-algebras $\alpha{:}(M(c', 0), \omega) \rightarrow (U, \nu)$.\
Because $\langle\omega\rangle {=} M_k(c', 0)$ we may, and shall, assume without loss that $U{=}\langle\nu\rangle$.

\medskip
First we prove that there is at most one such $\alpha$.\ Write
$Y(\omega, z){=}\sum_n L(n)z^{-n-2}$ and $Y(\nu, z){=}\sum_n L'(n)z^{-n-2}$.\
The construction of $M(c', 0)$ in Subsection \ref{SSVvring} shows that it has a \emph{free} $k$-basis
given by states $L(-n_1).\hdots L(-n_k)v_0$ for $n_1{\geq} \hdots {\geq} n_k{\geq} 2$.\ Furthermore,
the relations satisfied by the modes $L'(n)$ together with the creativity statement $L'(n)\mathbf{1}{=}0\ (n{\geq} {-}1)$
show that $U$ is \emph{spanned} by states $L'(-n_1).\hdots L'(-n_k)\mathbf{1}$ for $n_1{\geq} \hdots {\geq} n_k{\geq} 2$.

\medskip
Now because $\alpha$ is a morphism of vertex $k$-algebras and $\alpha(\omega){=}\nu$ then we have $\alpha L(n)v{=}L'(n)\alpha(v)$ for $v{\in} M(c', 0), n{\in}\mathbf{Z}$.\ It follows that $\alpha(L(-n_1).\hdots L(-n_k)v_0)=L'(-n_1).\hdots L'(-n_k)\mathbf{1}$  for $n_1{\geq} \hdots {\geq} n_k{\geq} 2$, and therefore $\alpha$ is uniquely determined.

\medskip
It remains to show that, so defined, $\alpha$ really \emph{is} a morphism of vertex $k$-algebras.\ But this is clear (in principle) because all of the relations satisfied by the operators in $M(c', 0)$
are consequences of the Virasoro relations (\ref{Virdef}), and they will therefore also hold in $U$.\
Since $\alpha$ merely exchanges $L'(n)$ for $L(n)$ then it is a morphism of vertex $k$-algebras.\ We leave further details to the reader.
\end{proof}

\subsection{Graded vertex rings}
The appropriate notion of $\mathbf{Z}$-grading for vertex algebras over $\mathbf{C}$
is well-known, and carries over unchanged to vertex rings.\ 

\begin{dfn}\label{dfnZgrade} Let $k$ be a commutative ring and $V$ a vertex $k$-algebra.\ We say that
$V$ is \emph{$\mathbf{Z}$-graded} (or simply graded, since we will not consider other kinds of gradings) if there is a decomposition of $V$ into $k$-submodules 
\begin{eqnarray*}
V {=} \oplus_{k\in\mathbf{Z}} V_k
\end{eqnarray*}
with the following property: if $u{\in} V_k, v{\in} V_{\ell}$ and $n{\in}\mathbf{Z}$, then
\begin{eqnarray*}
u(n)v{\in} V_{k+\ell-n-1}.
\end{eqnarray*}
We say that homogeneous states $u\in V_k$ have \emph{weight} $k$, written $wt(u){=}k$.
\end{dfn}

\begin{lem}\label{lemmavacwt} Suppose that $V$ is a graded vertex $k$-algebra.\ Then $\mathbf{1}{\in} V_0$.
\end{lem}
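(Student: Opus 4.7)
The plan is to exploit the two defining properties of the vacuum from Definition \ref{defvr}(b) and Theorem \ref{thmvacuum}, namely $u(-1)\mathbf{1}=u$ and $\mathbf{1}(-1)=Id_V$, and match weights on both sides.

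Write the vacuum in homogeneous components as $\mathbf{1}=\sum_{j\in\mathbf{Z}}\mathbf{1}_j$ with $\mathbf{1}_j\in V_j$ (a finite sum). First I would apply the identity $u(-1)\mathbf{1}=u$ with $u=\mathbf{1}_j$, expand $\mathbf{1}$ on the right, and use Definition \ref{dfnZgrade} to note that $\mathbf{1}_j(-1)\mathbf{1}_k\in V_{j+k-(-1)-1}=V_{j+k}$. Since the total sum $\sum_k \mathbf{1}_j(-1)\mathbf{1}_k$ must equal $\mathbf{1}_j\in V_j$, comparing homogeneous components yields
\begin{equation*}
\mathbf{1}_j(-1)\mathbf{1}_0=\mathbf{1}_j,\qquad \mathbf{1}_j(-1)\mathbf{1}_k=0\ \text{for}\ k\neq 0.
\end{equation*}
Next I would apply $\mathbf{1}(-1)=Id_V$ to the homogeneous vector $\mathbf{1}_0\in V_0$, giving $\sum_k \mathbf{1}_k(-1)\mathbf{1}_0=\mathbf{1}_0$, in which the $k$-th summand now lies in $V_k$. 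Comparing homogeneous components in $V_0$ forces $\mathbf{1}_k(-1)\mathbf{1}_0=0$ for every $k\neq 0$.

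Combining these, for any $j\neq 0$ we obtain simultaneously $\mathbf{1}_j(-1)\mathbf{1}_0=\mathbf{1}_j$ (from the first step) and $\mathbf{1}_j(-1)\mathbf{1}_0=0$ (from the second step), so $\mathbf{1}_j=0$. Hence $\mathbf{1}=\mathbf{1}_0\in V_0$, as desired. There is no serious obstacle here: the whole argument is a weight-matching exercise, and the only input beyond the grading axiom is the pair of vacuum identities already established.
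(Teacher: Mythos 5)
Your proof is correct and takes essentially the same approach as the paper: decompose $\mathbf{1}=\sum_k\mathbf{1}_k$ into homogeneous components and weight-match using the two vacuum identities $\mathbf{1}(-1)=Id_V$ (Theorem \ref{thmvacuum}) and $u(-1)\mathbf{1}=u$. The paper merely packages the same computation slightly differently, deducing from $\mathbf{1}(-1)u=u$ for \emph{all} homogeneous $u$ that $\mathbf{1}_k(-1)=0$ as an operator for $k\neq 0$ and then applying this to $\mathbf{1}$ via creativity, whereas you specialize everything to the vacuum components themselves ($u=\mathbf{1}_0$ and $u=\mathbf{1}_j$); both are valid instances of the identical weight-matching mechanism.
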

\begin{proof} Write $\mathbf{1} {=} \sum_k \mathbf{1}_k$ where $\mathbf{1}_k{\in} V_k.$\ Then for
$u{\in}V_{\ell}$ we have
\begin{eqnarray*}
u {=}\mathbf{1}(-1)u{=} \left(\sum_k \mathbf{1}_k\right)(-1)u {=} \sum_k \mathbf{1}_k(-1)u,
\end{eqnarray*}
and $wt(\mathbf{1}_k(-1)u){=}k+\ell$.\ Therefore, we must have $\mathbf{1}_k(-1)u{=}0$ whenever
$k{\not=}0$, and since this holds for all homogeneous $u$ then $\mathbf{1}_k(-1){=}0$ for $k\not=0$.\
Therefore $\mathbf{1}_k=\mathbf{1}_k(-1)\mathbf{1}=0$ for $k\not=0$, and consequently
$\mathbf{1}{=}\mathbf{1}_0$, as required.
\end{proof}

\begin{ex} (i)\ Suppose that $V$ is a graded vertex $k$-algebra such that $V{=}V_0$.\ Then
$V$ has a trivial HS derivation and is a commutative ring as in Theorem \ref{thmcommring}.\\
(ii) Suppose that $V_{\ell}{=}0$ for $\ell{<}0$.\ Then $V_0$ is a commutative $k$-algebra with respect to the
$-1$ operation, and $\langle V_0\rangle$ is a vertex $k$-subalgebra of $V$ of the type
prescribed in Theorem \ref{thmcatderring}.
\end{ex}
\begin{proof} Let $u, v{\in} V_0$.\ Then $u(n)v{\in} V_{-n-1}$, and this is $0$ if $n{\geq} 0$ in the
situation of either (i) or (ii).\ Indeed, if $V{=}V_0$ then $u(n)v{=}0$ for $n{\not=}{-}1$, and the assertion of (i) follows.\\
(ii) In this case we can still conclude that $u(r)v(s){=}v(s)u(r)$ for all integers $r, s$ by (\ref{commform}),
so that $\langle V_0\rangle$ consists of sums of states of the form $u{:=} u_1(n_1)\hdots u_k(n_k)\mathbf{1}$
with $u_i{\in} V_0$ and $n_i{<}0$.\ We assert that if $v{:=}v_1(m_1)\hdots v_r(m_r)\mathbf{1}$ is another
such state ($v_i{\in} V_0, m_i<0$) then $u(n)v{=}0$ for $n{\geq} 0$.\ Indeed by (\ref{assocform}) we have
\begin{eqnarray*}
&&u(n)v {=}(u_1(n_1)\hdots u_k(n_k)\mathbf{1})(n)v\\
&&\ \ \ \ \ \ \ \ {=}\sum_{i\geq 0} (-1){n_1\choose i}\left\{u_1(n_1-i)w(n+i)\right\}v
\end{eqnarray*}
where we have set $w=u_2(n_2)\hdots u_k(n_k)\mathbf{1}$, and our assertion follows by induction
on $k$.\ Now we see that the hypotheses, and therefore also the conclusions, of Theorem \ref{thmcatcommring}
hold, and the assertions of (ii) follow easily.
\end{proof}

\begin{lem}\label{Dmwt} Suppose that $V$ is a graded vertex $k$-algebra with canonical HS derivation
$\underline{D}=(D_0, D_1, \hdots)$.\ Then $D_m$ has weight $m$ as an operator, i.e.,
\begin{eqnarray*}
D_m{:} V_k{\rightarrow} V_{k+m}.
\end{eqnarray*}
\end{lem}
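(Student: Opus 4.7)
The plan is to unwind the definition of $D_m$ and apply the grading axiom directly. Recall from Theorem \ref{thmHS} (or Definition \ref{VringHSdef}) that the canonical HS derivation is given by
\[
D_m(u) = u(-m-1)\mathbf{1}.
\]
So for a homogeneous state $u \in V_k$, the question reduces to determining the weight of $u(-m-1)\mathbf{1}$.

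By Lemma \ref{lemmavacwt}, $\mathbf{1} \in V_0$, i.e., $wt(\mathbf{1}) = 0$. Now apply the grading axiom from Definition \ref{dfnZgrade}: if $u \in V_k$ and $v \in V_\ell$, then $u(n)v \in V_{k+\ell-n-1}$. Taking $v = \mathbf{1}$, $\ell = 0$, and $n = -m-1$, we obtain
\[
D_m(u) = u(-m-1)\mathbf{1} \in V_{k + 0 - (-m-1) - 1} = V_{k+m},
\]
as required. Since every element of $V_k$ is a (finite) $k$-linear combination of such homogeneous states and $D_m$ is additive, the claim follows for all of $V_k$.

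There is essentially no obstacle here; the content of the lemma is a direct bookkeeping consequence of the fact that the vacuum sits in degree $0$ together with the weight shift encoded in the grading axiom. The only step that required any preliminary work was Lemma \ref{lemmavacwt}, which placed $\mathbf{1}$ in $V_0$.
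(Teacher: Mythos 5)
Your proof is correct and is essentially identical to the paper's: both unwind $D_m(u)=u(-m-1)\mathbf{1}$, invoke Lemma \ref{lemmavacwt} to place $\mathbf{1}$ in $V_0$, and read off the weight $k+0-(-m-1)-1=k+m$ from the grading axiom of Definition \ref{dfnZgrade}. Your closing remark about linear combinations is harmless but unnecessary, since elements of $V_k$ are already homogeneous of weight $k$.
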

\begin{proof} Let $u{\in} V_k$.\ By definition of $D_m$\ (cf.\ Theorem \ref{thmHS}),
$D_m(u) {=}  u(-1-m)\mathbf{1}$.\ By Lemma \ref{lemmavacwt} it then follows that
$wt(D_m(u)) {=} wt(u){+}wt(\mathbf{1}){+}m {=}k{+}m$.
\end{proof}

\begin{ex}\label{Mgraded}  $M{=}M_k(c', 0)$ is a graded vertex $k$-algebra
\begin{eqnarray*}
M = \oplus_{\ell\geq 0} M_{\ell}
\end{eqnarray*}
 where (by definition, and using the notation of Section \ref{SVir})  $M_{\ell}$ is spanned by those states $u'(n_1, \hdots, n_k)$ satisfying $n_1+\hdots+n_k{=}\ell$.\ Furthermore, $L(0)$ leaves each $M_{\ell}$ invariant
 and acts as multiplication by $\ell$ on $M_{\ell}$.
 \begin{proof}
  Recall that
 $M$ is generated as a vertex ring by the state $\omega{=}L(-2)\mathbf{1}$, with $Y(\omega, z){=}\sum_n L(n)z^{-n-2}$, and has a $k$-basis consisting of the states
\begin{eqnarray*}
u'(n_1, \hdots, n_k) {=} L(-n_1)\hdots L(-n_k)\mathbf{1}{=}\omega(1-n_1)\hdots \omega(1-n_k)\mathbf{1}
\end{eqnarray*}
with $n_1{\geq} \hdots {\geq} n_k{\geq} 2$.\ For example,  $\omega{\in} M_2$.

\medskip
First we show that $L(-n)$ has weight $n$ as an operator, i.e.,
$L(-n)u'(n_1,\hdots, n_k)$ has weight $n{+}n_1{+}\hdots{+}n_k$.\ (Because $M$ has no nonzero states of negative weight, this includes the statement  that if $n{+}n_1{+}\hdots{+}n_k<0$
then $L(-n)u'(n_1,\hdots, n_k)=0$.)\ Suppose to begin with that $k{=}0$, so that $u'{=}\mathbf{1}$.\ If also $n{\leq 1}$
then $L(-n)\mathbf{1}{=}0$ by the creation axiom, and there is nothing to prove.\ On the other hand,
if $n{\geq} 2$ then $L(-n)\mathbf{1}{=} \omega(1-n)\mathbf{1}{=} D_{n-2}(\omega)$, and by Lemma \ref{Dmwt} this has weight $wt(\omega)+n-2{=}n$, as required.\ This completes the case $k{=}0$.\ We proceed by
induction on $n_1{+}\hdots{+}n_k$.\ 

\medskip
 If $n{\geq} n_1$ then $L(-n)u'(n_1, \hdots, n_k){=}u'(n, n_1, \hdots, n_k)$ has weight
 $n{+}n_1{+}\hdots{+} n_k$ by definition.\ If $n{<}n_1$ then
\begin{eqnarray}\label{calcLn}
&&L(-n)u'(n_1, \hdots, n_k){=}L(-n)L(-n_1)u'(n_2, \hdots, n_k) \\
&&{=}\left\{(n_1-n)L(-n-n_1){+}L(-n_1)L(-n){+}\delta_{n+n_1, 0}{1-n\choose 3}c'  \right\} u'(n_2, \hdots, n_k),\notag
\end{eqnarray}
and the desired result follows by induction.\ This completes the proof of our assertion that $L(-n)$ has weight $n$ as an operator.

\medskip
Now according to Remark \ref{remgenresprod},
\begin{eqnarray*}
Y(u'(m_1, \hdots, m_r), z) {=} Y(\omega, z)_{1-m_1}(\hdots (Y(\omega, z)_{1-m_r}Id_M)\hdots )
\end{eqnarray*}
is a composition of residue products.\ Using (\ref{resprod}), it follows easily from
the special case previously established that
the $n^{th}$ product $u'(m_1, \hdots, m_r)(n)u'(n_1, \hdots, n_k)$ has weight
$n_1{+}\hdots{+}n_k{+}m_1{+}\hdots{+} m_r{+}n-1$, as required.

\medskip
Finally, we prove the assertions about $L(0)$.\ Indeed,  taking $n{=}0$ in (\ref{calcLn}) yields
\begin{eqnarray*}
&&L(0)u'(n_1, \hdots, n_k) {=} L(0)L(-n_1)u'(n_2, \hdots, n_k) \\
&&\ \ \ \ \ \ \ \ \ \ \ \ \ \ \ \ \ \ \ \ \ \ \ \ {=} \left\{n_1L(-n_1){+}L(-n_1)L(0) \right\} u'(n_2, \hdots, n_k),
\end{eqnarray*}
and the proof that $L(0)u'(n_1, \hdots, n_k) =(n_1{+}\hdots{+}n_k)u'(n_1, \hdots, n_k)$ follows by
induction.
\end{proof}
\end{ex}

\subsection{Vertex operator $k$-algebras}
Our definition is modeled on the definition of a VOA over $\mathbf{C}$ and the example of
$M_{k}(c', 0)$.
\begin{dfn}  Let $(V, \omega)$ be an object in $k\mathbf{Ver}_{c'}$, i.e., a vertex $k$-algebra with  Virasoro element
$\omega$ of quasicentral charge $c'$ and  $Y(\omega,z):=\sum_n L(n)z^{-n-2}$,  and let the canonical HS derivation be $\underline{D}=(Id, D_1, \hdots)$.\ We call $(V, \omega)$ a \emph{vertex operator $k$-algebra of quasicentral charge $c'$} if $V$ carries a $\mathbf{Z}$-grading in the sense of Definition \ref{dfnZgrade}:
\begin{eqnarray*}
V {=} \oplus_{\ell\in\mathbf{Z}} V_{\ell}
\end{eqnarray*}
that satisfies the following additional assumptions:
\begin{eqnarray*}
&& (a)\  V_{\ell}\ \mbox{is a finitely generated $k$-module},\\
&&(b)\  V_{\ell}{=}0\ \mbox{for}\ \ell \ll 0,\\
&&(c)\ L(0)\ \mbox{acts on $V_{\ell}$ as multiplication by $\ell$},\\
&&(d)\ L(-1)^m{=} m!D_m\  (m{\geq} 0).
\end{eqnarray*}
\end{dfn}

\begin{ex} (i)\ $M_k(c', 0)$ (and any of its graded quotients) are vertex operator $k$-algebras of quasi-central charge $c'$.\
This follows from the construction of $M_k(c', 0)$, Lemma \ref{lemD1m}, Theorem \ref{thmMkVc} and Example \ref{Mgraded}.\\
(ii)\ A vertex operator algebra over $\mathbf{C}$ of central charge $c$ is a vertex operator 
$\mathbf{C}$-algebra of quasicentral charge $2c$.\\
(iii)\ If $V$ is a  vertex operator $\mathbf{Z}$-algebra then the base change
$k{\otimes} V$ is a vertex operator $k$-algebra.
\end{ex}

\begin{lem} Suppose that $(V, \omega)$ is a vertex operator $k$-algebra.\ Then
$\omega{\in} V_2$.
\end{lem}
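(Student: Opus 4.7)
The plan is to compute $L(0)\omega$ directly and then apply the grading. Since $\omega(n) = L(n-1)$, the vacuum axiom gives $\omega = \omega(-1)\mathbf{1} = L(-2)\mathbf{1}$, and creativity yields $L(n)\mathbf{1} = \omega(n+1)\mathbf{1} = 0$ for all $n \geq -1$; in particular $L(0)\mathbf{1} = 0$. Applying the Virasoro relation $[L(0), L(-2)] = 2L(-2)$ then gives
\[L(0)\omega = L(0)L(-2)\mathbf{1} = [L(0), L(-2)]\mathbf{1} + L(-2) L(0)\mathbf{1} = 2L(-2)\mathbf{1} = 2\omega.\]

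Now decompose $\omega = \sum_\ell \omega_\ell$ with $\omega_\ell \in V_\ell$. By axiom (c) each $V_\ell$ is $L(0)$-stable with $L(0)$ acting as multiplication by $\ell$, so $L(0)\omega = \sum_\ell \ell\omega_\ell$. Combining this with $L(0)\omega = 2\omega = \sum_\ell 2\omega_\ell$ and using the directness of the sum $V = \bigoplus_\ell V_\ell$ forces $(\ell - 2)\omega_\ell = 0$ for each $\ell$, whence $\omega_\ell = 0$ for $\ell \neq 2$ and $\omega = \omega_2 \in V_2$.

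The one delicate point is passing from $(\ell - 2)\omega_\ell = 0$ to $\omega_\ell = 0$ for $\ell \neq 2$, and this is the main obstacle: the implication is immediate whenever the graded pieces $V_\ell$ are $\mathbf{Z}$-torsion-free (the standard situation, which includes the case when $k$ is a $\mathbf{Q}$-algebra or $V$ is a free $k$-module), but in positive characteristic one might in principle have $\ell - 2 \equiv 0$ annihilating nonzero components. If complete generality is needed, one could try to rule this out by combining axiom (d) with the weight behavior of $D_m$ from Lemma \ref{Dmwt} (noting that $D_m\omega_\ell = \omega_\ell(-m-1)\mathbf{1}$ produces non-trivial elements in $V_{\ell + m}$ that must fit the Virasoro recursion); however, the content of the Lemma is essentially the single weight calculation displayed above.
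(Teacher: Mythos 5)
Your proof is correct and follows the paper's own argument essentially verbatim: creativity gives $\omega = L(-2)\mathbf{1}$ and $L(0)\mathbf{1}=0$, and the bracket $[L(0), L(-2)] = 2L(-2)$ yields $L(0)\omega = 2\omega$, which axiom (c) converts into $\omega\in V_2$. Your closing caveat about possible torsion in the graded pieces is a fair observation, but it applies equally to the paper's proof, which stops at $L(0)\omega = 2\omega$ ``as required'' and leaves the same eigenvalue-to-grading step (your $(\ell-2)\omega_\ell = 0$ argument) implicit.
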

\begin{proof} By creativity we have $\omega {=} L(-2)\mathbf{1}$.\ Then $L(0)\mathbf{1}{=}0$ by Lemma \ref{lemmavacwt} and hypothesis (c), 
and 
 $L(0)\omega{=}L(0)L(-2)\mathbf{1}
{=}([L(0), L(-2)]{+}L(-2)L(0))\mathbf{1}{=}2L(-2)\mathbf{1}{=}2\omega$, as required.
\end{proof}

\medskip
\begin{center}{\bf Part II: Pierce bundles of Vertex Rings}
\end{center}

The second part of this paper revolves around the idea of a \emph{Pierce bundle of vertex rings}, which 
is a special kind of \emph{\'{e}tale bundle}.\ It transpires that Pierce's theory 
for commutative rings \cite{RSP} may be carried over \emph{en bloc} to the setting of vertex rings.

\section{\'{E}tale bundles of vertex rings}\label{Setale}

\subsection{Basic definitions}\label{SSbasicbundledefs}
 In this Subsection we give the basic definitions concerning \'{e}tale bundles of vertex rings
$p{:}E{\rightarrow}X$, regardless of whether they are Pierce bundles or not.\ 
 We refer the reader to the relevant sections of \cite{Mac} and  \cite{MacM} for additional background
on bundles and categories.

\medskip
An \'{e}tale bundle in the category $\bf{Top}$ of topological spaces and continuous maps
is a \emph{surjective} morphism $p{:}E{\rightarrow} X$ that is a \emph{local homeomorphism}
in the sense that $E$ is covered by open sets $U_i$ such that the restriction $p|U_i$
of $p$ to each $U_i$ is a homeomorphisms whose image $p(U_i)$ is open in $X$.

\medskip
The \emph{stalks} of the bundle are defined by $E_x{:=}p^{-1}(x)\ (x{\in} X)$.

\medskip
An \'{e}tale bundle of abelian groups is an \'{e}tale bundle
$p{:}E{\rightarrow}X$ such that each stalk $E_x$ has the structure of an additive abelian group.\
Moreover, the operations in the stalks are \emph{continuous} in the following sense{:}\
if $\Delta{:=}\{(t, u){\in}E{\times}E{\mid}p(t)=p(u)\}$ then the map
$\mu{:}\Delta{\rightarrow}E, (t, u)\mapsto t{-}u$ is \emph{continuous}.\ The relevant diagram is
\begin{eqnarray*}
  \xymatrix{
 E &\Delta\ar[l]_{\mu} \ar[d]_{\pi_1}\ar[r]^{\pi_2} & E\ar[d]^p\\
&E \ar[r]_p  & X 
 }
\end{eqnarray*}
where the square is a pullback in $\bf{Top}$.\ In particular, $\pi_i$ is projection on the $i^{th}$ coordinate,
$E{\times}E$ has the product topology and $\Delta{\subseteq}E{\times}E$ the subspace topology.\
 Note that $(t, u){\in}\Delta$ if, and only if, $t, u$ lie in the 
\emph{same stalk} $E_{p(t)}$, in which case $t{-}u$ is the group operation in $E_{p(t)}$.

\medskip
We set $0_x\ (x{\in}X)$ as the \emph{zero element} in $E_x$.\ It follows from the definitions that
the map $0_X{:}X\rightarrow E,\ x\mapsto 0_x$ is \emph{continuous}.

\medskip
\begin{dfn} An \'{e}tale bundle of vertex rings is an \'{e}tale bundle of abelian groups
$p{:}E{\rightarrow}X$ such that each stalk $E_x$ has the structure of a vertex ring,
and the operations $u(n)v\ (u, v{\in} E, n{\in} \ZZ)$ are continuous in the same sense as before.\ Furthermore,
if $\mathbf{1}_x$ is the vacuum element of $E_x$ then the map $\mathbf{1}_X{:}X\rightarrow E, x\mapsto \mathbf{1}_x$
must be continuous.
\end{dfn}

\begin{ex}(a)\ If each $E_x$ is a commutative ring regarded as a vertex ring with trivial canonical HS
derivation, the last definition reduces to the usual definition of an \'{e}tale bundle of commutative rings.\\
(b)\ Each $E_x$ is a \emph{discrete} subspace of $E$.\ On the other hand, if $V$ is any vertex ring equipped with the
discrete topology and $X{\times}V$ has the product topology, then the canonical projection
$p{:}X{\times}V{\rightarrow}X$ is an \'{e}tale bundle of vertex rings with constant  stalk $V$.

\end{ex}

\subsection{Nonassociative vertex rings and sections}

Let $f{:}Y{\rightarrow}X$ be continuous.\ The \emph{inverse image} of an \'{e}tale bundle
$p{:}E{\rightarrow} X$ of (say) abelian groups or commutative rings, 
with respect to $f$, denoted $f^*(E)$, is the pullback
  \begin{eqnarray}\label{Ediag} 
  \xymatrix{
f^*(E)\ar[d]_{\pi_1}\ar[r]^{\pi_2} & E\ar[d]^p\\
Y\ar[r]_f  & X  
 }
\end{eqnarray}
Thus $f^*(E){:=}\{(y, e){\in}Y{\times E}{\mid} f(y){=}p(e)\}$ with coordinate projections $\pi_i$
as before.

\medskip 
It is a standard fact  that $\pi_1{:}f^*(E){\rightarrow}Y$ is again an \'{e}tale bundle of abelian groups or commutative rings.\ However, if $p$ is a bundle of vertex rings then $f^*(E){\rightarrow}Y$ may not b.\ We discuss this in further detail below, in the context of local sections.\ Note that the $n^{th}$ product
on stalks $f^*(E)_y$ are (like the other operations) defined in the natural way, i.e.,
\begin{eqnarray*}
(y, t)(n)(y, u)):=(y, t(n)u)\ \ (t, u{\in}E, n\in\ZZ).
\end{eqnarray*}
We record 
\begin{lem}\label{lempullback} If $p{:}E{\rightarrow}X$ is an \'{e}tale bundle of vertex rings and $f{:}Y{\rightarrow}X$ a continuous map, then
the pullback $f^*(E)$ is an \'{e}tale bundle of abelian groups. $\hfill \Box$
\end{lem}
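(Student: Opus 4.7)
The plan is to verify directly that $\pi_1{:}f^*(E){\rightarrow}Y$ satisfies the three requirements laid out in Subsection \ref{SSbasicbundledefs}: it is a surjective local homeomorphism, each fiber carries an abelian group structure, and the structure maps (subtraction and the zero-section) are continuous. None of these steps is genuinely deep; the work lies in unwinding the definition of the pullback in (\ref{Ediag}) and transporting the structure on $E$ stalk by stalk.

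First I would establish the topological part. Surjectivity of $\pi_1$ follows from surjectivity of $p$: given $y{\in}Y$, pick any $e{\in}p^{-1}(f(y))$ and note $(y, e){\in}f^*(E)$. For the local homeomorphism property, fix $(y_0, e_0){\in}f^*(E)$ and choose an open neighborhood $U{\subseteq}E$ of $e_0$ such that $p|_U{:}U{\rightarrow}p(U)$ is a homeomorphism onto an open set. Let $V{:=}f^{-1}(p(U)){\subseteq}Y$ and $W{:=}\pi_2^{-1}(U)$, which is open in $f^*(E)$ by continuity of $\pi_2$. Then
\begin{eqnarray*}
\pi_1|_W{:}W{\rightarrow}V,\ \ (y, e){\mapsto}y
\end{eqnarray*}
is a continuous bijection with explicit continuous inverse $y{\mapsto}(y, (p|_U)^{-1}(f(y)))$, hence a homeomorphism.

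Next I would install the algebraic structure. Each fiber $\pi_1^{-1}(y){=}\{y\}{\times}E_{f(y)}$ is identified with $E_{f(y)}$ and thereby receives an abelian group structure. To check continuity of subtraction $\mu'{:}\Delta_{f^*(E)}{\rightarrow}f^*(E)$, observe that $\Delta_{f^*(E)}{=}\{((y, t), (y, u)){\mid}p(t){=}p(u){=}f(y)\}$ sits naturally inside $f^*(E){\times}f^*(E)$, and $\mu'((y, t), (y, u)){=}(y, t{-}u)$ factors as $(\pi_1{\circ}\pi_{1,1}, \mu{\circ}(\pi_2{\times}\pi_2))$, a product of continuous maps into $Y{\times}E$ whose image lies in $f^*(E)$. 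Continuity of the zero-section $0_Y{:}Y{\rightarrow}f^*(E)$, $y{\mapsto}(y, 0_{f(y)})$, is immediate from continuity of $0_X{\circ}f$ into $E$ together with continuity of $\mathrm{id}_Y$ into $Y$.

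The only potential obstacle is conceptual rather than technical: one might hope to upgrade the conclusion from bundles of abelian groups to bundles of vertex rings by the same recipe, applying $\mu$ to each $n^{\text{th}}$ product and the vacuum section $\mathbf{1}_X$. The fiberwise vertex ring structure transports without issue, but as the paper warns just before the Lemma, the countable family of operations can cause difficulties for a general continuous $f$; the statement is therefore kept at the level of abelian-group bundles, which is all the subsequent theory requires. With this restriction, the verification above completes the proof. $\hfill\Box$
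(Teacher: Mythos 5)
Your proof is correct: the paper offers no argument for this Lemma, simply recording it as a standard fact (the $\Box$ appears in the statement itself), and your direct verification---surjectivity and the local homeomorphism property of $\pi_1$ via $W=\pi_2^{-1}(U)$ with explicit inverse $y\mapsto(y,(p|_U)^{-1}(f(y)))$, fiberwise transport of the group structure, and continuity of subtraction by factoring through $\mu\circ(\pi_2\times\pi_2)$---is precisely the routine check the paper is invoking. Your closing remark about why the conclusion is deliberately not upgraded to bundles of vertex rings also matches the paper's own discussion immediately preceding the Lemma.
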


Let $p{:}E{\rightarrow} X$ be an \'{e}tale bundle with $U{\subseteq}X$ any subset.\
The set of \emph{local sections} over $U$ is defined by
\begin{eqnarray*}
\Gamma(U, E){:=}\{\sigma{:}U{\rightarrow} E{\mid}\sigma\ \mbox{is continuous and}\ p\circ\sigma{=}Id_U\}.
\end{eqnarray*}
The set of \emph{global sections} is $\Gamma(X, E)$.\ If $i{:}U{\rightarrow}X$ is insertion, then
$\Gamma(U, X)$ is isomorphic (as a set, or abelian group) to $\Gamma(U, i^*(E))$.

\begin{ex} The \emph{zero section} $0_X$ and \emph{vacuum section} $\mathbf{1}_X$
both lie in $\Gamma(X, E)$.
\end{ex}

If $p{:}E{\rightarrow}X$ is a bundle of  vertex rings, the algebraic operations enjoyed by the stalks
may be transported to sections in a pointwise fashion.\ Explicitly, the $n^{th}$
product of sections $\sigma, \tau{\in}\Gamma(U, E)$ is defined by
\begin{eqnarray*}
(\sigma(n)\tau)(u):=\sigma(u)(n)\tau(u)\ \ (u{\in}U).
\end{eqnarray*}
This is meaningful inasmuch as $\sigma(u), \tau(u){\in}E_u$, so that their $n^{th}$ product is defined.\ The group operations are defined similarly.

\medskip
It is immediate that $\Gamma_U{:=}\Gamma(U, E)$ is an additive abelian group.\ 
 It is equipped with $n^{th}$ products, so we may define formal series $Y(\sigma, z)\in End(\Gamma_U)[[z,z^{-1}]]$ in the expected way:
\begin{eqnarray*}
Y'(\sigma, z)\tau{:=}\sum_{n\in \ZZ}\sigma(n)\tau z^{-n-1}.
\end{eqnarray*}

The $n^{th}$ products on $\Gamma_U$ being defined in a pointwise fashion, some of their basic properties
carry over unchanged from the corresponding property in the vertex rings that comprise the stalks
of the bundle.\ Thus we have 
a vacuum element $\mathbf{1}_U{:}u \mapsto \mathbf{1}_u\ (u{\in}U), Y'(\mathbf{1}_U, z){=}Id_{\Gamma_U}$,
and the creativity formula $Y'(\sigma, z)\mathbf{1}_U=\sigma+\hdots$ also holds.

\medskip
The sequence
of endomorphisms $\underline{D}{:=}(Id, D_1, \hdots)$ defined, as in Theorem \ref{thmHS}, 
by $D_m(\sigma){:=}\sigma(-m-1)\mathbf{1}_U$ is iterative and
HS with respect to all products, moreover $D_m(\mathbf{1}_U){=}0_U\ (m{\geq}1)$.\ Additionally, each $Y'(\sigma, z)$
is translation-covariant with respect to $\underline{D}$.

\medskip
Motivated by Theorem \ref{thmexist3} and the preceding remarks,
we make the
\begin{dfn}
A \emph{nonassociative vertex ring} $(V, Y', v_0, \underline{D})$ consists of an additive abelian group $V$,
a state $v_0\in V$, an \emph{iterative} sequence of endomorphisms
$\underline{D}:=(Id_V, D_1, ...)$ in $End(V)$ satisfying $D_m(v_0){=}0$ for $m\geq 1$, and a morphism of abelian groups
\begin{eqnarray*}
Y'{:} V\rightarrow End(V)[[z, z^{-1}]],\ u\mapsto Y'(u, z):=\sum_n u(n)z^{-n-1},
\end{eqnarray*}
satisfying for all $u, v{\in}V${:}
\begin{eqnarray*}
&&\ \ \ \ \ \  Y'(u, z)v_0\in u+zV[[z]], \\
&& {[}D_m, Y'(u, z){]} =\sum_{i=1}^m \delta_z^{(i)}Y'(u, z)D_{m-i}\ \ (m\geq 0).
\end{eqnarray*}
\end{dfn}

We might have incorporated more of the properties enjoyed by $\Gamma_U$ in this Definition,
for example the HS property of $\underline{D}$.\ It is arguably more
natural to also assume that each $Y'(u, z){\in}\mathcal{F}(V)$.\  
As it is, we can paraphrase Theorem \ref{thmexist3} as follows{:} 

\begin{lem}\label{lemnearvringchar} A nonassociative vertex ring $V$ is a vertex ring
if, and only if, the locality condition $Y'(u, z){\sim}Y'(v, z)$ holds for all $u, v{\in}V$.\ $\hfill \Box$
\end{lem}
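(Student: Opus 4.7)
The plan is to observe that this lemma is essentially a repackaging of results already established, so the proof should be short and reduce to citing Theorem \ref{thmexist3} and Lemma \ref{lemlocform}.

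First I would dispose of the easy direction. Suppose $V$ is a vertex ring (in particular, a nonassociative vertex ring satisfying the locality hypothesis). Then the locality formula (\ref{locform1}) of Lemma \ref{lemlocform} is a purely formal consequence of the Jacobi identity, and its statement is precisely $Y'(u,z) \sim Y'(v,z)$ for all $u, v \in V$. So this direction is immediate.

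For the converse, which is the substantive direction, suppose that $(V, Y', v_0, \underline{D})$ is a nonassociative vertex ring and that $Y'(u,z) \sim Y'(v,z)$ for all $u, v \in V$. I would verify that the hypotheses of Theorem \ref{thmexist3} are satisfied. By definition of nonassociative vertex ring we have: an additive abelian group $V$, a distinguished state $v_0$, an iterative sequence $\underline{D} = (Id_V, D_1, \ldots)$ of endomorphisms with $D_m(v_0) = 0$ for $m \geq 1$, a morphism $Y' : V \to End(V)[[z, z^{-1}]]$, creativity $Y'(u,z)v_0 \in u + zV[[z]]$, and translation-covariance $[D_m, Y'(u,z)] = \sum_{i=1}^m \delta_z^{(i)} Y'(u,z) D_{m-i}$ for $m \geq 0$. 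The locality assumption $Y'(u,z) \sim Y'(v,z)$ supplies the remaining hypothesis of Theorem \ref{thmexist3}. Therefore Theorem \ref{thmexist3} applies and yields that $V$ is a vertex ring with state-field correspondence $Y'$, vacuum vector $v_0$, and canonical HS derivation $\underline{D}$.

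I do not anticipate any genuine obstacle here: the definition of nonassociative vertex ring was deliberately tailored so that the sole missing hypothesis for Theorem \ref{thmexist3} is locality. The one point worth noting is that the definition of nonassociative vertex ring does \emph{not} require each $Y'(u,z)$ to be a field a priori, and likewise Theorem \ref{thmexist3} does not assume this; rather, the fieldness $Y'(u,z) \in \mathcal{F}(V)$ emerges as a consequence (via Lemma \ref{lemfdprop}) once locality is in hand. So the statement of the lemma is really just the observation that, among the Goddard-type axioms, it is precisely locality that distinguishes vertex rings from their nonassociative counterparts.
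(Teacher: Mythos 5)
Your proposal is correct and matches the paper's intended argument exactly: the paper itself offers no separate proof, stating only that the lemma is a paraphrase of Theorem \ref{thmexist3}, since the definition of nonassociative vertex ring was tailored to supply every hypothesis of that theorem except locality. Your handling of the easy direction via Lemma \ref{lemlocform} and your remark that fieldness of $Y'(u,z)$ is a consequence (Lemma \ref{lemfdprop}) rather than a hypothesis are both exactly right.
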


 We encapsulate the earlier discussion in the following
\begin{lem}\label{lemgammanearvring} Suppose that $p{:}E{\rightarrow}X$ is an \'{e}tale bundle of vertex rings.\ Then
for any subset $U{\subseteq}X$, $\Gamma_U$ is a nonassociative vertex ring.
$\hfill \Box$
\end{lem}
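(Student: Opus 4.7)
\  The plan is to verify each clause of the definition of nonassociative vertex ring for $\Gamma_U$ by transporting the stalkwise structures.\  First I would check that the pointwise operations actually produce local sections.\  Given $\sigma, \tau\in\Gamma_U$ and $n\in\mathbf{Z}$, the function $(\sigma(n)\tau)(u){:=}\sigma(u)(n)\tau(u)$ is continuous because it factors as $u\mapsto(\sigma(u),\tau(u))\mapsto\sigma(u)(n)\tau(u)$ through the pullback $E\times_X E$, on which the $n^{th}$ product is continuous by the definition of an \'{e}tale bundle of vertex rings.\  Moreover $p(\sigma(u)(n)\tau(u)){=}u$ since both arguments lie in $E_u$, so $\sigma(n)\tau\in\Gamma_U$.\  Continuity of $\mu$ together with continuity of $\mathbf{1}_X$ similarly shows that $\Gamma_U$ is an additive abelian group and that $\mathbf{1}_U\in\Gamma_U$.

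Next I would define $Y'(\sigma,z){:=}\sum_n\sigma(n)z^{-n-1}\in End(\Gamma_U)[[z,z^{-1}]]$ and $D_m(\sigma)(u){:=}\sigma(u)({-}m{-}1)\mathbf{1}_u$.\  Each $D_m(\sigma)$ is continuous by the same pullback argument, combining continuity of the $({-}m{-}1)^{st}$ product with continuity of $\mathbf{1}_X$, so each $D_m$ is a well-defined endomorphism of $\Gamma_U$.\  It remains to check iterativity $D_i{\circ}D_j{=}\binom{i+j}{i}D_{i+j}$, the vanishing $D_m(\mathbf{1}_U){=}0$ for $m{\geq}1$, creativity $Y'(\sigma,z)\mathbf{1}_U{\in}\sigma{+}z\Gamma_U[[z]]$, and translation-covariance $[D_m,Y'(\sigma,z)]{=}\sum_{i=1}^m\delta_z^{(i)}Y'(\sigma,z)D_{m-i}$.\  Each of these is an equality between sections or formal series of endomorphisms, so by evaluating at an arbitrary $u\in U$ it reduces to the corresponding identity in the vertex ring $E_u$, which is provided by Theorems \ref{thmHS} and \ref{lend} applied stalkwise.

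The point worth emphasizing --- not so much an obstacle as the reason the statement is phrased in terms of nonassociative vertex rings --- is that one cannot expect $\Gamma_U$ to be a vertex ring in general.\  By Lemma \ref{lemnearvringchar} that would require mutual locality of every pair $Y'(\sigma,z), Y'(\tau,z)$, i.e., a \emph{uniform} locality order $t$ across the family of stalkwise pairs $\{(\sigma(u),\tau(u)){\mid}u\in U\}$.\  Although each such pair is mutually local inside $E_u$, the stalkwise orders may be unbounded as $u$ varies over $U$, so no single $t$ need work on $U$.\  This is precisely the phenomenon flagged at the beginning of Part II and explains why the conclusion is the weaker one recorded in the Lemma.
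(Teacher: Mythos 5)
Your proposal is correct and matches the paper's own argument, which the Lemma merely encapsulates: the paper likewise defines all operations on $\Gamma_U$ pointwise, notes the structure of additive abelian group with vacuum section $\mathbf{1}_U$, defines $D_m(\sigma){:=}\sigma(-m-1)\mathbf{1}_U$ as in Theorem \ref{thmHS}, and observes that creativity, iterativity, $D_m(\mathbf{1}_U){=}0$, and translation-covariance all transfer stalkwise, while locality is exactly the property that fails to transfer for want of a uniform order $t$. You spell out the continuity verifications (via the pullback $\Delta$ and the sections $\mathbf{1}_X$) that the paper treats as immediate, but the route is the same.
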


\medskip
The question of whether $\Gamma_U$ is in fact a
vertex ring is murkier.\ At issue is the locality property.\ Certainly
for $u{\in}U$ we have $Y'(\sigma, z)(u){\sim_t}Y'(\tau, z)(u)$ for $t{\gg}0$, 
however the choice $t$ depends on $u$, whereas we need
a $t$ that works uniformly for $u{\in}U$.\
We state a standard fact  used repeatedly in what follows.
\begin{lem}\label{lemzsects} Let $p{:}E{\rightarrow}X$ be an \'{e}tale bundle (of sets) with $U{\subseteq}X$ open and
$\sigma, \tau{\in}\Gamma(U, E)$.\ Then the set $\{x{\in}U{\mid}\sigma(x){=}\tau(x)\}$ is \emph{open} in $X$.\
In particular (taking $\tau{=}0_U$), the zero set of $\sigma$, i.e., $\{x{\in}U{\mid}\sigma(x){=}0_x\}$ is \emph{open}.\ 
$\hfill \Box$
\end{lem}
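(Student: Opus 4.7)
The plan is to prove this by a direct local argument using the fact that $p$ is a local homeomorphism, which is the defining feature of an \'{e}tale bundle. Let $S{:=}\{x{\in}U\mid \sigma(x){=}\tau(x)\}$. To show $S$ is open in $X$ (equivalently, open in $U$, since $U$ is open in $X$), I will show that every point of $S$ has an open neighborhood contained in $S$.

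First I would fix $x_0{\in}S$ and set $e{:=}\sigma(x_0){=}\tau(x_0){\in}E$. Since $p{:}E{\rightarrow}X$ is a local homeomorphism, I can choose an open neighborhood $W$ of $e$ in $E$ such that $p|_W{:}W{\rightarrow}p(W)$ is a homeomorphism and $p(W)$ is open in $X$. Next, because $\sigma$ and $\tau$ are continuous and both send $x_0$ to $e{\in}W$, the set $V{:=}\sigma^{-1}(W)\cap\tau^{-1}(W)\cap p(W)$ is open in $U$ and contains $x_0$.

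Now the key step: for any $x{\in}V$, both $\sigma(x)$ and $\tau(x)$ lie in $W$, and both satisfy $p(\sigma(x)){=}x{=}p(\tau(x))$. Since $p|_W$ is injective, this forces $\sigma(x){=}\tau(x)$, so $x{\in}S$. Thus $V{\subseteq}S$, which proves $S$ is open. For the stated consequence, note that $0_X{:}X{\rightarrow}E$ is continuous by definition of a bundle of abelian groups (as recorded just after the pullback diagram in Subsection \ref{SSbasicbundledefs}), so its restriction $0_U{\in}\Gamma(U, E)$, and applying the first part with $\tau{=}0_U$ yields the openness of the zero set.

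There is no real obstacle here; the argument is essentially forced once one observes that two sections that agree at a point must agree on a neighborhood because $p$ is locally injective. The only thing to be mindful of is writing $V$ as an intersection of preimages of the \emph{same} trivializing neighborhood $W$, so that the common inverse to $p|_W$ can be used to conclude $\sigma(x){=}(p|_W)^{-1}(x){=}\tau(x)$.
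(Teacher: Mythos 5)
Your proof is correct and is exactly the standard local-homeomorphism argument: two sections agreeing at a point must agree on a neighborhood because $p$ is locally injective, with the only care being to take preimages of a single trivializing open set $W$, which you handle properly. The paper itself states this lemma as a standard fact with no proof (it ends with $\Box$), so your argument is precisely the one being implicitly invoked, including the correct observation that the ``in particular'' clause uses the continuity of the zero section recorded in Subsection 8.1.
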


\begin{thm}\label{thmpresheaf} Let $p{:}E{\rightarrow}X$ be an \'{e}tale bundle of vertex rings
over a \emph{compact} space $X$.\ Then $\Gamma_Y$ is a vertex ring if $Y{\subseteq} X$ is \emph{closed}.
\end{thm}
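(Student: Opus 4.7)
The plan is to verify that $\Gamma_Y$, equipped with the pointwise operations already shown in Lemma \ref{lemgammanearvring} to yield a nonassociative vertex ring structure, is in fact a genuine vertex ring. By Lemma \ref{lemnearvringchar}, the entire task reduces to establishing uniform mutual locality in $\Gamma_Y$: for each pair $\sigma, \tau \in \Gamma_Y$, I must produce a single $t \geq 0$ such that $(z-w)^t[Y'(\sigma, z), Y'(\tau, w)] = 0$ as an operator on $\Gamma_Y$. Because all modes in $\Gamma_Y$ are defined pointwise, this identity applied to any $\rho \in \Gamma_Y$ and evaluated at $y \in Y$ reduces to $(z-w)^t[Y(\sigma(y), z), Y(\tau(y), w)]\rho(y) = 0$ inside the stalk vertex ring $E_y$, which holds whenever $\sigma(y), \tau(y)$ are mutually local of order $t$ in $E_y$.

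Next, I reduce mutual locality in each stalk to a product-vanishing statement. Precisely, by the derivation of Lemma \ref{lemlocform}, the vanishing $\sigma(y)(n)\tau(y) = 0_y$ for all $n \geq t$ implies mutual locality of order $t$ in $E_y$. Hence the entire proof reduces to the following uniform vanishing claim: \emph{there exists an integer $N \geq 0$ with $\sigma(y)(n)\tau(y) = 0_y$ for every $y \in Y$ and every $n \geq N$.} Once $N$ is in hand, the pointwise locality at order $N$ uniformly promotes to the required identity of operators on $\Gamma_Y$.

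For the uniform vanishing I would exploit compactness. Since $Y$ is closed in the compact $X$, $Y$ itself is compact. For each $y_0 \in Y$, axiom (a) of Definition \ref{defvr} applied in $E_{y_0}$ supplies an integer $N_{y_0}$ with $\sigma(y_0)(n)\tau(y_0) = 0_{y_0}$ for $n \geq N_{y_0}$. Using the local homeomorphism property of $p$, one may extend $\sigma, \tau$ near $y_0$ to canonical local sections $\tilde\sigma, \tilde\tau$ over an open neighborhood $V \subseteq X$ of $y_0$ (with $\tilde\sigma|_{V \cap Y} = \sigma|_{V \cap Y}$ after possibly shrinking $V$), and for each individual $n \geq N_{y_0}$, Lemma \ref{lemzsects} combined with the openness in $E$ of the image of $0_X$ yields an open neighborhood $V_n \subseteq V$ of $y_0$ on which the section $\tilde\sigma(n)\tilde\tau$ vanishes identically. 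The goal is to produce a single open neighborhood $U_{y_0} \subseteq Y$ of $y_0$ and a (possibly larger) integer $N_{U_{y_0}}$ such that $\sigma(y)(n)\tau(y) = 0_y$ holds \emph{simultaneously} for all $y \in U_{y_0}$ and all $n \geq N_{U_{y_0}}$. Once that local uniformity is in place, the $\{U_{y_0}\}_{y_0 \in Y}$ cover the compact space $Y$, and a finite subcover $U_{y_1}, \ldots, U_{y_k}$ yields the global bound $N := \max_i N_{U_{y_i}}$, completing the proof.

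The hard part will be promoting the pointwise vanishing into the local uniform vanishing in the last paragraph. The obstruction is that the neighborhoods $V_n$ produced by Lemma \ref{lemzsects}, one per integer $n$, may shrink with $n$, and their intersection over the infinite range $n \geq N_{y_0}$ need not be a neighborhood of $y_0$. Resolving this requires a more delicate use of the étale structure, for instance by showing that the function $y \mapsto \min\{N : \sigma(y)(n)\tau(y) = 0_y \text{ for all } n \geq N\}$ is locally bounded on $Y$, which would follow from an upper semi-continuity statement; the discreteness of stalks and the fact that $\tilde\sigma, \tilde\tau$ arise as canonical local sections (rigidly determined by their values at $y_0$) should be the key ingredients in this final technical step.
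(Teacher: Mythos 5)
Your skeleton is exactly the paper's: the paper likewise reduces, via Lemmas \ref{lemgammanearvring} and \ref{lemnearvringchar}, to producing a single order of mutual locality for each pair of sections; it obtains an order $t_x$ pointwise from locality in the stalk $E_x$, asserts that this spreads to an open neighborhood $U_x$ of $x$, and concludes by compactness and $t := \max_i t_{x_i}$ over a finite subcover, handling closed $Y$ by pulling back along the inclusion (Lemma \ref{lempullback}) rather than working directly over the compact set $Y$ as you do --- an immaterial difference. Your intermediate reductions are all sound: since $t \geq 0$, each coefficient of $(z-w)^t[Y'(\sigma,z), Y'(\tau,w)]\rho$ is a finite sum, so pointwise locality at a \emph{common} order does yield the operator identity on $\Gamma_Y$; and in each stalk the vanishing $\sigma(y)(n)\tau(y) = 0_y$ for $n \geq t$ implies locality of order $t$, exactly as in the derivation of (\ref{locform}) from the Jacobi identity.

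But your submission is not a proof: the ``hard part'' you name in your final paragraph is never carried out, and it carries the entire weight of the theorem. The paper spends one line on it, asserting via Lemma \ref{lemzsects} that the stalk locality identity of order $t_x$ ``continues to hold throughout an open neighborhood $U_x$''; as your own analysis makes clear, Lemma \ref{lemzsects} applied literally produces one open set per vanishing condition --- per mode $n$ in your truncation formulation, per coefficient and test section in the paper's formulation --- and an infinite intersection of open sets need not be a neighborhood of $x$. So you have correctly located the crux, but the repairs you gesture at do not discharge it. The \'{e}tale axioms provide no ``canonical'' extension of a stalk element beyond the fact that two sections agreeing at a point agree nearby; nearby stalks need not be isomorphic to $E_{y_0}$ (in the Pierce bundle of Subsection \ref{SSPiercebun} the stalks $V/\overline{M}$ genuinely vary with $M$), so no rigidity of local sections by itself bounds $y \mapsto n_0(\sigma(y), \tau(y))$ near $y_0$; discreteness of stalks yields nothing beyond the openness of each zero set $\{y : \sigma(y)(n)\tau(y) = 0_y\}$, which is again Lemma \ref{lemzsects} once per $n$; and the ``upper semi-continuity'' you invoke is not a tool available to you --- local boundedness of $n_0(\sigma(\cdot), \tau(\cdot))$ at $y_0$ \emph{is} the local uniform vanishing you need, restated, not a lemma from which it follows. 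Until you either prove that local boundedness from the bundle axioms or otherwise justify the simultaneous spreading of the infinitely many conditions, your argument stops precisely at the step where the paper's own proof is tersest, and reducing the theorem to an unproved claim of the same strength does not prove it.
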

\begin{proof} First consider the case $Y{=}X$, and let $\sigma, \tau{\in}\Gamma_X$.\ By Lemmas
\ref{lemnearvringchar} and  \ref{lemgammanearvring} it suffices to show that
$Y'(\sigma, z){\sim}Y'(\tau, z)$.

\medskip
Let $x{\in}X$.\ By locality in the stalk $E_x$, there is a nonnegative integer $t_x$ such that
\begin{eqnarray}\label{locformsect}
(z-w)^{t_x}[Y'(\sigma, z), Y'(\tau, w)](x){=}0_x.
\end{eqnarray}
By Lemma \ref{lemzsects} (\ref{locformsect}) continues to hold throughout
an open neighborhood $U_x{\subseteq}U$ of $x$  for all integers  ${\geq}t_x$.\
As $x$ ranges over $X$ we obtain an open cover $\{U_x\}$ of $X$, and by compactness
there is a finite subcover $X{=}\cup_{i=1}^n U_{x_i}$.\ Then if we choose $t$ to be the
\emph{maximum} of  $t_{x_1}, \hdots, t_{x_n}$ then  
$Y'(\sigma, z){\sim}_{t}Y'(\tau, z)$ holds throughout $X$, as required. 
  
  \medskip
  Finally, if $i{:}Y{\rightarrow}X$ is a closed subset then $Y$ is compact, and we have
  the pullback bundle $i^*(E){\rightarrow}Y$ (cf.\ Lemma \ref{lempullback}) which \emph{is} a bundle of vertex rings
  thanks to compactness.\ 
  Now (b) follows from the special case $Y{=}X$. 
\end{proof}

\section{Pierce bundles of vertex rings}
 So far, we have not discussed any interesting examples of \'{e}tale bundles of vertex rings.\ We will rectify this situation
 in the present Section by constructing \emph{Pierce bundles}.\ We follow \cite{RSP} closely.
 
  \subsection{The Stone space of a vertex ring}\label{SSBV}
 Fix a vertex ring $V$ with center $C{=}C(V)$.\
  Let $B{=}B(V){=}B(C)$ be the set of idempotents
 in $V$.\ By Lemma \ref{lemmaidem} these are precisely the idempotents of $C$.\ By a standard procedure
 we can equip $B$ with the structure of a \emph{Boolean ring} (cf.\  \cite{RSP} or \cite{Jac}, Chapter 8).\
 This means that
 $B$ is a commutative ring in which every element is idempotent.\ Indeed, 
 multiplication in $B$  is that of $C$ and addition $\oplus$ is defined by
 \begin{eqnarray*}
e{\oplus}f{:=} e{+}f{-}2ef\ \ (e, f{\in}B).
\end{eqnarray*}

$B$ is also a \emph{Boolean algebra} (complemented, distributive lattice with $0, 1$) if we define 
$e\vee f{:=}e{+}f{-}ef, e\wedge f{:=} ef$ and $e'{:=}\mathbf{1}-e$.\ This fact will not play much of
a r\^{o}le in what follows.

\begin{dfn}\label{dfnSS} The \emph{Stone space} of $V$ is the topological space $X{=}Spec(B)$.
\end{dfn}

Here, and below, $Spec(R)$ for a commutative ring $R$ denotes the set of prime ideals of
$R$ equipped with its Zariski topology.\ 
As is well-known, the Boolean property of $B$  implies that $X$ has additional topological properties beyond those that hold in
a general prime spectrum.\ We state some of the main properties of $X{=}Spec(B)$ that we use.\
 
\medskip
 It is easy to see that all prime ideals of $B$ are maximal, so that the points of $X$ are the maximal ideals of $B$.\ Moreover
$X$ is a \emph{Boolean space}, i.e., it is a
\emph{totally disconnected, compact Hausdorff space}.\ (Totally disconnected means that the  connected components of $X$ are single points.)\ For further details, see (\ref{VNRequivs}).
 
 \medskip
 A \emph{basis} for the Zariski topology on $X$ consists of the sets
 \begin{eqnarray*}
N(e){:=}\{M{\in}X{\mid} e{\notin}M\}\ \ (e{\in}B).
\end{eqnarray*}
Note that $N(e'){=}N(e)'$ is the \emph{complement} of $N(e)$ in $X$, so that 
$N(e)$ is a \emph{clopen} (closed and open) subset in $X$.\ Indeed, the sets $\{N(e){\mid}e{\in}B\}$
comprise \emph{all} of the clopen subsets of $X$.\ Because $X$ is Hausdorff, clopen
is the same as compact and open.\ We have in addition that
\begin{eqnarray}\label{Nelattice}
N(e){\cup} N(f) = N(e\vee f),\ \ N(e){\cap} N(f) = N(e{\wedge} f).
\end{eqnarray}

\medskip
The map $e{\mapsto} N(e)$, which induces 
a \emph{lattice isomorphism} between $B$ and clopen subsets of $B$ thanks to (\ref{Nelattice}), is
 the \emph{Stone duality}.\ This explains our nomenclature
in Definition \ref{dfnSS}.

\medskip
The following property of $X$ is very powerful.
\begin{lem}\label{lemmapp}(Partition property) Suppose that $\{X_i\}$ is an open cover of $X$.\
Then there are clopen sets $N_1, \hdots, N_k$ such that each
$N_i$ is contained in some $X_j$ and the $N_i$ \emph{partition} $X$ in the sense that
$\cup_i N_i{=}X$ and $N_i{\cap} N_j{=}\phi\ (i{\not=}j)$.
\end{lem}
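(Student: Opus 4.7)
The plan is to exploit two standard features of the Stone space $X = \mathrm{Spec}(B)$: it is compact, and the sets $N(e)$ with $e \in B$ form a basis of clopens closed under finite unions, intersections, and (relative) complements via \eqref{Nelattice}. With these in hand, the argument reduces to a routine Boolean refinement of the given cover.

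First I would replace the given cover $\{X_i\}$ by a cover by basic clopen sets subordinate to it. For each point $x \in X$ choose an index $i(x)$ with $x \in X_{i(x)}$; since the $N(e)$ form a basis, pick $e_x \in B$ with $x \in N(e_x) \subseteq X_{i(x)}$. This produces an open cover $\{N(e_x)\}_{x \in X}$ refining $\{X_i\}$, with each $N(e_x)$ clopen and contained in some $X_i$. By compactness of $X$, extract a finite subcover $X = N(f_1) \cup \cdots \cup N(f_m)$ where each $N(f_j)$ is contained in some $X_{i_j}$.

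Next I would disjointify using the Boolean ring structure. Define inductively
\begin{eqnarray*}
M_1 := N(f_1), \qquad M_j := N(f_j) \setminus \bigl( N(f_1) \cup \cdots \cup N(f_{j-1})\bigr) \quad (2 \leq j \leq m).
\end{eqnarray*}
By \eqref{Nelattice} and the existence of complements $N(e)' = N(e')$ in the lattice of clopens, each $M_j$ is itself of the form $N(g_j)$ for some $g_j \in B$, hence clopen. By construction the $M_j$ are pairwise disjoint, their union is $\bigcup_j N(f_j) = X$, and $M_j \subseteq N(f_j) \subseteq X_{i_j}$. Discarding any $M_j$ that happens to be empty, the remaining sets $N_1, \ldots, N_k$ provide the desired clopen partition of $X$ subordinate to $\{X_i\}$.

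There is no real obstacle here: the only thing to check is that the inductive differences are again basic clopens, and this is immediate from the Boolean identities $N(e) \setminus N(f) = N(e \wedge f') = N(e - ef)$ together with finite unions via $N(e) \cup N(f) = N(e \vee f)$. Compactness of $X$ (which in turn rests on $X$ being a Stone space) supplies the finiteness, and the Boolean algebra of clopens supplies the disjointification.
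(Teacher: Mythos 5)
Your proof is correct. Note that the paper itself states Lemma \ref{lemmapp} \emph{without} proof, treating it as a standard fact about Boolean spaces going back to Pierce \cite{RSP}; your argument is precisely the canonical one that fills this gap: refine the given cover by basic clopens $N(e)$, invoke compactness of the Stone space $X=\Spec(B)$ for a finite subcover, and disjointify inductively inside the Boolean algebra of clopens. Each step is justified by facts the paper records explicitly — the $N(e)$ form a basis, $N(e')=N(e)'$, and the lattice identities (\ref{Nelattice}) — and your key observation that the inductive differences remain basic is sound, since $N(f_j)\setminus\bigl(N(f_1)\cup\cdots\cup N(f_{j-1})\bigr)=N\bigl(f_j\wedge(f_1\vee\cdots\vee f_{j-1})'\bigr)$ with $f_j\wedge g'=f_j-f_jg$ an idempotent of $C(V)$, hence an element of $B$ (in the Boolean ring's own addition it is $f_j\oplus(f_j\wedge g)$).
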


In terms of idempotents, the partition property means that if $N_i{=}N(e_i)\ (e_i{\in}B)$, then
\begin{eqnarray*}
e_ie_j{=}\delta_{ij}e_i,\ \ \ \sum_i e_i {=} \mathbf{1}.
\end{eqnarray*}

\medskip
The following Example is telling.\

\begin{ex}\label{exhomeo} Recall that an associative ring $R$ is  \emph{von Neumann regular} 
if, for every $x{\in}R$, there is $y{\in}R$ such that $xyx{=}x$.\ If $R$ is a commutative von Neumann regular ring then there is a \emph{homeomorphism} $Spec(R){\rightarrow} Spec(B(R)), P{\mapsto} P\cap B(R)$. (For further discussion about von Neumann regular rings and vertex rings, see Subsection \ref{SvNr}.)
\end{ex}
\begin{proof} See \cite{Good}, Theorem 8.25.
\end{proof}

\subsection{The Pierce bundle of a vertex ring}\label{SSPiercebun}
Let $V$ be a vertex ring with Stone space $X{=}Spec(B(V))$.\ Following Pierce \cite{RSP}, we  construct an \'{e}tale bundle 
$\mathcal{R}{\rightarrow}X$ of vertex rings associated to this data.

\medskip
For each  $M{\in}X$ set
\begin{eqnarray}\label{Mbardef}
\overline{M}{:=}\cup_{e{\in}B\cap M} e(-1)V.
\end{eqnarray}
\begin{lem}\label{lembarM}  $\overline{M}$ is a 2-sided ideal in $V$.
\end{lem}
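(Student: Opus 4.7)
The plan is to show that $\overline{M}$ inherits its ideal structure from its constituent pieces $e(-1)V$, using the fact that the family $\{e(-1)V : e \in B \cap M\}$ is directed under inclusion.

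First I would recall, via Example \ref{exvac}(ii), that for each idempotent $e$ (which lies in $C(V)$ by Lemma \ref{lemmaidem}), the set $e(-1)V$ is already a 2-sided ideal of $V$. Consequently, closure of $\overline{M}$ under each $n^{\mathrm{th}}$ product on either side with an arbitrary state of $V$ is automatic, provided we know $\overline{M}$ is an additive subgroup: given $u \in e(-1)V \subseteq \overline{M}$ and $v \in V$, the products $u(n)v$ and $v(n)u$ lie in $e(-1)V$, hence in $\overline{M}$.

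The real content is therefore to show $\overline{M}$ is closed under addition. Here I would use the Boolean ring structure described in Subsection \ref{SSBV}. Given $e, f \in B \cap M$, set $g := e \vee f = e + f - ef \in B$. Since $M$ is an ideal of the Boolean ring $B$ and contains $e, f$, it also contains $e+f$ and $ef$, hence $g \in M$. Because $g$ is a Boolean join of $e$ and $f$, we have the identities $eg = e$ and $fg = f$ in $B$, i.e., $e(-1)g = e$ and $f(-1)g = f$. Since $e, g \in C(V)$, Theorem \ref{thmcenter1} together with the fact that $Y(e, z) = e(-1)$ and $Y(g,z) = g(-1)$ implies the operators $e(-1)$ and $g(-1)$ commute and associate: for any $v \in V$,
\begin{eqnarray*}
e(-1)v = (e(-1)g)(-1)v = g(-1)(e(-1)v) \in g(-1)V,
\end{eqnarray*}
and similarly $f(-1)V \subseteq g(-1)V$. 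Thus for any $u_1 \in e(-1)V$ and $u_2 \in f(-1)V$, both elements (and hence $u_1 \pm u_2$) lie in the single ideal $g(-1)V \subseteq \overline{M}$.

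The main obstacle, minor as it is, is the bookkeeping verifying $g \in M$ and $e(-1)V \subseteq g(-1)V$; once these are in place, additive closure follows immediately, and $\overline{M}$ is a 2-sided ideal of $V$.
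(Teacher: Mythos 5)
Your proof is correct and takes essentially the same route as the paper: each $e(-1)V$ is a 2-sided ideal by Example \ref{exvac}(ii), and the union is additively closed because $e(-1)V$ and $f(-1)V$ both sit inside $g(-1)V$ for a single idempotent $g\in M\cap B$ dominating $e$ and $f$. The only difference is your choice $g = e\vee f$, which satisfies the identities $eg=e$ and $fg=f$ exactly, whereas the paper takes $g=e\oplus f$ and invokes $(e\oplus f)e=e$ --- an identity that actually fails in general (one computes $(e+f-2ef)e = e-ef$) and holds only for the join $e\vee f$, so your version is the corrected form of the paper's argument.
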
 
\begin{proof} For an idempotent  $e{\in}M{\in}B$ we know that $e(-1)V$ is a 2-sided ideal of
$V$ (Example \ref{exvac}(ii)), hence so is $\sum_{e\in M} e(-1)V$.\ The main point of the Lemma is that this sum coincides with the displayed \emph{union}.\ This follows from the identity $(e{\oplus}f)e {=} e$
in $B$, which implies that
if $u, v{\in} V$ then
\begin{eqnarray*}
e(-1)u{+}f(-1)v = (e{\oplus}f)(-1)(e(-1)u{+}f(-1)v).
\end{eqnarray*}
The Lemma follows.
\end{proof}

By Lemma \ref{lembarM}, each $V/\overline{M}$ is a vertex ring, and we let $0_{\overline{M}}, \mathbf{1}_{\overline{M}}$
denote the zero element and vacuum element respectively.\
Introduce the \emph{disjoint union}
\begin{eqnarray*}
\mathcal{R}{:=} \bigcup_{M\in B} V/\overline{M},
\end{eqnarray*}
and define
\begin{eqnarray*}
\pi{:}\mathcal{R}{\rightarrow}X,\ \ v{+}\overline{M}{\mapsto}M.
\end{eqnarray*}

The result we are after is
\begin{thm}\label{thmVXB} Let $V$ be a vertex ring with Stone space $X$.\ Then
$\pi{:}\mathcal{R}{\rightarrow}X$ is an \'{e}tale bundle of vertex rings.
\end{thm}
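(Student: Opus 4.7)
The plan is to topologize $\mathcal{R}$ using Pierce's construction and then verify that $\pi$ becomes an \'{e}tale bundle compatible with the vertex-ring operations on the stalks. For each $v{\in}V$ define a set-theoretic section
\[
\sigma_v : X \longrightarrow \mathcal{R}, \qquad M \longmapsto v + \overline{M},
\]
and, for each $e{\in}B{:=}B(V)$, set $U_{v, e}{:=}\sigma_v(N(e))$. I take the family $\{U_{v, e} \mid v{\in}V,\ e{\in}B\}$ as a basis of open sets on $\mathcal{R}$; it covers $\mathcal{R}$ because every point lies in $U_{v, \mathbf{1}}$. Once this is established, the identities $\pi\circ\sigma_v|_{N(e)}{=}\mathrm{id}_{N(e)}$ and $\sigma_v\circ\pi|_{U_{v, e}}{=}\mathrm{id}_{U_{v, e}}$ make $\pi$ a local homeomorphism, with surjectivity coming from $\mathbf{1}{\notin}\overline{M}$ (no $f{\in}M$ can be a unit in $C(V)$, so $\overline{M}$ is proper).

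The key technical lemma supporting the basis property is the characterization
\[
v\in\overline{M} \iff \exists\, f{\in}B\cap M \text{ with } (1-f)(-1)v{=}0.
\]
The forward direction writes $v{=}f(-1)u$ and invokes the identity $(a(-1)b)(-1){=}a(-1)b(-1)$ for $a{\in}C(V)$ (a consequence of the associativity formula (\ref{assocform}) applied to a state with constant vertex operator) together with $(1-f)(-1)f{=}0$ in the commutative ring $C(V)$; the reverse uses $v{=}\mathbf{1}(-1)v{=}f(-1)v+(1-f)(-1)v{=}f(-1)v{\in}f(-1)V$. Now suppose basic opens $U_{v_1, e_1}, U_{v_2, e_2}$ meet in a common point over $M$, so that $v_1-v_2\in\overline{M}$, and pick $f{\in}B\cap M$ with $(1-f)(-1)(v_1-v_2){=}0$. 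For every $M'$ containing $f$ (that is, every $M'{\in}N(1-f)$) the same relation forces $v_1-v_2\in f(-1)V\subseteq\overline{M'}$. Setting $e'{:=}(1-f)e_1e_2\in B$ gives a basic clopen $N(e')$ contained in $N(e_1)\cap N(e_2)$ and containing $M$, throughout which $\sigma_{v_1}{=}\sigma_{v_2}$; hence $U_{v_1, e'}{=}U_{v_2, e'}\subseteq U_{v_1, e_1}\cap U_{v_2, e_2}$, confirming the basis axiom.

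Continuity of the vertex-ring operations then follows from a single uniform pattern. For the $n^{th}$ product, suppose $(t, u){\in}\Delta$ with $t{=}v+\overline{M}$, $u{=}w+\overline{M}$, and $v(n)w+\overline{M}\in U_{r, e}$. Then $v(n)w-r\in\overline{M}$, so by the lemma there is $e''{\in}B$ with $M{\in}N(e'')$ such that $v(n)w-r\in\overline{M'}$ for every $M'\in N(e'')$. Choosing any basic neighborhoods $U_{v, f}, U_{w, g}$ of $t, u$, the preimage of $U_{r, e}$ under the $n^{th}$-product map contains $(U_{v, fge''}\times U_{w, fge''})\cap\Delta$, establishing continuity; addition, subtraction, and the vacuum section $\mathbf{1}_X{=}\sigma_{\mathbf{1}}$ on $N(\mathbf{1}){=}X$ are treated identically.

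The main obstacle is the basis lemma, which is the one place where the vertex-ring structure (as opposed to ordinary commutative-ring structure) enters in a nontrivial way. Its proof hinges on Lemma \ref{lemmaidem}, which places all idempotents of $V$ in the center $C(V)$, so that their $(-1)$-modes commute and associate on the nose as in a commutative ring; without this, the calculus of annihilators by idempotents breaks down and the Pierce construction does not go through. Granted this, the argument is formally parallel to the commutative case of \cite{RSP}, and the presence of infinitely many $n^{th}$ products poses no difficulty, since continuity is verified one index $n$ at a time.
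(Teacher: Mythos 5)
Your proposal is correct and takes essentially the same route as the paper: the same sections $\sigma_v$, the same basis $\{\sigma_v(N(e))\}$ justified by the same idempotent-propagation argument (your annihilator characterization $v\in\overline{M}\iff(\mathbf{1}-f)(-1)v=0$ for some $f\in B\cap M$ is just a repackaging of the paper's Lemma \ref{lemsigmaid}), and the same pointwise verification of continuity of the operations. One small fix: in the continuity step you should take the idempotent $h:=efge''$ rather than $fge''$, so that $N(h)\subseteq N(e)$ and the image of $(U_{v,h}\times U_{w,h})\cap\Delta$ is guaranteed to land inside $U_{r,e}$ and not merely agree with $\sigma_r$ where both are defined.
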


\medskip
First we introduce a section $\sigma_v$ of the set map $\pi$ for each $v{\in}V$.\ These will be instrumental in everything that follows, and are defined in the natural way, namely
\begin{eqnarray}\label{sectiondef}
\sigma_v{:}X{\rightarrow}\mathcal{R},\ \ M{\mapsto}v{+}\overline{M}.
\end{eqnarray}

\medskip
We may, and shall, topologize $\mathcal{R}$ by taking the family of sets $\{\sigma_v(N(e))\}$ for $v{\in}V$ and $e{\in}B$ 
as a \emph{basis}.\ The justification for this is the same as the case of commutative rings
(\cite{RSP}, pp\ 16-17).\ We give details to highlight some common techniques and results that we use.

\begin{lem}\label{lemsigmaid} Suppose that $u, v{\in}V$  satisfy $\sigma_u(M){=}\sigma_v(M)$ for some 
$M{\in}X$.\ Then there is $e\in B{\setminus}{M}$ such that $\sigma_u(L){=}\sigma_v(L)$ for all $L{\in}N(e)$.\ 
\end{lem}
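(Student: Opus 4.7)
The plan is to exploit the concrete description of $\overline{M}$ as a union of principal ideals generated by idempotents (equation \ref{Mbardef}) together with the elementary fact that complementary idempotents $f, \mathbf{1}-f$ are orthogonal, so primality of $L \in X$ forces exactly one of them into $L$.

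First I would unpack the hypothesis. The equality $\sigma_u(M) = \sigma_v(M)$ in the stalk over $M$ amounts to $u-v \in \overline{M}$. By the definition of $\overline{M}$ as the union over $f \in B \cap M$ of the ideals $f(-1)V$, there is some idempotent $f \in B \cap M$ and some $w \in V$ such that $u - v = f(-1)w$.

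Next I would produce the promised basic open neighborhood. Set $e := \mathbf{1} - f$, which is an idempotent by a direct check (or by Lemma \ref{lemmaidem}, since $e \in C(V)$). Since $f \in M$ but $\mathbf{1} \notin M$ (as $M$ is proper), we must have $e \notin M$, i.e.\ $e \in B \setminus M$, and hence $M \in N(e)$.

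Finally I would verify that this $e$ does the job. For any $L \in N(e)$, by definition $e \notin L$. Compute in the Boolean ring $B$: $e \cdot f = (\mathbf{1}-f)(-1)f = f - f(-1)f = f - f = 0$, so $ef = 0 \in L$. Since $L$ is prime and $e \notin L$, we conclude $f \in L$, i.e.\ $f \in B \cap L$. Therefore $f(-1)V \subseteq \overline{L}$, and so $u - v = f(-1)w \in \overline{L}$, which gives $\sigma_u(L) = \sigma_v(L)$.

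There is essentially no obstacle here beyond getting comfortable with the translation between the two pictures of $\overline{M}$; the argument is a direct transcription of Pierce's argument for commutative rings, since all of the action takes place among central idempotents, where the vertex ring $V$ looks exactly like the commutative ring $C(V)$.
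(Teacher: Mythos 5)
Your proof is correct and follows essentially the same route as the paper: extract $u-v=f(-1)w$ with $f\in B\cap M$ from the definition (\ref{Mbardef}) of $\overline{M}$, take $e:=\mathbf{1}-f$, and observe that $L\in N(e)$ forces $f\in L$ so that $u-v\in\overline{L}$. The only difference is that you spell out the step $f\in L$ via orthogonality $ef=0$ and primality of $L$ (equivalently, $N(\mathbf{1}-f)=N(f)'$), which the paper leaves implicit.
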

 \begin{proof} We have $0_{\overline{M}}=\sigma_u(M){-}\sigma_v(M){=}(u{-}v){+}\overline{M}$.\ So
 $u{-}v\in\overline{M}\Rightarrow u{-}v=f(-1)w\ (f{\in} B\cap M, w{\in} V)$, and we take 
 $e{:=}\mathbf{1}{-}f\in B{\setminus}{M}$.\
 Then if $L{\in}N(e)$ we have $f{\in}L$ and we obtain
 $\sigma_u(L){=}u{+}\overline{L}=v{+}f(-1)w{+}\overline{L}=v{+}\overline{L}{=}\sigma_v(L)$.
 \end{proof}

Now consider idempotents $e, f{\in}B$ such that $M{\in}N(e){\cap} N(f)$, and suppose
further that $u, v{\in}V$ satisfy $\sigma_u(M){=}\sigma_v(M)$.\ By Lemma \ref{lemsigmaid},
$\sigma_u, \sigma_v$ agree on a clopen neighborhood $N(g)$ of $M$ for some $g{\in}B$.\ But we have
$M{\in}N(efg){\subseteq}N(e)\cap N(f)\cap N(g)$, so that
$\sigma_u(M){\in}\sigma_u(N(e))\cap\sigma_v(N(f))$.\ This suffices to establish our assertion
about the topology on $\mathcal{R}$.

\medskip
Now it is clear that $\pi$ is a continuous surjection onto $X$ and a local homeomorphism.\ Indeed, $\sigma_v(N(e))$ is mapped homeomorphically onto the open set $N(e)$.\ Thus the bundle $\pi{:}\mathcal{R}{\rightarrow}X $ is 
\emph{\'{e}tale}.\

\begin{ex} $\sigma_v{\in}\Gamma(X, \mathcal{R})$ for all $v{\in}V$.\ 
$\sigma_{0}{:}M\mapsto 0_{\overline{M}}, \sigma_{\mathbf{1}}{:}M\mapsto \mathbf{1}_{\overline{M}}\ (M{\in}X)$ are the \emph{zero} and \emph{vacuum} sections respectively.
\end{ex}
\begin{proof} We need the \emph{continuity} of $\sigma_v$.\ But $N(\mathbf{1})=X$, so
$\sigma_v$ is the inverse of the restriction of $\pi$ to $\sigma_v(X)$, which is a homeomorphism.
\end{proof}

\medskip
The stalks of the bundle $\pi{:}\mathcal{R}{\rightarrow}X$ are the vertex rings $\mathcal{R}_M{=}V/\overline{M}$.\
So we can define $n^{th}$ products on sections in the usual way.\ In particular, if $u, v{\in}V$
and $M{\in}X$ we have 
\begin{eqnarray*}(\sigma_u(n)\sigma_v)(M){=}\sigma_u(M)(n)\sigma_v(M){=}(u{+}\overline{M})(n)(v{+}\overline{M})
{=}u(n)v{+}\overline{M}{=}\sigma_{u(n)v}(M).
\end{eqnarray*} 
This shows that
\begin{eqnarray*}
\sigma_u(n)\sigma_v{=}\sigma_{u(n)v}.
\end{eqnarray*}

 Now we can show that $\pi$ is a bundle of vertex rings for which we require the continuity
 of the various algebraic operations (cf.\ Subsection \ref{SSbasicbundledefs}).\ Precisely, set 
 \begin{eqnarray*}
\Delta{:=}\{(a, b){\in}\mathcal{R}{\times}\mathcal{R}{\mid}\pi(a){=}\pi(b)\}.
\end{eqnarray*}
We have to show that the maps $\Delta{\rightarrow}\mathcal{R}, (a, b){\mapsto}a\pm b, a(n)b$
are continuous.

\medskip
For example, fix $(a, b){\in}\Delta, n{\in}\mathbf{Z}$.\ There is $t{\in}V$ such that $a(n)b{\in}\sigma_t(X)$.\  
So there are $u, v{\in}V, M{\in}X$ such that
$a{=}u{+}\overline{M}, b{=}v{+}\overline{M}$, and $a(n)b{=}t{+}\overline{M}$.\ Now we have 
$\sigma_t(M){=}\sigma_{u(n)v}(M)$, so (Lemma \ref{lemsigmaid}) 
$\sigma_t$ and $\sigma_{u(n)v}$ agree on a basic open neighborhood $N(e)$ of $M$.\ 
 Write $\mu_n{:}\Delta{\rightarrow}\mathcal{R}$ for the $n^{th}$ product.\
So $\sigma_t(N(e))$ is a basic open neighborhood of $a(n)b$, and
\begin{eqnarray*}
&&\mu_n(\sigma_{u}(N(e)){\times}\sigma_{v}(N(e))\cap \Delta)
= \bigcup_{M\in N(e)} u(n)v{+}\overline{M} {=} \sigma_{u(n)v}(N(e)){=}\sigma_t(N(e)).
\end{eqnarray*}
The continuity of $\mu_n$ follows.\ The other operations are treated similarly, 
and the proof of Theorem \ref{thmVXB} is complete. $\hfill\Box$

\begin{dfn} The \'{e}tale bundle $\pi{:}\mathcal{R}{\rightarrow}X$ associated to the
vertex ring $V$ is the \emph{Pierce bundle} associated to $V$.
\end{dfn}

\subsection{Some local sections}
We continue with previous notation, in particular $V$ is a vertex ring with Stone space $X$
and associated Pierce bundle $\pi{:}\mathcal{R}{\rightarrow}X$ (Theorem \ref{thmVXB}).
According to Theorem \ref{thmpresheaf}  the local sections
$\Gamma_Y$ for closed $Y{\subseteq}X$ carry the structure of a vertex ring.\ 
We shall explicitly realize these vertex rings as quotients of $V$.\ The main result is the case when $Y{=}X$, and may be stated as follows.
\begin{thm}\label{thmglobaliso} There is an \emph{isomorphism of vertex rings}
\begin{eqnarray*}
\xi{:}V{\stackrel{\cong}{\longrightarrow}}\Gamma(X, \mathcal{R}), \ v{\mapsto}\sigma_v.
\end{eqnarray*}
\end{thm}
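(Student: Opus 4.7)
The plan is to establish the three claims in order: $\xi$ is a morphism of vertex rings, $\xi$ is injective, and $\xi$ is surjective. The first is essentially bookkeeping. By Theorem~\ref{thmpresheaf} applied to the compact space $X$ (compactness coming from the fact that the Stone space of a Boolean ring is compact), $\Gamma(X,\mathcal{R})$ is a vertex ring, with operations given pointwise on stalks. Since the stalk-projection $V\to V/\overline{M}$ is a morphism of vertex rings, the computation $\sigma_u(n)\sigma_v=\sigma_{u(n)v}$ (already noted in Subsection~\ref{SSPiercebun}) combined with $\sigma_{\mathbf{1}}(M)=\mathbf{1}+\overline{M}=\mathbf{1}_{\overline{M}}$ shows that $\xi$ preserves all $n^{\text{th}}$ products, addition, and the vacuum. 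Additivity and compatibility with addition is immediate from the fact that $\sigma_{u+v}(M)=u+v+\overline{M}=\sigma_u(M)+\sigma_v(M)$.

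For injectivity, fix $v\in V$ with $\sigma_v=0$. For each idempotent $e\in B$, observe that $e\in B\setminus M$ implies $v\in\overline{M}$ is equivalent, after unpacking (\ref{Mbardef}) and using $V=e(-1)V\oplus(\mathbf{1}-e)(-1)V$, to the existence of some $f\in B\cap M$ with $(\mathbf{1}-f)(-1)v=0$. This motivates the set
\begin{equation*}
J_v:=\{e\in B\mid e(-1)v=0\}.
\end{equation*}
Using that idempotents are central and that $e(-1)$ and $f(-1)$ commute (Example~\ref{exvac}(ii)), a direct check shows that $J_v$ is an ideal of the Boolean ring $B$. If $J_v\neq B$, then $J_v$ is contained in some maximal ideal $M\in X$; but then no element of $B\setminus M$ annihilates $v$ via $e(-1)$, hence $v\notin\overline{M}$ and $\sigma_v(M)\neq 0_{\overline{M}}$, contradicting $\sigma_v=0$. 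Therefore $J_v=B$, in particular $\mathbf{1}\in J_v$, forcing $v=\mathbf{1}(-1)v=0$.

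For surjectivity, let $\sigma\in\Gamma(X,\mathcal{R})$. For each $M\in X$, choose $v_M\in V$ representing $\sigma(M)$, so that $\sigma$ and $\sigma_{v_M}$ agree at $M$. By Lemma~\ref{lemsigmaid} they agree on a clopen neighborhood $N(e_M)$ of $M$. The collection $\{N(e_M)\}_{M\in X}$ is an open cover of the compact space $X$, so by the partition property (Lemma~\ref{lemmapp}) we obtain a finite partition $X=N(e_1)\sqcup\cdots\sqcup N(e_k)$ with pairwise orthogonal idempotents $e_1,\dots,e_k$ satisfying $e_ie_j=\delta_{ij}e_i$ and $\sum_i e_i=\mathbf{1}$, together with states $w_1,\dots,w_k\in V$ such that $\sigma|_{N(e_i)}=\sigma_{w_i}|_{N(e_i)}$ for each $i$. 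Set
\begin{equation*}
v:=\sum_{i=1}^k e_i(-1)w_i\in V.
\end{equation*}
Then for $M\in N(e_i)$ we have $e_j\in M$ for $j\neq i$ and $\mathbf{1}-e_i\in M$, hence $e_j(-1)w_j\in\overline{M}$ for $j\neq i$ and $(\mathbf{1}-e_i)(-1)w_i\in\overline{M}$, so in $V/\overline{M}$ the element $v$ reduces to $e_i(-1)w_i\equiv w_i$. Therefore $\sigma_v(M)=w_i+\overline{M}=\sigma_{w_i}(M)=\sigma(M)$, proving $\xi(v)=\sigma$.

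The main conceptual obstacle is the surjectivity argument, which is where the interplay between the Boolean structure of $X$ and the idempotent decomposition (\ref{dirsumdecomp}) of $V$ plays a decisive role; the injectivity argument, while requiring one to identify the right ideal $J_v$, is otherwise routine, and the morphism property is formal once one has Theorem~\ref{thmpresheaf}.
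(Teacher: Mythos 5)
Your proof is correct, and its surjectivity argument is essentially the paper's: represent $\sigma$ locally by sections $\sigma_{v_M}$, refine to a clopen partition $X=N(e_1)\cup\cdots\cup N(e_k)$ via Lemma \ref{lemmapp}, and glue with $v=\sum_i e_i(-1)w_i$, checking stalkwise that $\sigma_v=\sigma$ (the paper does the same check in slightly different clothing, writing $a_i=\sum_j e_j(-1)a_i$ and noting $e_j(-1)a_i\in\overline{L}$ for $j\neq i$, where you instead use $\mathbf{1}-e_i\in M$ directly). You genuinely diverge on the other two claims. For the vertex-ring structure on $\Gamma(X,\mathcal{R})$ you invoke Theorem \ref{thmpresheaf} (legitimate, since a Boolean space is compact), whereas the paper obtains it as a byproduct of surjectivity: once every global section is some $\sigma_u$, the field property, Jacobi identity and vacuum axioms reduce to identities in the stalks, so the paper's proof never actually needs Theorem \ref{thmpresheaf}; both routes are available. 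For injectivity the paper simply cites Proposition 1.7 of \cite{RSP} for $\bigcap_{M\in X}\overline{M}=0$, while you give a self-contained argument via the Boolean ideal $J_v=\{e\in B\mid e(-1)v=0\}$; this works because idempotents lie in $C(V)$ (so $e(-1)$ acts multiplicatively and associatively, making $J_v$ closed under $\oplus$ and absorption), and because $v\in\overline{M}$ is equivalent to $(\mathbf{1}-f)(-1)v=0$ for some $f\in B\cap M$, i.e.\ to $J_v\not\subseteq M$, which makes the maximal-ideal contradiction go through. In effect you have reconstructed Pierce's Proposition 1.7 inside the vertex-ring setting, gaining independence from the citation at the cost of some Boolean bookkeeping. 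One citation nit: in the surjectivity step you quote Lemma \ref{lemsigmaid} to get agreement of $\sigma$ and $\sigma_{v_M}$ on a clopen neighborhood, but that Lemma is stated only for pairs $\sigma_u,\sigma_v$; for an arbitrary section $\sigma$ you should instead use Lemma \ref{lemzsects} (the agreement set of two sections is open) together with the fact that the sets $N(e)$ form a basis of clopens, exactly as the paper does --- a trivially repaired slip, not a gap.
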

\begin{proof} The main issue is to show that $\xi$ is \emph{surjective},
and we do this first.\ Pick any $\alpha{\in}\Gamma(X, \mathcal{R})$ 
and $M{\in}X$.\ Then $\alpha(M){\in}\mathcal{R}_M$,
so $\alpha(M){=}a{+}\overline{M}$ for some $a{\in}V$.\ The sections
$\alpha, \sigma_a$ agree at $M$, so by Lemma \ref{lemzsects} they
 agree on a basic open neighborhood of $M$.\ As $M$ ranges over all points of $X$ we obtain an open cover of $X$,
and on each open set in the cover $\alpha$ agrees with some $\sigma_a\ (a{\in} V)$.

\medskip
By Lemma \ref{lemmapp} there is a partition  $X=N(e_1) \cup ...\cup N(e_r)$ into clopen sets  
($e_j{\in} B$) such that on each $N(e_j)$ $\alpha$ agrees with some $\sigma_{a_j} (a_j{\in} V)$.\
Set 
\begin{eqnarray*}
b{:=}\sum_j e_j(-1)a_j.
\end{eqnarray*}
We will show that $\alpha{=}\sigma_b$.\
Let $L{\in}X$.\ Then there is a unique index $i$ such that $L{\in}N(e_i)$,
and we have to show that $\alpha(L){=}\sigma_b(L)$.

\medskip
The partition property implies that $\sum_j e_j{=}\mathbf{1}$, so that $a_i{=}\sum_j e_j(-1)a_i$.\
Now observe that  $i{\not=} j {\Rightarrow} L{\notin} N(e_j){\Rightarrow}e_j{\in}L
{\Rightarrow}e_j(-1)a_i{\in}\overline{L}$.\ It follows that $a_i{\equiv} e_i(-1)a_i$ (mod $\overline{L}$),
and therefore $\sigma_{a_i}(L){=}\sigma_{e_i(-1)a_i}(L).$\ Furthermore we
have $\sigma_b(L){=}\sigma_{e_i(-1)a_i}(L)$.\

\medskip
On the other hand because $\alpha$ agrees with $\sigma_{a_i}$ on $N(e_i)$ then 
$\alpha(L) {=}\sigma_{a_i}(L)$.\ Therefore, $\alpha(L){=}\sigma_b(L)$
follows from the previous paragraph, and the surjectivity of $\xi$ is established.

\medskip
The field property $\sigma(n)\tau{=}0$ for $n{\gg}0$ ($\sigma, \tau{\in}\Gamma(X, \mathcal{R})$) now follows
because $\sigma{=}\sigma_u$ and $\tau{=}\sigma_v$ for some $u, v{\in}V$, and
$\sigma_u(n)\sigma_v{=}\sigma_{u(n)v}{=}0$ for $n{\gg}0$.\ The Jacobi identity and vacuum identities
follow because they hold in each stalk, and the first assertion of the Theorem,
 that $\Gamma(X, \mathcal{R})$ is a vertex ring, is proved.
 
 \medskip
 $\xi$ is a morphism of vertex rings because
\begin{eqnarray*}
\xi(u(n)v){=}\sigma_{u(n)v} {=} \sigma_u(n)\sigma_v {=} \xi(u)(n)\xi(v)
\end{eqnarray*}
and $\xi(\mathbf{1}){=}\sigma_{\mathbf{1}}$.\
Finally, if $\sigma_v{\in} ker\xi$ then $v{\in} \cap_{M\in X} \overline{M}$, and this intersection is $0$
by Proposition 1.7 of \cite{RSP}.\ This completes the proof of the Theorem.
\end{proof}

\medskip
To describe $\Gamma_Y$ for an arbitrary closed subset $Y{\subseteq} X$ we introduce some additional notation.

\medskip\noindent
The \emph{support} of $\sigma{\in}\Gamma_X$ is
$supp(\sigma){:=} \{M{\in} X{\mid}\sigma(M){\not=}0\}$.\\
For $U{\subseteq} X$, $J[U]{:=}\{\sigma{\in} \Gamma_X{\mid} supp(\sigma){\subseteq} U\}$.\\
For $J{\subseteq} \Gamma_X$, $U[J]{:=} \cup_{\sigma{\in} J} supp(\sigma)$.

\begin{lem}\label{lemsesJ} Let $Y{\subseteq} X$ be closed with complement $Y'$.\
Then there is a short exact sequence 
\begin{eqnarray*}
0 \longrightarrow J[Y']\longrightarrow \Gamma(X, \mathcal{R})\stackrel{res}{\longrightarrow} \Gamma(Y, \mathcal{R}) \longrightarrow 0
\end{eqnarray*}
($res$ is \emph{restriction} from $X$ to $Y$.)
\end{lem}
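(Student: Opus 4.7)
The plan is to establish exactness at each of the three positions. The left map $J[Y']\hookrightarrow \Gamma(X,\mathcal{R})$ is by definition the inclusion, so there is nothing to check there. For exactness in the middle, if $\sigma \in \Gamma(X,\mathcal{R})$ satisfies $res(\sigma)=0$, then $\sigma(M)=0_{\overline{M}}$ for every $M\in Y$, hence $supp(\sigma)\subseteq Y'$, giving $\sigma \in J[Y']$; the reverse inclusion is immediate from the definitions. So everything reduces to surjectivity of $res$, which is where the Boolean structure of $X$ does the real work.

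For surjectivity, let $\tau\in\Gamma(Y,\mathcal{R})$. Given $M\in Y$, write $\tau(M)=v_M+\overline{M}$ for some $v_M\in V$; then $\tau$ and $\sigma_{v_M}|_Y$ agree at $M$, and by (the $Y$-restricted version of) Lemma \ref{lemzsects} they agree on an open neighborhood of $M$ in $Y$. Since basic opens of $X$ are the clopens $N(e)$, and these restrict to a basis of $Y$, we may find a clopen $N(e_M)\subseteq X$ with $\tau|_{N(e_M)\cap Y}=\sigma_{v_M}|_{N(e_M)\cap Y}$. Since $Y$ is closed in the compact space $X$ it is compact, so finitely many of these clopens, say $N(e_1),\dots,N(e_r)$, cover $Y$, with associated elements $v_1,\dots,v_r\in V$.

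Now apply the partition property (Lemma \ref{lemmapp}) to the open cover $\{N(e_1),\dots,N(e_r),Y'\}$ of $X$. This gives a partition $X=N(g_1)\sqcup\cdots\sqcup N(g_t)$ into pairwise disjoint clopens, with $\sum_j g_j=\mathbf{1}$ and $g_jg_k=\delta_{jk}g_j$, such that each $N(g_j)$ is contained either in some $N(e_{i(j)})$ or in $Y'$. Let $I$ denote the set of indices of the first type, and define
\begin{eqnarray*}
v := \sum_{j\in I} g_j(-1)v_{i(j)} \in V.
\end{eqnarray*}
I claim $\sigma_v|_Y=\tau$. Fix $M\in Y$; since $M\notin Y'$, the unique $j$ with $M\in N(g_j)$ satisfies $j\in I$. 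For $k\neq j$ we have $g_k\in M$, hence $g_k(-1)v_{i(k)}\in\overline{M}$, while for $k=j$ the idempotent relation $g_j(g_j-\mathbf{1})=0\in M$ together with $g_j\notin M$ forces $\mathbf{1}-g_j\in M$, so $(\mathbf{1}-g_j)(-1)v_{i(j)}\in\overline{M}$. Thus $v\equiv v_{i(j)}\pmod{\overline{M}}$, and since $M\in N(g_j)\cap Y\subseteq N(e_{i(j)})\cap Y$ we have $\tau(M)=\sigma_{v_{i(j)}}(M)=v+\overline{M}=\sigma_v(M)$, as required.

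The only real obstacle is the surjectivity step, and within it the careful bookkeeping with the partition property; the two crucial inputs are the compactness of $Y$ (so the cover can be made finite) and the Boolean identity $g_j\equiv\mathbf{1}\pmod M$ when $M\in N(g_j)$, which allows $g_j(-1)v_{i(j)}$ to recover $v_{i(j)}$ modulo $\overline{M}$. Everything else is formal, following the template already used for $Y=X$ in the proof of Theorem \ref{thmglobaliso}.
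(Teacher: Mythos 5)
Your proof is correct, and while its skeleton matches the paper's — kernel identification, then surjectivity via pointwise agreement with principal sections $\sigma_a$ (Lemma \ref{lemzsects} applied on the subspace $Y$), compactness of $Y$, and the partition property (Lemma \ref{lemmapp}) applied to the cover $\{N(e_1),\dots,N(e_r),Y'\}$ — your surjectivity step realizes the preimage differently. The paper never exhibits an element of $V$: it glues a global section piecewise, setting $\sigma$ equal to $0$ on the partition pieces missing $Y$ and equal to $\sigma_{a_i}$ on the others, with continuity automatic because the pieces are clopen. You instead assemble the single element $v=\sum_{j\in I}g_j(-1)v_{i(j)}\in V$ and verify $\sigma_v|_Y=\tau$ by the mod-$\overline{M}$ computation with idempotents; this is exactly the $b=\sum_j e_j(-1)a_j$ device from the paper's proof of Theorem \ref{thmglobaliso}, transplanted to this lemma. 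Your route buys a concrete \emph{principal} preimage and sidesteps any gluing argument, at the cost of the idempotent bookkeeping; the paper's route is softer, and only recovers a principal preimage a posteriori via Theorem \ref{thmglobaliso}. One cosmetic point: in deducing $\mathbf{1}-g_j\in M$ you factor $g_j(g_j-\mathbf{1})=0\in M$, but $g_j-\mathbf{1}$ is not in general an element of the Boolean ring $B$ (it need not be idempotent), so primeness of $M$ in $B$ does not directly apply to that factorization; the clean version is that $\mathbf{1}-g_j\in B$, that $g_j\wedge(\mathbf{1}-g_j)=0\in M$, and that $M$ is prime in $B$, whence $g_j\notin M$ forces $\mathbf{1}-g_j\in M$. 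This is a one-line repair and does not affect the argument.
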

\begin{proof} To show that $res$ is \emph{surjective}, let $\alpha{\in}\Gamma_Y$.\
If $M{\in}Y$ then $\alpha(M)=a{+}\overline{M}$ for some $a{\in}V$, so $\sigma_a$ and $\alpha$ agree on a basic open neighborhood of $M$.\ If $M{\notin}Y$, choose a basic open neighborhood of
$M$ whose intersection with $Y$ is empty.\ As $M$ ranges over $X$, we obtain in this way
an open cover $\{N_i\}$ of $X$
such that either $\alpha$ agrees with some $\sigma_{a_i} (a_i{\in}V)$ on $N_i$ or else $Y\cap N_i{=}\phi$.\
There is a partition $X{=}N(e_1)\cup \hdots \cup N(e_k)$ such that each $N(e_j)$ is contained in
some $N_i$.\ Define $\sigma{:}X{\rightarrow}\mathcal{R}$ as follows:\
if $N(e_i)\cap Y{=}\phi$ then $\sigma|N(e_i){=}0$, and otherwise 
$\sigma|N(e_i){=}\sigma_{a_i}|N(e_i){=}\alpha|N(e_i)$.\ 
Because the $N(e_i)$s partition $X$ and the restriction of $\sigma$ to $N(e_i)$ belongs to
$\Gamma(N(e_i), \mathcal{R})$, then $\sigma{\in}\Gamma(X, \mathcal{R})$.\ Moreover by construction we
have $\sigma|Y=\alpha$.\ This completes the proof of the surjectivity of $res$.

\medskip
Clearly  $\sigma{\in} ker\ res \Leftrightarrow supp(\sigma)\subseteq Y' \Leftrightarrow \sigma\in J[Y']$, and the Lemma is proved.
\end{proof}

\medskip
\begin{ex}\label{exses} If $M{\in}X$ then $\{M\}$ is closed because $X$ is Hausdorff, and there is an isomorphism of short exact sequences, where $\eta{:}\sigma \mapsto \sigma(M)$,
\[\xymatrix{
0\ar[r] & J[X\setminus{\{M\}}]\ar[r]& \Gamma(X, \mathcal{R})\ar[r]^{res}\ar[d]_{\xi^{-1}} & 
\Gamma(\{M\}, \mathcal{R}) \ar[r]\ar[d]_{\eta} & 0 \\
0\ar[r] & \overline{M}\ar[r]&V\ar[r] & \mathcal{R}_M \ar[r] & 0
 }
\]
$\hfill \Box$
\end{ex}

\medskip
We establish an additional property of the Pierce sheaf $\mathcal{R}\rightarrow X$.
\begin{lem}\label{lemindecstalk}
Each stalk $\mathcal{R}_M$ is \emph{indecomposable}, i.e.\
(cf.\ Subsection \ref{SSidem}) the only idempotents
in $V/\overline{M}$ are $0_{\overline{M}}$ and $\mathbf{1}_{\overline{M}}$.
\end{lem}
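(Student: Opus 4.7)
The goal is to show that if $\bar e := e + \overline M$ is an idempotent in $\mathcal{R}_M = V/\overline M$, then $\bar e \in \{0_{\overline M}, \mathbf{1}_{\overline M}\}$. My plan is to \emph{lift} $\bar e$ to a genuine idempotent $e'\in V$ of the form $e' := (\mathbf{1}-f)(-1)e$ for a suitable $f\in B\cap M$, to invoke Lemma \ref{lemmaidem} to conclude $e'\in B$, and then to exploit the fact that $M$ is a maximal (equivalently prime) ideal of the Boolean ring $B$ to finish.

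First I construct $f$. By Lemma \ref{lemmaidem} applied in the stalk, the idempotency of $\bar e$ is equivalent to $e(n)e \in \overline M$ for all $n\geq 0$ together with $e(-1)e - e \in \overline M$. By Definition \ref{defvr}(a) there are only finitely many nonzero $e(n)e$ with $n\geq 0$; writing each such as $f_n(-1)w_n$ with $f_n\in B\cap M$, and writing $e(-1)e - e = g(-1)w$ with $g \in B\cap M$, I let $f$ be the Boolean join in $B$ of those finitely many $f_n$'s together with $g$. Since $B\cap M$ is an ideal of the Boolean ring $B$, hence closed under joins, $f\in B\cap M$; and since $h\leq f$ in $B$ implies $h(-1)V\subseteq f(-1)V$ (using that for $a,b\in C(V)$ one has $(ab)(-1) = a(-1)b(-1)$, a consequence of Example \ref{exvac}(ii)), I obtain $e(n)e \in f(-1)V$ for all $n\geq 0$ and $e(-1)e - e \in f(-1)V$.

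Now set $e' := (\mathbf{1}-f)(-1)e$. Since $e - e' = f(-1)e \in \overline M$, we have $\bar{e'} = \bar e$. To check that $e'$ is an idempotent in $V$, I use that $\mathbf{1}-f\in B\subseteq C(V)$, so $(\mathbf{1}-f)(-1)$ commutes with every mode $e(n)$ (Example \ref{exvac}(ii)) and satisfies $(\mathbf{1}-f)(-1)^2 = (\mathbf{1}-f)(-1)$. A direct computation yields $e'(n)e' = (\mathbf{1}-f)(-1)(e(n)e)$ for every $n\in\mathbf{Z}$. For $n\geq 0$ this vanishes because $e(n)e\in f(-1)V$ and $(\mathbf{1}-f)(-1)f(-1) = 0$; for $n = -1$ it equals $(\mathbf{1}-f)(-1)(e + f(-1)w) = e' + 0 = e'$. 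So $e'$ is idempotent in $V$, and $e'\in B$ by Lemma \ref{lemmaidem}.

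Finally, $M$ is a maximal ideal of the Boolean ring $B$, and $e'(\mathbf{1}-e') = 0\in M$ forces $e'\in M$ or $\mathbf{1}-e'\in M$. In the first case $e' = e'(-1)\mathbf{1} \in e'(-1)V \subseteq \overline M$, so $\bar e = \bar{e'} = 0_{\overline M}$; in the second case the same argument gives $\mathbf{1}-e'\in \overline M$, whence $\bar{e'} = \mathbf{1}_{\overline M}$. The main obstacle is the construction of $f$: only finitely many obstructions to the idempotency of $e$ are nonzero (Definition \ref{defvr}(a)), and the centrality of elements of $B$ lets $(\mathbf{1}-f)(-1)$ absorb them uniformly without altering the image in the stalk.
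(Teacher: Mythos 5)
Your proof is correct, and it takes a genuinely different---and more elementary---route than the paper's. The paper works on the section side of its bundle: it uses Lemma \ref{lemsigmaid} to find a basic clopen neighborhood $N(f)$ of $M$ (so $f\notin M$) on which $\sigma_{e(-1)e}$ agrees with $\sigma_e$ and the finitely many obstruction sections $\sigma_{e(m)e}$, $m\geq 0$, vanish; it then glues $\sigma_e$ restricted to $N(f)$ with the zero section on $N(f)'$ to obtain a global section $\tau$, checks $\tau(m)\tau=0$ for $m\geq 0$ and $\tau(-1)\tau=\tau$, invokes the global-sections isomorphism of Theorem \ref{thmglobaliso} to write $\tau=\sigma_g$ with $g\in B$, and finishes by evaluating at $M$. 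You instead carry out the Stone-dual manipulation directly inside $V$: the finitely many obstructions $e(n)e$ ($n\geq 0$) and $e(-1)e-e$ are absorbed into a single idempotent $f\in M$ via a finite join (legitimate, since $M$ is an ideal of $B$ and $a\vee b=a\oplus b\oplus ab$), and multiplying by the complementary central idempotent, $e':=(\mathbf{1}-f)(-1)e$, produces an honest idempotent lift of $\bar e$---note that your $\mathbf{1}-f$ is precisely the paper's $f$, and your $\sigma_{e'}$ is precisely the paper's glued section $\tau$, so the two constructions are dual pictures of the same object. Your central-idempotent calculus ($e'(n)e'=(\mathbf{1}-f)(-1)(e(n)e)$, annihilation of $f(-1)V$ by $(\mathbf{1}-f)(-1)$) is sound via Example \ref{exvac}(ii), and the endgame using primality of $M$ in $B$ is exactly right. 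What your approach buys: it is purely algebraic, needs only Lemma \ref{lembarM}, Lemma \ref{lemmaidem} and Example \ref{exvac}(ii)---nothing about sections, gluing, or Theorem \ref{thmglobaliso} (whose proof is itself nontrivial)---and in effect establishes the stronger bundle-free statement that idempotents of $V/\overline{M}$ lift to idempotents of $V$. What the paper's approach buys is uniformity with the section-theoretic toolkit (agreement/vanishing on clopen sets, gluing over partitions) that is reused throughout Part II, e.g.\ in Lemma \ref{lemUJU} and Theorem \ref{thmsimplestalk}. One cosmetic slip: in verifying $e'(-1)e'=e'$ you reuse the letter $w$; since you only know $e(-1)e-e=g(-1)w$ with $g\leq f$, you should write $e(-1)e=e+f(-1)w'$ for some $w'\in V$---the computation is unaffected.
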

\begin{proof}
Suppose that $e{\in}V$ is such that $e{+}\overline{M}\in B(\mathcal{R}_M(V))$.\ Then
$e(-1)e{-}e\in\overline{M}$ and $e(m)e{\in}\overline{M}$ for $m{\not=} 0$.\
Then $\sigma_e$ and $\sigma_{e(-1)e}$ agree
at $M{\in}X$, and
therefore they agree on a basic open 
neighborhood $N_{-1}$ of $M$ by Lemma \ref{lemsigmaid}.\ 
Similarly, $\sigma_{e(m)e}$ \emph{vanishes}
on a basic open neighborhood $N_m$ of $M$ for $m{\not=} 0$.

\medskip
Now $e(m)e{=}0$ for $m{\gg}0$.\ So intersecting $N_{-1}$ and a sufficient
\emph{finite number} of $N_m$'s for $m{\geq} 0$, we obtain a basic open neighborhood
$N(f)$ of $M$ ($f{\in}B$) with the property that $\sigma_{e(m)e}$ \emph{vanishes} on $N(f)$ for all $m\geq 0$
and $\sigma_{e(-1)e}{=}\sigma_e$ on $N(f)$.\ Because $N(f)$ is a clopen set, we can find
a global section $\tau\in\Gamma(X, \mathcal{R})$ that agrees with $\sigma_e$ on $N(f)$ and \emph{vanishes} on $N(f)'$.

\medskip
Consider $\tau(m)\tau$ for $m{\geq}0${:}\ it vanishes on $N(f)'$ because
$\tau$ vanishes there, whereas it agrees with $\sigma_e(m)\sigma_e{=}\sigma_{e(m)e}$ on
$N(f)$ and therefore again vanishes because 
$\sigma_{e(m)e}$ does.\ So
$\tau(m)\tau{=}0$ for $m\geq 0$.\
Similarly,  $\tau(-1)\tau$ vanishes on $N(f)'$ and coincides with $\sigma_e(-1)\sigma_e{=}\sigma_e$
on $N(f)$.\ As a consequence, we have $\tau(-1)\tau{=}\tau$.\ Now by Lemma \ref{lemmaidem} we 
conclude that $\tau$ is an idempotent in 
$\Gamma(X, \mathcal{R})$, and by Theorem \ref{thmglobaliso} it follows that
$\tau{=}\sigma_g$ for some idempotent $g{\in} B$.\
Finally, because $\tau$ agrees with $\sigma_e$ on $N(f)$ we have 
\begin{eqnarray*}
e{+}\overline{M} {=} \sigma_e(M) {=} \tau(M) {=}\sigma_g(M) {=} g{+}\overline{M}{=}0_{\overline{M}}\ \mbox{or}\
 \mathbf{1}_{\overline{M}}
\end{eqnarray*}
according to whether $g{\in}M$ or $g{\notin} M$.\ This completes the proof of the Lemma.
\end{proof}

\begin{dfn}\label{dfnredbundle} Following  Pierce \cite{RSP}, we call an \'{e}tale bundle of vertex rings 
$\mathcal{R}{\rightarrow} X$ \emph{reduced} if (i)\ $X$ is a Boolean space and (ii)\ each stalk
is an indecomposable vertex ring.
\end{dfn}

By Theorem \ref{thmVXB} and Lemma \ref{lemindecstalk} we conclude
\begin{cor}\label{corredbundle} Suppose that $V$ is a vertex ring with Stone space $X$.\ Then
the Pierce bundle $\mathcal{R}{\rightarrow} X$ of $V$ is \emph{reduced}.\ \ $\hfill \Box$
\end{cor}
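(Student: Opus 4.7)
The plan is very short because essentially all of the work has already been done in the preceding results; Corollary \ref{corredbundle} is just the synthesis of two facts with the definition of \emph{reduced} (Definition \ref{dfnredbundle}).

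First I would recall what must be verified: a bundle of vertex rings $\mathcal{R}\to X$ is reduced when (i) its base $X$ is a Boolean space and (ii) every stalk $\mathcal{R}_M$ is an indecomposable vertex ring. The bundle structure itself (that $\pi\colon\mathcal{R}\to X$ is an \'etale bundle of vertex rings) is Theorem \ref{thmVXB}, so nothing further is needed on that front.

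For (i), I would simply invoke the discussion following Definition \ref{dfnSS}: because $B=B(V)$ is a Boolean ring, its prime spectrum $X=\mathrm{Spec}(B)$ is totally disconnected, compact, and Hausdorff, i.e.\ a Boolean space. This is the standard Stone-duality statement quoted in Subsection \ref{SSBV} and requires no new argument.

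For (ii), the stalks are $\mathcal{R}_M=V/\overline{M}$, and Lemma \ref{lemindecstalk} states precisely that the only idempotents in $V/\overline{M}$ are $0_{\overline{M}}$ and $\mathbf{1}_{\overline{M}}$, which is the definition of indecomposability for a vertex ring (see Subsection \ref{SSidem}, around the decomposition \eqref{dirsumdecomp}). Combining these two observations with Theorem \ref{thmVXB} yields the corollary. There is no real obstacle here; the only thing worth emphasizing in the write-up is that ``indecomposable'' in the sense used in Definition \ref{dfnredbundle} matches the idempotent-free characterization supplied by Lemma \ref{lemindecstalk}, so that the hypotheses of Definition \ref{dfnredbundle} are met verbatim.
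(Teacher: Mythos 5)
Your proposal is correct and coincides exactly with the paper's own (one-line) proof: the paper derives the corollary from Theorem \ref{thmVXB} together with Lemma \ref{lemindecstalk}, with the Boolean-ness of $X=\mathrm{Spec}(B)$ already established in Subsection \ref{SSBV}, which is precisely the synthesis you describe. Nothing is missing; your explicit remark matching ``indecomposable'' in Definition \ref{dfnredbundle} with the idempotent characterization of Lemma \ref{lemindecstalk} is a reasonable clarification the paper leaves implicit.
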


\section{Von Neumann regular vertex rings}\label{SvNr}

\subsection{Regular ideals}
\begin{dfn}\label{defregideal1} Let $V$ be a vertex ring with Boolean ring $B{=}B(V)$.\ A $2$-sided ideal $I{\subseteq}V$ is called \emph{regular} if it satisfies $I{=}\cup_{e\in I\cap B} e(-1)V$.
\end{dfn}

For example, the ideal $\overline{M}$ in (\ref{Mbardef}) is regular.\ Our aim in this Subsection is to describe
the lattice of all regular 2-sided ideals in a vertex ring $V$.

\medskip
Let $X$ be the Stone space of $V$, with $\mathcal{R}{\rightarrow}X$ 
the associated Pierce bundle of $V$.\ As usual $\mathcal{O}(X)$ denotes the set of open sets in $X$.

\medskip
 Thanks to Theorem \ref{thmglobaliso} we have an isomorphism of vertex rings 
$\xi{:}V{\stackrel{\cong}{\longrightarrow}}\Gamma_X$, so for our purposes it suffices to describe the regular 2-sided ideals of 
$\Gamma_X$.\
We  prove
\begin{thm}\label{thm2sidedreg}  If $U{\in}\mathcal{O}(X)$ then
$J[U]{\subseteq}\Gamma_X$ is a regular 2-sided ideal, and the map
\begin{eqnarray*}
\mathcal{O}(X){\rightarrow} \{\mbox{regular $2$-sided ideals in $\Gamma_X$}\},\ \  U{\mapsto}J[U]
\end{eqnarray*}
is an \emph{isomorphism of lattices} with inverse $J{\mapsto}U[J]$.\ (So the regular 2-sided ideals
are the kernels of the projections described in Lemma \ref{lemsesJ}.)
\end{thm}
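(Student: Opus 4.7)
Via the isomorphism $\xi : V \cong \Gamma(X,\cR)$ of Theorem \ref{thmglobaliso}, I will work entirely with regular $2$-sided ideals of $\Gamma_X$. The engine of the argument is three stalkwise observations: (i) for any $\sigma \in \Gamma_X$ the support $\mathrm{supp}(\sigma) := \{M \in X : \sigma(M) \neq 0_M\}$ is closed in $X$ (Lemma \ref{lemzsects} applied to $\sigma$ and the zero section), hence \emph{compact} since $X$ is Boolean and thus compact; (ii) for $e \in B$ the section $\sigma_e$ satisfies $\mathrm{supp}(\sigma_e) = N(e)$, since indecomposability of stalks (Lemma \ref{lemindecstalk}) forces $\sigma_e(M) \in \{0_M, \mathbf{1}_M\}$, and the nonvanishing case corresponds exactly to $M \in N(e)$; and (iii) consequently, whenever $\mathrm{supp}(\sigma) \subseteq N(e)$ one has $\sigma = e(-1)\sigma$ (a stalkwise check: $e(M) = \mathbf{1}_M$ on $N(e)$, and both sides vanish off $\mathrm{supp}(\sigma)$), while for any $\tau \in \Gamma_X$ one has $\mathrm{supp}(e(-1)\tau) \subseteq N(e)$, because $e(M) = 0_M$ off $N(e)$ and $0(n)v = v(n)0 = 0$ in any vertex ring (by biadditivity of $(u,v) \mapsto u(n)v$).

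With these in hand, the first task is to verify that $J[U]$ is a regular $2$-sided ideal for $U \in \cO(X)$. That it is a $2$-sided ideal is immediate: support membership is tested pointwise, and the vanishing identities for $0$ ensure that the support of any $n$-product involving a section of $J[U]$ lies inside $\mathrm{supp}(\sigma) \subseteq U$. For regularity, given $\sigma \in J[U]$, the compact set $\mathrm{supp}(\sigma) \subseteq U$ is covered by clopens $N(e) \subseteq U$ (which form a basis of $X$), so by compactness there exist $e_1,\ldots,e_k \in B$ with $\mathrm{supp}(\sigma) \subseteq N(e_1) \cup \cdots \cup N(e_k) = N(e)$, where $e := e_1 \vee \cdots \vee e_k \in B$. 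Then (iii) gives $\sigma = e(-1)\sigma \in e(-1)\Gamma_X$, while (ii) gives $\mathrm{supp}(\sigma_e) = N(e) \subseteq U$, so $e \in B \cap J[U]$. The reverse inclusion $e(-1)\Gamma_X \subseteq J[U]$ for $e \in B \cap J[U]$ is immediate from (iii).

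Next I establish that $U \mapsto J[U]$ and $J \mapsto U[J]$ are mutually inverse. The identity $U[J[U]] = U$ is clear on $\subseteq$; conversely, each $M \in U$ lies in some basic clopen $N(e) \subseteq U$, and then (ii) gives $\sigma_e \in J[U]$ with $M \in \mathrm{supp}(\sigma_e) \subseteq U[J[U]]$. For $J[U[J]] = J$ with $J$ regular, the inclusion $J \subseteq J[U[J]]$ is trivial. Conversely, regularity writes $U[J] = \bigcup_{e \in B \cap J} N(e)$; given $\sigma$ with $\mathrm{supp}(\sigma) \subseteq U[J]$, the same compactness argument yields $e_1,\ldots,e_k \in B \cap J$ and $e := e_1 \vee \cdots \vee e_k$ with $\mathrm{supp}(\sigma) \subseteq N(e)$. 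Since $J$ is an additive subgroup closed under $(-1)$-products and $e_i \vee e_j = e_i + e_j - e_i(-1)e_j$, it follows inductively that $e \in J$, whence $\sigma = e(-1)\sigma \in J$ by (iii).

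Finally, for the lattice structure: preservation of meets is immediate from pointwise testing, $J[U_1 \cap U_2] = J[U_1] \cap J[U_2]$. For joins, $J[U_1] + J[U_2] \subseteq J[U_1 \cup U_2]$ is clear; the reverse uses the standard splitting. Given $\sigma \in J[U_1 \cup U_2]$, compactness yields $f_1, f_2 \in B$ with $N(f_i) \subseteq U_i$ and $\mathrm{supp}(\sigma) \subseteq N(f_1) \cup N(f_2)$; replacing $f_2$ by $f_2(1-f_1)$ makes the two idempotents orthogonal with $f_1 + f_2 = f_1 \vee f_2$, whence $\sigma = (f_1 + f_2)(-1)\sigma = f_1(-1)\sigma + f_2(-1)\sigma$, and by (iii) the $i$th summand lies in $J[U_i]$. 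The principal obstacle throughout is not conceptual but organizational: one must thread the Boolean-space compactness argument together with the stalkwise identity $\sigma = e(-1)\sigma$ to constructively produce the governing idempotent at each step. Once this template is set up, the proof parallels Pierce's original commutative treatment \cite{RSP} with essentially no modification.
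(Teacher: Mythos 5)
Your proof is correct, and its skeleton — closed (hence compact) supports, idempotents whose supports are the basic clopens $N(e)$, and compactness arguments producing a governing idempotent — is the same as the paper's (Lemmas \ref{lemmaidemclopen}--\ref{lemJUJ}), so this is a variation in mechanics rather than a new strategy; still, the local differences are worth recording. Where the paper invokes the partition property (Lemma \ref{lemmapp}) to interpolate a clopen $N$ with $\mathrm{supp}(\sigma)\subseteq N\subseteq U$ and then builds an idempotent section by hand (value $\mathbf{1}_{\overline{M}}$ on $N$, zero off $N$), you extract a finite subcover of the compact support by basic clopens $N(e_i)\subseteq U$ and take the join $e=e_1\vee\cdots\vee e_k$ in $B$; this is arguably cleaner, since the idempotent is produced algebraically inside $B$ and $\sigma=e(-1)\sigma$ is a one-line stalkwise check, with no continuity of a piecewise-defined section to verify. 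More substantively, in the step $J[U[J]]=J$ the paper writes $\sigma=\sigma(\tau_1\oplus\cdots\oplus\tau_n)$ with the Boolean addition $e\oplus f=e+f-2ef$; as written this is the symmetric difference and fails when supports overlap (already for $\tau_1=\tau_2$), so the join is what is actually needed there — your version, using $e\vee f=e+f-e(-1)f$ and closure of the ideal $J$ under this operation, is the correct form of that step. Your explicit verification of meets and joins is logically redundant (mutually inverse inclusion-preserving bijections between lattices are automatically lattice isomorphisms, which is all the theorem asserts), but it buys something concrete: it identifies the meet of regular ideals as the intersection and the join as the sum, via the orthogonalization $f_2\mapsto f_2(\mathbf{1}-f_1)$. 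One point you assert with only a gesture: $\mathrm{supp}(\sigma_e)=N(e)$. Indecomposability of stalks (Lemma \ref{lemindecstalk}) only gives $\sigma_e(M)\in\{0_{\overline{M}},\mathbf{1}_{\overline{M}}\}$; identifying the nonvanishing locus with $N(e)$ requires the small computation that $e\in\overline{M}$ if and only if $e\in M$ — if $e=f(-1)v$ with $f\in B\cap M$, then $e=f(-1)e=f\wedge e\in M$ since $M$ is an ideal of $B$, and conversely $e\in M$ gives $e=e(-1)e\in\overline{M}$. This is routine and true (and is used in both directions of your argument, e.g.\ for $U\subseteq U[J[U]]$), so it is an omitted verification rather than a gap.
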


Theorem \ref{thm2sidedreg} is a consequence of the following Lemmas \ref{lemmaidemclopen}-\ref{lemJUJ}.\

\begin{lem}\label{lemmaidemclopen} If $\sigma{\in}\Gamma(X, \mathcal{R})$ then $supp(\sigma){\subseteq}X$ is compact, and it is
a \emph{clopen} subset if $\sigma$ is an idempotent.
\end{lem}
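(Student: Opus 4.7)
\textbf{Proof plan for Lemma \ref{lemmaidemclopen}.} The plan is to reduce everything to the global iso $\xi{:}V{\stackrel{\cong}{\to}}\Gamma(X,\mathcal{R})$ of Theorem \ref{thmglobaliso}, which lets me write any $\sigma{\in}\Gamma(X,\mathcal{R})$ as $\sigma_v$ for some $v{\in}V$, and then use the elementary topology of \'etale bundles together with the Stone duality material of Subsection \ref{SSBV}.

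First I would prove compactness of $supp(\sigma)$. Apply Lemma \ref{lemzsects} (with $U{=}X$, $\tau{=}0_X$) to conclude that the zero set $\{M{\in}X{\mid}\sigma(M){=}0_M\}$ is open in $X$. Hence $supp(\sigma)$ is closed. Since $X{=}Spec(B(V))$ is a Boolean space and in particular compact (Subsection \ref{SSBV}), $supp(\sigma)$ is a closed subset of a compact space, hence itself compact.

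Next, suppose $\sigma$ is an idempotent in $\Gamma(X,\mathcal{R})$. Via the vertex ring isomorphism $\xi$ of Theorem \ref{thmglobaliso}, $\sigma{=}\sigma_e$ for a unique $e{\in}V$ that is an idempotent in $V$; by Lemma \ref{lemmaidem} we then have $e{\in}B{=}B(V)$. So it suffices to verify the equality
\begin{eqnarray*}
supp(\sigma_e){=}N(e)\ \ (e{\in}B),
\end{eqnarray*}
since $N(e)$ is clopen by Stone duality. For the inclusion $N(e)^c{\subseteq}supp(\sigma_e)^c$: if $e{\in}M$, then $e{\in}B\cap M$, so by the definition (\ref{Mbardef}) of $\overline{M}$ we have $e{=}e(-1)\mathbf{1}{\in}e(-1)V{\subseteq}\overline{M}$, whence $\sigma_e(M){=}0_{\overline{M}}$. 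The contrapositive gives $supp(\sigma_e){\subseteq}N(e)$, and the same implication $e{\in}M{\Rightarrow}e{\in}\overline{M}$ yields, by contraposition, the reverse inclusion $N(e){\subseteq}supp(\sigma_e)$.

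The only subtle point is the idempotent case, and it is handled entirely by combining Theorem \ref{thmglobaliso}, Lemma \ref{lemmaidem}, and the observation $e{=}e(-1)\mathbf{1}$; I do not expect any real obstacle.
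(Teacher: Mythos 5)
Your compactness argument is fine and coincides with the paper's (zero set open by Lemma \ref{lemzsects}, support closed in the compact space $X$), and reducing the idempotent case via $\xi$ to the equality $supp(\sigma_e)=N(e)$ is a legitimate alternative route. But there is a genuine logical gap at the final step. From $e{\in}M\Rightarrow e{\in}\overline{M}$ you correctly get $supp(\sigma_e)\subseteq N(e)$; the contrapositive of that same implication is $e\notin\overline{M}\Rightarrow e\notin M$, which is \emph{again} the inclusion $supp(\sigma_e)\subseteq N(e)$, not its reverse. What the reverse inclusion $N(e)\subseteq supp(\sigma_e)$ actually requires is the \emph{converse}: for an idempotent $e$, $e\in\overline{M}\Rightarrow e\in M$, i.e.\ $\overline{M}\cap B=M$. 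This is not automatic, since $\overline{M}$ is far larger than $M$, and it is exactly the point your proposal leaves unproved.

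The gap is fixable with a short argument. If $e\in\overline{M}$ then, by (\ref{Mbardef}), $e{=}f(-1)w$ for a single $f{\in}B\cap M$ and some $w{\in}V$. Since $f{\in}C(V)$, the associativity noted in Example \ref{exvac}(ii) gives $f(-1)e{=}f(-1)f(-1)w{=}(f(-1)f)(-1)w{=}f(-1)w{=}e$, so in the Boolean ring $B$ we have $e{=}fe{\in}M$ because $M$ is an ideal of $B$ and $f{\in}M$. With this inserted, $supp(\sigma_e){=}N(e)$ holds and your proof closes. For comparison, the paper avoids identifying the support altogether: it invokes reducedness of the Pierce bundle (Corollary \ref{corredbundle}, resting on Lemma \ref{lemindecstalk}) to conclude that an idempotent section takes only the values $0_{\overline{M}}$ and $\mathbf{1}_{\overline{M}}$ at each stalk, and then applies Lemma \ref{lemzsects} twice (against $0_X$ and against $\mathbf{1}_X$) to see the support is simultaneously closed and open. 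Your repaired argument buys slightly more information (the support is the basic clopen set $N(e)$, in line with Stone duality), at the cost of needing $\overline{M}\cap B{=}M$; the paper's is softer and shorter.
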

\begin{proof} If $\sigma$ vanishes at $M{\in}X$ then it vanishes on an open neighborhood of $M$.\
This shows that the zero set of $\sigma$ is open, so that $supp(\sigma)$ is closed in $X$.\
Furthermore, because $\mathcal{R}{\rightarrow}X$ is reduced by Corollary \ref{corredbundle}, it follows that if $\sigma{\in} 
B(\Gamma(X, \mathcal{R}))$ then $\sigma(M)=0_{\overline{M}}$ or $\mathbf{1}_{\overline{M}}$
for all $M$.\ Since $\{M\in X{\mid}\sigma(M)=\mathbf{1}_{\overline{M}}\}$ is  open 
  then $supp(\sigma)$ is also closed.\ 
\end{proof}

\begin{lem}\label{lemUJU} If $U{\in}\mathcal{O}(X)$  then $J[U]$ is a regular $2$-sided ideal of 
$\Gamma_X$, and $U[J[U]]{=}U$.
\end{lem}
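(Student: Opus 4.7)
The plan has three parts, matching the three assertions: (i) $J[U]$ is a $2$-sided ideal of $\Gamma_X$; (ii) it is regular; (iii) $U[J[U]]=U$. Throughout I will work on the bundle side via the isomorphism $\xi$ of Theorem \ref{thmglobaliso}, freely identifying elements of $B(\Gamma_X)$ with characteristic sections $\sigma_e$ of clopen subsets $N(e)\subseteq X$.

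First I would establish (i) by a pointwise argument. Since all $n^{th}$ products on $\Gamma_X$ are computed stalkwise and each stalk product is biadditive, $0$ in a stalk annihilates every other element under every $n^{th}$ product. Hence for $\sigma\in J[U]$ and $\tau\in\Gamma_X$, at any $M\notin U$ we have $\sigma(M)=0_{\overline{M}}$, so $(\tau(n)\sigma)(M)=\tau(M)(n)\,0_{\overline{M}}=0_{\overline{M}}$ and similarly $(\sigma(n)\tau)(M)=0_{\overline{M}}$. Thus $supp(\tau(n)\sigma)\cup supp(\sigma(n)\tau)\subseteq supp(\sigma)\subseteq U$, and closure under addition is immediate, so $J[U]$ is a $2$-sided ideal.

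Next, for (ii), the key move is to produce, for a given $\sigma\in J[U]$, an idempotent $\varepsilon\in B(\Gamma_X)\cap J[U]$ with $\varepsilon(-1)\sigma=\sigma$. By Lemma \ref{lemmaidemclopen}, $supp(\sigma)$ is closed, hence compact, in $X$. For each $M\in supp(\sigma)\subseteq U$ the basis of clopens in the Boolean space $X$ provides a clopen $N(e_M)$ with $M\in N(e_M)\subseteq U$. Compactness yields finitely many of these covering $supp(\sigma)$, and by the Partition Property (Lemma \ref{lemmapp}) their union is again a clopen set $N(e)\subseteq U$ containing $supp(\sigma)$. The characteristic section $\sigma_e$ is an idempotent in $\Gamma_X$ with $supp(\sigma_e)=N(e)\subseteq U$, so $\sigma_e\in J[U]\cap B(\Gamma_X)$. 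To finish, check pointwise that $\sigma_e(-1)\sigma=\sigma$: on $N(e)$ we have $\sigma_e(M)=\mathbf{1}_{\overline{M}}$, hence acts as the identity under the $-1^{th}$ product; outside $N(e)$ both $\sigma_e$ and $\sigma$ vanish. Thus $\sigma\in\sigma_e(-1)\Gamma_X$, proving $J[U]=\bigcup_{\varepsilon\in J[U]\cap B(\Gamma_X)}\varepsilon(-1)\Gamma_X$.

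Finally, (iii) is almost a tautology. The inclusion $U[J[U]]\subseteq U$ is immediate from the definition of $J[U]$. For the reverse inclusion, let $M\in U$; pick a basic clopen $N(e)$ with $M\in N(e)\subseteq U$ (using that clopens are a basis). The idempotent section $\sigma_e$ lies in $J[U]$ since $supp(\sigma_e)=N(e)\subseteq U$, and $M\in supp(\sigma_e)$ because $\sigma_e(M)=\mathbf{1}_{\overline{M}}\neq 0_{\overline{M}}$. Thus $M\in U[J[U]]$. The main (minor) obstacle is really the verification of the identity $\sigma_e(-1)\sigma=\sigma$ in step (ii), which hinges on knowing that $\sigma_e(M)=\mathbf{1}_{\overline{M}}$ for $M\in N(e)$; this is in turn a consequence of the fact that $\mathbf{1}-e\in M$ whenever $e\notin M$ in the Boolean ring $B$, so that $e\equiv\mathbf{1}\pmod{\overline{M}}$.
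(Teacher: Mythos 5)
Your proof is correct and takes essentially the same route as the paper: sandwich a clopen $N(e)$ between $supp(\sigma)$ and $U$, take the corresponding idempotent characteristic section, and verify $\sigma_e(-1)\sigma=\sigma$ stalkwise (the paper obtains the clopen via the partition property applied to the cover $\{U,\,supp(\sigma)'\}$ and gets the ideal property from Lemma \ref{lemsesJ} rather than your direct pointwise check, but these differences are cosmetic). One small note: your appeal to Lemma \ref{lemmapp} to form the union of finitely many basic clopens is unnecessary, since a finite union of clopens is already clopen of the form $N(e)$ by (\ref{Nelattice}).
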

\begin{proof} That $J[U]$ is a 2-sided ideal follows from Lemma \ref{lemsesJ}.\ Let $\sigma{\in}J[U]$, so that 
$\sigma{\in}\Gamma_X$ with $supp(\sigma){\subseteq}U$.\ Because $supp(\sigma)$ is closed by Lemma \ref{lemmaidemclopen}, there is
a clopen set $N$ such that $supp(\sigma){\subseteq}N{\subseteq}U$.\ (Consider the open cover
$\{U, supp(\sigma)'\}$ of $X$.\ Then there is a partition of $X$ into  clopen sets, each contained in either $U$ or $supp(\sigma)'$, and $N$ is the union of those contained in $U$.) 

\medskip
We get an idempotent $\tau{\in}J[U]$ by setting $\tau(M){=}\mathbf{1}_{\overline{M}}\ (M{\in}N)$ together with
$\tau(M){=}0_{\overline{M}}\ (M{\notin} N)$.\ We have $\tau(-1)\sigma=\sigma$ because
$supp(\sigma)\subseteq supp(\tau)$, proving that
$J[U]$ is  regular.

\medskip
Finally, $U[J[U]]$ is the union of  $supp(\sigma)$ for $\sigma{\in}J[U]$, which coincides
with the union of $supp(\tau)$ for the idempotents $\tau{\in}J[U]$.\ By the argument of the first
paragraph, there is a clopen set $N'{\subseteq}U$ containing a given point of $U$ 
(points are closed because $X$ is Hausdorff), and by the argument of the second paragraph $N'$ is the support of an idempotent in $J[U]$.\ This shows that $U[J[U]]{=}U$, and the Lemma is proved.
\end{proof}

\begin{lem}\label{lemJUJ} If $J{\subseteq}\Gamma(X, \mathcal{R})$ is a regular 2-sided ideal 
then $U[J]{\subseteq}X$ is open and $J[U[J]]{=}J$.
\end{lem}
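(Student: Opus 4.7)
The plan is to establish both claims by exploiting the regularity of $J$ to replace arbitrary elements by idempotents, whose supports are clopen by Lemma~\ref{lemmaidemclopen}.

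First I would show $U[J]$ is open. Since $J$ is regular, every $\sigma \in J$ has the form $\sigma = e(-1)\tau$ for some idempotent $e \in J \cap B$ and some $\tau \in \Gamma_X$. Working pointwise: if $e(M) = 0_{\overline{M}}$ then $\sigma(M) = 0_{\overline{M}}(-1)\tau(M) = 0_{\overline{M}}$ by biadditivity, so $\mathrm{supp}(\sigma) \subseteq \mathrm{supp}(e)$. Hence
\[
U[J] \;=\; \bigcup_{\sigma \in J}\mathrm{supp}(\sigma) \;=\; \bigcup_{e \in J \cap B}\mathrm{supp}(e),
\]
which is a union of clopen sets by Lemma~\ref{lemmaidemclopen}, hence open.

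Next I would prove $J[U[J]] = J$. The inclusion $J \subseteq J[U[J]]$ is immediate from the definitions. For the reverse, let $\sigma \in J[U[J]]$, so $\mathrm{supp}(\sigma) \subseteq \bigcup_{e \in J \cap B}\mathrm{supp}(e)$. By Lemma~\ref{lemmaidemclopen} $\mathrm{supp}(\sigma)$ is compact, so finitely many idempotents $e_1,\dots,e_n \in J\cap B$ suffice to cover it. I would then form the Boolean join $e := e_1 \vee \cdots \vee e_n$; since $e_i \vee e_j = e_i + e_j - e_i(-1)e_j$ and $J$ is a $2$-sided ideal closed under these operations, an easy induction shows $e \in J \cap B$, and pointwise $\mathrm{supp}(e) = \bigcup_i \mathrm{supp}(e_i) \supseteq \mathrm{supp}(\sigma)$. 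Evaluating $e(-1)\sigma$ at each $M$: on $\mathrm{supp}(e)$ we get $\mathbf{1}_{\overline{M}}(-1)\sigma(M) = \sigma(M)$, while off $\mathrm{supp}(e)$ both sides vanish. Thus $\sigma = e(-1)\sigma \in e(-1)\Gamma_X \subseteq J$ by Example~\ref{exvac}(ii), completing the argument.

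The main obstacle is the step going from ``$\mathrm{supp}(\sigma)$ is contained in the union of the $\mathrm{supp}(e)$'s'' to ``$\sigma$ lies in the ideal generated by a single idempotent $e \in J$''; this is where the compactness of $\mathrm{supp}(\sigma)$ (and hence the Boolean/Stone-space structure of $X$) is essential, together with the Boolean-ring closure of $J \cap B$ under finite joins. Once a single enveloping idempotent $e \in J$ is produced, the identity $\sigma = e(-1)\sigma$ and Example~\ref{exvac}(ii) finish things off mechanically.
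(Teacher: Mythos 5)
Your proof is correct and follows essentially the same route as the paper's: regularity reduces $U[J]$ to a union of idempotent supports, which are clopen by Lemma~\ref{lemmaidemclopen}, and compactness of $\mathrm{supp}(\sigma)$ produces finitely many idempotents of $J$ from which a single enveloping idempotent $e\in J\cap B$ is built, giving $\sigma = e(-1)\sigma \in J$. If anything, your use of the join $e_1\vee\cdots\vee e_n$ (whose support is the honest union) is slightly more careful than the paper's Boolean sum $\tau_1\oplus\cdots\oplus\tau_n$, whose support is a symmetric difference unless the supports are first refined to be pairwise disjoint.
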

\begin{proof} If $\sigma{=}\tau\sigma$ then $supp(\sigma){\subseteq}supp(\tau)$.\ Thus if $J$ is regular then
the union of the supports $supp(\sigma)$ for $\sigma{\in}J$, i.e., $U[J]$, coincides with the union of the 
corresponding supports for the \emph{idempotents} in $J$.\ But this is open thanks to Lemma \ref{lemmaidemclopen}.

\medskip Now if $\sigma{\in}J[U[J]]$ then $supp(\sigma){\subseteq}U[J]=\cup_{\tau\in B(J)} supp(\tau)$.\
By compactness we get $supp(\sigma){=}\cup_{i=1}^n supp(\tau_i)$ where the $\tau_i\in B(J)$.\
Then $\sigma {=} \sigma(\tau_1\oplus \hdots\oplus \tau_n){\in}J$.\ This proves that $J[U[J]]{\subseteq}J$,
and since the opposite inclusion is obvious then $J{=}J[U[J]]$ and the proof of the Lemma is complete.
\end{proof}

\subsection{Von Neumann regular vertex rings}
Suppose that $R$ is a commutative, unital ring, and let $X{=}Spec(R)$.\ To provide some background and  context, 
we recall (\cite{AM}, \cite{Good}) that the following properties of $R$ are \emph{equivalent}{:}
\begin{eqnarray}\label{VNRequivs}
&&\mbox{$R/Nil(R)$ is a von Neumann regular ring (cf.\  Example \ref{exhomeo}), }\notag\\
&&\mbox{Every principal ideal in $R$ is generated by an idempotent,}\notag\\
&&\mbox{Every f.\ g.\  ideal in $R$ is generated by an idempotent,}\\
&&\mbox{$R$ has Krull dimension $0$}\notag\\
&&\mbox{$X$ is a Hausdorff space}\notag\\
&&\mbox{$X$ is totally disconnected}\notag
\end{eqnarray}
(Krull dimension $0$ means that every prime ideal  is maximal; $Nil(R)$ is the \emph{nilpotent radical} of $R$.)\
Toplogical spaces enjoying the last two properties are called \emph{Boolean}.

\medskip\noindent
\begin{ex} Let $V$ be a vertex ring with associated Boolean ring $B{=}B(V)$.\ Then
we showed in Section \ref{SSBV} that $B$ has Krull dimension $0$.\ Since every element is idempotent then
$Nil(R){=}0$, so $B$ is a commutative von Neumann ring.\ Therefore its prime spectrum $X{=}Spec(B)$ is  Boolean.
$\hfill\Box$
\end{ex}

\begin{dfn}\label{defregideal} A vertex ring $V$
 is called \emph{von Neumann regular} if \emph{every} 
principal 2-sided ideal of $V$ is generated by an idempotent.\ In other words, every principal
2-sided ideal of $V$ is equal to $e(-1)V$ for some $e{\in}B(V)$.
\end{dfn}

\begin{ex} If $R$ is a commutative von Neumann regular ring  then  by (\ref{VNRequivs}) $R$ is
a von Neumann regular vertex ring with trivial HS derivation.
\end{ex}

\begin{lem}\label{lem2sidedreg} Let $V$ be a von Neumann regular vertex ring.\ Then
\emph{every}  2-sided ideal $I{\subseteq}V$  is regular
in the sense of Definition \ref{defregideal}.
\end{lem}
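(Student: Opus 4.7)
The plan is to check both inclusions in the equality $I=\bigcup_{e\in I\cap B}e({-}1)V$ that defines regularity (Definition \ref{defregideal1}). One inclusion is immediate and uses no hypothesis; the other is where the von Neumann regular assumption is deployed, applied not to $I$ itself but to each principal $2$-sided subideal $\langle v\rangle\subseteq I$.

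\textbf{Step 1 (easy inclusion).} If $e\in I\cap B$, then because $I$ is a $2$-sided ideal and $e\in C(V)$ (Lemma \ref{lemmaidem}), we have $e({-}1)v\in I$ for every $v\in V$, so $e({-}1)V\subseteq I$. Taking the union over $e\in I\cap B$ gives $\bigcup_{e\in I\cap B}e({-}1)V\subseteq I$.

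\textbf{Step 2 (hard inclusion, using vNr).} Pick $v\in I$ and let $\langle v\rangle$ denote the principal $2$-sided ideal generated by $v$ (Example \ref{exideal}(iii)); then $\langle v\rangle\subseteq I$ since $I$ is $2$-sided and contains $v$. By the hypothesis that $V$ is von Neumann regular, $\langle v\rangle=e({-}1)V$ for some idempotent $e\in B$. Now the point is that the generator $e$ itself automatically lies in $I$: indeed, $e=e({-}1)\mathbf{1}\in e({-}1)V=\langle v\rangle\subseteq I$, so $e\in I\cap B$. On the other hand $v\in\langle v\rangle=e({-}1)V$, which gives $v\in\bigcup_{e\in I\cap B}e({-}1)V$, and Step 2 is done.

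\textbf{Main obstacle.} There is no real obstacle; the argument is a faithful vertex-ring transcription of the classical fact that in a commutative von Neumann regular ring every ideal is a union of principal (idempotent-generated) ideals. The only subtlety worth flagging is that ``$e$ idempotent'' in Definition \ref{defregideal} means $e\in B(V)$, which by Lemma \ref{lemmaidem} is the same as an idempotent of $C(V)$; this identification is what legitimises the simultaneous use of the two-sided product $e({-}1)v$ and the containment $e=e({-}1)\mathbf{1}\in e({-}1)V$. Everything else is formal.
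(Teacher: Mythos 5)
Your proof is correct, and it takes a more direct route than the paper's. The paper argues in two stages: first, $I$ is a \emph{sum} of principal $2$-sided ideals, hence (by the vNr hypothesis) a sum of ideals $e({-}1)V$ with $e$ idempotent; second, it converts this sum into a \emph{union} by invoking the mechanism of Lemma \ref{lembarM}, namely the Boolean identity $(e\oplus f)e=e$, which shows $e({-}1)u+f({-}1)v\in(e\oplus f)({-}1)V$ with $e\oplus f\in I\cap B$. You bypass the sum-to-union conversion entirely: since any single element $v\in I$ generates a principal ideal $\langle v\rangle\subseteq I$ that vNr turns into $e({-}1)V$ all at once, and since $e=e({-}1)\mathbf{1}\in\langle v\rangle\subseteq I$ (a point the paper leaves implicit, and which you rightly make explicit), each element of $I$ already lies in a single idempotent-generated ideal with generator in $I\cap B$. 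Your shortcut works precisely because in this lemma the family of admissible idempotents is all of $I\cap B$; in Lemma \ref{lembarM} itself the family $B\cap M$ is prescribed in advance and $V$ is not assumed vNr, so there the $\oplus$-trick is genuinely needed. What the paper's sum-based route buys is the Remark immediately following the lemma — that every \emph{finitely generated} $2$-sided ideal is generated by a single idempotent — which drops out of the same $\oplus$-computation, whereas your element-wise argument would still need that trick to combine the finitely many idempotents $e_1,\hdots,e_n$ into one.
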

\begin{proof} $I$ is certainly a \emph{sum} of principal 2-sided ideals, hence a sum of ideals generated by idempotents.\  That it is a \emph{union} of such ideals follows as in the proof of Lemma \ref{lembarM}.
\end{proof}

\begin{rmk} This argument shows that in a von Neumann regular vertex ring, every \emph{finitely generated}
2-sided ideal is generated by an idempotent.
\end{rmk}

\begin{cor}\label{coriso} Let $V$ be a von Neumann regular vertex ring with Stone space $X$.\ Then there is an 
\emph{isomorphism of lattices}
\begin{eqnarray*}
\mathcal{O}(X){\stackrel{\cong}{\longrightarrow}} \{\mbox{2-sided ideals of $V$} \}.
\end{eqnarray*}
\end{cor}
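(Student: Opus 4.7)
The plan is to obtain the corollary essentially for free by combining three earlier results: the isomorphism $\xi\colon V \stackrel{\cong}{\to} \Gamma_X$ of Theorem \ref{thmglobaliso}, the lattice isomorphism $\mathcal{O}(X) \cong \{\text{regular 2-sided ideals of } \Gamma_X\}$ of Theorem \ref{thm2sidedreg}, and Lemma \ref{lem2sidedreg} which guarantees that in a vNr vertex ring \emph{every} 2-sided ideal is already regular.

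First I would note that $\xi$ induces a bijection between 2-sided ideals of $V$ and 2-sided ideals of $\Gamma_X$, since $\xi$ is an isomorphism of vertex rings; this bijection is automatically a lattice isomorphism (order-preserving, with intersections and sums going to intersections and sums). Moreover, $\xi$ carries $B(V)$ bijectively onto $B(\Gamma_X)$ and preserves the relation $I = \bigcup_{e \in I \cap B} e(-1)V$ (Definition \ref{defregideal1}), so $\xi$ restricts to a lattice isomorphism between the \emph{regular} 2-sided ideals on each side.

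Next, since $V$ is von Neumann regular, Lemma \ref{lem2sidedreg} tells us that every 2-sided ideal of $V$ is regular. Equivalently, under $\xi$, every 2-sided ideal of $\Gamma_X$ is regular. Therefore
\begin{eqnarray*}
\{\text{2-sided ideals of } V\} \;\cong\; \{\text{regular 2-sided ideals of } \Gamma_X\}
\end{eqnarray*}
as lattices. Composing with the lattice isomorphism $U \mapsto J[U]$ of Theorem \ref{thm2sidedreg} (whose inverse is $J \mapsto U[J]$) yields the desired isomorphism $\mathcal{O}(X) \cong \{\text{2-sided ideals of } V\}$.

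There is no real obstacle here; the only thing to watch is the explicit verification that $\xi$ sends regular ideals to regular ideals, but this is immediate because $\xi$ is a vertex ring isomorphism sending $\mathbf{1}$ to $\mathbf{1}$ and preserving the $(-1)$-product, hence mapping idempotents to idempotents bijectively. One could also write down the composite map explicitly as $U \mapsto \xi^{-1}(J[U])$, which one checks on idempotents is the 2-sided ideal of $V$ generated by $\{e \in B(V) \mid \operatorname{supp}(\sigma_e) \subseteq U\}$.
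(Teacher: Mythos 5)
Your proposal is correct and follows essentially the same route as the paper, which also deduces the corollary by combining Theorem \ref{thm2sidedreg} with Lemma \ref{lem2sidedreg} (transferring ideals between $V$ and $\Gamma_X$ via the isomorphism $\xi$ of Theorem \ref{thmglobaliso}). Your explicit verification that $\xi$ carries regular ideals to regular ideals is a detail the paper leaves implicit, but it is the same argument.
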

\begin{proof} By Corollary \ref{corredbundle} the Pierce bundle 
$\mathcal{R}{\rightarrow} X$ associated to $V$ is reduced.\ Then the present Corollary follows
from Theorem \ref{thm2sidedreg} and Lemma \ref{lem2sidedreg}.
\end{proof}

\begin{lem}\label{lemsimplestalk} Let $V$ be a von Neumann regular vertex ring.\ Then the 
Pierce bundle $\mathcal{R}{\rightarrow}X$ of $V$ is a bundle of \emph{simple} vertex rings.\
That is, each stalk $\mathcal{R}_M$ is a simple vertex ring.
\end{lem}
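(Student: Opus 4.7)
The plan is to invoke Corollary \ref{coriso}, which, for the vNr vertex ring $V$, furnishes a lattice isomorphism $\mathcal{O}(X) \stackrel{\cong}{\longrightarrow} \{\text{2-sided ideals of } V\}$ of the form $U \mapsto J[U]$ (transported along $\xi$ of Theorem \ref{thmglobaliso}). Under this isomorphism I will identify $\overline{M}$ with a specific open subset of $X$, and then a short topological argument will show $\overline{M}$ is a maximal 2-sided ideal; since $\mathcal{R}_M = V/\overline{M}$, this gives the desired simplicity.

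First I would identify the open set corresponding to $\overline{M}$. Example \ref{exses} provides an isomorphism of short exact sequences whose leftmost column is $J[X \setminus \{M\}] \stackrel{\cong}{\longrightarrow} \overline{M}$ via $\xi^{-1}$, so under the lattice isomorphism $\overline{M}$ corresponds to $U_M := X \setminus \{M\}$.\ Since $X$ is Boolean it is Hausdorff, so $\{M\}$ is closed and $U_M$ is indeed open.\ Now the topology is trivial: any open $U$ with $U \supsetneq U_M$ must contain $M$, and hence $U \supseteq U_M \cup \{M\} = X$.\ So the only open set strictly containing $U_M$ is $X$, which under the lattice isomorphism translates to: the only 2-sided ideal of $V$ strictly containing $\overline{M}$ is $V$ itself. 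Thus $\mathcal{R}_M$ has no nontrivial 2-sided ideals, i.e., it is simple.

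There is no real obstacle here: all the machinery has been put in place by Theorem \ref{thm2sidedreg} and Corollary \ref{coriso}, and the present Lemma falls out almost immediately.\ A more hands-on alternative proof would avoid Corollary \ref{coriso} entirely: given a 2-sided ideal $J \supsetneq \overline{M}$, regularity of $J$ (Lemma \ref{lem2sidedreg}) writes $J = \bigcup_{e \in B \cap J} e(-1)V$, so some idempotent $e \in B \cap J$ must lie outside $M$ (else $J \subseteq \overline{M}$).\ Since $M$ is prime in $B$ and $e(\mathbf{1} - e) = 0 \in M$, we obtain $\mathbf{1} - e \in M \subseteq \overline{M} \subseteq J$, whence $\mathbf{1} = e + (\mathbf{1} - e) \in J$ and $J = V$. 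Either route yields the result.
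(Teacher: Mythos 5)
Your first argument is essentially the paper's own proof: the paper likewise invokes Corollary \ref{coriso}, uses Hausdorffness of $X$ to identify the maximal proper open subsets as the sets $X\setminus\{M\}$, and then applies Lemma \ref{lemUJU} and Example \ref{exses} to conclude that $\mathcal{R}_M\cong\Gamma_X/J[X\setminus\{M\}]$ is simple. The only difference is cosmetic ordering: the paper first characterizes the maximal proper opens and then matches them with maximal ideals, while you first identify $\overline{M}$ with $X\setminus\{M\}$ and then check maximality; both are the same route.

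Your ``hands-on alternative'' is correct and is a genuinely different, more elementary argument worth noting. It bypasses the whole section/lattice machinery of Theorems \ref{thmglobaliso} and \ref{thm2sidedreg}, using only Lemma \ref{lem2sidedreg} and the primality of $M$ in $B$: given a $2$-sided ideal $J\supsetneq\overline{M}$, regularity of $J$ produces an idempotent $e\in B\cap J$ with $e\notin M$ (otherwise $J\subseteq\overline{M}$); since $e(-1)(\mathbf{1}-e)=0\in M$ and $M$ is prime in $B$, one gets $\mathbf{1}-e\in M\subseteq\overline{M}\subseteq J$, whence $\mathbf{1}=e+(\mathbf{1}-e)\in J$ and $J=V$. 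This is exactly the classical argument for commutative von Neumann regular rings transported to vertex rings, and what it buys is independence from the topology of the Pierce bundle: it proves maximality of $\overline{M}$ directly from the idempotent calculus, so it would apply even before the bundle-theoretic apparatus is set up. The paper's route, in exchange, exhibits the simplicity of the stalks as one instance of the full lattice correspondence $\mathcal{O}(X)\cong\{\mbox{$2$-sided ideals}\}$, which is the structural fact driving the categorical equivalence in Section \ref{Seqcat}.
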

\begin{proof} According to Corollary \ref{coriso} the maximal ideals of $V$ correspond
to those open subsets of $X$ that are maximal (with respect to containment) among
all \emph{proper} open subsets of $X$.\ Since $X$ is Hausdorff, these are precisely the
subsets $X{\setminus}{\{M\}}\ (M{\in}X)$.\
By Lemma \ref{lemUJU} the maximal ideal of $\Gamma_X$ that corresponds to
$X{\setminus}{\{M\}}$ is $J[X{\setminus}{\{M\}}]$.\ Therefore $\Gamma_X/J[X{\setminus}{\{M\}}]
\cong \mathcal{R}_M$
is \emph{simple}, where the isomorphism follows from Example \ref{exses}.
\end{proof}

\begin{thm}\label{thmCVNvring} Let $V$ be a von Neumann regular vertex ring.\ Then the center $C(V)$ is a \emph{commutative von Neumann regular ring}.
\end{thm}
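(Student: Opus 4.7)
The plan is, given $a \in C(V)$, to exhibit $b \in C(V)$ with $a(-1)b(-1)a = a$, thereby proving the von Neumann regularity of the commutative ring $C(V)$. The idea is to reduce the problem to a statement about units inside a suitable direct summand of $V$.

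First, consider the principal 2-sided ideal of $V$ generated by $a$, which is $a(-1)V$ by Example \ref{exvac}(ii). By hypothesis this coincides with $e(-1)V$ for some idempotent $e$, and $e \in B(V) = B(C(V))$ by Lemma \ref{lemmaidem}. Set $V_e := e(-1)V$. By (\ref{dirsumdecomp}) we have the decomposition $V = V_e \oplus V_{1-e}$ of vertex rings, where $V_e$ has vacuum $e$. Since $a = a(-1)\mathbf{1} \in a(-1)V = V_e$ and the canonical HS derivation of $V_e$ is the restriction of $\underline{D}$ to $V_e$, Theorem \ref{thmcenter1} yields $a \in V_e \cap C(V) = C(V_e)$.

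Next, I claim $a(-1)V_e = V_e$, i.e.\ $a$ generates $V_e$ as a (2-sided) ideal of $V_e$. Applying $e(-1)$ to $a(-1)V = e(-1)V = V_e$ and using that $e \in C(V)$ so $e(-1)$ commutes with $a(-1)$, one has $a(-1)V_e = a(-1)e(-1)V = e(-1)a(-1)V = e(-1)V_e = V_e$. By Example \ref{exunit}(i) applied to the vertex ring $V_e$ and the central element $a \in C(V_e)$, this implies $a$ is a unit in $V_e$, so there exists $b \in V_e$ with $a(-1)b = e$. By Lemma \ref{lemmaunit}, $b \in C(V_e)$. One then observes $C(V_e) \subseteq C(V)$: the summands $V_e, V_{1-e}$ mutually annihilate under all $n$-products (because central idempotents intertwine with modes), so $Y_V(b,z)$ acts as $b(-1)$ on $V_e$ and as $0$ on $V_{1-e}$, matching $b(-1)$ on all of $V$.

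Finally, compute $a(-1)b(-1)a = e(-1)a = a$, the last equality because $e$ is the vacuum of $V_e$ and $a \in V_e$. Since the $(-1)$-product is the ring multiplication in $C(V)$, this is the von Neumann regularity relation $aba = a$ in $C(V)$, completing the argument.

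The only real obstacle is the bookkeeping around the direct-sum decomposition: namely the identifications $C(V_e) = V_e \cap C(V)$ and $a(-1)V_e = V_e$, plus the inclusion $C(V_e) \subseteq C(V)$. All of these are routine consequences of the centrality of $e$, the formula $(c(-1)u)(n) = c(-1)u(n)$ for $c \in C(V)$, and the characterization of $C(V)$ as $D$-constants in Theorem \ref{thmcenter1}.
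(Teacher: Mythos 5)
Your proof is correct, and it takes a genuinely different route from the paper's. The paper argues through the Pierce bundle: for each point $M$ of the Stone space $X$ it shows that $I=C(V)\cap\overline{M}$ is a maximal ideal of $C(V)$, assembles the quotients $C(V){+}\overline{M}/\overline{M}$ into an \'{e}tale bundle of fields $\mathcal{S}\rightarrow X$, and then invokes Pierce's Theorem 10.3 of \cite{RSP} to conclude that $C(V)$ is von Neumann regular. You instead verify the regularity relation $a(-1)b(-1)a=a$ directly: writing the principal $2$-sided ideal $a(-1)V$ as $e(-1)V$, decomposing $V=V_e\oplus V_{1-e}$ as in (\ref{dirsumdecomp}), showing $a$ is a unit of $V_e$ via Example \ref{exunit}(i) with quasi-inverse $b\in C(V_e)\subseteq C(V)$ by Lemma \ref{lemmaunit}, and computing $a(-1)b(-1)a=e(-1)a=a$. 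The bookkeeping steps you flag do all hold: since $V_e$ and $V_{1-e}$ are complementary $2$-sided ideals, $u(n)w\in V_e\cap V_{1-e}=0$ for $u\in V_e$, $w\in V_{1-e}$, so the restriction of $\underline{D}$ to $V_e$ is its canonical HS derivation and the $D$-constant criterion of Theorem \ref{thmcenter1} transfers between $V$ and $V_e$; likewise $Y_V(b,z)=b(-1)$ on all of $V$, giving $C(V_e)\subseteq C(V)$. It is worth noting that the kernel of your argument --- applying Lemma \ref{lemmaunit} inside the summand $e(-1)V$ to land in $C(e(-1)V)\subseteq C(V)$ --- also occurs as a sub-step of the paper's proof, but there it is used only to show that certain idempotent generators lie in $I$, whereas you use it to produce the quasi-inverse outright. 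Your approach buys a self-contained, elementary proof with no topology and no appeal to \cite{RSP}; the paper's approach buys the stronger structural fact that every stalk $C(V){+}\overline{M}/\overline{M}$ of the center bundle is a field, which is precisely what feeds into the categorical equivalences of Section \ref{Seqcat}.
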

\begin{proof} \ Let $B{:=}B(V), C{:=}C(V)$ with $X$ the Stone space of $V$.\ First we show that if $M{\in}X$ then $C{+}\overline{M}/\overline{M}$ is a \emph{field}.\
Equivalently, setting $I{:=} C\cap \overline{M}$, we show that $I$ is a \emph{maximal ideal} of $C$.\ It is clear that
$I$ is a proper ideal of $C$.\ Now $\overline{M}$ is generated by the idempotents in $M$, and these all lie
in $C$.\ Thus $\overline{M}$ is generated by the idempotents in $I$.\ Because $\overline{M}$ is a maximal 2-sided ideal of $V$, it follows that the
2-sided ideal $N{:=}\sum_{a\in I} a(-1)V$ of $V$ generated by $I$ is equal to either $V$ or $\overline{M}$.

\medskip
We show that $N{=}V$ leads to a contradiction.\ To achieve this we need the following result:\
if $a{\in} I$ then $a(-1)V{=}e(-1)V$ for some $e{\in} B\cap I$.\ (We know that we can choose $e{\in} B$
because $V$ is von Neumann regular.\ The main point is that in fact $e{\in} I$.) 

\medskip
To prove this assertion, and starting from $a(-1)V{=}e(-1)V$ with $e{\in} B$, there are $u, v{\in} V$ such that 
$a(-1)u{=}e, e(-1)v{=}a$.\
Consider the decomposition $V{=}e(-1)V\oplus(\mathbf{1}-e)(-1)V$.\ Since $a{\in} e(-1)V$ then
we may, and shall, also assume that $u{\in} e(-1)V$.\ Then from
$a(-1)u{=}e$ it follows from Lemma \ref{lemmaunit} (with $e(-1)V$ in place of $V$) that $u{\in} C(e(-1)V)\subseteq C$.\
Since $I$ is an ideal in $C$ and $a{\in} I$ we deduce that $e{=}a(-1)u{\in} I$, as required.

\medskip
 From what we have just proved, we have $N{=}\sum_{e\in B\cap N}e(-1)V{=}
\bigcup_{e\in B\cap N}e(-1)V$.\ Then if $V{=}N$ we have $\mathbf{1}{\in} e(-1)V$ for some
$e{\in} B\cap I$.\ Thus $e(-1)u{=}\mathbf{1}$ for some $u{\in} V$, so that $e$ is a unit in $C$
by Lemma \ref{lemmaunit}.\ Since $e{\in} I$ this is a contradiction.

\medskip
Having shown that $N{\not=} V$, we must conclude that $N{=}\overline{M}$.\ Let $I{\subseteq} J{\subseteq} C$
where $J$ is a \emph{maximal} ideal of $C$.\ We show that $I{=}J$.\ We can proceed as before,
setting $N'{:=}\sum_{b\in N'} b(-1)V$.\ This is a 2-sided ideal of $V$ that contains $\overline{M}$, so
either $N'{=}V$ or $N'{=}\overline{M}$.\ If $N'{=}V$ we get a contradiction just as before,
so $N'{=}\overline{M}$.\ But then $N'$ is generated by the idempotents in $M$, and therefore it
must be equal to $\overline{M}$, as asserted.

\medskip
We have established that $C{+}\overline{M}/\overline{M}$ is a field for each $M{\in} X$.\ Therefore,
setting $\mathcal{S}{:=} \bigcup_{M\in X} C{+}\overline{M}/\overline{M}{\subseteq} \mathcal{R}$,
it follows that $\mathcal{S}{\rightarrow} X$ is an \'{e}tale bundle of \emph{commutative fields} over $X$.\ Then  we can
invoke one of the main results of \cite{RSP} (loc.\ cit.\ Theorem 10.3) to conclude that $C$ is a
von Neumann regular ring.\ This completes the proof of the Theorem.
\end{proof}

\begin{cor} Let $V$ be a von Neumann regular vertex ring with Stone space $X$ and center $C$.\ Then
the lattices of 2-sided ideals in $V$ and $C$ are each isomorphic to $\mathcal{O}(X)$.\ 
\end{cor}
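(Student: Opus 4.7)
The plan is to handle the two lattice isomorphisms separately, and then identify them with a common parameter set $\mathcal{O}(X)$. The assertion for $V$ itself is essentially on record{:} by Corollary \ref{corredbundle} the Pierce bundle $\mathcal{R}\to X$ is reduced, and since $V$ is vNr, Lemma \ref{lem2sidedreg} tells us every 2-sided ideal is regular in the sense of Definition \ref{defregideal1}. Therefore Theorem \ref{thm2sidedreg} (via the isomorphism $\xi{:}V\stackrel{\cong}{\to}\Gamma_X$ of Theorem \ref{thmglobaliso}) supplies an inclusion-preserving bijection $U\mapsto J[U]$ between $\mathcal{O}(X)$ and the 2-sided ideals of $V$, with inverse $J\mapsto U[J]$. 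This is a restatement of Corollary \ref{coriso} and gives the first half.

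For the second half, the key observation is that the Stone space $X$ does not change when we pass from $V$ to $C$. By construction in Subsection \ref{SSBV}, $B(V)=B(C)$ since the idempotents of $V$ are precisely the idempotents of $C$ (Lemma \ref{lemmaidem}), and so $\mathrm{Spec}(B(C))=\mathrm{Spec}(B(V))=X$. Next, by Theorem \ref{thmCVNvring} the center $C$ is a commutative von Neumann regular ring. Viewed as a vertex ring with trivial HS derivation (Theorem \ref{thmcommring}), every principal 2-sided ideal of $C$ has the form $aC$, and by the equivalent characterizations in (\ref{VNRequivs}) every such ideal is generated by an idempotent $e\in B(C)$. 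In vertex-ring language this says $aC=e(-1)C$, so $C$ itself is a vNr vertex ring in the sense of Definition \ref{defregideal}.

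Having established that $C$ is a vNr vertex ring with Stone space $X$, the first half of the corollary (applied now to $C$ in place of $V$) immediately produces a lattice isomorphism between $\mathcal{O}(X)$ and the 2-sided ideals of $C$. Since $C$ is commutative, its 2-sided vertex-ring ideals coincide with its usual ring ideals (the only nonzero product is the $-1$ product, which is ordinary multiplication), so this is the desired isomorphism. Composing the two bijections through $\mathcal{O}(X)$ yields the stated result.

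The argument is entirely a packaging of earlier results; there is no real obstacle, only the small bookkeeping point of verifying that the Stone space is intrinsic to the pair $(V,C)$ and that the commutative ring $C$ qualifies as a vNr vertex ring so that Corollary \ref{coriso} can be reused. Both points are immediate from $B(V)=B(C)$ and the characterization (\ref{VNRequivs}) of commutative von Neumann regular rings.
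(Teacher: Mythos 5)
Your proposal is correct and follows essentially the same route as the paper: the paper's proof likewise notes that $X$ is the Stone space of $C$ as well as of $V$ (via $B(V)=B(C)$), invokes Theorem \ref{thmCVNvring} to see that $C$ is von Neumann regular, and then applies the commutative-ring case of Corollary \ref{coriso} with $C$ in place of $V$. Your extra bookkeeping---checking via (\ref{VNRequivs}) that $C$ with trivial HS derivation is a vNr vertex ring, and that its vertex-ring 2-sided ideals are its ordinary ring ideals---is a sound and slightly more explicit spelling-out of what the paper leaves implicit.
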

\begin{proof}  $X$ is the
Stone space of $C$ as well as $V$, and because $C$ is von Neumann regular by Theorem
\ref{thmCVNvring} we can apply the commutative ring case of Corollary 9.10 with $C$ in place of $V$.\
Thus $\mathcal{O}(X)$ is also isomorphic to the lattice of 2-sided ideals of $C$, and the Corollary is proved.
\end{proof}

\section{Equivalence of some categories of vertex rings}\label{Seqcat}
In this Section we will discuss the following  diagram of categories and functors.\ The lower half
gives expression to some theorems of Pierce \cite{RSP} while the upper half reflects the extensions of these
theorems to the corresponding categories of vertex rings.

\begin{eqnarray*}
\xymatrix{
\bf{redVerbun}  & &&\bf{simpVerbun}\ar[lll] \\
 &\bf{Ver}\ar[d]\ar[lu]^{\sim} & \bf{regVer}\ar[ru]_{\sim}\ar[l] \ar[d]  \\
 & \bf{Comm} \ar[ld]_{\sim}  & \bf{regComm}\ar[l]\ar[rd]^{\sim} & \\
\bf{redCommbun}\ar[uuu] &&&\bf{simpCommbun}\ar[lll] \ar[uuu]
 }
\end{eqnarray*}

\medskip
The notation is supposed to be self-explanatory.\ $\bf{Ver}{\rightarrow} \bf{Comm}$ is the center functor
from vertex rings to commutative rings that is left adjoint to the insertion
$K{:}\bf{Comm}{\rightarrow}\bf{Ver}$.\ See Section 6, especially Theorem \ref{thmladjoint}.

\medskip
$\bf{regVer}$ and $\bf{regComm}$
are the full subcategories of $\bf{Ver}$ and $\bf{Comm}$ whose objects are
the von Neumann regular vertex rings and von Neumann regular commutative rings
respectively.\ That the inner square commutes then means that the center of
a von Neumann regular vertex ring is a von Neumann regular commutative ring, and
this is the content of Theorem \ref{thmCVNvring}.

\medskip
The four diagonals are categorical equivalences and require more discussion.\ The objects of 
 $\bf{redVerbun}$ are \emph{reduced \'{e}tale bundles
of vertex rings}.\ Recall (Definition \ref{dfnredbundle}) that these are \'{e}tale bundles 
$\mathcal{R}{\rightarrow} X$ of indecomposable
vertex rings over a Boolean base space $X$.\ We will define morphisms  shortly.\ Then
$\bf{simpVerbun}$, $\bf{redCommbun}$ and $\bf{simpCommbun}$ are the full subcategories whose objects are
bundles of \emph{simple} vertex rings, indecomposable commutative rings, and \emph{fields} 
(i.e., simple commutative rings) respectively.\ Thus  the sides
of the outer square are functorial insertions.

\medskip
Morphisms in categories such as $\bf{redVerbun}$ are standard, and are
described as follows.\ Given a pair of objects 
$\mathcal{R}\stackrel{\nu}{\longrightarrow} X, \mathcal{S}\stackrel{\pi}{\rightarrow} Y$, 
a morphism from the first object to the second is a pair of
continuous maps $(f, g)$ as in the diagram

\[\xymatrix{
&X\times_Y\mathcal{S}\ar[ld]_g\ar[d]\ar[r] &\mathcal{S}\ar[d]^{\pi} \\
\mathcal{R}\ar[r]_{\nu} &X\ar[r]_{f} & \bf{Y} 
 }
\]
where the square is a pull-back in $\mathbf{Top}$
and where the following property holds:\ for each $x\in X$, 
\begin{eqnarray*}
g(x, *){:}\mathcal{S}_{f(x)}{\rightarrow} \mathcal{R}_x
\end{eqnarray*}
is a morphism of vertex rings.

\medskip
The lower left diagonal equivalence  $\mathbf{Comm}\stackrel{\sim}{\longrightarrow}\mathbf{redCommbun}$ is a main result of Pierce (\cite{RSP}, Theorem 10.1).\ Moreover upon restricting to
the subcategory of vNr commutative rings, Pierce shows (\cite{RSP}, Theorem 10.3) that the corresponding bundles have stalks which are
\emph{fields} (i.e., simple commutative rings), thereby giving rise to the lower right categorical equivalence
$\mathbf{regComm}\stackrel{\sim}{\longrightarrow}\mathbf{simpCommbun}$

\medskip
These results extend to
equivalences of the corresponding categories of vertex rings.\
The first equivalence, namely $\mathbf{Ver}\stackrel{\sim}{\longrightarrow}\mathbf{redVerbun}$ may be established by
a  proof parallel to that of Pierce (loc.\ cit.)\ The object map assigns to a vertex ring $V$ its Pierce bundle $\mathcal{R}{\rightarrow}X$ described in
Subsection \ref{SSPiercebun}, while the inverse is the global sections functor that assigns to a reduced bundle 
$E{\rightarrow}X$ of vertex rings the vertex ring $\Gamma(X, E)$.\ Theorem \ref{thmglobaliso} shows that the composition of these two functors is equivalent to the identity functor, at least on objects.\ There are a multitude of additional details to check, most of them very similar to \cite{RSP}, and we skip the details here.\ The fourth and final equivalence is
 $\mathbf{regVer} \stackrel{\sim}{\longrightarrow} \mathbf{simpVerbun}$.\
The main point, and the result that we will actually prove, is the following.
\begin{thm}\label{thmsimplestalk} Suppose that $\mathcal{R}{\rightarrow} X$ is an \'{e}tale
bundle of  vertex rings over a Boolean space $X$.\  Then the vertex ring of global
sections $\Gamma(X, \mathcal{R})$ is von Neumann regular if, and only if, $\mathcal{R}$ is a bundle of
\emph{simple} vertex rings.
\end{thm}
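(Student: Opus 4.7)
The plan is to treat the two directions separately. For the forward direction I will appeal to the equivalence of categories $\mathbf{Ver} \cong \mathbf{redVerbun}$ discussed in this section: if $V := \Gamma(X, \mathcal{R})$ is vNr, this equivalence identifies $\mathcal{R} \to X$ with the Pierce bundle of $V$, whose stalks are then simple by Lemma \ref{lemsimplestalk}. The substance of the theorem lies in the reverse implication, which I develop as follows.

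Assume every stalk $\mathcal{R}_M$ is simple. Given $\sigma \in V$, I will show the principal $2$-sided ideal $J := \langle \sigma \rangle$ (Example \ref{exideal}(iii)) has the form $e(-1)V$ for an idempotent $e \in V$. The pivotal step is to prove that $S := \{M \in X : \sigma(M) \neq 0_M\}$ is clopen. By Lemma \ref{lemzsects}, the zero set of $\sigma$ is open, so $S$ is closed; to show it is also open, fix $M \in S$. Since $\mathcal{R}_M$ is simple and $\sigma(M) \neq 0_M$, we have $\mathbf{1}_M \in \langle \sigma(M) \rangle$; by Example \ref{exideal}(iii), $\mathbf{1}_M$ is a finite sum of expressions $a_{1j}(M)(n_{1j}) \cdots a_{k_j,j}(M)(n_{k_j,j}) \sigma(M)(n_j)\mathbf{1}_M$. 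Lifting each $a_{ij}(M)$ to a global section $a_{ij} \in V$ (local sections exist by the \'etale property and extend to $X$ by zero across a clopen neighborhood of $M$, since $X$ has a clopen basis), the resulting $\tau := \sum_j a_{1j}(n_{1j}) \cdots a_{k_j,j}(n_{k_j,j}) \sigma(n_j)\mathbf{1}$ lies in $J$ and satisfies $\tau(M) = \mathbf{1}_M$. By Lemma \ref{lemzsects} applied to $\tau$ and the vacuum section, $\tau$ agrees with $\mathbf{1}$ on an open neighborhood $W$ of $M$. Because every element of $J$ vanishes at any stalk where $\sigma$ does (all $n$-th products with the zero element vanish by bilinearity), $\sigma$ cannot vanish on $W$; hence $W \subseteq S$ and $S$ is open.

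With $S$ clopen, the section $e \in V$ defined by $e(M) := \mathbf{1}_M$ on $S$ and $e(M) := 0_M$ off $S$ is a well-defined continuous global idempotent. For any $\rho \in J$ we have $supp(\rho) \subseteq S$, so $e(-1)\rho = \rho$ stalkwise, giving $J \subseteq e(-1)V$. For the reverse inclusion it suffices to produce $e \in J$: compactness of $S$ combined with the partition property (Lemma \ref{lemmapp}) yields a clopen partition $S = N(f_1) \sqcup \cdots \sqcup N(f_r)$ with each $N(f_i)$ contained in some neighborhood $W_{M_i}$ on which a section $\tau_{M_i} \in J$ equals $\mathbf{1}$ (as constructed above); then $f_i(-1)\tau_{M_i} \in J$ equals $\mathbf{1}$ on $N(f_i)$ and $0$ elsewhere, and summing over $i$ recovers $e$. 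The main obstacle throughout is the openness of $supp(\sigma)$, where simplicity of the stalks is genuinely used by converting a \emph{generation equation} in $\mathcal{R}_M$ into a global equation recognized by Lemma \ref{lemzsects}; the remaining steps assemble familiar ingredients from Sections \ref{Setale}--\ref{SvNr}.
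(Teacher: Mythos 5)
Your proposal is correct and follows essentially the same route as the paper: simplicity of the stalk $\mathcal{R}_M$ converts the generation equation of Example \ref{exideal}(iii) into a section of $\langle\sigma\rangle$ equal to $\mathbf{1}$ near $M$ (via Lemma \ref{lemzsects}), and the partition property (Lemma \ref{lemmapp}) glues these into a global idempotent that is $\mathbf{1}$ on $supp(\sigma)$ and $0$ elsewhere, with the forward direction handled identically through the Pierce-bundle identification and Lemma \ref{lemsimplestalk}. The only cosmetic difference is that you isolate the clopenness of $supp(\sigma)$ and verify the two inclusions $J\subseteq e(-1)V$ and $e\in J$ separately, whereas the paper builds the idempotent $\nu$ directly as a single expression $\sum\tau_1(n_1)\cdots\tau_{\ell}(n_{\ell})\sigma(i)\mathbf{1}$, which vanishes off $supp(\sigma)$ automatically.
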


\begin{proof} Set $V{:=}\Gamma(X, \mathcal{R})$.\ Then $\mathcal{R}{\rightarrow}X$ is isomorphic to the
Pierce bundle of $V$, so the  implication $\Rightarrow$ follows from Lemma \ref{lemsimplestalk}.

\medskip
Conversely, suppose that $\mathcal{R}{\rightarrow}  X$ is a bundle of simple rings.\ Then certainly each stalk is an
indecomposable vertex ring, so the sheaf is reduced (cf.\ Definition \ref{dfnredbundle}) and we have to show that 
$V$ is von Neumann regular.\
Let $\sigma{\in}\Gamma(X, \mathcal{R})$.\ It suffices to show that $\sigma {=} e(-1)\sigma$ for some idempotent
$e$, because then the 2-sided ideal generated by $\sigma$ is equal to $e(-1)V$.

\medskip
Suppose that $M{\in} supp(\sigma)$.\ Then $0{\not=}\sigma(M){\in}\mathcal{R}_M$, and since $\mathcal{R}_M$
is simple then it coincides with the 2-sided ideal generated by $\sigma(M)$.\ Therefore
there is an equation of the form
\begin{eqnarray*}
\sum v_1^M(n_1)\hdots v_k^M(n_k)\sigma(M)(i)\mathbf{1}_{\overline{M}} = \mathbf{1}_{\overline{M}}\ \ 
(v_j^M{\in}\mathcal{R}_M)
\end{eqnarray*}
We can find global sections $\tau_1^M, \hdots, \tau_k^M\in\Gamma(X, \mathcal{R})$ such that 
$\tau_j^M(M)=v_j^M\ (1{\leq} j{\leq} k)$.\ Then $\sum \tau_1^M(n_1)\hdots \tau_k^M(n_k)\sigma(i)\mathbf{1}$ 
takes the value $\mathbf{1}_M$ at $M$, and it follows that $\sum \tau_1^M(n_1)\hdots \tau_k^M(n_k)\sigma(i)\mathbf{1}$ 
and $\mathbf{1}$ agree on an open neighborhood $N_M$ of $M$.

\medskip
As $M$ ranges over $supp(\sigma)$ we get an open cover $\{N_M\}\cup supp(\sigma)'$ of $X$, and by the
partition property we can find a partition of $X$ into clopen sets,  each of which is contained in some $N_M$ or in $supp(\sigma)'$.\ It follows that there is a continuous section $\nu$ of the form
$\nu = \sum \tau_1(n_1)\hdots \tau_{\ell}(n_{\ell})\sigma(i)\mathbf{1}$ with the property that
$\nu(L)=\mathbf{1}_{\overline{L}}$ for all $L{\in} supp(\sigma)$; and if $L{\notin} supp(\sigma)$ then
$\sigma(L){=}0$, so that also $\nu(L){=}0$.\ This shows that $\nu$ is idempotent.\ Finally,
to check that $\nu(-1)\sigma{=}\sigma$, we only have to check it locally.\ But this is clear,
because $(\nu(-1)\sigma)(L){=}\nu(L)(-1)\sigma(L)$ and this is $\sigma(L)$ or $0$
according to whether $L{\in} supp(\sigma)$ or not.\ The proof of
Theorem \ref{thmsimplestalk} is complete.
\end{proof}

\section{Appendix}
If $m, n\in\ZZ$ we define
\begin{eqnarray*}
{m\choose n} = \left \{\begin{array}{ccc}
                                m(m-1)...(m-n+1)/n!&\mbox{if}\ n \geq 1\\
                                1&\ \mbox{if}\ n=0,\\
                                0&\mbox{if}\ n<0 \end{array} \right.
\end{eqnarray*}
We have ${m\choose n}\in\ZZ$, and the following identities hold for all $m, n, r$.
\begin{eqnarray}
{m\choose n} &=& (-1)^n{n-m-1\choose n}\label{bi1}, \\
{m\choose n} &=& {m-1\choose n}{+}{m-1\choose n-1} \label{bi2},\\
 {m\choose n}&=&\sum_{i=0}^n {r\choose i} {m-r\choose n-i} \label{bi3},\\
{m\choose n}&=& \sum_{i=0}^n (-1)^{i}{r\choose i}{r+m-i\choose n-i} \label{bi4}.
\end{eqnarray}
Care is warranted when dealing with these binomial coefficients.\ For example, the
`familiar' identity ${m\choose n} = {m\choose m-n}$ is not universally true.

\medskip
We use the following binomial expansion for all $m\in\ZZ$:
\begin{eqnarray}\label{binexp}
(z+w)^m=\sum_{n\geq 0}{m\choose n}z^{m-n}w^n.
\end{eqnarray}
That is, we always expand binomial expressions in nonnegative powers of the \emph{second}
variable.

\bibliographystyle{amsplain}

\end{document}